\documentclass[12pt]{article}

\usepackage{amsmath, amsthm, amssymb}
\usepackage{enumerate}
\usepackage{ifpdf}
\ifpdf
\usepackage[pdftex]{graphicx}
\else
\usepackage[dvips]{graphicx}
\fi
\usepackage{tikz}
 	 \usetikzlibrary{arrows,backgrounds}
\usepackage[all]{xy}

\input xy
\xyoption{all}

\usepackage[pdftex,plainpages=false,hypertexnames=false,pdfpagelabels]{hyperref}
\newcommand{\arxiv}[1]{\href{http://arxiv.org/abs/#1}{\tt arXiv:\nolinkurl{#1}}}
\newcommand{\arXiv}[1]{\href{http://arxiv.org/abs/#1}{\tt arXiv:\nolinkurl{#1}}}

\newcommand{\googlebooks}[1]{(preview at \href{http://books.google.com/books?id=#1}{google books})}

\usepackage{xcolor}
\definecolor{dark-red}{rgb}{0.7,0.25,0.25}
\definecolor{dark-blue}{rgb}{0.15,0.15,0.55}
\definecolor{medium-blue}{rgb}{0,0,.8}
\definecolor{DarkGreen}{RGB}{0,150,0}
\definecolor{rho}{named}{red}
\definecolor{theta}{named}{blue}
\hypersetup{
   colorlinks, linkcolor={purple},
   citecolor={medium-blue}, urlcolor={medium-blue}
}

%\addtolength{\textwidth}{.5in} 
\usepackage{longtable}
\usepackage{fullpage}
%\renewcommand{\arraystretch}{1.5}

% Page size %%%%%%%%%%%%%%%%%%%%%%%%%%%%%%%%%%%%%%%%%%%
\setlength\topmargin{-.25in}
\setlength\headheight{0in}
\setlength\headsep{.2in}
\setlength\textheight{9in}
\addtolength{\hoffset}{-0.25in}
\addtolength{\textwidth}{.5in}
\setlength\parindent{0.25in}

% Theorems %%%%%%%%%%%%%%%%%%%%%%%%%%%%%%%%%%%%%%%%%%
\theoremstyle{plain}
\newtheorem{thm}{Theorem}[section]
\newtheorem*{thm*}{Theorem}
\newtheorem{thmalpha}{Theorem}

\newtheorem{cor}[thm]{Corollary}

\newtheorem*{cor*}{Corollary}
\newtheorem{conj}[thm]{Conjecture}
\newtheorem{conjalpha}[thmalpha]{Conjecture}
\newtheorem*{conj*}{Conjecture}
\newtheorem{lem}[thm]{Lemma}

\newtheorem{facts}[thm]{Facts}
\newtheorem{prop}[thm]{Proposition}

\newtheorem*{quest*}{Question}
\newtheorem*{claim*}{Claim}

\theoremstyle{definition}
\newtheorem{defn}[thm]{Definition}
\newtheorem{assumption}[thm]{Assumption}

\newtheorem{ex}[thm]{Example}
\newtheorem{rem}[thm]{Remark}
\newtheorem{remark}[thm]{Remark}
\newtheorem{rems}[thm]{Remarks}

% Operators %%%%%%%%%%%%%%%%%%%%%%%%%%%%%%%%%%%%%%%%%%%

\DeclareMathOperator{\coeff}{coeff}

\DeclareMathOperator{\Hom}{Hom}
\DeclareMathOperator{\spann}{span}
\DeclareMathOperator{\id}{id}

\DeclareMathOperator{\Tr}{Tr}

% Math %%%%%%%%%%%%%%%%%%%%%%%%%%%%%%%%%%%%%%%%%%%%%
\newcommand{\D}{\displaystyle}
\newcommand{\comment}[1]{}

\newcommand{\be}{\begin{enumerate}[(1)]}
\newcommand{\ee}{\end{enumerate}}

\newcommand{\itt}[1]{\item[\underline{\text{#1}:}]}

\newcommand{\set}[2]{\left\{#1 \middle| #2\right\}}

% tricky way to iterate macros over a list
\def\semicolon{;}
\def\applytolist#1{
    \expandafter\def\csname multi#1\endcsname##1{
        \def\multiack{##1}\ifx\multiack\semicolon
            \def\next{\relax}
        \else
            \csname #1\endcsname{##1}
            \def\next{\csname multi#1\endcsname}
        \fi
        \next}
    \csname multi#1\endcsname}

\def\calc#1{\expandafter\def\csname c#1\endcsname{{\mathcal #1}}}
\applytolist{calc}QWERTYUIOPLKJHGFDSAZXCVBNM;
\def\bbc#1{\expandafter\def\csname bb#1\endcsname{{\mathbb #1}}}
\applytolist{bbc}QWERTYUIOPLKJHGFDSAZXCVBNM;
\def\bfc#1{\expandafter\def\csname bf#1\endcsname{{\mathbf #1}}}
\applytolist{bfc}QWERTYUIOPLKJHGFDSAZXCVBNM;
\def\sfc#1{\expandafter\def\csname s#1\endcsname{{\sf #1}}}
\applytolist{sfc}QWERTYUIOPLKJHGFDSAZXCVBNM;

\newcommand{\Pro}{{\sf Pro}}

\renewcommand{\Vec}{{\sf Vec}}
\newcommand{\jw}[1]{f^{(#1)}}

\newcommand{\noshow}[1]{}
\newcommand{\MR}[1]{}
\newcommand{\TL}{\cT\hspace{-.08cm}\cL}
\newcommand{\rhoE}{\textcolor{rho}{e_1}}
\newcommand{\rhoJW}{\textcolor{rho}{\jw{2}}}

% TikZ operators %%%%%%%%%%%%%%%%%%%%%%%%%%%%%%%%%%%%%%%%
\usetikzlibrary{shapes}
\usetikzlibrary{backgrounds}
\usetikzlibrary{decorations,decorations.pathreplacing,decorations.markings}
\usetikzlibrary{fit,calc,through}
\usetikzlibrary{external}
\tikzset{
	super thick/.style={line width=3pt}
}
\tikzstyle{shaded}=[fill=red!10!blue!20!gray!30!white]
\tikzstyle{unshaded}=[fill=white]
\tikzstyle{empty box}=[circle, draw, thick, fill=white, opaque, inner sep=2mm]
\tikzstyle{annular}=[scale=.7, inner sep=1mm, baseline]
\tikzstyle{rectangular}=[scale=.75, inner sep=1mm, baseline=-.1cm]
\tikzstyle{mid>}=[decoration={markings, mark=at position 0.5 with {\arrow{>}}}, postaction={decorate}]
\tikzstyle{mid<}=[decoration={markings, mark=at position 0.5 with {\arrow{<}}}, postaction={decorate}]

\newcommand{\nbox}[6]{
	\draw[thick, #1] ($#2+(-#3,-#3)+(-#4,0)$) rectangle ($#2+(#3,#3)+(#5,0)$);
	\coordinate (ZZa) at ($#2+(-#4,0)$);
	\coordinate (ZZb) at ($#2+(#5,0)$);
	\node at ($1/2*(ZZa)+1/2*(ZZb)$) {#6};
}

\newcommand{\ncircle}[5]{
	\draw[thick, #1] #2 circle (#3);
	\node at #2 {#5};
	\node at ($#2+(#4:.15cm)+(#4:#3cm)$) {$\star$};
}

% colors %%%%%%%%%%%%%%%%%%%%%%%%%%%%%%%%%%%%%%%%

% bigraphs %%%%%%%%%%%%%%%%%%%%%%%%%%%%%%%%%%%%%%%%
%\makeatletter
%\newcommand{\hashdef}[2]{\@namedef{#1}{#2}}
%\newcommand{\hashlookup}[1]{\@nameuse{#1}}
%\makeatother
%
% \IfFileExists{../../../fusionatlas/graphs/lookup.tex}%
%	{\newcommand{\pathtographs}{../../../fusionatlas/graphs/}}%
%%\newcommand{\pathtographs}{diagrams/graphs/}
%
%\input{\pathtographs lookup.tex}
%
%\newcommand{\bigraph}[1]{{\hspace{-3pt}\begin{array}{c}%
%  \raisebox{-2.5pt}{\includegraphics[height=6mm]{\pathtographs \hashlookup{#1}}}% 
%\end{array}\hspace{-3pt}}}

%%%%%%%%%%%%%%%%%%%%%%%%%%%%%%%%%%%%%%%%%%%%%%%%%%

\begin{document}
\title{The generator conjecture for $3^G$ subfactor planar algebras}
\author{Zhengwei Liu and David Penneys}
\date{\today}
\maketitle
\begin{abstract}
We state a conjecture for the formulas of the depth 4 low-weight rotational eigenvectors and their corresponding eigenvalues for the $3^G$ subfactor planar algebras.
We prove the conjecture in the case when $|G|$ is odd. 
To do so, we find an action of $G$ on the reduced subfactor planar algebra at $\jw{2}$, which is obtained from shading the planar algebra of the even half.
We also show that this reduced subfactor planar algebra is a Yang-Baxter planar algebra.
%This is the submitted version of \arxiv{}
\end{abstract}
%\tableofcontents

Dedicated to the 60th birthday of Vaughan F. R. Jones.
%%%%%%%%%%%%%%%%%%%%%%%%%%%%%%%%%%%%%%%%%%%%%%%%%%
%%%%%%%%%%%%%%%%%%%%%%%%%%%%%%%%%%%%%%%%%%%%%%%%%%
%%%%%%%%%%%%%%%%%%%%%%%%%%%%%%%%%%%%%%%%%%%%%%%%%%
\section{Introduction}

Haagerup initiated the classification of subfactor principal graphs with index a little greater than 4, and he gave a classification of all possible graph pairs in the index range $(4,3+\sqrt{2})$ \cite{MR1317352}.
In doing so, he discovered a so-called `exotic' subfactor \cite{MR1686551} with index $\frac{5+\sqrt{13}}{2}$ and principal graph the 3-spoke
$$
\begin{tikzpicture}[baseline=-.1cm,scale=.6]
	\filldraw (0,0) circle (.1cm) node [above] {{\scriptsize{$1$}}};
	\filldraw (1,0) circle (.1cm);
	\filldraw (2,0) circle (.1cm);
	\filldraw (3,0) circle (.1cm);
	\filldraw (4,.4) circle (.1cm);
	\filldraw (4,-.4) circle (.1cm);
	\filldraw (5,.4) circle (.1cm);
	\filldraw (5,-.4) circle (.1cm);
	\filldraw (6,.4) circle (.1cm);
	\filldraw (6,-.4) circle (.1cm);
	\draw (0,0)--(3,0);
	\draw (6,-.4)--(4,-.4)--(3,0)--(4,.4)--(6,.4);
\end{tikzpicture}\,.
$$
The $\bbZ/3\bbZ$-symmetry of this graph means that the dimension one vertices at the ends of the spokes form the group $\bbZ/3\bbZ$ under the fusion operation of the corresponding bimodules.

In \cite{MR1832764}, Izumi gave a generalized construction to other abelian groups using Cuntz algebras, and constructed an example when $G=\bbZ/5\bbZ$.
Such a subfactor is called a $3^G$ subfactor and has principal graph
$$
\Gamma_+=
\begin{tikzpicture}[baseline=-.1cm,scale=.6]
	\filldraw (0,0) circle (.1cm) node [above] {{\scriptsize{$1$}}};
	\filldraw (1,0) circle (.1cm) node [above] {{\scriptsize{$X$}}};
	\filldraw (2,0) circle (.1cm) node [above] {{\scriptsize{$\rho$}}};
	\filldraw (3,0) circle (.1cm) node [above] {{\scriptsize{$Z$}}};
	\filldraw (4,.6) circle (.1cm) node [above] {{\scriptsize{$g\rho$}}};
	\filldraw (4,.2) circle (.1cm);
	\filldraw (4,-.6) circle (.1cm) node [below] {{\scriptsize{$h\rho$}}};
	\filldraw (5,.6) circle (.1cm) node [above] {{\scriptsize{$gX$}}};
	\filldraw (5,.2) circle (.1cm);
	\filldraw (5,-.6) circle (.1cm) node [below] {{\scriptsize{$hX$}}};
	\filldraw (6,.6) circle (.1cm) node [above] {{\scriptsize{$g$}}};
	\filldraw (6,.2) circle (.1cm);
	\filldraw (6,-.6) circle (.1cm) node [below] {{\scriptsize{$h$}}};
	\draw[] (0,0)--(3,0);
	\draw (6,.6)--(4,.6)--(3,0)--(4,.2)--(6,.2);
	\draw (6,-.6)--(4,-.6)--(3,0);
	\draw[thick, dotted]  (4,.2)--(4,-.6);
	\draw[thick, dotted]  (5,.2)--(5,-.6);
	\draw[thick, dotted]  (6,.2)--(6,-.6);
\end{tikzpicture}\,.
$$
Two recent articles of Evans-Gannon \cite{MR2837122,MR3167494} have successfully used Izumi's equations to construct a myriad of new examples of $3^G$ and related $2^G1$ subfactors with principal graphs
$$
\Gamma_\pm=
\begin{tikzpicture}[baseline=-.1cm,scale=.6]
	\filldraw (1,0) circle (.1cm) node [above] {{\scriptsize{$1$}}};
	\filldraw (2,0) circle (.1cm);
	\filldraw (3,0) circle (.1cm) node [below] {{\scriptsize{$\rho$}}};
	\filldraw (3,.6) circle (.1cm);
	\filldraw (4,.4) circle (.1cm);
	\filldraw (4,-.4) circle (.1cm);
	\filldraw (5,.4) circle (.1cm) node [above] {{\scriptsize{$g$}}};
	\filldraw (5,-.4) circle (.1cm) node [below] {{\scriptsize{$h$}}};
	\draw[] (1,0)--(3,0);
	\draw (3,.6)--(3,0)--(4,.4)--(5,.4);
	\draw (5,-.4)--(4,-.4)--(3,0);
	\draw[thick, dotted]  (4,.4)--(4,-.4);
	\draw[thick, dotted]  (5,.4)--(5,-.4);
\end{tikzpicture}\,.
$$
They also give simple formulas for the quantum double and its modular data, leading them to conjecture that there should be associated rational conformal field theories.

The even halves of the $3^G$ and $2^G1$ subfactors are examples of \underline{quadratic} unitary fusion categories, which have a group $G$ of invertible objects and one other orbit $G\rho$ of simple objects, together with a relation for fusion on the right by $g$, and a quadratic fusion relation for $\rho$. 
For example, by unpublished work of Izumi, the even half of a $3^G$ subfactor for $|G|$ odd satisfies 
$$
\rho g=g^{-1}\rho \text{ and }\rho^2\cong 1\oplus \bigoplus_{g\in G} g\rho.
$$
The even halves of $2^G1$ subfactors are unitary near group fusion categories \cite{MR3167494}, which are generalizations of Tambara-Yamagami  categories \cite{MR1659954}.

Izumi observed in \cite{MR1832764} that when realizing a unitary fusion category as a category of sectors of some infinite factor $M$, Cuntz algebras naturally appear as the C*-algebras generated by the orthonormal bases of intertwiner spaces in $M$.
Cuntz algebras are particularly useful in constructing quadratic categories because we usually only need to analyze one Cuntz algebra, in which the quadratic relation allows us to write down polynomial equations in the generators to define an endomorphism of the C*-algebra.
One then extends the endomorphisms to the von Neumann completion using the unique KMS state \cite{MR500150}, which is again an infinite factor (e.g., see \cite{MR1441540}).
When the category is not quadratic, we obtain multiple Cuntz algebras together with relations between them, and the situation is much more complicated.

Currently planar algebra techniques are not as effective as Cuntz algebras for constructing quadratic categories.
The recent articles \cite{MR3157990,1308.5197} suggest a uniform skein theory for the $3^G$'s using 2-strand jellyfish relations.
A general formula for the generators in the graph planar algebras remains elusive, as the valence and size of the $2^n1$ and $3^n$ graphs gets quite large.
We expect that the Cuntz algebra and planar algebra techniques can be reconciled, which will be explored in future work.
For example, Izumi has shown how to draw planar diagrams for the actions of his Cuntz algebra endomorphisms.

Based on \cite{MR3314808,1308.5197} we conjecture specific formulas for the $3^G$ low-weight rotational eigenvectors in the 4-box spaces in terms of minimal projections in the $3^G$ subfactor planar algebras.
This is the first step in the Jones-Peters graph planar algebra embedding program \cite{MR1929335,MR2679382,MR2812459,MR2972458} toward a uniform planar algebraic approach to the $3^G$ subfactors.

Let $\cP_\bullet$ be a $3^G$ subfactor planar algebra.
For $g\in G\setminus\{1\}$, let $p_g$ be the projection in $\cP_{4,+}$ corresponding to $g\rho$.
We make the following conjecture about the low-weight rotational eigenvectors for $\cP_\bullet$ which agrees with the Haagerup $3^{\bbZ/3\bbZ}$ and Izumi $3^{\bbZ/2\bbZ\times \bbZ/2\bbZ}$ and $3^{\bbZ/4\bbZ}$ subfactor planar algebras by \cite{MR2679382,MR2972458,MR3314808,1308.5197}.

\begin{conjalpha}\label{conj:Main}
Suppose $g,h,k,\ell \in G\setminus\{1\}$ are distinct elements.
\begin{enumerate}[(1)]
\item
If $g=g^{-1}$ and $h=h^{-1}$, then $p_g-p_h$ is a low-weight rotational eigenvector with eigenvalue 1.
\item
If $g=h^{-1}$, then $p_g-p_h$ is a low-weight rotational eigenvector with eigenvalue -1.
\item
If $g=g^{-1}$ and $h=k^{-1}$, then $2p_g-(p_h+p_k)$ is a low-weight rotational eigenvector with eigenvalue 1.
\item
If $g=h^{-1}$ and $k=\ell^{-1}$, then $(p_g+p_h)-(p_k+p_\ell)$ is a low-weight rotational eigenvector with eigenvalue 1.
\end{enumerate}
\end{conjalpha}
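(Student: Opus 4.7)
The plan is to reduce Conjecture~\ref{conj:Main} to the single claim
\[
\mathrm{rot}_4(p_g) = p_{g^{-1}} \qquad \text{for all } g \in G \setminus \{1\},
\]
where $\mathrm{rot}_4$ denotes the rotation on $\cP_{4,+}$. Granting this formula, each of the four cases reduces to a one-line verification. For instance, in case~(2) with $h = g^{-1}$,
\[
\mathrm{rot}_4(p_g - p_h) = p_{g^{-1}} - p_{h^{-1}} = p_h - p_g = -(p_g - p_h),
\]
giving eigenvalue $-1$. Cases~(1), (3), (4) are analogous: in each, the signed multiset of indices is manifestly invariant under $g \mapsto g^{-1}$, yielding eigenvalue $+1$.

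To prove the rotation formula I would exploit the $G$-action on the reduced subfactor planar algebra at $\jw{2}$ constructed earlier in the paper, together with its Yang-Baxter structure. The natural expectation is that the $G$-action permutes the projections $\{p_g : g \in G \setminus \{1\}\}$, say via a planar operation $T_g$ (implemented by a $G$-labelled strand) that takes some canonical rotation-fixed base projection $p_{\mathrm{base}}$ to $p_g$. The key identity is the commutation relation
\[
\mathrm{rot}_4 \circ T_g = T_{g^{-1}} \circ \mathrm{rot}_4,
\]
which is the planar-algebraic incarnation of the fusion rule $\rho g = g^{-1}\rho$: pushing a $g$-strand past a $90^\circ$ rotation inverts it. Combined with $\mathrm{rot}_4(p_{\mathrm{base}}) = p_{\mathrm{base}}$, this yields $\mathrm{rot}_4(p_g) = \mathrm{rot}_4 T_g (p_{\mathrm{base}}) = T_{g^{-1}}\mathrm{rot}_4(p_{\mathrm{base}}) = p_{g^{-1}}$.

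The low-weight condition is the routine half. A vector in $\cP_{4,+}$ is low-weight precisely when both capping maps into the depth-$3$ box space annihilate it. The principal graph $\Gamma_+$ shows that each $g\rho$ has a unique neighbour at depth~$3$, namely $Z$, with multiplicity one; since $\dim(g\rho) = \dim(\rho)$ is independent of $g$, each cap sends every $p_g$ to the same scalar multiple of the central projection for $Z$. Each of the four linear combinations in Conjecture~\ref{conj:Main} has signed total $\sum_i \alpha_i = 0$, so both caps vanish.

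The main obstacle is the diagrammatic verification of the commutation relation $\mathrm{rot}_4 \circ T_g = T_{g^{-1}} \circ \mathrm{rot}_4$. This is exactly where the Yang-Baxter property of the reduced planar algebra should enter: it supplies the sliding relations needed to interchange the $G$-strand with the rotation tangle while tracking the inversion $g \leftrightarrow g^{-1}$. Even granted the correct commutation law, computing the \emph{exact} scalar — not merely establishing the formula up to a root of unity — requires careful bookkeeping through the $\jw{2}$-shading that relates the even half to $\cP_\bullet$. The restriction $|G|$ odd, appearing in the paper's main theorem, is natural from this viewpoint: it ensures $g \neq g^{-1}$ for all relevant $g$, so that $p_g$ and $p_{g^{-1}}$ remain distinct, and no Frobenius-Schur-type signs from self-conjugate classes (the situation in cases (1) and (3)) intervene.
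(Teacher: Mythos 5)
Your overall strategy --- reduce the conjecture to a formula for the rotation of each individual $p_g$ and then verify the four cases by cancellation --- is exactly the paper's strategy (Proposition \ref{prop:2ClickRotation} and Corollary \ref{cor:EquivalentConjecture}). But the formula you reduce to, $\cF^{2}(p_g)=p_{g^{-1}}$, is false. Summing it over $g\in G\setminus\{1\}$ would give $\cF^{2}(\jw{4})=\jw{4}$, whereas the paper observes that $\jw{4}$ and $\cF^{2}(\jw{4})$ are linearly independent (capping $\cF^{2}(\jw{4})$ on the bottom is nonzero). The individual $p_g$ are not uncappable in the rotated direction, so $\cF^{2}$ cannot simply permute them; the correct statement is
$$
\cF^{2}(p_g)=\frac{1}{|G|-1}\left(\cF^{2}(\jw{4})-\jw{4}\right)+p_{g^{-1}},
$$
with a $g$-independent Temperley--Lieb correction term. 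Since all four linear combinations in the conjecture have coefficients summing to zero, the correction cancels and your one-line verifications of cases (1)--(4) survive; but the formula itself must carry the extra term, and establishing it is where all the work lies.

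The more serious gap is in your proposed mechanism for the rotation formula. You posit a planar operator $T_g$ (a $g$-labelled strand) with $T_g(p_{\mathrm{base}})=p_g$ and a commutation law with the rotation. The natural such operator is conjugation of a $2$-box by a $g$-strand, and because $\rho g^{-1}\cong g\rho$, conjugation sends $h\rho$ to $g\otimes h\rho\otimes g^{-1}\cong g^{2}h\rho$: the paper's $\Phi_g$ satisfies $\Phi_g(p_h)=p_{g^{2}h}$, not $p_{gh}$. Consequently, applying the intertwining relation $\Phi_g\circ\cF_{\cR_\bullet}=\cF_{\cR_\bullet}\circ\Phi_{g^{-1}}$ to the base case $p_1$ (whose rotation is computed directly from Relation \eqref{rel:IH}) only yields the rotation formula for elements of the form $p_{g^{2}}$. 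This --- and not any Frobenius--Schur sign issue for self-inverse elements --- is the reason for the hypothesis that $|G|$ is odd: squaring is then a bijection of $G$, so every $p_g$ is reached. (Indeed, for $|G|$ odd, cases (1) and (3) of the conjecture are vacuous, since there are no nontrivial self-inverse elements.) Your sketch, taken at face value, would prove the conjecture for all $G$, which the paper cannot do; the obstruction you are missing is precisely the square in the $G$-action.
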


Our main theorem in this article is as follows.

\begin{thmalpha}\label{thm:Main}
Conjecture \ref{conj:Main} is true when $|G|$ is odd.
\end{thmalpha}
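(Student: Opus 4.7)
When $|G|$ is odd, every non-identity element $g \in G$ satisfies $g \neq g^{-1}$, so the hypotheses of parts (1) and (3) of Conjecture~\ref{conj:Main} are vacuous. It therefore suffices to prove parts (2) and (4): that for each $g \in G \setminus \{1\}$ the vector $p_g - p_{g^{-1}}$ is a low-weight rotational eigenvector with eigenvalue $-1$, and that for distinct orbits $\{g,g^{-1}\} \neq \{k, k^{-1}\}$ the vector $(p_g + p_{g^{-1}}) - (p_k + p_{k^{-1}})$ is a low-weight rotational eigenvector with eigenvalue $+1$.

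The plan is to reduce the rotation computation on the family $\{p_g : g \in G \setminus \{1\}\}$ to a one-dimensional calculation using the two structural tools advertised in the abstract. First, I construct an action of $G$ on the reduced planar algebra $\jw{2} \cP_\bullet \jw{2}$ obtained by shading the even half. For each $g \in G$ there will be a unitary-like element $u_g$ implementing the invertible-object symmetry coming from $\rho g \cong g^{-1}\rho$, and conjugation by $u_g$ will carry the anchor projection $p_1$ (corresponding to $\rho$) to $p_g$ (corresponding to $g\rho$). Thus the rotation calculation on every $p_g$ is determined by the rotation on the single summand $\bbC p_1$ together with how rotation interacts with each $u_g$.

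Second, I establish that the reduced planar algebra at $\jw{2}$ is Yang-Baxter, producing an explicit 4-box generator with braiding-style skein relations. Feeding this skein theory into a diagrammatic rotation calculation on $p_1$ and transporting back along $u_g$, the expected outcome is $r(p_g) = p_{g^{-1}}$ on the low-weight subspace of $\cP_{4,+}$; the odd-order hypothesis pins down the phase here because there is no $g$ fixed by inversion to force an additional square-root choice. From this the eigenvector computations
\[
r(p_g - p_{g^{-1}}) = p_{g^{-1}} - p_g = -(p_g - p_{g^{-1}}), \qquad r(p_g + p_{g^{-1}}) = p_g + p_{g^{-1}}
\]
immediately yield parts (2) and (4). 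The low-weight property reduces to duality: since $g\rho$ and $g^{-1}\rho$ are dual in the even half, each single-capping $\cP_{4,+} \to \cP_{3,\pm}$ sends $p_g$ and $p_{g^{-1}}$ to the same element, so their difference (and every symmetric combination in (4)) is annihilated.

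The main obstacle lies in the two structural results themselves: constructing the $G$-action on $\jw{2}\cP_\bullet \jw{2}$ and verifying the Yang-Baxter relations in that reduced planar algebra. Once these are in place, the rotation calculation and the low-weight verification are essentially formal. A secondary subtlety, which also explains the restriction to odd $|G|$, is that the identity $r(p_g) = p_{g^{-1}}$ is only unambiguous when no element is self-inverse; cases (1) and (3) require pinning down an additional phase on the rotation-fixed one-dimensional summand $\bbC p_g$ when $g = g^{-1}$, and this lies outside the scope of the present argument.
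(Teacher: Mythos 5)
Your high-level architecture matches the paper's: reduce to the single computation of the rotation on $p_1$ (which follows from the relation rewriting one triangle of trivalent vertices in terms of the other plus Temperley--Lieb terms, valid since $\dim\Hom(\rho^{\otimes 2},\rho^{\otimes 2})=3$), then transport the resulting formula to every $p_g$ via a $G$-action on the reduced planar algebra at $\jw{2}$. However, there is a genuine gap at the central step, and it is exactly the step where the hypothesis $|G|$ odd must enter. You assert that conjugation by the intertwiner $u_g$ implementing $\rho g\cong g^{-1}\rho$ carries $p_1$ to $p_g$. It does not: conjugating $h\rho$ by the invertible object $g$ gives $g\otimes h\rho\otimes g^{-1}\cong g^2h\rho$ (because commuting $\rho$ past $g^{-1}$ inverts it), so the action sends $p_h\mapsto p_{g^2h}$ and in particular $p_1\mapsto p_{g^2}$. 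Consequently the orbit of $p_1$ under this action is $\{p_{g^2}:g\in G\}$, and one only obtains the rotation formula for $p_k$ when $k$ is a square in $G$. The theorem for odd $|G|$ then follows precisely because $g\mapsto g^2$ is an automorphism of an odd-order abelian group. As written, your argument would prove parts (2) and (4) for arbitrary $G$, which the paper cannot do; this is the symptom of the missing $g^2$.

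Two smaller points. First, your stated reason for the restriction to odd $|G|$ (a phase ambiguity on the summand $\bbC p_g$ when $g=g^{-1}$) is misplaced: for odd $|G|$ there are no self-inverse elements, so by your own account nothing would obstruct the even case for parts (2) and (4), and the real obstruction is the surjectivity of squaring described above. Second, the claim $r(p_g)=p_{g^{-1}}$ must be stated modulo explicit Temperley--Lieb correction terms independent of $g$ (the paper's Corollary on the equivalent form of the conjecture records these precisely); this is harmless for differences $p_g-p_h$ since the corrections cancel, but the transport argument needs the full formula, together with the fact that the action intertwines the one-click rotation with the action of $g^{-1}$ rather than commuting with it --- a point your sketch acknowledges but does not resolve. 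The Yang--Baxter property of the reduced planar algebra, which you list as a needed structural input, is not actually used in the proof of this theorem; it is an independent result.
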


To prove this theorem, we construct a $G$-action\footnote{\label{footnote:SymmetricSelfDuality}There is actually a technicality involving the shading and the symmetric self-duality \cite{MR3314808,MPAffineAandD} of $\cR_\bullet$ which we address in Sections \ref{sec:ActionOnR} and \ref{sec:ActionOnS}, but we omit the shading in the introduction to give the spirit of the argument.}
on the reduced unshaded planar algebra $\cR_\bullet$ of $\cP_\bullet$ at $\rho=\jw{2}$ with principal graphs
$$
\Lambda=
\begin{tikzpicture}[baseline=-1cm,scale=.6]
	\filldraw (0,0) circle (.1cm) node [left] {{\scriptsize{$1$}}};
	\filldraw (0,-1) circle (.1cm) node [left] {{\scriptsize{$g$}}};
	\filldraw (0,-3) circle (.1cm) node [left] {{\scriptsize{$h$}}};
	\filldraw (2,0) circle (.1cm);
	\filldraw (2,-1) circle (.1cm);
	\filldraw (2,-3) circle (.1cm);
	\filldraw (4,0) circle (.1cm) node [above] {{\scriptsize{$\rho$}}};
	\filldraw (4,-1) circle (.1cm) node [above] {{\scriptsize{$g\rho$}}};
	\filldraw (4,-3) circle (.1cm) node [below] {{\scriptsize{$h\rho$}}};
	\filldraw (6,0) circle (.1cm);
	\filldraw (6,-1) circle (.1cm);
	\filldraw (6,-3) circle (.1cm);
	\draw (0,0) -- (6,0);
	\draw (0,-1) -- (6,-1);
	\draw (0,-3) -- (6,-3);
	\draw (2,0) -- (4,-1);
	\draw (2,0) -- (4,-3);
	\draw (2,-1) -- (4,0);
	\draw (2,-1) -- (4,-3);
	\draw (2,-3) -- (4,0);
	\draw (2,-3) -- (4,-1);
	\draw[thick, dotted]  (0,.-1.5)--(0,-2.5);
	\draw[thick, dotted]  (2,-1.5)--(2,-2.5);
	\draw[thick, dotted]  (4,-1.5)--(4,-2.5);
	\draw[thick, dotted]  (6,-1.5)--(6,-2.5);
\end{tikzpicture}\,.
$$
For each $g\in G$, we pick a distinguished isomorphism $V_g: \rho g^{-1}\to g\rho$.
Denoting $\rho\in \cR_1$ by a red strand and the group elements $g\in \cP_{6,+}$ by black labelled, oriented strands, the action is given by
$$
\Phi_g(x) = 
\begin{tikzpicture}[baseline = -.1cm]
	\pgfmathsetmacro{\innerRadius}{.9}
	\pgfmathsetmacro{\middleRadius}{1.1}
	\pgfmathsetmacro{\outerRadius}{1.5}
	\draw[mid>] (225:\innerRadius) arc (225:135:\innerRadius);
	\draw[mid<] (135:\innerRadius) arc (135:45:\innerRadius);
	\draw[mid>] (45:\innerRadius) arc (45:-45:\innerRadius);
	\draw[mid<] (315:\innerRadius) arc (315:225:\innerRadius);
	\draw[thick, rho] (0,0) -- (45:\outerRadius);
	\draw[thick, rho] (0,0) -- (135:\outerRadius);
	\draw[thick, rho] (0,0) -- (225:\outerRadius);
	\draw[thick, rho] (0,0) -- (315:\outerRadius);
	\node at (0:\middleRadius) {\scriptsize{$g$}};
	\node at (90:\middleRadius) {\scriptsize{$g$}};
	\node at (180:\middleRadius) {\scriptsize{$g$}};
	\node at (270:\middleRadius) {\scriptsize{$g$}};
	\ncircle{unshaded}{(45:\innerRadius)}{.27}{0}{{\scriptsize{$V_{g}^*$}}}
	\ncircle{unshaded}{(135:\innerRadius)}{.27}{180}{{\scriptsize{$V_g$}}}
	\ncircle{unshaded}{(225:\innerRadius)}{.27}{180}{{\scriptsize{$V_{g}^*$}}}
	\ncircle{unshaded}{(315:\innerRadius)}{.27}{0}{{\scriptsize{$V_g$}}}
	\ncircle{unshaded}{(0,0)}{.27}{180}{$x$}
\end{tikzpicture}
\,,
$$
which is similar to diagrams arising from looking at connections \cite{MR996454,math/9909027,MR3254427,1308.5656}. 
Moreover, we have $\Phi_g\circ \Phi_h = \Phi_{gh}$, giving a $G$-action on $\cR_\bullet$\textsuperscript{\ref{footnote:SymmetricSelfDuality}}.
We anticipate this new technique will have new applications to subfactor planar algebras beyond the proof of our theorem.

When there is an $h\in G$ such that $h^2=g$, we apply the action of $\Phi_h$ to the relation
$$
\begin{tikzpicture}[baseline = -.1cm]
	\filldraw[rho] (.2,0) circle (.05cm);
	\filldraw[rho] (-.2,0) circle (.05cm);
	\draw[thick, rho] (-.2,-.4) -- ( -.2, .4);	
	\draw[thick, rho] (.2,-.4) -- (.2, .4);	
	\draw[thick, rho] (-.2,0) -- (.2,0);
	\nbox{}{(0,0)}{.4}{0}{0}{}
\end{tikzpicture}
=
\begin{tikzpicture}[baseline = -.1cm]
	\filldraw[rho] (0,.2) circle (.05cm);
	\filldraw[rho] (0,-.2) circle (.05cm);
	\draw[thick, rho] (-.2,.4) -- (0,.2) --  ( .2, .4);	
	\draw[thick, rho] (-.2,-.4) -- (0,-.2) --  ( .2, -.4);	
	\draw[thick, rho] (0,-.2) -- (0, .2);
	\nbox{}{(0,0)}{.4}{0}{0}{}
\end{tikzpicture}
- \frac{1}{[3]-1} \left(\,
\begin{tikzpicture}[baseline = -.1cm]
	\nbox{unshaded}{(0,0)}{.4}{0}{0}{}
	\draw[thick, rho] (-.2,.4) arc (-180:0:.2);
	\draw[thick, rho] (-.2,-.4) arc (180:0:.2);	
\end{tikzpicture}
-
\begin{tikzpicture}[baseline = -.1cm]
	\draw[thick, rho] (-.2,-.4) -- ( -.2, .4);	
	\draw[thick, rho] (.2,-.4) -- (.2, .4);
	\nbox{}{(0,0)}{.4}{0}{0}{}
\end{tikzpicture}
\,\right),
$$
where the trivalent vertex is a suitably normalized map $\rho\otimes \rho\to \rho$.
We then obtain the formula
$$
\cF_{\cR_\bullet}(p_g)
= 
p_{g^{-1}} - \frac{1}{[3]-1} \left(\,
\begin{tikzpicture}[baseline = -.1cm]
	\nbox{unshaded}{(0,0)}{.4}{0}{0}{}
	\draw[thick, rho] (-.2,.4) arc (-180:0:.2);
	\draw[thick, rho] (-.2,-.4) arc (180:0:.2);	
\end{tikzpicture}
-
\begin{tikzpicture}[baseline = -.1cm]
	\draw[thick, rho] (-.2,-.4) -- ( -.2, .4);	
	\draw[thick, rho] (.2,-.4) -- (.2, .4);
	\nbox{}{(0,0)}{.4}{0}{0}{}
\end{tikzpicture}
\,\right).
$$
By Corollary \ref{cor:EquivalentConjecture} below, Conjecture \ref{conj:Main} is equivalent to this formula holding for every $g\in G$.
When $|G|$ is odd, every $g\in G$ has a square root, but this is no longer the case when $|G|$ is even.

We note that the principal graph $\Lambda$ above bears a strong resemblance to the principal graph of the reduced subfactor of $A_7$ at $\jw{2}$, given by
$$
\cS=
\begin{tikzpicture}[baseline=-.4cm,scale=.6]
	\filldraw (0,0) circle (.1cm) node [above] {{\scriptsize{$1$}}};
	\filldraw (0,-1) circle (.1cm) node [below] {{\scriptsize{$\jw{6}$}}};
	\filldraw (2,0) circle (.1cm);
	\filldraw (2,-1) circle (.1cm);
	\filldraw (4,0) circle (.1cm) node [above] {{\scriptsize{$\jw{2}$}}};
	\filldraw (4,-1) circle (.1cm) node [below] {{\scriptsize{$\jw{4}$}}};
	\filldraw (6,0) circle (.1cm);
	\filldraw (6,-1) circle (.1cm);
	\draw (0,0) -- (6,0);
	\draw (0,-1) -- (6,-1);
	\draw (2,0) -- (4,-1);
	\draw (2,-1) -- (4,0);
\end{tikzpicture}\,.
$$
It was recently shown in \cite{MR3306607} that the planar algebra corresponding to this reduced subfactor is a \underline{Yang-Baxter planar algebra}, which is a planar algebra generated by 2-boxes with a relation which writes one type of triangle in terms of the other triangle and lower order terms (below, $\cB_{2,+}$ is a basis of $\cP_{2,+}$ including the diagrammatic basis of $\cT\cL_{2,+}$):
$$
\begin{tikzpicture}[baseline=-.1cm]
	\pgfmathsetmacro{\width}{.8};
	\pgfmathsetmacro{\height}{.8};
	\coordinate (a) at ($ 3/4*(0,-\height) $);
	\coordinate (b) at ($ 3/4*(0,\height) $);
	\coordinate (c) at (\width,0);

	\fill[shaded] ($(a)+3/4*(0,-\height)$) -- ($(b)+3/4*(0,\height)$) -- ($(b)+3/4*(0,\height) + 1/2*(\width,0)$) -- (b) -- (c) -- (a) -- ($(a)+3/4*(0,-\height) + 1/2*(\width,0)$);
	\fill[shaded] ($(a)+3/4*(0,-\height)+(\width,0)$) -- ($(b)+3/4*(0,\height)+(\width,0)$) -- ($(b)+3/4*(0,\height)+3/2*(\width,0)$) -- ($(a)+3/4*(0,-\height)+3/2*(\width,0)$);

	\draw (b) -- (c) -- (a);
	\draw ($(a)+3/4*(0,-\height)$) -- ($(b)+3/4*(0,\height)$);
	\draw ($(a)+3/4*(0,-\height)+(\width,0)$) -- ($(b)+3/4*(0,\height)+(\width,0)$);
	\draw (a) -- ($(a)+3/4*(0,-\height) + 1/2*(\width,0)$);
	\draw (b) -- ($(b)+3/4*(0,\height) + 1/2*(\width,0)$);
	\ncircle{unshaded}{(b)}{.3}{180}{$a$}
	\ncircle{unshaded}{(c)}{.3}{115}{$b$}
	\ncircle{unshaded}{(a)}{.3}{180}{$c$}
\end{tikzpicture}
=
\sum_{x,y,z\in \cB_{2,+}}
\lambda_{x,y,z}
\begin{tikzpicture}[xscale=-1, baseline=-.1cm]
	\pgfmathsetmacro{\width}{.8};
	\pgfmathsetmacro{\height}{.8};
	\coordinate (a) at ($ 3/4*(0,-\height) $);
	\coordinate (b) at ($ 3/4*(0,\height) $);
	\coordinate (c) at (\width,0);

	\fill[shaded] ($(a)+3/4*(0,-\height)+ (\width,0)$) -- ($(b)+3/4*(0,\height)+ (\width,0)$) -- ($(b)+3/4*(0,\height) + 1/2*(\width,0)$) -- (b) -- (c) -- (a) -- ($(a)+3/4*(0,-\height) + 1/2*(\width,0)$);
	\fill[shaded] ($(a)+3/4*(0,-\height)$) -- ($(b)+3/4*(0,\height)$) -- ($(b)+3/4*(0,\height)+1/2*(-\width,0)$) -- ($(a)+3/4*(0,-\height)+1/2*(-\width,0)$);

	\draw (b) -- (c) -- (a);
	\draw ($(a)+3/4*(0,-\height)$) -- ($(b)+3/4*(0,\height)$);
	\draw ($(a)+3/4*(0,-\height)+(\width,0)$) -- ($(b)+3/4*(0,\height)+(\width,0)$);
	\draw (a) -- ($(a)+3/4*(0,-\height) + 1/2*(\width,0)$);
	\draw (b) -- ($(b)+3/4*(0,\height) + 1/2*(\width,0)$);
	\ncircle{unshaded}{(b)}{.3}{73}{$x$}
	\ncircle{unshaded}{(c)}{.3}{0}{$y$}
	\ncircle{unshaded}{(a)}{.3}{65}{$z$}
\end{tikzpicture}\,.
$$
(There is also be a relation swapping the above types of triangles, which is necessary to be able to evaluate all closed diagrams.)
In his recent classification of singly generated Yang-Baxter planar algebras \cite{YBPA}, Liu discovered that the subfactor for $\cS$ belongs to an infinite family of subfactors arising from the $E_{N+2}$ quantum subgroup of $SU(N)$.

We further conjecture a new skein theoretic approach to constructing the reduced subfactor planar algebra $\cR_\bullet$ of a $3^G$ subfactor planar algebra, and we prove it in the case $|G|$ is odd.

\begin{conjalpha}\label{conj:YB}
$\cR_\bullet$ is a Yang-Baxter planar algebra with $|G|-1$ generators.
\end{conjalpha}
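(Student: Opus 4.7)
The plan is to bootstrap Conjecture~\ref{conj:YB} from the rotation formula for $\cF_{\cR_\bullet}(p_g)$ that drives the proof of Theorem~\ref{thm:Main}, together with the $G$-action $\Phi_g$ on $\cR_\bullet$.

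First I would identify the generating set. The fusion rule $\rho^2 \cong 1 \oplus \bigoplus_{g\in G} g\rho$ gives $\dim \cR_{2,+} = |G|+1$, with basis of minimal projections $\{e_1\} \cup \{p_g : g\in G\}$ subject to the single linear relation $e_1 + \sum_g p_g = 1_{\rho^2}$. Since $\cT\cL_{2,+} = \spann\{e_1, 1_{\rho^2}\}$, any $|G|-1$ of the $p_g$ are independent modulo $\cT\cL$ and will serve as the $|G|-1$ generators claimed.

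The heart of the argument is producing the Yang--Baxter triangle-flip relation in $\cR_{3,+}$. I would first compute $\dim \cR_{3,+}$ from the principal graph $\Lambda$ and compare it against the span of triangular tangles built from the generators together with $\cT\cL$ diagrams; the deficit should equal exactly the number of independent triangle-flip relations needed. To produce those relations, I would apply $\Phi_g$ to the trivalent vertex skein relation displayed in the introduction, using it to transport a triangle containing a $p_g$ to the oppositely-oriented triangle containing $p_{g^{-1}}$ via the $V_g$-intertwiners sitting at each corner. The formula $\cF_{\cR_\bullet}(p_g) = p_{g^{-1}} - \frac{1}{[3]-1}(\cdots)$, which holds for every $g\in G$ precisely when $|G|$ is odd because every element then admits a square root $h^2 = g$, supplies the remaining $\cT\cL$ correction that closes up the flip.

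Once this relation is established, I would verify that cabling the triangle-flip together with the $\cT\cL$ skein and the 2-box multiplication suffices to evaluate every closed diagram in $\cR_\bullet$, which is the definition of a Yang--Baxter planar algebra in the sense of \cite{MR3306607,YBPA}. This final check reduces to a dimension count in $\cR_{k,+}$ for small $k$ read off $\Lambda$. The main obstacle is the triangle-flip identity itself: the $G$-action gives rotations of individual projections, whereas YB is a tripartite relation among three, so extracting the flip requires threading the intertwiners $V_g$ around all three corners of the triangle and checking that the resulting corrections telescope to $\cT\cL$ diagrams with coefficients determined by the group product. When $|G|$ is odd, every $p_g$ lies in a $\Phi_h$-orbit of a common reference projection, and all triangles reduce uniformly; when $|G|$ is even, triangles involving order-two elements lie outside any such orbit, which is precisely why Conjecture~\ref{conj:YB} for even $|G|$ remains open.
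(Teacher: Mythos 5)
Your identification of the $|G|-1$ generators, and of the reason oddness matters (the squaring map $g\mapsto g^2$ is a bijection, so every $p_g$ lies in the $\Phi$-orbit of $p_1$), both agree with the paper. The gap is in the central step: you propose to extract the Yang--Baxter triangle-flip relation from the rotation formula $\cF_{\cR_\bullet}(p_g)=p_{g^{-1}}-\tfrac{1}{[3]-1}(\cdots)$ together with the action $\Phi_g$. That formula is a statement about the two-box space $\cR_2$ --- it is exactly the content of Conjecture~\ref{conj:Main} via Corollary~\ref{cor:EquivalentConjecture} --- whereas the Yang--Baxter relation is a statement about $\cR_3$: each triangle of one orientation must lie in the span of the oppositely-oriented triangles together with lower-order terms, i.e.\ elements of the basic construction ideal $\cI_{3,+}$. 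Knowing how each individual two-box transforms under the one-click rotation does not tell you how a triangle built from three of them decomposes, and the claim that ``the resulting corrections telescope to $\cT\cL$ diagrams'' is precisely the assertion that needs proof; I do not see how the telescoping would go. Likewise, a dimension count read off $\Lambda$ cannot by itself show that the triangles span the complement of $\cI_{3,+}$: one must prove that a specific, explicitly constructed set of vectors is linearly independent, and that is where all the work is.

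What the paper actually does in Section 5 is largely independent of Theorem~\ref{thm:Main}: it exhibits two spanning sets of $\cS_{3,+}$, namely $\cB_{3,+}=\{\alpha_{i,j}\}\cup\{\gamma_{h,k,\ell}\}$ (Corollary~\ref{cor:BasisOfS3}) built from one triangle orientation, and its mirror $\cB_{3,+}'$ built from the other, where $\gamma_{h,k,\ell}=\beta_{h,k,\ell}-c_{h,k,\ell}\,\alpha_{h,\ell}$ is a triangle minus its projection onto $\cI_{3,+}$. Expressing each $\xi_{h,k,\ell}$ in $\spann(\cB_{3,+})$ and each $\beta_{h,k,\ell}$ in $\spann(\cB_{3,+}')$ \emph{is} the Yang--Baxter relation, and the same bases give two-box generation (Theorem~\ref{thm:GeneratedByTwoBoxes}). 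The technical core, absent from your outline, is the evaluation of the norms $\|\beta_{h,k,\ell}\|_2^2$ and the coefficients $c_{h,k,\ell}$ in Lemma~\ref{lem:Nonzero}: this is done by using $\Psi_g(\beta_{h,k,\ell})=\beta_{g^2h,g^2k,g^2\ell}$ to move every triangle to one containing $p_1$ --- the only place the oddness of $|G|$ enters --- and then expanding the trivalent vertex $p_1$ in Temperley--Lieb via Equation~\eqref{eq:V}, after which a compression-and-attachment argument yields linear independence. Without these computations, or a substitute for them, there is no proof that the triangle-flip relations exist, so the proposal as written does not establish Theorem~\ref{thm:YB}.
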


\begin{thmalpha}\label{thm:YB}
Conjecture \ref{conj:YB} is true when $|G|$ is odd.
\end{thmalpha}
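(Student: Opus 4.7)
The plan is two-fold: first, identify $|G|-1$ non-Temperley-Lieb generators at depth two; second, use the $G$-action $\Phi_g$ constructed earlier in the paper to establish the triangle-swap relation that defines a Yang-Baxter planar algebra. From the quadratic fusion relation $\rho \otimes \rho \cong 1 \oplus \bigoplus_{g \in G} g\rho$ one obtains a decomposition of $\cR_{2,+}$ with one minimal projection for the trivial summand (the Jones projection, a Temperley-Lieb element) together with one minimal projection $p_g$ for each $g \in G$. Since the identity decomposes as the sum of all of these, the $|G|$ projections $\{p_g\}_{g \in G}$ satisfy a single linear relation modulo Temperley-Lieb, leaving $|G|-1$ genuinely new generators.

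For the Yang-Baxter axiom, I would first establish a single fundamental triangle-swap in $\cR_{3,+}$ for one distinguished triple of corner labels. Mirroring the proof of Theorem A, one starts with the quadratic fusion relation for $\rho$ displayed in the introduction, applies the automorphism $\Phi_h$ with $h^2 = g$ (available because $|G|$ is odd), and rearranges the result into an identity that exchanges the two orientations of a triangle with a specific corner triple, up to Temperley-Lieb terms. Then, since the $\Phi_k$ form a $G$-action by planar-algebra automorphisms of $\cR_\bullet$, applying $\Phi_k$ simultaneously at all three corners as $k$ ranges over $G$ produces an explicit triangle-swap for every triple of corner labels; here one uses that each $\Phi_k$ permutes the $p_g$'s in a controlled way compatible with the group structure.

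The main obstacle is completeness: showing that Temperley-Lieb skein theory together with the derived family of triangle-swaps suffices to evaluate every closed diagram in $\cR_\bullet$. I would prove this by induction on the number of interior trivalent vertices, verifying that each triangle-swap application either strictly reduces this count or produces a purely Temperley-Lieb diagram. Matching the resulting dimension count for each $\cR_{n,+}$ against the loop-count prediction of the principal graph $\Lambda$ then confirms that no further relations are needed. A subsidiary check is that the ``reverse'' triangle-swap, needed to close the evaluation in both orientations, follows from the first; this should come either from the symmetric self-duality of $\cR_\bullet$ or from one further application of the $G$-action.
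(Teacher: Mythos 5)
There is a genuine gap in the mechanism you propose for establishing the triangle-swap relations, and it is the heart of the matter. The $G$-action $\Phi_g$ (resp.\ $\Psi_g$) is a planar algebra automorphism, so when you apply it to a triangle it necessarily shifts \emph{all three} corner labels by the same element: by Corollary \ref{cor:ActionOnEvenHalf} and Facts \ref{facts:S3Basis}, $\Psi_g(\beta_{h,k,\ell})=\beta_{g^2h,g^2k,g^2\ell}$. Starting from one ``fundamental'' swap for a single triple and letting $k$ range over $G$ therefore only transports the relation along the diagonal orbit $\{(g^2h,g^2k,g^2\ell)\}_{g\in G}$, which has size at most $|G|$, whereas you need a relation for each of the $|G|^3$ triples. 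You cannot act independently at the three corners. A second problem is the starting point: the relation \eqref{rel:IH} you propose to hit with $\Phi_h$ lives in $\cR_2$ (it is a statement about the one-click rotation of $2$-boxes, and applying $\Phi_h$ to it is exactly the proof of Theorem \ref{thm:Main}, i.e.\ Conjecture \ref{conj:Main}); it is not clear how it would yield a $3$-box identity exchanging a triangle of $p_g$'s for triangles of the opposite orientation.

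The paper's route is different and avoids both issues. One first proves $\cQ_\bullet=\cR_\bullet$ (Theorem \ref{thm:GeneratedByTwoBoxes}), so the planar algebra is generated by its $2$-boxes. Then one exhibits \emph{two} explicit bases of $\cS_{3,+}$: $\cB_{3,+}=\{\alpha_{i,j}\}\cup\{\gamma_{h,k,\ell}\}$ built from triangles $\beta_{h,k,\ell}$ of one orientation together with the matrix units $\alpha_{i,j}$ of the basic construction ideal $\cI_{3,+}$, and $\cB_{3,+}'$ built analogously from the opposite-orientation triangles $\xi_{h,k,\ell}$ and $\eta_{i,j}$. The Yang--Baxter relation is then nothing but the change of basis: $\xi_{h,k,\ell}\in\spann(\cB_{3,+})$ and $\beta_{h,k,\ell}\in\spann(\cB_{3,+}')$. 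The $G$-action does enter, but only to compute the coefficients $c_{h,k,\ell}=\langle\beta_{h,k,\ell},\alpha_{h,\ell}\rangle$ by translating the middle label to $p_1$ (Lemma \ref{lem:Nonzero}, where oddness of $|G|$ is used so that $g\mapsto g^2$ is onto) and to verify $\gamma_{h,k,\ell}\neq 0$; linear independence of the $\gamma$'s is a separate compression argument. Your final paragraph on evaluability is also not what the theorem requires: $\cR_\bullet$ already exists as the reduced subfactor planar algebra, so one only needs to verify that it is generated by $2$-boxes and that the swap relations hold in it, not that the relations form a consistent evaluation algorithm. To repair your argument you would need to prove the spanning statement for the triangles of each orientation directly --- which is precisely the two-bases computation the paper carries out.
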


%%%%%%%%%%%%%%%%%%%%%%%%%%%%%%%%%%%%%%%%%%%%%%%%%%%%%
\subsection{Acknowledgements}

The authors would like to thank Masaki Izumi and Emily Peters for helpful conversations; the Banff International Research Station for hosting the 2014 workshop on Subfactors and Fusion Categories, where this work started; and the 2014 Maui conference on Subfactor Theory in Mathematics and Physics, which was funded by NSF grant DMS-1400275 and DOD-DARPA grant HR0011-12-1-0009.
Zhengwei Liu was supported by NSF grant DMS-1001560.
David Penneys was partially supported by an AMS Simons travel grant and NSF grant DMS-1500387.
Both authors were supported by DOD-DARPA grant HR0011-12-1-0009.

%%%%%%%%%%%%%%%%%%%%%%%%%%%%%%%%%%%%%%%%%%%%%%%%%%%%%%%%%%%%%%%%%%
%%%%%%%%%%%%%%%%%%%%%%%%%%%%%%%%%%%%%%%%%%%%%%%%%%%%%%%%%%%%%%%%%%
%%%%%%%%%%%%%%%%%%%%%%%%%%%%%%%%%%%%%%%%%%%%%%%%%%%%%%%%%%%%%%%%%%
\section{$3^G$ subfactors}

\begin{defn}
Let $G$ be a non-trivial finite group.
A \underline{$3^G$ subfactor planar algebra} is a subfactor planar algebra whose principal graph $\Gamma_+$ is a $3^{|G|}$ spoke graph
$$
\Gamma_+=
\begin{tikzpicture}[baseline=-.1cm,scale=.6]
	\filldraw (0,0) circle (.1cm) node [above] {{\scriptsize{$1$}}};
	\filldraw (1,0) circle (.1cm) node [above] {{\scriptsize{$X$}}};
	\filldraw (2,0) circle (.1cm) node [above] {{\scriptsize{$\rho$}}};
	\filldraw (3,0) circle (.1cm) node [above] {{\scriptsize{$Z$}}};
	\filldraw (4,.6) circle (.1cm) node [above] {{\scriptsize{$g\rho$}}};
	\filldraw (4,.2) circle (.1cm);
	\filldraw (4,-.6) circle (.1cm);
	\filldraw (5,.6) circle (.1cm) node [above] {{\scriptsize{$gX$}}};
	\filldraw (5,.2) circle (.1cm);
	\filldraw (5,-.6) circle (.1cm);
	\filldraw (6,.6) circle (.1cm) node [above] {{\scriptsize{$g$}}};
	\filldraw (6,.2) circle (.1cm);
	\filldraw (6,-.6) circle (.1cm);
	\draw[] (0,0)--(3,0);
	\draw (6,.6)--(4,.6)--(3,0)--(4,.2)--(6,.2);
	\draw (6,-.6)--(4,-.6)--(3,0);
	\draw[thick, dotted]  (4,.2)--(4,-.6);
	\draw[thick, dotted]  (5,.2)--(5,-.6);
	\draw[thick, dotted]  (6,.2)--(6,-.6);
\end{tikzpicture}
$$
where the even bimodules generate a $G$-quadratic category, denoted $\frac{1}{2}\cP_+$, whose fusion rules are
\be
\item
$g\otimes h =  gh$, i.e., we may identify the dimension 1 bimodules with $G$,

\item
$g \otimes \rho = g\rho$, so $\set{g \rho}{g\in G}$ is a left $G$-set the obvious way: $g(h\rho)=(gh)\rho$,

\item
$\rho\otimes g = \theta(g)\rho$
for some automorphism $\theta$ of $G$, since $\rho g$ is irreducible by Frobenius reciprocity, and

\item
$\rho \otimes \rho = 1\oplus \bigoplus_{g\in G} g\rho$.
\ee
\end{defn}

\begin{conj}[Izumi]\label{conj:Izumi}
If a $3^G$ subfactor planar algebra exists, then $G$ is abelian, and $\theta(g)=g^{-1}$ for all $g\in G$.
\end{conj}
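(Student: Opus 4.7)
The plan is to separate Izumi's conjecture into a soft part that follows from fusion-ring associativity and a hard part requiring the full tensor-categorical structure. Observe first that $\theta(g) = g^{-1}$, being required to be a group homomorphism, automatically forces $G$ to be abelian, so the conjecture is equivalent to the single assertion $\theta(g) = g^{-1}$ for all $g\in G$.

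For the soft part, Grothendieck-ring associativity applied to $[\rho][\rho][g]$ already yields $\theta^2 = \id$: the left association gives $([\rho][\rho])[g] = [g] + \sum_{h\in G}[h\theta(g)\rho]$, whereas the right gives $[\rho]([\rho][g]) = [\rho][\theta(g)\rho] = [\theta^2(g)] + \sum_h [\theta^2(g)h\rho]$, and comparing invertible summands forces $g = \theta^2(g)$. In parallel, the $C^*$-structure forces $\rho^* \cong \rho$ (since $1$ appears in $\rho\otimes\rho$ with multiplicity one), whence $(g\rho)^* = \rho^*\otimes g^* = \theta(g)^{-1}\rho$. Thus what remains to prove is precisely that every $g\rho$ is self-dual.

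To force self-duality of every $g\rho$, I would turn to the pentagon axiom. Because each triple tensor $g\otimes\rho\otimes h = \theta(h)g\rho$ is simple and similar simplicity holds for the other types of triples appearing, the associator $\alpha_{g,\rho,h}$ and its analogues are scalars; pentagon applied to the 4-tuple $(a,\rho,b,\rho)$ with $a,b\in G$ then produces a polynomial identity coupling these scalars with the birth/death data of $\rho$. By analogy with the Tambara--Yamagami classification~\cite{MR1659954}, where pentagon forces the underlying group to be abelian and pins down the associated bicharacter, I expect the $3^G$ pentagon system to simultaneously force $G$ abelian and $\theta$ to be inversion. A complementary route is Izumi's Cuntz-algebra realization~\cite{MR1832764}: a $3^G$ fusion category is governed by an endomorphism of $\cO_{|G|+1}$ satisfying a specific family of polynomial relations encoding $G$ and $\theta$, and one may hope to show algebraically that these relations admit no nontrivial solutions unless $\theta$ is inversion.

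The main obstacle is that both routes produce a polynomial system in unknowns indexed by $G\times G$ or larger, whose obstruction to solvability cannot be extracted term-by-term; one likely needs a global invariant --- for instance a cohomology class in $H^3(G,\bbC^\times)$ built from the associators, or an $H^2$-type obstruction measuring when $\theta$-twisted self-duality of the $g\rho$ can be arranged compatibly with pentagon --- to package the group-theoretic conclusion. Uniformly ruling out every non-abelian $G$ and every non-inversion involution $\theta$, rather than verifying the conjecture case by case for small groups such as $S_3$ or $Q_8$, is precisely the open heart of Izumi's conjecture and has resisted direct attack.
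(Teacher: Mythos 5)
This statement is recorded in the paper as a \emph{conjecture}, not a theorem: the authors attribute it to Izumi, remark that he has proved it only for $|G|$ odd in unpublished work, and then simply adopt it as a standing Assumption for the remainder of the paper. So there is no proof in the paper to compare your proposal against, and your proposal is likewise not a proof --- as you yourself acknowledge in the final paragraph, the uniform elimination of non-abelian $G$ and non-inversion $\theta$ is the open heart of the problem. What is sound in your write-up are the two soft reductions. First, since $\theta$ is by hypothesis an automorphism of $G$, the single assertion $\theta(g)=g^{-1}$ does force $G$ abelian, so the conjecture collapses to one statement. Second, associativity of $\rho\otimes\rho\otimes g$ in the Grothendieck ring correctly yields $\theta^2=\id$ by matching the unique invertible summand ($g$ versus $\theta^2(g)$), and Frobenius reciprocity gives $\rho^*\cong\rho$ and hence $(g\rho)^*\cong \theta(g)^{-1}\rho$, so the conjecture is precisely the self-duality of every $g\rho$. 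This is the converse direction of the paper's Corollary \ref{cor:SelfDual}, which deduces self-duality of the depth-4 vertices \emph{from} the assumed conjecture.

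The genuine gap is everything beyond that. The fusion ring alone cannot settle the question: for example $\theta=\id$ on $G=\bbZ/5\bbZ$ satisfies $\theta^2=\id$ and yields a perfectly consistent associative fusion ring with a positive dimension function, yet violates the conjecture; so some categorical or operator-algebraic input (pentagon, unitarity, or Izumi's polynomial equations in the Cuntz algebra) is unavoidable, and neither of the two routes you sketch is carried out --- the pentagon system is not written down, the putative $H^3$ or $H^2$ obstruction is not constructed, and the analogy with Tambara--Yamagami is only an analogy. Since the paper itself does not prove the statement and proceeds by assumption, your proposal should be presented the same way: as a correct reformulation (equivalence with self-duality of the $g\rho$, plus $\theta^2=\id$ for free) followed by an honest statement that the conjecture remains open, rather than as a proof.
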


\begin{remark}
In unpublished work, Izumi has proven Conjecture \ref{conj:Izumi} for the case $|G|$ odd.
\end{remark}

\begin{assumption}
We will assume $G$ is abelian and $\theta(g)=g^{-1}$ for all $g\in G$.
\end{assumption}

\begin{cor}\label{cor:SelfDual}
Every bimodule at depth 4 is self-dual.
\end{cor}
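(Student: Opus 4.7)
The plan is to identify the depth-4 simple bimodules with the family $\{g\rho \mid g \in G\}$ and then compute duals directly using the fusion rules provided in the definition of a $3^G$ subfactor planar algebra together with the standing assumption that $\theta(g) = g^{-1}$.

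First I would dispose of $\rho$ itself. Since the fusion rule reads $\rho \otimes \rho \cong 1 \oplus \bigoplus_{g\in G} g\rho$, the trivial bimodule $1$ appears with multiplicity one in $\rho \otimes \rho$. By Frobenius reciprocity, $\Hom(1, \rho \otimes \rho) \cong \Hom(\bar\rho, \rho)$, and because $\rho$ is simple this forces $\bar\rho \cong \rho$. Thus $\rho$ is self-dual.

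Next, for any $g \in G$, the monoidal structure is rigid, so $\overline{g \otimes \rho} \cong \bar\rho \otimes \bar g \cong \rho \otimes g^{-1}$. Applying the right-multiplication fusion rule $\rho \otimes h \cong \theta(h)\rho$ with $h = g^{-1}$, and using $\theta(h) = h^{-1}$, we get $\rho \otimes g^{-1} \cong (g^{-1})^{-1}\rho = g\rho$. Hence $\overline{g\rho} \cong g\rho$, so every depth-4 bimodule is self-dual.

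Since there is essentially no obstacle here beyond unwinding the fusion rules, the only step that requires any care is making sure the depth-4 simples are exactly the $g\rho$'s: this is visible from the principal graph $\Gamma_+$, where the vertices at distance four from $1$ are labelled precisely by $\{g\rho : g \in G\}$. The whole argument is then a one-line computation in the Grothendieck ring.
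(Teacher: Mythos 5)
Your argument is correct and is essentially the paper's own proof, which is the one-line computation $\overline{g\rho}=\bar\rho\,\bar g=\rho g^{-1}=\theta(g^{-1})\rho=g\rho$; you merely add the (valid) Frobenius reciprocity justification that $\bar\rho\cong\rho$ from the appearance of $1$ in $\rho\otimes\rho$. No gaps.
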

\begin{proof}
For all $g\in G$, 
$\overline{g\rho}=\rho g^{-1}=\theta(g^{-1})\rho = g\rho$.
\end{proof}

An argument of Izumi gives the structure of the dual principal graph.
We provide a proof for the reader's convenience.
We begin with a helpful lemma generalizing \cite[Lemma 3.6]{MR2914056}.
For a pair of principal graphs $(\Gamma_+,\Gamma_-)$ of a subfactor planar algebra $\cP_\bullet$, let $\Gamma_\pm(n)$ denote the truncation to depth $n$.
Denote the one-click rotation by $\cF$.

\begin{lem}\label{lem:SimplyLacedTruncation}
Suppose $\Gamma_\pm$ is exactly $(n-1)$ supertransitive for an even $n\geq 2$.
If $\Gamma_+(n)$ is simply-laced and only has self-dual vertices at depth $n$, then $\Gamma_-(n)$ is simply-laced and only has self-dual vertices at depth $n$. 
\end{lem}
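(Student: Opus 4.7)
The strategy is to use the one-click rotation $\cF$, which gives a vector-space isomorphism $\cP_{n,+}\xrightarrow{\sim}\cP_{n,-}$ preserving the Jones-Wenzl projection $\jw{n}$, to transfer structural data between $\Gamma_+(n)$ and $\Gamma_-(n)$.

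I first read off the Bratteli structure of $\cP_{n,\pm}$ from the truncated graphs. By exact $(n-1)$-supertransitivity, the Bratteli diagrams of $\cP_{n,\pm}$ agree with that of $TL_{n,+}$ below depth $n$, so the only freedom is in the positive-integer edge multiplicities $d_v^{\pm}$ from the unique depth-$(n-1)$ vertex to the depth-$n$ vertices of $\Gamma_{\pm}$, and
$$
\dim \cP_{n,\pm} = D + \sum_{v}(d_v^{\pm})^2
$$
for a common constant $D$ depending only on $n$. Moreover, the compression $\jw{n}\cP_{n,\pm}\jw{n}$ has dimension equal to the number of depth-$n$ vertices of $\Gamma_{\pm}$, because $\jw{n}$ has branching multiplicity one into each depth-$n$ simple summand.

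Since $\cF$ fixes $\jw{n}$ (a Temperley-Lieb element is rotation invariant), it restricts to a linear isomorphism $\jw{n}\cP_{n,+}\jw{n}\xrightarrow{\sim}\jw{n}\cP_{n,-}\jw{n}$. Comparing dimensions of both the compression and the ambient algebra yields
$$
\#\{v\}=\#\{u\}\qquad\text{and}\qquad\sum_v(d_v^+)^2=\sum_u(d_u^-)^2,
$$
where $v$ (resp.\ $u$) ranges over the depth-$n$ vertices of $\Gamma_+$ (resp.\ $\Gamma_-$). Since every $d_v^+=1$ by hypothesis, the second equation becomes $\sum_u(d_u^-)^2=\#\{u\}$, which forces every $d_u^-=1$ because they are positive integers. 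Thus $\Gamma_-(n)$ is simply-laced.

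For self-duality, once simple-lacedness holds on both sides the minimal central projections $z_v^{\pm}$ at depth $n$ are precisely the minimal projections of $\jw{n}\cP_{n,\pm}\jw{n}$. The full rotation $\cF^n$ is an involution of $\cP_{n,\pm}$ with $\cF^n(z_v^{\pm})=z_{\bar v}^{\pm}$, so its trace on the compression counts the self-dual depth-$n$ vertices of $\Gamma_{\pm}$. Because $\cF$ intertwines $\cF^n$ on the two sides, these two traces coincide; the hypothesis that the count on $\Gamma_+$ equals the total number of depth-$n$ vertices then forces the same on $\Gamma_-$. The main obstacle is the planar-algebraic bookkeeping needed to verify the preparatory inputs: that $\cF$ preserves $\jw{n}$, that $\cF^n$ implements the bimodule duality on minimal central projections, and that $\jw{n}$ has branching multiplicity one into each depth-$n$ summand; once these are in hand the rest is linear algebra and the elementary observation that a sum of squares of positive integers equal to their count forces every summand to be $1$.
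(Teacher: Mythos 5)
There is a genuine gap, and it sits at the heart of your simply\hyp lacedness argument. The dimension of the corner $\jw{n}\cP_{n,-}\jw{n}$ is \emph{not} the number of depth-$n$ vertices of $\Gamma_-$: the Jones--Wenzl $\jw{n}$ contains a depth-$n$ simple $u$ with multiplicity equal to the number of length-$n$ paths from $*$ to $u$, i.e.\ the edge multiplicity $d_u^-$, so $\dim\bigl(\jw{n}\cP_{n,-}\jw{n}\bigr)=\sum_u (d_u^-)^2$. Your ``branching multiplicity one'' claim is exactly equivalent to $\Gamma_-(n)$ being simply laced, which is what you are trying to prove. As a result your two displayed equations collapse into the single identity $\sum_v(d_v^+)^2=\sum_u(d_u^-)^2$ (which does follow from $\dim\cP_{n,+}=\dim\cP_{n,-}$), and the crucial input $\#\{v\}=\#\{u\}$ is never established. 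Without it the sum-of-squares argument cannot exclude, say, $\Gamma_-$ having a single depth-$n$ vertex attached by a double edge while $\Gamma_+$ has four simply laced depth-$n$ vertices. A secondary problem: the one-click rotation does \emph{not} fix $\jw{n}$ (already $\cF(\jw{2})=e_1-[2]^{-1}\id$, since $\cF$ permutes the Temperley--Lieb diagrams nontrivially), and correspondingly $\cF$ does not carry the corner $\jw{n}\cP_{n,+}\jw{n}$ onto $\jw{n}\cP_{n,-}\jw{n}$; what is true is that $\cF$ is a unitary carrying $TL_{n,+}^{\perp}$ onto $TL_{n,-}^{\perp}$, which again only yields the one equation above. (It is the rotation by $\pi$, $\cF^{n}$, that fixes $\jw{n}$.)

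The missing idea, which is how the paper closes the argument, is to exploit that $\cF^{n}$ is an algebra \emph{anti}-automorphism of $\cP_{n,\pm}$ implementing duality on the simple summands. The hypothesis on $\Gamma_+(n)$ forces $\cF^{n}$ to act as the identity on the relevant (depth-$n$) part of $\cP_{n,+}$, and the intertwining relation $\cF\circ\cF^{n}=\cF^{n}\circ\cF$ transports this to $\cP_{n,-}$; an anti-automorphism that equals the identity forces the algebra it acts on to be abelian, which kills all multiplicities $d_u^-$ at once, with no need to compare vertex counts. Your trace argument for self-duality is reasonable in outline (it is close in spirit to the cited \cite[Lemma 3.6]{MR2914056}), but it is downstream of the broken step and also relies on $\cF$ preserving the corners, so it needs to be rebuilt on the $TL^{\perp}$ decomposition or on the anti-automorphism argument above.
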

\begin{proof}
We analyze the rotation by $\pi$ given by $\cF^n$ on $\cP_{n,\pm}$.
Since $\Gamma_+(n)$ is simply-laced and only has self-dual vertices, $\cF^n$ is the identity on $\cP_{n,+}$.
This means $\cF^n$ is also the identity on $\cP_{n,-}$.
Observe that for $x\in \cP_{n,-}$, 
$$
\cF^n(x)=\cF^n(\cF(\cF^{-1}(x))) = \cF(\cF^n(\cF^{-1}(x)))=\cF(\cF^{-1}(x))=x.
$$
But $\cF^n$ is also an anti-isomorphism of the algebra $\cP_{n,-}$.
Thus $\cP_{n,-}$ must be abelian, which shows $\Gamma_-(n)$ is simply laced.
The rest follows from \cite[Lemma 3.6]{MR2914056}.
\end{proof}

\begin{thm}[Izumi]
For every self-dual tail of $\Gamma_+$, there is self-dual tail of $\Gamma_-$.
$$
\begin{tikzpicture}[baseline=-.1cm,scale=.6]
	\filldraw (0,0) circle (.1cm);
	\draw[very thick, red] (0,0)--(0,.2);
	\filldraw (1,0) circle (.1cm);
	\filldraw (2,0) circle (.1cm);
	\draw[very thick, red] (2,0)--(2,.2);
	\filldraw (4,0) circle (.1cm);
	\draw[very thick, red] (4,0)--(4,.2);
	\filldraw (5,0) circle (.1cm);
	\filldraw (6,0) circle (.1cm);
	\draw[very thick, red] (6,0)--(6,.2);
	\draw[] (0,0)--(6,0);
	\draw[dotted]  (3,0)--(4,-.25);
	\draw[dotted]  (3,0)--(4,-.5);
	\filldraw[unshaded] (3,0) circle (.1cm);
\end{tikzpicture}
\hspace{.5cm}
\longleftrightarrow
\hspace{.5cm}
\begin{tikzpicture}[baseline=-.1cm,scale=.6]
	\filldraw (0,0) circle (.1cm);
	\draw[very thick, red] (0,0)--(0,.2);
	\filldraw (1,0) circle (.1cm);
	\filldraw (2,0) circle (.1cm);
	\draw[very thick, red] (2,0)--(2,.2);
	\filldraw (4,0) circle (.1cm);
	\draw[very thick, red] (4,0)--(4,.2);
	\filldraw (5,0) circle (.1cm);
	\filldraw (6,0) circle (.1cm);
	\draw[very thick, red] (6,0)--(6,.2);
	\draw[] (0,0)--(6,0);
	\draw[dotted]  (3,0)--(4,-.25);
	\draw[dotted]  (3,0)--(4,-.5);
	\filldraw[unshaded] (3,0) circle (.1cm);
\end{tikzpicture}
$$
For every Haagerup tail of $\Gamma_+$, there is a dual Haagerup tail of $\Gamma_-$.
$$
\begin{tikzpicture}[baseline=-.1cm,scale=.6]
	\filldraw (0,0) circle (.1cm);
	\draw[very thick, red] (0,0)--(0,.2);
	\filldraw (1,0) circle (.1cm);
	\filldraw (2,0) circle (.1cm);
	\draw[very thick, red] (2,0)--(2,.2);
	\filldraw (4,-.3) circle (.1cm);
	\filldraw (4,.3) circle (.1cm);
	\draw[very thick, red] (4,.3)--(4,.5);
	\draw[very thick, red] (4,-.3)--(4,-.1);
	\filldraw (5,.3) circle (.1cm);
	\filldraw (5,-.3) circle (.1cm);
	\filldraw (6,.3) circle (.1cm);
	\filldraw (6,-.3) circle (.1cm);
	\draw[very thick, red] (6,-.3)--(6.2,0)--(6,.3);
	\draw[] (0,0)--(3,0);
	\draw (6,.3)--(4,.3)--(3,0)--(4,-.3)--(6,-.3);
	\draw[dotted]  (3,0)--(4,-.6);
	\draw[dotted]  (3,0)--(4,-.8);
	\filldraw[unshaded] (3,0) circle (.1cm);
\end{tikzpicture}
\hspace{.5cm}
\longleftrightarrow
\hspace{.5cm}
\begin{tikzpicture}[baseline=-.1cm,scale=.6]
	\filldraw (0,0) circle (.1cm);
	\draw[very thick, red] (0,0)--(0,.2);
	\filldraw (1,0) circle (.1cm);
	\filldraw (2,0) circle (.1cm);
	\draw[very thick, red] (2,0)--(2,.2);
	\filldraw (4,-.3) circle (.1cm);
	\filldraw (4,.3) circle (.1cm);
	\draw[very thick, red] (4,.3)--(4,.5);
	\draw[very thick, red] (4,-.3)--(4,-.1);
	\filldraw (5,0) circle (.1cm);
	\filldraw (5,-.6) circle (.1cm);
	\draw[] (0,0)--(3,0);
	\draw (4,-.3)--(3,0)--(4,.3);
	\draw (5,0)--(4,-.3)--(5,-.6);
	\draw[dotted]  (3,0)--(4,-.6);
	\draw[dotted]  (3,0)--(4,-.8);
	\filldraw[unshaded] (3,0) circle (.1cm);
\end{tikzpicture}
$$
\end{thm}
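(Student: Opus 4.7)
The plan is to first deploy Lemma~\ref{lem:SimplyLacedTruncation} at $n=4$ to establish that $\Gamma_-(4)$ mirrors $\Gamma_+(4)$ (simply-laced with only self-dual vertices at depth $4$), and then to propagate this through depths $5$ and $6$ tail by tail.

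To apply the lemma at $n=4$, I verify its three hypotheses. First, $\Gamma_\pm$ is exactly $3$-supertransitive because the depth-$\leq 3$ skeleton of $\Gamma_+$ is the linear chain $1 - X - \rho - Z$, with the first branching occurring at depth $4$. Second, $\Gamma_+(4)$ is simply-laced because the fusion rule $\rho\otimes\rho = 1 \oplus \bigoplus_{g\in G} g\rho$ is multiplicity-free. Third, each depth-$4$ vertex $g\rho$ is self-dual by Corollary~\ref{cor:SelfDual}. The lemma then yields that $\Gamma_-(4)$ is simply-laced with only self-dual vertices at depth $4$, and hence has exactly $|G|-1$ depth-$4$ vertices in correspondence with those of $\Gamma_+$.

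For depths $5$ and $6$, I would analyze the contribution of each depth-$4$ vertex of $\Gamma_-$ independently. The tails on $\Gamma_+$ are indexed by the orbits of the inversion involution $g \mapsto g^{-1}$ on $G\setminus\{1\}$: fixed points (those $g$ with $g = g^{-1}$) give self-dual tails, while two-element orbits $\{g, g^{-1}\}$ give Haagerup tails. Given a self-dual depth-$4$ vertex $V$ on $\Gamma_-$, I would use Frobenius reciprocity together with the Perron--Frobenius dimension constraints (both $\Gamma_+$ and $\Gamma_-$ share the same index and global dimension) to bound the depth-$5$ and depth-$6$ neighborhood of $V$; a tail-by-tail match then shows that each self-dual tail of $\Gamma_+$ forces a self-dual tail of $\Gamma_-$ and each Haagerup tail of $\Gamma_+$ forces the asymmetric ``dual Haagerup'' shape depicted in the statement.

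The main obstacle is that $\Gamma_\pm$ is only $3$-supertransitive, so Lemma~\ref{lem:SimplyLacedTruncation} cannot be invoked at $n = 6$ and we cannot directly transfer self-dual information at depth $6$ from $\Gamma_+$ to $\Gamma_-$ by the same clean rotational argument. Consequently the analysis at depths $5$ and $6$ must be carried out tail by tail, using multiplicity and dimension constraints rather than a single global identity on $\cP_{6,\pm}$.
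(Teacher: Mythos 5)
Your setup is fine as far as it goes: invoking Lemma~\ref{lem:SimplyLacedTruncation} at $n=4$ (the hypotheses are verified correctly, and the paper uses this same lemma) and observing that it cannot be invoked at $n=6$ because $\Gamma_\pm$ is only $3$-supertransitive. But the entire substance of the theorem lies in the ``depths $5$ and $6$'' step, and there your proposal is a declaration of intent rather than an argument. The phrases ``use Frobenius reciprocity together with the Perron--Frobenius dimension constraints to bound the depth-$5$ and depth-$6$ neighborhood'' and ``a tail-by-tail match then shows'' contain no actual computation, and the specific facts that make the theorem true are never established. Concretely, the paper's proof hinges on: (i) computing $\langle \overline{X}gX,\overline{X}gX\rangle = 1+\langle \rho g, g\rho\rangle$, so that $\overline{X}gX$ is simple iff $g\neq g^{-1}$; (ii) computing $\langle \overline{X}gX,\overline{X}g^{-1}X\rangle=1$ to conclude $\overline{X}gX=\overline{X}g^{-1}X$, which is what collapses a Haagerup tail of $\Gamma_+$ into the asymmetric dual-Haagerup shape on $\Gamma_-$; (iii) the decomposition $\overline{X}gX\overline{X}=\overline{X}g+\overline{X}g^{-1}+\overline{Z}g^{-1}$ into exactly three simples, which pins down the valences; and (iv) a path count from $Z$ to $\overline{Z}$ on the Ocneanu $4$-partite graph (there are $|G|$ such paths through $A$--$A$ bimodules, hence $|G|$ through $B$--$B$ bimodules), which is what forces the existence of the $|N|/2$ extra \emph{univalent} self-dual vertices at depth $4$ of $\Gamma_-$ that appear in the dual Haagerup picture. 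None of (i)--(iv) appears in your proposal, and without them the claimed tail shapes are not derived.

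There is also a directional problem with your plan. You propose to start from ``a self-dual depth-$4$ vertex $V$ on $\Gamma_-$'' and work outward, but the depth-$4$ vertices of $\Gamma_-$ are exactly the unknowns: Lemma~\ref{lem:SimplyLacedTruncation} tells you only that they are self-dual and that the truncation is simply laced (and, via the cited Lemma~3.6, how many there are); it tells you nothing about which depth-$5$ vertices each one meets or whether it meets any at all. The paper works in the opposite direction, starting from the depth-$3$ and depth-$5$ vertices of $\Gamma_-$, which \emph{are} known (they are the duals of the odd vertices of $\Gamma_+$), and deducing the even vertices from Frobenius reciprocity applied to $\overline{X}gX$. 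Finally, ``Perron--Frobenius dimension constraints'' alone cannot distinguish the self-dual tail from the dual Haagerup tail, since both configurations carry the same total dimension at each depth; the multiplicity computations (i)--(iii) are genuinely needed. As written, the proposal has a real gap.
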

\begin{proof}
We know the vertices of $\Gamma_-$ at depth 3 and 5, so it remains to determine the vertices at depth 4 and 6 and the edges.
First, we consider the vertices $\overline{X}g$ at depth 5.
By Frobenius reciprocity,
\begin{align*}
\langle \overline{X} g X, \overline{X} g X\rangle 
= \langle X \overline{X} g, g  X\overline{X}\rangle 
= \langle (1+\rho)g,g(1+\rho)\rangle 
= 1+\langle \rho g, g\rho\rangle
=
\begin{cases}
2 & \text{if }g=g^{-1}\\
1 & \text{if }g\neq g^{-1}.
\end{cases}
\end{align*}
Hence $\overline{X}gX$ is simple precisely when $g\neq g^{-1}$.

\begin{itemize}
\item\underline{Case 1:}
Suppose $g\neq g^{-1}$. Then $\overline{X}gX$ and $\overline{X}g^{-1}X$ are simple, and moreover,
$$
\langle \overline{X} g X, \overline{X} g^{-1} X\rangle 
= \langle X \overline{X} g, g^{-1}  X\overline{X}\rangle 
= \langle (1+\rho)g,g^{-1}(1+\rho)\rangle 
=1,
$$
so they are equal. 
Hence the distinct depth 5 vertices $\overline{X}g^{-1}$ and $\overline{X}g$ of $\Gamma_-$ are univalent and connect to a single self-dual vertex $\overline{X}gX = \overline{X}g^{-1}X$ at depth 4.
Since 
\begin{equation}\label{eq:ThreeSimples}
\overline{X}gX \overline{X} 
= \overline{X}g(1+\rho) 
= \overline{X}g+\overline{X}g\rho 
= \overline{X}g + \overline{X} \rho g^{-1} 
= \overline{X}g +\overline{X}g^{-1} + \overline{Z}g^{-1},
\end{equation}
each of which is simple, $\overline{X}gX$ connects by a single edge to the branch point at depth 3 of $\Gamma_-$.

\item\underline{Case 2:}
Suppose $g=g^{-1}$. 
Then $\overline{X}g$ connects to two even vertices of $\Gamma_-$, at least one of which must connect to the branch point at depth 3.
Since $\overline{X}gX\overline{X}$ splits into exactly three simples by Equation \eqref{eq:ThreeSimples}, $\overline{X}g$ must connect to one bivalent vertex at depth 4 and one univalent vertex at depth 6.
\end{itemize}

Now in both cases, $\overline{X}gX$ is self-dual, so the new vertices we have found so far at depths 4 and 6 are all self-dual, as the dual of a vertex must occur at the same depth.
It remains to show there is a single self-dual vertex connected to $\overline{Z}$ for each (unordered) set of elements $\{g,g^{-1}\}$ with $g\neq g^{-1}$.

Analyzing the Ocneanu 4-partite graph, we see there are $|G|$ paths from $Z$ to $\overline{Z}$ through $A-A$ bimodules, so there must be $|G|$ paths through $B-B$ bimodules.
If $G=N\cup S\cup \{1\}$ where $N$ is the set of non self-inverse elements of $G$ and $S$ is the set of non-trivial self-inverse elements of $G$, then currently we can account for $|S|+1+|N|/2$ paths through $B-B$ vertices.

Hence the remaining paths through $B-B$ bimodules must come from vertices at depth 4 which do not continue to depth 5.
By Lemma \ref{lem:SimplyLacedTruncation}, we know $\Gamma_-(4)$ is simply laced with only self-dual vertices, so there must be exactly $|N|/2$ self-dual univalent vertices at depth 4 of $\Gamma_-$.
\end{proof}

For $g\in G\setminus \{1\}$, let $p_g$ be the projection in $\cP_{4,+}$ corresponding to $g\rho$. 
Let $\langle E_i\rangle$ denote the algebra generated by $E_1,\dots, E_{n-1}$ in $TL_{n,+}$, and note that $\langle E_i\rangle$ is perpendicular to all projections on the principal graph and to the Jones-Wenzl idempotents, and $\spann(\set{p_g}{g\neq 1})=\cP_{4,+}\ominus \langle E_i\rangle$.

\begin{lem}
If $\cQ_\bullet$ is an $n-1$ supertransitive subfactor planar algebra, then any non-zero element in $\cQ_{n,+}\ominus \langle E_i\rangle$ with zero trace is uncappable.
\end{lem}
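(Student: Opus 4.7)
The plan is to split $\cQ_{n,+}\ominus\langle E_i\rangle$ into a one-dimensional piece spanned by $\jw{n}$ and the ``low-weight'' orthogonal complement $\cQ_{n,+}\ominus \TL_{n,+}$, kill the $\jw{n}$-component using the trace hypothesis, and then check that every element of the low-weight space is automatically uncappable using $(n-1)$-supertransitivity.

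Concretely, inside $\TL_{n,+}$ we have the orthogonal decomposition $\TL_{n,+}=\langle E_i\rangle\oplus \bbC \jw{n}$, since the Jones--Wenzl idempotent satisfies $\jw{n}E_i=0=E_i\jw{n}$ for all $i$. Taking the orthogonal complement of $\langle E_i\rangle$ inside all of $\cQ_{n,+}$ therefore yields
$$
\cQ_{n,+}\ominus\langle E_i\rangle \;=\; \bbC\jw{n}\oplus \bigl(\cQ_{n,+}\ominus \TL_{n,+}\bigr).
$$
Writing $x=\alpha\jw{n}+y$ with $y\in \cQ_{n,+}\ominus \TL_{n,+}$, the element $y$ is orthogonal to $1\in \TL_{n,+}$ and so $\Tr(y)=0$; the hypothesis $\Tr(x)=0$ then forces $\alpha\,\Tr(\jw{n})=\alpha[n+1]=0$, and since $[n+1]\neq 0$ in a subfactor planar algebra, $\alpha=0$ and $x=y$ lies entirely in the low-weight piece.

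It remains to verify that any $y\in\cQ_{n,+}\ominus\TL_{n,+}$ is uncappable. Applying any adjacent-strand cap (top or bottom) to $y$ produces an element of $\cQ_{n-1,\sigma}$ for the appropriate shading $\sigma$, and by $(n-1)$-supertransitivity $\cQ_{n-1,\sigma}=\TL_{n-1,\sigma}$. By the standard cap/cup adjointness in a subfactor planar algebra, for any $w\in \TL_{n-1,\sigma}$ one has $\langle \mathrm{cap}(y),w\rangle=\langle y,\mathrm{cup}(w)\rangle$, and $\mathrm{cup}(w)\in\TL_{n,+}$ while $y\perp \TL_{n,+}$, so this pairing vanishes. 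Thus every cap of $y$ is zero, proving $y$ and hence $x$ is uncappable.

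No serious obstacle arises in this plan: the argument is entirely driven by the orthogonal decomposition and by the observation that $(n-1)$-supertransitivity sends caps of low-weight elements into $\TL$, where the low-weight element is already orthogonal. The only minor bookkeeping is tracking the shading $\sigma$ when capping, but because supertransitivity applies to both shadings in a subfactor planar algebra this causes no real difficulty.
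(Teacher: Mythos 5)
Your proof is correct, but it takes a different route from the paper's. The paper's proof is a one-liner: it invokes the algebraic relation $qE_i=0$ for all $i<n$ and all minimal projections $q\in \cQ_{n,+}\ominus\langle E_i\rangle$, leaving the reader to deduce that the inner caps vanish from $xE_i=E_ix=0$ and that the two outer caps (the partial traces) vanish using the trace hypothesis. You instead refine the decomposition to $\cQ_{n,+}\ominus\langle E_i\rangle=\bbC\jw{n}\oplus(\cQ_{n,+}\ominus\TL_{n,+})$, use $\Tr(\jw{n})=[n+1]\neq 0$ to kill the Jones--Wenzl component, and then run the standard annular adjointness argument: every capping of a low-weight element lands in $\cQ_{n-1,\sigma}=\TL_{n-1,\sigma}$ by supertransitivity, and pairing against any $w$ there transfers via the adjoint cup tangle to a pairing of $y$ with an element of $\TL_{n,+}$, which vanishes. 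Your version has two advantages: it works uniformly for all cappings, including the left and right partial-trace closures (the adjoint of any capping tangle is a cupping tangle, which preserves $\TL$), and it makes explicit exactly where the zero-trace hypothesis is used --- namely that $\jw{n}$ itself lies in $\cQ_{n,+}\ominus\langle E_i\rangle$ and satisfies $\jw{n}E_i=0$ yet is \emph{not} uncappable, so the hypothesis is genuinely needed. The paper's argument is shorter but presumes familiarity with how $qE_i=0$ plus the trace condition yields uncappability; yours is self-contained. The only cosmetic point is that your phrase ``adjacent-strand cap (top or bottom)'' should be understood to include the two caps joining a top point to a bottom point; your adjointness argument covers these without change.
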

\begin{proof}
Follows easily from $q E_i=0$ for all $i<n$ and all minimal projections $q\in \cQ_{n,+}\ominus \langle E_i\rangle$.
\end{proof}

\begin{cor}
For all $g,h\in G\setminus\{1\}$ with $g\neq h$, $p_g-p_h$ is uncappable.
\end{cor}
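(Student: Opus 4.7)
The plan is to apply the preceding lemma with $n=4$, since a $3^G$ subfactor planar algebra $\cP_\bullet$ is $3$-supertransitive (the principal graph $\Gamma_+$ is a chain of length three before the branching at $Z$). This reduces the corollary to verifying three things about $p_g-p_h$: that it lies in $\cP_{4,+}\ominus\langle E_i\rangle$, that it is tracial zero, and that it is non-zero.

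The membership in $\cP_{4,+}\ominus\langle E_i\rangle$ is immediate from the remark made just before the lemma, namely that $\spann(\set{p_g}{g\neq 1})=\cP_{4,+}\ominus\langle E_i\rangle$, so any difference of two such $p_g$'s lies in the orthogonal complement. Non-vanishing follows because the $p_g$ are pairwise distinct minimal projections corresponding to the pairwise non-isomorphic simple bimodules $g\rho$, so $p_g\neq p_h$ whenever $g\neq h$.

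The only mildly non-trivial point is the zero trace condition, and this is where I would spend the most care. The (unnormalized) trace of $p_g$ is the dimension of the bimodule $g\rho$. Since $g$ is invertible of dimension $1$, the fusion rule $g\otimes \rho = g\rho$ forces $\dim(g\rho)=\dim(g)\cdot\dim(\rho)=\dim(\rho)$, independently of $g$. Consequently $\tr(p_g)=\tr(p_h)$ and $\tr(p_g-p_h)=0$.

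Having verified the three hypotheses, the lemma yields uncappability of $p_g-p_h$ directly, with no further computation needed. I expect no serious obstacle here: the only subtlety is being honest about which normalization of trace one uses when asserting $\tr(p_g)=\tr(p_h)$, but since the $p_g$ are all Murray--von Neumann equivalent to projections of the form $1\otimes p_\rho$ under multiplication by the invertible $g$, equality of traces holds in any consistent normalization.
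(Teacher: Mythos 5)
Your proposal is correct and matches the paper's (implicit) argument: the corollary is stated without proof precisely because it follows from the preceding lemma applied with $n=4$, using that $p_g-p_h$ lies in $\cP_{4,+}\ominus\langle E_i\rangle$, is non-zero, and has zero trace since $\Tr(p_g)=\dim(g\rho)=\dim(\rho)$ for all $g$. Your careful verification of the three hypotheses, including the trace computation via the fusion rule $g\otimes\rho=g\rho$, is exactly the intended reasoning.
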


\begin{prop}
The new low-weight vectors at depth 4 have eigenvalue $\pm 1$.
\end{prop}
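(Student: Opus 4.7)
The plan is to apply Lemma \ref{lem:SimplyLacedTruncation} to control the order of the rotation at depth $4$, and then read off the eigenvalue bound immediately. First I would observe that a $3^G$ subfactor planar algebra is exactly $3$-supertransitive: the principal graph $\Gamma_+$ consists of the straight chain $1 - X - \rho - Z$ through depth $3$ and only branches at depth $4$. Second, the depth-$4$ truncation $\Gamma_+(4)$ is simply-laced, and by Corollary \ref{cor:SelfDual} each depth-$4$ vertex $g\rho$ is self-dual. These are exactly the hypotheses of Lemma \ref{lem:SimplyLacedTruncation} with $n = 4$, and the argument used in its proof gives more than just the statement: the half rotation $\cF^4$ acts as the identity on all of $\cP_{4,+}$.

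Armed with this, the one-box rotation $\cF^2$ on $\cP_{4,+}$ satisfies $(\cF^2)^2 = \cF^4 = \id$, so its eigenvalues on any invariant subspace are contained in $\{+1,-1\}$. The operator $\cF^2$ permutes the Jones projections cyclically and therefore preserves the Temperley-Lieb subalgebra $\langle E_i\rangle$ together with its orthogonal complement $\cP_{4,+}\ominus\langle E_i\rangle$. By the preceding lemma and its corollary, every new low-weight rotational eigenvector must live in $\cP_{4,+}\ominus\langle E_i\rangle$, and so its eigenvalue is $+1$ or $-1$.

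The argument is short once Lemma \ref{lem:SimplyLacedTruncation} is available; the only thing to verify is that its hypotheses hold at $n=4$, which is immediate from the shape of $\Gamma_+$ together with the self-duality of all depth-$4$ bimodules. I do not foresee any substantive obstacle beyond bookkeeping with the rotation convention. The real content of the paper then becomes identifying \emph{which} of the two eigenvalues $\pm 1$ attaches to each candidate eigenvector in Conjecture \ref{conj:Main}, which is where the $G$-action $\Phi_g$ on $\cR_\bullet$ and the squaring trick $h^2 = g$ enter; the present proposition only provides the coarse dichotomy.
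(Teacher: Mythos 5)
Your argument is correct and is essentially the paper's own proof: the paper simply cites Corollary \ref{cor:SelfDual} to conclude that $\cF^4$ is the identity on $\spann\set{p_g}{g\in G}$, whence $(\cF^2)^2=\id$ there and the eigenvalues are $\pm 1$. Routing through the observation inside the proof of Lemma \ref{lem:SimplyLacedTruncation} (self-dual vertices plus simply-laced $\Gamma_+(4)$ give $\cF^4=\id$ on $\cP_{4,+}$) is the same mechanism, just stated on the whole box space rather than on the span of the depth-4 projections.
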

\begin{proof}
By Corollary \ref{cor:SelfDual}, $\cF^{4}$ is the identity on $\spann(\set{p_g}{g\in G}\})$.
\end{proof}

%%%%%%%%%%%%%%%%%%%%%%%%%%%%%%%%%%%%%%%%%%%%%%%%%%%%%%%%%%%%%%%%
\subsection{The low weight rotational eigenvector conjecture}

\begin{conj*}[Conjecture \ref{conj:Main}]
Suppose $g,h,k,\ell \in G\setminus\{1\}$ are distinct elements.
\begin{enumerate}[(1)]
\item
If $g=g^{-1}$ and $h=h^{-1}$, then $p_g-p_h$ is a low-weight rotational eigenvector with eigenvalue 1.
\item
If $g=h^{-1}$, then $p_g-p_h$ is a low-weight rotational eigenvector with eigenvalue -1.
\item
If $g=g^{-1}$ and $h=k^{-1}$, then $2p_g-(p_h+p_k)$ is a low-weight rotational eigenvector with eigenvalue 1.
\item
If $g=h^{-1}$ and $k=\ell^{-1}$, then $(p_g+p_h)-(p_k+p_\ell)$ is a low-weight rotational eigenvector with eigenvalue 1.
\end{enumerate}
\end{conj*}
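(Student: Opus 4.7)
The overall plan is to work not inside $\cP_\bullet$ itself but inside the reduced unshaded planar algebra $\cR_\bullet$ at $\rho=\jw{2}$, and to exploit the $G$-action $\Phi_g$ on $\cR_\bullet$ sketched in the introduction together with the basic $\rho\otimes\rho\to\rho$ trivalent skein relation. The target is the single rotation identity displayed there, namely
\[
\cF_{\cR_\bullet}(p_g) \;=\; p_{g^{-1}} \;-\; \frac{1}{[3]-1}\bigl(\,\text{cup-cap}\,-\,\text{through-strands}\,\bigr)
\]
for every $g\in G\setminus\{1\}$; by Corollary~\ref{cor:EquivalentConjecture} this is equivalent to Conjecture~\ref{conj:Main}. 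Granting the formula, the four cases fall out immediately because the Temperley-Lieb tail on the right is independent of $g$, so in each balanced linear combination of the $p_g$'s appearing in the statement these tails cancel and we are left with the involution $p_g\mapsto p_{g^{-1}}$: this fixes the combinations in cases (1), (3), (4) and negates the combination in case (2). Combined with the uncappability corollary already established, together with the observation that $\spann\{p_g:g\neq 1\}=\cP_{4,+}\ominus\langle E_i\rangle$, this produces the four low-weight rotational eigenvectors with the stated eigenvalues.

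To prove the rotation formula, the first step is to set up the $G$-action. For each $g\in G$ I would pick a distinguished intertwiner $V_g:\rho g^{-1}\to g\rho$; the point requiring care, since every $g\rho$ is self-dual by Corollary~\ref{cor:SelfDual}, is to make the choice compatible with the ambient symmetric self-duality of $\cR_\bullet$ so that the four-disk diagram defining $\Phi_g$ actually is an unshaded planar algebra automorphism and so that the composition law $\Phi_g\circ\Phi_h=\Phi_{gh}$ holds on the nose. This is the most delicate step and corresponds to the technicality flagged in the footnote, which the paper handles in later sections; once it is in place, the composition law itself is essentially a diagrammatic isotopy in which the outer ring of $V_g^{*}\otimes V_g$ caps slides onto and annihilates with the $V_h^{*}\otimes V_h$ caps.

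With the $G$-action available, the formula is obtained by applying $\Phi_h$ to the basic trivalent-vertex skein relation (the second displayed equation in the introduction) under the assumption $h^2=g$: the left-hand side of the relation encodes the projection $p_{h^2}=p_g$, while the image of the right-hand side, after accounting for the single click of rotation that $\Phi_h$ effectively implements, becomes $p_{(h^2)^{-1}}=p_{g^{-1}}$ plus the $\Phi_h$-invariant Temperley-Lieb correction. When $|G|$ is odd the map $g\mapsto g^{2}$ on $G$ is a bijection, so every $g\in G\setminus\{1\}$ admits a square root $h$; the formula therefore holds for all such $g$, and Conjecture~\ref{conj:Main} follows.

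The main obstacle is clearly the construction of $\Phi_g$ and the verification of the composition law $\Phi_g\circ\Phi_h=\Phi_{gh}$, together with the shading/self-duality bookkeeping needed to descend from $\cP_\bullet$ to $\cR_\bullet$: this is where the restriction to $|G|$ odd enters indirectly, since the absence of $2$-torsion in $G$ makes the choice of $V_g$'s coherent enough for the group law to close up. Everything downstream --- applying $\Phi_h$ to the skein relation, recognising its image as the desired rotation formula, and assembling the four eigenvector statements by linearity --- is a short diagrammatic computation once the $G$-action has been constructed.
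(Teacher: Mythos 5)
Your proposal is correct and follows essentially the same route as the paper: reduce Conjecture \ref{conj:Main} to the rotation formula via Corollary \ref{cor:EquivalentConjecture}, build the action $\Phi_g$ on $\cR_\bullet$, apply it to the $p_1$ instance of the trivalent skein relation to obtain the formula for $p_{g^2}$ (Proposition \ref{prop:TrueForSquaredg}), and conclude since $g\mapsto g^2$ is a bijection when $|G|$ is odd. The only minor inaccuracy is your suggestion that oddness is needed for the coherence of the $V_g$'s and the composition law $\Phi_g\circ\Phi_h=\Phi_{gh}$ --- in the paper these hold for any abelian $G$ because the $2$-cocycle contributions $\mu_{g,h}$ cancel in alternating pairs, and oddness enters only through the surjectivity of squaring at the final step.
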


\begin{remark}
Conjecture \ref{conj:Main} agrees with  the Haagerup $3^{\bbZ/3\bbZ}$ and Izumi $3^{\bbZ/2\bbZ\times \bbZ/2\bbZ}$ and $3^{\bbZ/4\bbZ}$ subfactor planar algebras by \cite{MR2679382,MR2972458,MR3314808,1308.5197}.
See also Remark \ref{rem:Conjecture}.
\end{remark}

\begin{lem}
The set $B=\set{p_g}{g\in G}\cup \{\cF^{2}(f^{(4)})\})$ is linearly independent. 
\end{lem}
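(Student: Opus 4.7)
The projections $\{p_g\}_{g\in G\setminus\{1\}}$ are pairwise orthogonal minimal central idempotents of $\cP_{4,+}$ corresponding to the $|G|-1$ pairwise non-isomorphic simple depth-$4$ bimodules $g\rho$; they lie in distinct central blocks and are therefore trace-orthogonal, hence linearly independent. The task thus reduces to showing that $\cF^2(f^{(4)})\notin \spann\{p_g : g\neq 1\}$.

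My approach is to produce a separating linear functional. Each $p_g$ is uncappable, so every element of $\spann\{p_g\}_{g\neq 1}$ is trace-orthogonal to $\langle E_i\rangle$; by the remark preceding the lemma, this containment is in fact the equality $\spann\{p_g\}_{g\neq 1} = \cP_{4,+}\ominus \langle E_i\rangle$. It therefore suffices to exhibit a $y\in\langle E_i\rangle$ for which $\Tr(\cF^2(f^{(4)})\cdot y)\neq 0$, that is, to certify that $\cF^2(f^{(4)})$ has a non-trivial component in the Temperley--Lieb ideal $\langle E_i\rangle$.

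Concretely, I would expand $f^{(4)}$ via the Wenzl recursion in the Temperley--Lieb basis, compute its two-click rotation diagrammatically, and pair the result against a specific Jones projection (or a product of such). An alternative route is to assume for contradiction that $\cF^2(f^{(4)}) = \sum_{g\neq 1} c_g p_g$ and pin down the $c_g$: pairing both sides with each $p_h$ via the identity $\Tr(\cF^2(f^{(4)})\cdot p_h) = \Tr(f^{(4)}\cdot \cF^{-2}(p_h))$, together with $f^{(4)} = \sum_{g\neq 1} p_g$ and the permutation action of $\cF^2$ on the low-weight subspace, reduces the problem to ruling out the equality $\cF^2(f^{(4)}) = f^{(4)}$.

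The main obstacle I foresee is this final rotational/trace computation. Although rotation preserves $\langle E_i\rangle$ in the abstract TL setting, the technicality involving the shading and the symmetric self-duality of $\cR_\bullet$ mentioned in the introduction's footnote is precisely what allows $\cF^2(f^{(4)})$ to deviate from $f^{(4)}$ once the appropriate conventions are unwound. Once that non-vanishing is in hand, the argument above delivers the linear independence of $B$.
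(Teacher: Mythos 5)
Your reduction is the same one the paper uses: the $p_g$ are mutually orthogonal projections spanning $\cP_{4,+}\ominus\langle E_i\rangle$, so everything hinges on showing $\cF^{2}(f^{(4)})\notin\spann\set{p_g}{g\neq 1}$, and your second route (pairing against each $p_h$ to force all coefficients equal, thereby reducing to ruling out $\cF^{2}(f^{(4)})=f^{(4)}$) is essentially the computation the paper writes down. The problem is that you never close the loop: you explicitly flag the decisive non-vanishing as ``the main obstacle I foresee'' and leave it unresolved, proposing only to ``expand $f^{(4)}$ via the Wenzl recursion'' or to invoke the shading/self-duality technicality. That last worry is a red herring -- this lemma lives entirely in $\cP_{4,+}$ and Temperley--Lieb, and no shading subtlety enters. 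As written, the proof has a genuine gap at exactly the point where all the work is.

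The gap is easy to fill, and the paper fills it in one clause: capping $\cF^{2}(f^{(4)})$ on the bottom does not give zero. Concretely, $\cF^{2}$ shifts the boundary points of $f^{(4)}$ by one strand position, so a rectangular capping of $\cF^{2}(f^{(4)})$ at an extreme bottom position is (a rotation of) the one-sided partial closure of $f^{(4)}$, which is $\tfrac{[5]}{[4]}f^{(3)}\neq 0$ -- it is \emph{not} one of the adjacent cappings killed by $E_i f^{(4)}=f^{(4)}E_i=0$. Since every element of $\spann\set{p_g}{g\neq 1}=\cP_{4,+}\ominus\langle E_i\rangle$ is rectangularly uncappable, this single nonzero capping certifies $\cF^{2}(f^{(4)})\notin\spann\set{p_g}{g\neq 1}$ (equivalently, $\cF^{2}(f^{(4)})\neq f^{(4)}$), and your argument then goes through. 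Add that computation and your proof is complete and matches the paper's.
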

\begin{proof}
First, note that $f^{(4)}=\sum_{g\in G\setminus\{1\}} p_g$ and $\cF^{2}(f^{(4)})$ are linearly independent, since capping $\cF^{2}(f^{(4)})$ on the bottom does not give zero. Suppose
$$
0=\lambda_f \cF^{2}(f^{(4)})+\sum_{g\in G\setminus\{1\}} \lambda_g p_g.
$$
Taking inner products with $p_g$ gives 
$$
0=
\lambda_g \Tr(p_g)+ \lambda_f \Tr(\cF^{2}(f^{(4)}p_g)
=
\lambda_g \Tr(p_g)+ \lambda_f
\Tr(p)\left( \coeff_{\in f^{(4)}} \cF^{2}(\id)\right),
$$
so $\lambda_g$ is independent of $g$. Call this constant $\lambda$. Then we have
$$
0=\lambda_f \cF^{2}(f^{(4)})+\lambda \sum_{g\in G\setminus\{1\}} p_g
=\lambda_f \cF^{2}(f^{(4)})+\lambda f^{(4)},
$$
so $\lambda_f=\lambda = 0$, and $B$ is linearly independent.
\end{proof}

\begin{prop}\label{prop:2ClickRotation}
Conjecture \ref{conj:Main} holds if and only if for all $g\in G$,
$$
\cF^{2}(p_g)=\frac{1}{|G|-1}\left(\cF^{2}(f^{(4)})-\jw{4}\right)+   p_{g^{-1}}.
$$
\end{prop}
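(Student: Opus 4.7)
The plan is to convert both the displayed formula and the Conjecture into a common linear statement about $\cF^{2}(p_g)$. For $g\in G\setminus\{1\}$ set $q_g:=\cF^{2}(p_g)-p_{g^{-1}}$. Summing over $g\neq 1$ and using $\sum_{g\neq 1}p_g=\jw{4}=\sum_{g\neq 1}p_{g^{-1}}$ (the latter because $g\mapsto g^{-1}$ permutes $G\setminus\{1\}$), one gets $\sum_g q_g=\cF^{2}(\jw{4})-\jw{4}$. Therefore the displayed formula holds for every $g$ if and only if $q_g$ is independent of $g$, and in that case its common value is forced to be $\frac{1}{|G|-1}(\cF^{2}(\jw{4})-\jw{4})$. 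Thus it suffices to prove that Conjecture \ref{conj:Main} is equivalent to the assertion that $q_g$ is constant in $g$.

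For the direction formula $\Rightarrow$ Conjecture I would just substitute: under the formula $\cF^{2}(p_g)-\cF^{2}(p_h)=p_{g^{-1}}-p_{h^{-1}}$, and each case of the Conjecture is then read off by applying the self-duality hypothesis to decide which element in $G\setminus\{1\}$ the various $g^{-1},h^{-1},k^{-1},\ell^{-1}$ coincide with. In Case (1), self-duality gives $\cF^{2}(p_g-p_h)=p_g-p_h$; in Case (2), the relation $g=h^{-1}$ gives $\cF^{2}(p_g-p_h)=p_h-p_g$; Cases (3) and (4) unwind identically.

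For the converse, assume Conjecture \ref{conj:Main} and rewrite each case as a linear relation among the $q_g$. Case (2) gives $q_g=q_{g^{-1}}$; Case (1) gives $q_g=q_h$ whenever $g,h$ are both self-inverse; Case (3) gives $2q_g=q_h+q_k$, which combined with $q_h=q_k$ from Case (2) yields $q_g=q_h$; Case (4) gives $q_g+q_h=q_k+q_\ell$, which combined with $q_g=q_h$ and $q_k=q_\ell$ from Case (2) yields $q_g=q_k$. Together these identifications generate the "all $q_g$ are equal" relation, after which the displayed formula follows by the summation from the first paragraph.

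The main obstacle I anticipate is purely combinatorial: verifying that, for every abelian $G$, the equivalence relation generated by Cases (1)–(4) really does connect every pair of elements of $G\setminus\{1\}$. The generic situation (both a nontrivial self-inverse element and two disjoint non-self-inverse inverse-pairs) is handled by combining Cases (1), (3), and (4) as above; the corner cases $|G|=2$ and $|G|=3$ must be checked by hand, but in both the formula is essentially tautological since there are at most two nontrivial projections and the relevant cases of the Conjecture are either immediate from Case (2) or vacuous.
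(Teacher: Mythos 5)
Your proof is correct, and it takes a genuinely different route from the paper. The paper proves the hard direction by brute-force linear algebra: it splits into the cases $S\neq\emptyset$ and $S=\emptyset$, builds an explicit eigenbasis $D$ adapted to the conjectured eigenvectors, writes down $[\cF^2]_D$, and conjugates by an explicitly inverted change-of-basis matrix to read off $[\cF^2]_B$ column by column. You instead isolate the quantities $q_g=\cF^2(p_g)-p_{g^{-1}}$, observe that $\sum_{g\neq 1}q_g=\cF^2(\jw{4})-\jw{4}$ pins down the common value once constancy is known, and reduce each case of Conjecture~\ref{conj:Main} to a linear relation among the $q_g$ ($q_g=q_{g^{-1}}$, $q_g=q_h$ for self-inverse pairs, $2q_g=q_h+q_k$, $q_g+q_h=q_k+q_\ell$) whose generated equivalence relation connects all of $G\setminus\{1\}$. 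This avoids the matrix inversions entirely and makes the role of each case of the conjecture transparent; the combinatorial connectivity check you flag is indeed the only remaining content, and your case analysis (self-inverse elements linked by (1), each inverse pair linked internally by (2), pairs linked to $S$ by (3), pairs linked to each other by (4), with $|G|\in\{2,3\}$ degenerate) covers every abelian $G$. The only thing the paper's computation buys that yours does not is the explicit matrix $[\cF^2]_B$, which is not used elsewhere; on the other hand your argument makes the forward direction and the forced value of the constant essentially one-line observations.
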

\begin{proof}
If $\cF^2$ is given by the above formula, a straightforward calculation shows that Conjecture \ref{conj:Main} holds.
We now prove the other direction.

Divide $G\setminus\{1\}$ into the two subsets: the non-trivial self-inverse elements $S$, and the non-self-inverse elements $N$.
Let $N^+\subset N$ so that for each $g\in N$, exactly one of $g,g^{-1}\in N^+$.
Let $B_{1} = \set{p_g - p_{g^{-1}}}{g\in N^+}$, and note $|B_1|=|N|/2$.

\itt{Case 1}
Suppose $S\neq \emptyset$, so $|G|$ is even. 
Fix $s_0\in S$.
Let $B_{2}=\set{2p_{s_0} - (p_g+p_{g^{-1}})}{g\in N^+}$, and note $|B_{2}|=|N|/2$.
Finally, let $B_3=\set{p_{s_0}-p_{s}}{s\in S\setminus \{s_0\}}$, and note $|B_3|=|S|-2$.
Observe $B'=B_1\cup B_2\cup B_3$ has size $|G|-2$.

\begin{claim*}
$D=B'\cup \{\jw{4},\cF^{2}(\jw{4}\}$ is a basis for $\spann(B)$.
\end{claim*}
\begin{proof}[Proof of Claim]
It suffices to show $D$ is linearly independent.
Note that by taking linear combinations, we can obtain $p_g-p_h$ for all $g,h\in G\setminus\{1\}$.
The result now follows since $\jw{4}=\sum_{g\neq 1} p_g$.
\end{proof}

Now by Conjecture \ref{conj:Main},
$$
[\cF^{2}]_{D} = 
\begin{pmatrix}
-I_{B_1} & 0 &0& 0 & 0\\
0 & I_{B_2} & 0 & 0 & 0\\
0 & 0 & I_{B_3}& 0 & 0\\
0 & 0 & 0 & 0 & 1 \\
0 & 0 & 0 & 1 & 0
\end{pmatrix}\,,
$$
and the change of basis matrix from $D$ to $B$ is given by
$$
Q_D^B=
\left(
\begin{array}{c|ccccccccc}
& B_1=N^+-N^- & B_2=2s_0-N^+ & B_3=s_0-S\setminus\{s_0\} & \jw{4} & \cF^{2}(\jw{4})
\\\hline
N^+ & I & -I & 0 & {\bf 1} & 0\\
N^-  & -I & -I & 0 & {\bf 1} & 0\\
S\setminus\{s_0\} & 0 & 0 & -I & {\bf 1} &  0\\
s_0 & 0 & {\bf 2} & {\bf 1} &  1 & 0 \\
\cF^{2}(\jw{4}) & 0 & 0 & 0 & 0 & 1
\end{array}
\right)
$$
where $I$ is an identity matrix, and ${\mathbf{k}}$ is the matrix with all $k$'s.

Setting $n=|N|+|S|=|G|-1$, it is straightforward to check
$$
(Q_{D}^{B})^{-1}=
\frac{1}{2n}
\left(
\begin{array}{c|ccccccccc}
& N^+ & N^- & S\setminus\{s_0\} & s_0 & \cF^{2}(\jw{4})
\\\hline
B_1  & nI & -nI & 0 & 0& 0\\
B_2 & {\bf 2}-nI & {\bf 2} -nI & {\bf 2} & {\bf 2} & 0\\
B_3 & {\bf 2} & {\bf 2} & {\bf 2}-2nI & {\bf 2} &  0\\
\jw{4} & {\bf 2} & {\bf 2} & {\bf 2} & 2 & 0 \\
\cF^{2}(\jw{4}) & 0 & 0 & 0 & 0 & 2n
\end{array}
\right)
$$
Hence 
$$
[\cF^{2}]_B=Q_{D}^{B}[\cF^{2}]_{D}(Q_{D}^{B})^{-1}
=
\frac{1}{n}
\left(
\begin{array}{c|ccccccc}
& N^+ & N^- & S  & \cF^{2}(\jw{4})
\\\hline
N^+  & -{\bf 1} & nI-{\bf 1} & -{\bf 1} & {\bf n}\\
N^- & nI-{\bf 1} & -{\bf 1} & -{\bf 1} & {\bf n}\\
S & {\bf -1} & {\bf -1} & nI-{\bf 1} & {\bf n}\\
\cF^{2}(\jw{4}) & {\bf 1} & {\bf 1} & {\bf 1} & 0\\
\end{array}
\right).
$$ 

\itt{Case 2}
Suppose $S=\emptyset$, so $|G|$ is odd.
Fix $g_0\in N^+$.
Let 
$$
B_2=\set{(p_g+p_{g^{-1}})-(p_{g_0}+p_{g_0^{-1}})}{g\in N^+\setminus\{g\}},
$$ 
and note $|B_2|=|N^+|-1$.
Hence if $D=B_1\cup B_2$, we have $|D|=|G|-2$.
Similar to before, $D\cup \{\jw{4},\cF^{2}(\jw{4}\}$ is a basis for $\spann\{B\}$.
Now by Conjecture \ref{conj:Main},
$$
[\cF^{2}]_{D} = 
\begin{pmatrix}
-I_{B_1} & 0 & 0 & 0\\
0 & I_{B_2}  & 0 & 0\\
0 & 0 & 0 & 1 \\
0 & 0 & 1 & 0
\end{pmatrix}\,,
$$
and the change of basis matrix from $D$ to $B$ is given by
$$
Q_D^B=
\left(
\begin{array}{c|ccccccccc}
& B_1=N^+\setminus\{g_0\}-N^-\setminus\{g_0^{-1}\} & g_0-g_0^{-1} & B_2 & \jw{4} & \cF^{2}(\jw{4})
\\\hline
N^+\setminus\{g_0\} & I & 0 & I & {\bf 1} & 0\\
g_0 & 0 &  1 & -{\bf 1} & 1 & 0 \\
N^-\setminus\{g_0^{-1}\}  & -I & 0 & I & {\bf 1} & 0\\
g_0^{-1} & 0 & -1 & -{\bf 1}  & 1 & 0 \\
\cF^{2}(\jw{4}) & 0 & 0 & 0 & 0 & 1
\end{array}
\right)
$$
Setting $n=|G|-1$, we have
$$
(Q_D^B)^{-1}=
\frac{1}{2n}
\left(
\begin{array}{c|ccccccccc}
& N^+\setminus\{g_0\}  & g_0 & N^-\setminus\{g_0^{-1}\}  & g_0^{-1} & \cF^{2}(\jw{4})
\\\hline
B_1 & nI & 0 & -nI & 0 & 0\\
g_0-g_0^{-1} & 0 & n & 0 & -n & 0\\
B_2 & nI-{\bf 2} & -{\bf 2} & nI-{\bf 2} & -{\bf 2} & 0\\
\jw{4} & {\bf 2} & 2 & {\bf 2} & 2 & 0\\
\cF^{2}(\jw{4}) & 0 & 0 & 0 & 0 & 2n
\end{array}
\right)
$$
Hence 
\begin{align*}
[\cF^{2}]_B=Q_{D}^{B}[\cF^{2}]_{D}(Q_{D}^{B})^{-1}
&=
\frac{1}{n}
\left(
\begin{array}{c|ccccccc}
& N^+ & N^-  & \cF^{2}(\jw{4})
\\\hline
N^+  & -{\bf 1} & nI-{\bf 1}  & {\bf n}\\
N^- & nI-{\bf 1} & -{\bf 1}  & {\bf n}\\
\cF^{2}(\jw{4}) & {\bf 1} & {\bf 1}  & 0\\
\end{array}
\right).
\qedhere
\end{align*}
\end{proof}

\begin{rem}\label{rem:Conjecture}
Parts (1)-(3) of Conjecture \ref{conj:Main} were chosen because they agree with $3^G$ for $G\in \{\bbZ/2,\bbZ/4,\bbZ/2\times \bbZ/2\}$.
There are two reasons +1 was chosen as the eigenvalue in part (4).
First, if $S\neq \emptyset$ and $g,h\in N^+$ are distinct, then we have
$$
(p_g+p_{g^{-1}})-(p_h+p_h^{-1}) = 2p_{s_0}-(p_h+p_{h^{-1}}) - (2p_{s_0}-(p_g+p_g^{-1})),
$$
which has eigenvalue 1 by part (3).
Second, if $S=\emptyset$, switching the eigenvalue to $-1$ gives the formula
$$
\cF^{2}(p_g) = \frac{1}{n}\cF^{2}(f^{(4)})+  \frac{n-1}{n} p_{g} - \frac{1}{n}\sum_{h\neq g}p_h,
$$
which is not the same formula as when $S\neq \emptyset$.
\end{rem}

%%%%%%%%%%%%%%%%%%%%%%%%%%%%%%%%%%%%%%%%%%%%%%%%%%%%%%%%%%%%%%%%%%
%%%%%%%%%%%%%%%%%%%%%%%%%%%%%%%%%%%%%%%%%%%%%%%%%%%%%%%%%%%%%%%%%%
%%%%%%%%%%%%%%%%%%%%%%%%%%%%%%%%%%%%%%%%%%%%%%%%%%%%%%%%%%%%%%%%%%
\section{The reduced subfactor at $\rho=\jw{2}$}

The even half $\cE_\bullet$ of $\cP_\bullet$ is a factor planar algebra \cite{1208.5505} with principal graph
$$
\Gamma
=
\begin{tikzpicture}[baseline=-.1cm,scale=.6]
	\filldraw (0,0) circle (.1cm) node [left] {{\scriptsize{$1$}}};
	\filldraw (2,0) circle (.1cm) node [below] {{\scriptsize{$\rho$}}};
	\draw (2,.3) circle (.3cm);
	\filldraw (4,1.5) circle (.1cm);
	\draw (4,1.8) circle (.3cm);
	\filldraw (4,.5) circle (.1cm);
	\draw (4,.8) circle (.3cm);
	\filldraw (4,-1.5) circle (.1cm) node [below] {{\scriptsize{$g\rho$}}};
	\draw (4,-1.2) circle (.3cm);
	\filldraw (6,1.5) circle (.1cm);
	\filldraw (6,.5) circle (.1cm);
	\filldraw (6,-1.5) circle (.1cm) node [right] {{\scriptsize{$g$}}};
	\draw (0,0) -- (2,0);
	\draw (2,0) -- (4,1.5) -- (6,1.5);
	\draw (2,0) -- (4,.5) -- (6,.5);
	\draw (2,0) -- (4,-1.5) -- (6,-1.5);
	\draw (4,1.5) .. controls ++(235:.3cm) and ++(135:.3cm) .. (4,.5);
	\draw (4,1.5) .. controls ++(235:1cm) and ++(135:1cm) .. (4,-1.5);
	\draw (4,.5) .. controls ++(235:.5cm) and ++(135:.5cm) .. (4,-1.5);
	\draw[thick, dotted]  (4,-.8)--(4,.3);
	\draw[thick, dotted]  (6,-1.3)--(6,.3);
\end{tikzpicture}
\,.
$$
We denote $\rho=\jw{2}$ by a red strand, and we write a trivalent vertex for the intertwiner $\jw{2}\otimes \jw{2} \to \jw{2}$ given by
\begin{align}
\begin{tikzpicture}[baseline = -.1cm]
	\filldraw[rho] (0,0) circle (.05cm);
	\draw[thick, rho] (0,0) -- (-.2,-.4);
	\draw[thick, rho] (0,0) -- (0,.4);
	\draw[thick, rho] (0,0) -- (.2,-.4);
	\nbox{}{(0,0)}{.4}{0}{0}{}
\end{tikzpicture}
=
\left(
\frac{[2]}{[3]-1}
\right)^{1/2}
\begin{tikzpicture}[baseline = -.6cm, scale=.6]
	\clip (-1.2,-2.4) -- (-1.2,.8) -- (1.2,.8) -- (1.2,-2.4);
	\draw[shaded] (-.4,-2.5) -- (-.4,-1.2) arc (180:0:.4cm) -- (.4,-2.5) -- (.8,-2.5) -- (.8,-1.2) -- (.2,-.4) -- (.2,.9) -- (-.2,.9) -- (-.2,-.4) -- (-.8,-1.2) -- (-.8,-2.5);
	\nbox{unshaded}{(0,0)}{.4}{0}{0}{2}
	\nbox{unshaded}{(-.6,-1.6)}{.4}{0}{0}{2}
	\nbox{unshaded}{(.6,-1.6)}{.4}{0}{0}{2}
\end{tikzpicture}
\label{eq:V}
\tag{V}
\end{align}
where we just write $2$ for $\jw{2}$.

\begin{rem}\label{rem:GSize}
It is a straightforward calculation that $\D |G|=\frac{[3]^2-1}{[3]}$.
\end{rem}

\begin{prop}\label{prop:RhoSkeinRelations}
We have the following skein relations:
\begin{align}
\begin{tikzpicture}[baseline = -.1cm]
	\filldraw[rho] (0,.2) circle (.05cm);
	\filldraw[rho] (0,-.2) circle (.05cm);
	\draw[thick, rho] (0,0) circle (.2cm);	
	\draw[thick, rho] (0,-.2) -- (0, -.4);
	\draw[thick, rho] (0,.2) -- (0, .4);
	\nbox{}{(0,0)}{.4}{0}{0}{}
\end{tikzpicture}
&=
\begin{tikzpicture}[baseline = -.1cm]	
	\draw[thick, rho] (0,-.4) -- (0, .4);
	\nbox{}{(0,0)}{.4}{0}{0}{}
\end{tikzpicture}
\notag
\\
\begin{tikzpicture}[baseline = -.1cm]
	\draw[thick, rho] (0,0) circle (.3cm);
	\nbox{}{(0,0)}{.4}{0}{0}{}
\end{tikzpicture}
&=
\begin{tikzpicture}[baseline = -.1cm]
	\filldraw[rho] (-.3,0) circle (.05cm);
	\filldraw[rho] (.3,0) circle (.05cm);
	\draw[thick, rho] (0,0) circle (.3cm);
	\draw[thick, rho] (-.3,0)--(.3,0);
	\nbox{}{(0,0)}{.4}{0}{0}{}
\end{tikzpicture}
=
[3]
\notag
\\
\begin{tikzpicture}[baseline = -.1cm]
	\filldraw[rho] (0,-.2) circle (.05cm);
	\draw[thick, rho] (0,0) circle (.2cm);	
	\draw[thick, rho] (0,-.2) -- (0, -.4);
	\nbox{}{(0,0)}{.4}{0}{0}{}
\end{tikzpicture}
&=
0
\notag
\\
\begin{tikzpicture}[baseline = -.1cm]
	\filldraw[rho] (0,0) circle (.05cm);
	\draw[thick, rho] (0,0) arc (0:-180:.15cm) -- (-.3,.4);
	\draw[thick, rho] (0,0) -- (0,.4);
	\draw[thick, rho] (0,0) -- (.2,-.4);
	\nbox{}{(0,0)}{.4}{.1}{0}{}
\end{tikzpicture}
&=
\begin{tikzpicture}[baseline = -.1cm]
	\filldraw[rho] (0,0) circle (.05cm);
	\draw[thick, rho] (0,0) -- (-.2,-.4);
	\draw[thick, rho] (0,0) -- (0,.4);
	\draw[thick, rho] (0,0) -- (.2,-.4);
	\nbox{}{(0,0)}{.4}{0}{0}{}
\end{tikzpicture}^{\,*}
=
\begin{tikzpicture}[baseline = -.1cm]
	\filldraw[rho] (0,0) circle (.05cm);
	\draw[thick, rho] (0,0) -- (-.2,.4);
	\draw[thick, rho] (0,0) -- (0,-.4);
	\draw[thick, rho] (0,0) -- (.2,.4);
	\nbox{}{(0,0)}{.4}{0}{0}{}
\end{tikzpicture}
\notag
\\
\begin{tikzpicture}[baseline = -.1cm]
	\draw[thick, rho] (-.2,-.4) -- ( -.2, .4);	
	\draw[thick, rho] (.2,-.4) -- (.2, .4);
	\nbox{}{(0,0)}{.4}{0}{0}{}
\end{tikzpicture}
-
\begin{tikzpicture}[baseline = -.1cm]
	\draw[thick, rho] (-.2,-.4)arc (180:0:.2cm);	
	\draw[thick, rho] (-.2,.4)arc (-180:0:.2cm);	
	\nbox{}{(0,0)}{.4}{0}{0}{}
\end{tikzpicture}
&=
\left([3]-1\right)
\left(\,
\begin{tikzpicture}[baseline = -.1cm]
	\filldraw[rho] (0,.2) circle (.05cm);
	\filldraw[rho] (0,-.2) circle (.05cm);
	\draw[thick, rho] (-.2,.4) -- (0,.2) --  ( .2, .4);	
	\draw[thick, rho] (-.2,-.4) -- (0,-.2) --  ( .2, -.4);	
	\draw[thick, rho] (0,-.2) -- (0, .2);
	\nbox{}{(0,0)}{.4}{0}{0}{}
\end{tikzpicture}
-
\begin{tikzpicture}[baseline = -.1cm]
	\filldraw[rho] (.2,0) circle (.05cm);
	\filldraw[rho] (-.2,0) circle (.05cm);
	\draw[thick, rho] (-.2,-.4) -- ( -.2, .4);	
	\draw[thick, rho] (.2,-.4) -- (.2, .4);	
	\draw[thick, rho] (-.2,0) -- (.2,0);
	\nbox{}{(0,0)}{.4}{0}{0}{}
\end{tikzpicture}
\,\right)
\label{rel:IH}
\tag{I=H}\\
\begin{tikzpicture}[baseline = -.1cm]
	\draw[thick, rho] (-.2,-.4) -- ( -.2, .4);	
	\draw[thick, rho] (.2,-.4) -- (.2, .4);
	\nbox{}{(0,0)}{.4}{0}{0}{}
\end{tikzpicture}
&=
\frac{1}{[3]}\,
\begin{tikzpicture}[baseline = -.1cm]
	\draw[thick, rho] (-.2,-.4)arc (180:0:.2cm);	
	\draw[thick, rho] (-.2,.4)arc (-180:0:.2cm);	
	\nbox{}{(0,0)}{.4}{0}{0}{}
\end{tikzpicture}\,
+
\begin{tikzpicture}[baseline = -.1cm]
	\filldraw[rho] (0,.2) circle (.05cm);
	\filldraw[rho] (0,-.2) circle (.05cm);
	\draw[thick, rho] (-.2,.4) -- (0,.2) --  ( .2, .4);	
	\draw[thick, rho] (-.2,-.4) -- (0,-.2) --  ( .2, -.4);	
	\draw[thick, rho] (0,-.2) -- (0, .2);
	\nbox{}{(0,0)}{.4}{0}{0}{}
\end{tikzpicture}\,
+
\sum_{g\neq 1}
\begin{tikzpicture}[baseline = -.1cm]
	\draw[thick, rho] (-.1,.5) -- ( -.1, -.5);	
	\draw[thick, rho] (.1,.5) -- ( .1, -.5);	
	\nbox{unshaded}{(0,0)}{.25}{0}{0}{$p_g$}
\end{tikzpicture}\,
\label{rel:E}
\tag{E}
\end{align}
\end{prop}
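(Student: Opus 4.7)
I would split the six relations into four ``elementary'' ones (bigon, circle, lollipop, rotational invariance of the trivalent vertex) and two ``substantive'' ones ((I=H) and (E)), and handle them in that order. The elementary relations are general consequences of $\rho=\jw{2}$ being a self-dual simple object with $\dim\rho=[3]$ and appearing with multiplicity one in $\rho\otimes\rho$. The two substantive relations encode the specific $3^G$ fusion rule.

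\textbf{Elementary relations.} The circle equals $\dim\rho=[3]$. The lollipop lies in $\Hom(\rho,1)=0$ since $\rho$ is simple and $\rho\not\cong 1$. Rotational invariance of the trivalent vertex follows from the manifest cyclic symmetry of the defining diagram in (V), combined with the symmetric self-duality of $\rho$ referenced in the footnote on page~2. The bigon reduces to a single strand because $\dim\Hom(\rho\otimes\rho,\rho)=1$ by Frobenius reciprocity, so $VV^{*}=c\cdot\id_{\rho}$ for some scalar $c$; the normalization $\sqrt{[2]/([3]-1)}$ chosen in (V) is precisely the value yielding $c=1$, as a short Jones-Wenzl calculation confirms.

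\textbf{The (E) relation.} The algebra $\cR_{2,+}$ of endomorphisms of $\rho\otimes\rho$ has a canonical basis of minimal central projections $\{p_{1},p_{\rho}\}\cup\{p_{g}\}_{g\neq 1}$ matching the fusion decomposition $\rho\otimes\rho=1\oplus\rho\oplus\bigoplus_{g\neq 1}g\rho$. Since cup-cap is rank one, factors through $1$, and has circle value $[3]$, we get $\text{cup-cap}=[3]\,p_{1}$; the normalization of $V$ from (V) makes the I-shape equal to $p_{\rho}$ exactly. Substituting these identifications into $\id_{\rho\otimes\rho}=p_{1}+p_{\rho}+\sum_{g\neq 1}p_{g}$ yields (E).

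\textbf{The (I=H) relation.} This is the core content. Using (E), (I=H) is equivalent to the explicit decomposition
$$H = p_{1} + \tfrac{[3]-2}{[3]-1}\,p_{\rho} - \tfrac{1}{[3]-1}\sum_{g\neq 1} p_{g},$$
and since the basis $\{p_{X}\}$ is orthogonal under the Markov trace, each coefficient is computed as $c_{X}=\Tr(Hp_{X})/\Tr(p_{X})$. The pairing against $p_{1}$ closes $H$ into a theta-graph of $V$'s, which the bigon relation evaluates to $[3]$, giving $c_{1}=1$. The pairing against $p_{\rho}=I$ produces the tetrahedral graph of four trivalent vertices; this is the main calculation, and must equal $\tfrac{([3]-2)[3]}{[3]-1}$. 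The remaining coefficients $c_{g}$ can be shown equal across $g$ by the natural symmetry of $\cR_{2,+}$ permuting the $p_{g}$, and then pinned down either by a direct diagrammatic computation or by the overall constraint $\Tr(H)=0$ (itself immediate, since under the full trace $H$ becomes two lollipops joined by an edge, each of which vanishes).

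\textbf{Main obstacle.} Everything reduces to the tetrahedral evaluation. Expanding the trivalent vertex via (V) and repeatedly applying the Jones-Wenzl recursion at $\jw{2}$ reduces the tetrahedron of four $V$'s to a rational expression in $[2]$ and $[3]$, which one then verifies equals $([3]-2)[3]/([3]-1)$. The bookkeeping is considerable but algorithmic; no further conceptual input is required.
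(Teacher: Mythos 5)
Your handling of the elementary relations and of \eqref{rel:E} is sound and agrees in substance with the paper. The gap is in \eqref{rel:IH}, at the step where you determine the coefficients $c_g$ of $H$ against the projections $p_g$, $g\neq 1$. You assert these are equal across $g$ ``by the natural symmetry of $\cR_2$ permuting the $p_g$,'' but no such symmetry is available at this stage: the $p_g$ correspond to pairwise non-isomorphic simple objects $g\rho$, and an automorphism of the planar algebra permuting them is a highly nontrivial datum. Constructing such symmetries (the maps $\Phi_g$) is one of the main points of the paper, their construction relies on the skein relations of this very proposition, and even then the induced action sends $p_h\mapsto p_{g^2h}$, which is transitive on $\set{p_g}{g\neq 1}$ only when squaring is surjective --- essentially the $|G|$ odd case, whereas the proposition is used for all $G$. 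Your fallbacks do not close the gap: $\Tr(H)=0$ is a single linear constraint on $|G|-1$ unknowns, and a ``direct diagrammatic computation'' of $\Tr(Hp_g)$ is not accessible by Jones--Wenzl recursion alone --- up to normalization it is the coefficient of $p_\rho$ in $\cF^{-1}(p_g)$, i.e.\ exactly the kind of quantity whose determination is the content of Conjecture \ref{conj:Main} and Corollary \ref{cor:EquivalentConjecture}. As written, the argument is circular precisely where the relation has content.

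The repair, which is the paper's route, is to avoid pairing $H$ against the $p_g$ altogether. The trivalent vertex is defined by Equation \eqref{eq:V} as an element of Temperley--Lieb, so all four diagrams in \eqref{rel:IH} lie in the corner $\jw{2}^{\otimes 2}\,\TL_{4,+}\,\jw{2}^{\otimes 2}$, which is $3$-dimensional (spanned by $p_1$, the $I$-diagram, and $\jw{4}$, since $\jw{2}\otimes\jw{2}\cong \jw{0}\oplus\jw{2}\oplus\jw{4}$ inside $\TL$). Four vectors in a $3$-dimensional space are dependent; since the one-click rotation exchanges the identity with the cup-cap and $I$ with $H$, the relation must take the form $\id\pm\text{cupcap}=\lambda(I\pm H)$ with matching signs; capping on the right (which sends $\id\mapsto[3]$, cupcap $\mapsto 1$, $I\mapsto 1$ by the bigon, $H\mapsto 0$ by the lollipop) gives $\lambda=[3]\mp 1$, and the sign is fixed by comparing the coefficient of a single $\TL$ diagram that occurs in $I$ and $H$ but in neither term on the left. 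This sidesteps both the tetrahedron evaluation and the $c_g$'s entirely; your decomposition of $H$ and the value $([3]-2)[3]/([3]-1)$ are correct, but they are consequences of \eqref{rel:IH} rather than an independent route to it.
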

\begin{proof}
The proof is similar to \cite[Proposition 3.1]{1308.5723}.
We prove Relations \eqref{rel:IH} and \eqref{rel:E}.
To prove Relation \eqref{rel:IH}, note $\dim(\Hom(\rho^{\otimes 2}, \rho^{\otimes 2}))=3$, so there is some linear relation amongst the four diagrams which appear in the relation.
By rotational symmetry, we must have a relation of the form
$$
\begin{tikzpicture}[baseline = -.1cm]
	\draw[thick, rho] (-.2,-.4) -- ( -.2, .4);	
	\draw[thick, rho] (.2,-.4) -- (.2, .4);
	\nbox{}{(0,0)}{.4}{0}{0}{}
\end{tikzpicture}
\pm
\begin{tikzpicture}[baseline = -.1cm]
	\draw[thick, rho] (-.2,-.4)arc (180:0:.2cm);	
	\draw[thick, rho] (-.2,.4)arc (-180:0:.2cm);	
	\nbox{}{(0,0)}{.4}{0}{0}{}
\end{tikzpicture}
=
\lambda
\left(\,
\begin{tikzpicture}[baseline = -.1cm]
	\filldraw[rho] (0,.2) circle (.05cm);
	\filldraw[rho] (0,-.2) circle (.05cm);
	\draw[thick, rho] (-.2,.4) -- (0,.2) --  ( .2, .4);	
	\draw[thick, rho] (-.2,-.4) -- (0,-.2) --  ( .2, -.4);	
	\draw[thick, rho] (0,-.2) -- (0, .2);
	\nbox{}{(0,0)}{.4}{0}{0}{}
\end{tikzpicture}
\pm
\begin{tikzpicture}[baseline = -.1cm]
	\filldraw[rho] (.2,0) circle (.05cm);
	\filldraw[rho] (-.2,0) circle (.05cm);
	\draw[thick, rho] (-.2,-.4) -- ( -.2, .4);	
	\draw[thick, rho] (.2,-.4) -- (.2, .4);	
	\draw[thick, rho] (-.2,0) -- (.2,0);
	\nbox{}{(0,0)}{.4}{0}{0}{}
\end{tikzpicture}
\,\right).
$$
We can determine $\lambda$ by capping off the right hand side of all the diagrams. 
Finally, to determine that the sign is a minus sign, we note that the Temperley-Lieb diagram 
$$
\begin{tikzpicture}[baseline = -.1cm]
	\fill[shaded] (-.4,-.4) -- (-.4,.4) -- (.4,.4) -- (.4,-.4);
	\draw[] (-.4,-.4) -- ( -.4, .4);	
	\draw[] (.4,-.4) -- ( .4, .4);	
	\filldraw[unshaded] (-.2,.4) arc (-180:0:.2cm);
	\filldraw[unshaded] (-.2,-.4) arc (180:0:.2cm);
	\nbox{}{(0,0)}{.4}{.2}{.2}{}
\end{tikzpicture}
$$
has the same non-zero coefficient in both $\rho$-diagrams on the right when we expand the $\rho$-vertex and $\rho$-strands, but it does not appear on the left.

To prove Equation \eqref{rel:E}, we note that since $[2]>2$, $\jw{2}\otimes \jw{2}\cong \jw{0}\oplus \jw{2}\oplus \jw{4}$.
Since $\sum_{g\neq 1} p_g = \jw{4}$, we are finished.
\end{proof}

\begin{cor}\label{cor:RotateH}
Since $\D |G|-1=\frac{[3]^2-[3]-1}{[3]}$, similar to Proposition \ref{prop:2ClickRotation}, we have
$$
\begin{tikzpicture}[baseline = -.1cm]
	\filldraw[rho] (.2,0) circle (.05cm);
	\filldraw[rho] (-.2,0) circle (.05cm);
	\draw[thick, rho] (-.2,-.4) -- ( -.2, .4);	
	\draw[thick, rho] (.2,-.4) -- (.2, .4);	
	\draw[thick, rho] (-.2,0) -- (.2,0);
	\nbox{}{(0,0)}{.4}{0}{0}{}
\end{tikzpicture}
=
\frac{1}{|G|-1}\left( \cF^2(\jw{4})-\jw{4}\right) + 
\begin{tikzpicture}[baseline = -.1cm]
	\filldraw[rho] (0,.2) circle (.05cm);
	\filldraw[rho] (0,-.2) circle (.05cm);
	\draw[thick, rho] (-.2,.4) -- (0,.2) --  ( .2, .4);	
	\draw[thick, rho] (-.2,-.4) -- (0,-.2) --  ( .2, -.4);	
	\draw[thick, rho] (0,-.2) -- (0, .2);
	\nbox{}{(0,0)}{.4}{0}{0}{}
\end{tikzpicture}\,.
$$
\end{cor}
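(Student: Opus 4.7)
The plan is to apply the rotation $\cF^2$ to the skein relation \eqref{rel:E}, subtract from the original relation, and use \eqref{rel:IH} to eliminate the Temperley--Lieb diagrams in favor of $I$, $H$, $f^{(4)}$, and $\cF^2(f^{(4)})$.

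First I would observe that $\cF^2$ on $\cP_{4,+}$ corresponds to the one-click (i.e.\ $90^{\circ}$) rotation on $\cR_{2,+}$, since each $\rho$-strand is two strands of $\cP_\bullet$. Under this rotation, two parallel $\rho$-strands swap with the cap-cup diagram, and the $I$ diagram swaps with the $H$ diagram, while $f^{(4)}$ maps to $\cF^2(f^{(4)})$. Applying $\cF^2$ to \eqref{rel:E} therefore yields
$$
\begin{tikzpicture}[baseline = -.1cm]
	\draw[thick, rho] (-.2,-.4)arc (180:0:.2cm);
	\draw[thick, rho] (-.2,.4)arc (-180:0:.2cm);
	\nbox{}{(0,0)}{.4}{0}{0}{}
\end{tikzpicture}
=
\frac{1}{[3]}\,
\begin{tikzpicture}[baseline = -.1cm]
	\draw[thick, rho] (-.2,-.4) -- ( -.2, .4);
	\draw[thick, rho] (.2,-.4) -- (.2, .4);
	\nbox{}{(0,0)}{.4}{0}{0}{}
\end{tikzpicture}\,
+
\begin{tikzpicture}[baseline = -.1cm]
	\filldraw[rho] (.2,0) circle (.05cm);
	\filldraw[rho] (-.2,0) circle (.05cm);
	\draw[thick, rho] (-.2,-.4) -- ( -.2, .4);
	\draw[thick, rho] (.2,-.4) -- (.2, .4);
	\draw[thick, rho] (-.2,0) -- (.2,0);
	\nbox{}{(0,0)}{.4}{0}{0}{}
\end{tikzpicture}\,
+
\cF^2(f^{(4)}).
$$

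Subtracting this from \eqref{rel:E} gives, after collecting the parallel-minus-cap-cup terms on one side,
$$
\frac{[3]+1}{[3]}\bigl(\text{parallel}-\text{cap-cup}\bigr)-(I-H)=f^{(4)}-\cF^2(f^{(4)}).
$$
Next I would use \eqref{rel:IH}, rewritten as $I-H=\tfrac{1}{[3]-1}(\text{parallel}-\text{cap-cup})$, to eliminate $I-H$:
$$
\left[\frac{[3]+1}{[3]}-\frac{1}{[3]-1}\right]\bigl(\text{parallel}-\text{cap-cup}\bigr)=f^{(4)}-\cF^2(f^{(4)}).
$$
The bracketed coefficient simplifies to $\dfrac{[3]^2-[3]-1}{[3]([3]-1)}=\dfrac{|G|-1}{[3]-1}$ by Remark \ref{rem:GSize}. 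Applying \eqref{rel:IH} once more to convert back from $\text{parallel}-\text{cap-cup}$ to $I-H$ produces
$$
I-H=\frac{1}{|G|-1}\bigl(f^{(4)}-\cF^2(f^{(4)})\bigr),
$$
which is exactly the claimed identity after rearrangement.

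The only obstacle is the coefficient bookkeeping: keeping the signs in \eqref{rel:IH} straight and recognizing that $([3]+1)([3]-1)-[3]=[3]^2-[3]-1$ matches $[3](|G|-1)$ via Remark \ref{rem:GSize}. There is no conceptual difficulty, and in particular no need for the basis-change matrix machinery of Proposition \ref{prop:2ClickRotation}; the two skein relations \eqref{rel:IH} and \eqref{rel:E}, together with their $\cF^2$-rotations, fully determine the answer.
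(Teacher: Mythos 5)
Your derivation is correct: applying the rotation to Relation \eqref{rel:E}, subtracting, and eliminating via Relation \eqref{rel:IH} together with $|G|-1=\frac{[3]^2-[3]-1}{[3]}$ from Remark \ref{rem:GSize} is exactly the computation the paper leaves implicit here (and the same manipulation it carries out explicitly just afterwards when expanding $\cF^2(\jw{4})$ in terms of $\jw{4}$, $\rhoE$, and $p_1$). The coefficient bookkeeping checks out, so this supplies the omitted proof in the intended way.
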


\begin{defn}
We define 
$
\begin{tikzpicture}[baseline = -.1cm]
	\draw[thick, rho] (-.15,.6) --  ( -.15, -.6);	
	\draw[thick, rho] (.15,.6) --  ( .15, -.6);	
	\nbox{unshaded}{(0,0)}{.3}{0}{0}{$p_1$}
\end{tikzpicture}
:=
\begin{tikzpicture}[baseline = -.1cm]
	\draw[thick, rho] (-.2,.6) --  ( -.2, -.6);	
	\draw[thick, rho] (.2,.6) --  ( .2, -.6);	
	\nbox{unshaded}{(0,0)}{.4}{0}{0}{}
	\filldraw[rho] (0,.2) circle (.05cm);
	\filldraw[rho] (0,-.2) circle (.05cm);
	\draw[thick, rho] (-.2,.4) arc (-180:0:.2);
	\draw[thick, rho] (-.2,-.4) arc (180:0:.2);	
	\draw[thick, rho] (0,-.2) -- (0, .2);
\end{tikzpicture}
$\,.
\end{defn}

\begin{defn}\label{defn:ReducedPlanarAlgebra}
Let $\cS_\bullet$ be the reduced subfactor planar algebra of $\cP_\bullet$ at $\rho=\jw{2}$.
Since $\rho$ is symmetrically self-dual in the sense of \cite{MR3314808,MPAffineAandD}, we may lift the shading on $\cS_\bullet$ to get a factor planar algebra $\cR_\bullet$.
We use the convention that $\cR_n = \Hom( \rho^{\otimes n},\rho^{\otimes n})$, which is usually denoted $\cR_{2n}$ in \cite{1208.5505}.

By Relation \eqref{rel:E}, the unitary fusion category associated to $\cR_\bullet$ is $\frac{1}{2}\cP_+\boxtimes \Vec(\bbZ/2\bbZ)$, and the principal graph $\Lambda$ of $\cR_\bullet$ is given by
$$
\Lambda = 
\begin{tikzpicture}[baseline=-1cm,scale=.6]
	\filldraw (0,0) circle (.1cm) node [left] {{\scriptsize{$1$}}};
	\filldraw (0,-1) circle (.1cm) node [left] {{\scriptsize{$g$}}};
	\filldraw (0,-3) circle (.1cm) node [left] {{\scriptsize{$h$}}};
	\filldraw (2,0) circle (.1cm);
	\filldraw (2,-1) circle (.1cm);
	\filldraw (2,-3) circle (.1cm);
	\filldraw (4,0) circle (.1cm) node [above] {{\scriptsize{$\rho$}}};
	\filldraw (4,-1) circle (.1cm) node [above] {{\scriptsize{$g\rho$}}};
	\filldraw (4,-3) circle (.1cm) node [below] {{\scriptsize{$h\rho$}}};
	\filldraw (6,0) circle (.1cm);
	\filldraw (6,-1) circle (.1cm);
	\filldraw (6,-3) circle (.1cm);
	\draw (0,0) -- (6,0);
	\draw (0,-1) -- (6,-1);
	\draw (0,-3) -- (6,-3);
	\draw (2,0) -- (4,-1);
	\draw (2,0) -- (4,-3);
	\draw (2,-1) -- (4,0);
	\draw (2,-1) -- (4,-3);
	\draw (2,-3) -- (4,0);
	\draw (2,-3) -- (4,-1);
	\draw[thick, dotted]  (0,.-1.5)--(0,-2.5);
	\draw[thick, dotted]  (2,-1.5)--(2,-2.5);
	\draw[thick, dotted]  (4,-1.5)--(4,-2.5);
	\draw[thick, dotted]  (6,-1.5)--(6,-2.5);
\end{tikzpicture}\,.
$$
The principal graphs of $\cS_\bullet$ are $(\Lambda,\Lambda)$, and $\frac{1}{2}\cS_+=\frac{1}{2}\cP_+$.
\end{defn}

\begin{rems}
\mbox{}
\be
\item
We may also obtain $\cS_\bullet$ by imposing a shading on $\cE_\bullet$
\item
We may naturally identify $\cR_n\cong \cS_{n,\pm}$ as a subspace of $\cP_{2n,+}$.
\ee
\end{rems}

\begin{prop}\label{prop:RectangularLowWeight}
Suppose that $A\in \cR_{2}$ is rectangularly uncappable in $\cP_{4,+}$, i.e., capping $A$ on the top or bottom is zero when we write $A$ with 4 strands up and 4 strands down.
Then we may identify the two click rotation $\cF_{\cP_\bullet}^{2}(A)\in \cP_{4,+}$ with the 1-click rotation $\cF_{\cR_\bullet}(A)\in \cR_{2}$.
\end{prop}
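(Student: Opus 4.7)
The plan is to express $\cF_{\cR_\bullet}(A)$ explicitly as a diagram in $\cP_\bullet$ and compare it with $\cF_{\cP_\bullet}^2(A)$, showing that the difference consists of terms that vanish by the uncappability hypothesis.

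In $\cR_\bullet$, the one-click rotation is implemented by composing $A$ with a $\rho$-cap on one side and a $\rho$-cup on the other, using the symmetric self-dual duality data of $\rho$ as in \cite{MR3314808,MPAffineAandD}. Since $\rho=\jw{2}$, each $\rho$-cup and $\rho$-cap can be drawn in $\cP_\bullet$ as a pair of TL strands bent by TL cups/caps, together with $\jw{2}$ projections placed at the $\rho$-endpoints to project onto the $\rho\otimes\rho$ subspace. In contrast, $\cF_{\cP_\bullet}^2(A)$ simply bends the same pair of strands using TL cups/caps, with no additional projections. Thus the two elements of $\cP_{4,+}$ differ only in the insertion of certain $\jw{2}$ projections adjacent to $A$.

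The next step is to expand each of these $\jw{2}$ projections via Wenzl's recursion $\jw{2}=\id-\frac{1}{[2]}E_1$. The leading term (where every $\jw{2}$ is replaced by $\id$) reproduces $\cF_{\cP_\bullet}^2(A)$, and each remaining term contains at least one factor of $E_1$ positioned adjacent to $A$, which corresponds to capping $A$ on its top or bottom. By the rectangular uncappability assumption, every such term vanishes, giving $\cF_{\cR_\bullet}(A)=\cF_{\cP_\bullet}^2(A)$. Since the two-click rotation in $\cP_\bullet$ carries $\jw{2}$ projections on the corners of $A$ to $\jw{2}$ projections on the corresponding corners of the rotated element, $\cF_{\cP_\bullet}^2(A)\in\cR_2$ automatically, so the identification is well-defined.

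The main obstacle will be careful bookkeeping of the normalizations. The $\rho$-cup and $\rho$-cap are pinned down (up to the symmetric self-duality convention) by a snake relation, and one must verify that with this convention the leading term in the Wenzl expansion produces $\cF_{\cP_\bullet}^2(A)$ with coefficient exactly $1$ rather than a nontrivial scalar. One must also separate the $\jw{2}$s coming from the $\rho$-cup/cap into those sitting adjacent to $A$ --- which become cappings of $A$ once one expands $\jw{2}$ --- versus those lying on external boundaries of the diagram, which are harmlessly absorbed by the $\jw{2}$ projections that $A$ itself already carries at its corners. Once this is handled, the remaining argument is essentially immediate.
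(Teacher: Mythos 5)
Your proposal is correct, and it rests on the same diagrammatic comparison as the paper's proof: both rotations are drawn as elements of $\cP_{4,+}$, and they differ only by $\jw{2}$ projections inserted on the four pairs of parallel strands emanating from $A$. Where you diverge is in how you dispose of those projections. You expand each $\jw{2}=\id-\frac{1}{[2]}E_1$ and kill every non-leading term by rectangular uncappability, since each $E_1$ presents a cap to a pair of adjacent legs of $A$ (this works for the bent pairs as well as the straight ones, and the leading coefficient is indeed $1$). The paper instead observes that $A\in\cR_2$ means $A=(\jw{2}\otimes\jw{2})A(\jw{2}\otimes\jw{2})$ inside $\cP_{4,+}$, so each inserted $\jw{2}$ slides along its pair of parallel strands and is absorbed by $A$ for free --- no Wenzl recursion, and, notably, no use of the uncappability hypothesis at all. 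Your route is sound, but it invokes a strictly stronger hypothesis than is needed; the absorption mechanism, which you already use for the ``external'' $\jw{2}$s in your final paragraph, handles all of the projections uniformly and shows the identification $\cF_{\cP_\bullet}^{2}(A)=\cF_{\cR_\bullet}(A)$ holds for every $A\in\cR_2$. It also settles your normalization concern at once: with the symmetric self-duality convention, the $\rho$-cup and $\rho$-cap are exactly the nested Temperley--Lieb cup and cap cabled by $\jw{2}$, so no scalar appears.
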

\begin{proof}
$
\cF_{\cP_\bullet}^{2}(A) = 
\begin{tikzpicture}[baseline = -.1cm]
	\draw (-.3,.3) -- (-.3,1.1);
	\draw (-.1,.3) -- (-.1,1.1);
	\draw (.3,-.3) -- (.3,-1.1);
	\draw (.1,-.3) -- (.1,-1.1);
	\draw (-.3,-.3) arc (0:-180:.2cm) -- (-.7,1.1);
	\draw (-.1,-.3) arc (0:-180:.4cm) -- (-.9,1.1);
	\draw (.3,.3) arc (180:0:.2cm) -- (.7,-1.1);
	\draw (.1,.3) arc (180:0:.4cm) -- (.9,-1.1);
	\nbox{unshaded}{(0,0)}{.3}{.1}{.1}{$A$}
\end{tikzpicture}
=
\begin{tikzpicture}[baseline = -.1cm]
	\draw (-.3,.3) -- (-.3,1.1);
	\draw (-.1,.3) -- (-.1,1.1);
	\draw (.3,-.3) -- (.3,-1.1);
	\draw (.1,-.3) -- (.1,-1.1);
	\draw (-.3,-.3) arc (0:-180:.2cm) -- (-.7,1.1);
	\draw (-.1,-.3) arc (0:-180:.4cm) -- (-.9,1.1);
	\draw (.3,.3) arc (180:0:.2cm) -- (.7,-1.1);
	\draw (.1,.3) arc (180:0:.4cm) -- (.9,-1.1);
	\nbox{unshaded}{(0,0)}{.3}{.1}{.1}{$A$}
	\nbox{unshaded}{(-.8,0)}{.2}{0}{0}{$2$}
	\nbox{unshaded}{(.8,0)}{.2}{0}{0}{$2$}
	\nbox{unshaded}{(-.2,.7)}{.2}{0}{0}{$2$}
	\nbox{unshaded}{(.2,-.7)}{.2}{0}{0}{$2$}
\end{tikzpicture}
=
\begin{tikzpicture}[baseline = -.1cm]
	\draw[thick, rho] (-.2,.3) -- (-.2,1.1);
	\draw[thick, rho] (.2,-.3) -- (.2,-1.1);
	\draw[thick, rho] (-.2,-.3) arc (0:-180:.3cm) -- (-.8,1.1);
	\draw[thick, rho] (.2,.3) arc (180:0:.3cm) -- (.8,-1.1);
	\nbox{unshaded}{(0,0)}{.3}{.1}{.1}{$A$}
\end{tikzpicture}
=
\cF_{\cR_\bullet}(A).
$
\end{proof}

Consider the orthogonal complement of $\TL_{4,+}\subset \cP_{4,+}$, which is spanned by $|G|-2$ low weight vectors $\set{A_j}{j=1,\dots, |G|-2}\subset \spann\set{p_g}{g\neq 1}$, which are also eigenvectors for the 2-click rotation $\cF^{2}$ on $\cP_\bullet$ corresponding to rotational eigenvectors $\omega_{A_j}$.
By Proposition \ref{prop:RectangularLowWeight}, we get the following corollary:

\begin{cor}
Each $A_j$ is also a low-weight rotational eigenvector for the $1$-click rotation in $\cR_\bullet$.
\end{cor}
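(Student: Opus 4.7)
The plan is to apply Proposition \ref{prop:RectangularLowWeight} to each $A_j$ and then separately verify that $A_j$ remains uncappable in $\cR_\bullet$. First I would observe that by construction the $A_j$ lie in $\spann\set{p_g}{g\neq 1}\subset \cR_2$ and are chosen to be low-weight in $\cP_\bullet$, meaning every cap of an adjacent pair of strands at depth 4 annihilates $A_j$. This is exactly the rectangular uncappability hypothesis of Proposition \ref{prop:RectangularLowWeight}, so the proposition identifies $\cF_{\cP_\bullet}^{2}(A_j)$ with $\cF_{\cR_\bullet}(A_j)$ under the natural inclusion $\cR_2\hookrightarrow \cP_{4,+}$. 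Since $\cF_{\cP_\bullet}^{2}(A_j)=\omega_{A_j} A_j$, the same identity holds for $\cF_{\cR_\bullet}$, showing $A_j$ is an eigenvector of the one-click rotation in $\cR_\bullet$ with the same eigenvalue.

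Next I would verify that $A_j$ is low-weight in $\cR_\bullet$, i.e., uncappable in $\cR_2$. I would translate a $\rho$-cap of a pair of adjacent $\rho$-strands into the $\cP_\bullet$ picture: each $\rho$-strand is the projector $\jw{2}$ on two $\cP$-strands, and any single-strand cap along a $\jw{2}$ boundary vanishes. Hence the only Temperley-Lieb diagram that can realize a $\rho$-cap of two adjacent $\rho$-strands is the nested pair of $\cP$-caps in which the two innermost $\cP$-strands are capped first and the outer pair is then closed over the top. But the inner $\cP$-cap already annihilates $A_j$ by its $\cP_\bullet$-uncappability, so the full $\rho$-cap is zero, and the identical reasoning applies at the bottom of the box.

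I do not anticipate any serious obstacle. The content is essentially a one-line application of Proposition \ref{prop:RectangularLowWeight} together with the standard fact that $\rho$-caps factor through $\cP$-caps that $A_j$ already kills. The only mildly subtle point is writing out the $\rho$-cap in $\cP$-terms cleanly, but that is routine once one notes that $\jw{2}$ absorbs any cap along its boundary, reducing the situation to a single inner $\cP$-cap that vanishes by hypothesis.
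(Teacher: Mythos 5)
Your proposal is correct and follows the paper's own route: the corollary is stated as an immediate consequence of Proposition \ref{prop:RectangularLowWeight}, exactly as you argue. Your additional verification that each $A_j$ remains uncappable in $\cR_\bullet$ (via the $\rho$-cap factoring through nested $\cP$-caps absorbed by $\jw{2}$) is the routine detail the paper leaves implicit, and you have it right.
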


Since $\sum_{g\neq 1} p_g = \jw{4}$, we know that another low weight rotational eigenvector in $\cR_{2,+}$ comes from $\TL_{4,+}\subset \cP_{4,+}$:
$$
B=(|G|-1) p_1 - \jw{4} = (|G|-1)p_1 - \sum_{g\neq 1} p_g,
$$
where the rotational eigenvalue $\omega_B=1$ by Corollary \ref{cor:RotateH}.
Note that $B$ is orthogonal to each $A_j$.

We now compute the 1-click rotation for all the non-trivial minimal projections in $\cR_{2}$ in terms of the $p_g$ for $g\in G$ and 
$$
\rhoE
=
\frac{1}{[3]}
\begin{tikzpicture}[baseline = -.1cm]
	\nbox{unshaded}{(0,0)}{.4}{0}{0}{}
	\draw[thick, rho] (-.2,.4) arc (-180:0:.2);
	\draw[thick, rho] (-.2,-.4) arc (180:0:.2);	
\end{tikzpicture}
\text{ and }
\rhoJW
=
\begin{tikzpicture}[baseline = -.1cm]
	\draw[thick, rho] (-.2,-.4) -- ( -.2, .4);	
	\draw[thick, rho] (.2,-.4) -- (.2, .4);
	\nbox{}{(0,0)}{.4}{0}{0}{}
\end{tikzpicture}
-\rhoE
=
\begin{tikzpicture}[baseline = -.1cm]
	\filldraw[rho] (0,.2) circle (.05cm);
	\filldraw[rho] (0,-.2) circle (.05cm);
	\draw[thick, rho] (-.2,.4) -- (0,.2) --  ( .2, .4);	
	\draw[thick, rho] (-.2,-.4) -- (0,-.2) --  ( .2, -.4);	
	\draw[thick, rho] (0,-.2) -- (0, .2);
	\nbox{}{(0,0)}{.4}{0}{0}{}
\end{tikzpicture}\,
+
\sum_{g\neq 1}
\begin{tikzpicture}[baseline = -.1cm]
	\draw[thick, rho] (-.1,.5) -- ( -.1, -.5);	
	\draw[thick, rho] (.1,.5) -- ( .1, -.5);	
	\nbox{unshaded}{(0,0)}{.25}{0}{0}{$p_g$}
\end{tikzpicture}.
$$
Note that by Relations \eqref{rel:IH} and \eqref{rel:E},
\begin{align*}
\cF^2(\jw{4}) 
&= \frac{1}{[3]([3]-1)}\jw{4} + \frac{[3]^2-[3]-1}{[3]}\rhoE - \frac{[3]^2-[3]-1}{[3]([3]-1)}p_1\\
&= \frac{1}{[3]([3]-1)}\jw{4}+ (|G|-1)\rhoE - \frac{|G|-1}{[3]-1} p_1.
\end{align*}

\begin{cor}\label{cor:EquivalentConjecture}
Conjecture \ref{conj:Main} holds if and only if for each $g\in G$,
\begin{align*}
\cF_{\cR_\bullet}(p_g) 
&= \frac{1}{|G|-1} (\cF^2(\jw{4})-\jw{4})+p_{g^{-1}}\\
&= p_{g^{-1}} - \frac{1}{[3]-1}\jw{4} + \rhoE- \frac{1}{[3]-1}p_1\\
&= p_{g^{-1}} + \rhoE -\frac{1}{[3]-1}\sum_{g\in G} p_g\\
&= p_{g^{-1}} + \rhoE -\frac{1}{[3]-1}\rhoJW\\
&= p_{g^{-1}} - \frac{1}{[3]-1} \left(\,
\begin{tikzpicture}[baseline = -.1cm]
	\nbox{unshaded}{(0,0)}{.4}{0}{0}{}
	\draw[thick, rho] (-.2,.4) arc (-180:0:.2);
	\draw[thick, rho] (-.2,-.4) arc (180:0:.2);	
\end{tikzpicture}
-
\begin{tikzpicture}[baseline = -.1cm]
	\draw[thick, rho] (-.2,-.4) -- ( -.2, .4);	
	\draw[thick, rho] (.2,-.4) -- (.2, .4);
	\nbox{}{(0,0)}{.4}{0}{0}{}
\end{tikzpicture}\,
\right).
\end{align*}
\end{cor}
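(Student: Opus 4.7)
The plan is to identify the two-click rotation in $\cP_\bullet$ with the one-click rotation in $\cR_\bullet$ on the span of the $p_g$'s via Proposition \ref{prop:RectangularLowWeight}, and then unpack the resulting formula into the displayed equivalent forms using the $\rho$-skein relations already established.

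First, for each $g\in G\setminus\{1\}$, the projection $p_g$ is a minimal projection at a depth-4 vertex of $\Gamma_+$, so it is uncappable in $\cP_{4,+}$. Under the inclusion $\cR_2\subset\cP_{4,+}$, this uncappability is precisely the rectangular uncappability hypothesis of Proposition \ref{prop:RectangularLowWeight}, which then yields $\cF^2_{\cP_\bullet}(p_g)=\cF_{\cR_\bullet}(p_g)$. Thus Proposition \ref{prop:2ClickRotation} rephrases Conjecture \ref{conj:Main} as exactly the first displayed formula for every $g\in G\setminus\{1\}$. The $g=1$ case must be checked separately, but it is automatic: $\cF_{\cR_\bullet}(p_1)$ is the ninety-degree rotation of the ``I-shape'' $p_1$ into the ``H-shape'', and Relation \eqref{rel:IH} (equivalently Corollary \ref{cor:RotateH}) rewrites this H-shape as $p_1+\rhoE-\tfrac{1}{[3]-1}\rhoJW$, matching the fourth form of the corollary for $g=1$.

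Given this identification, the four remaining equalities are routine algebraic substitutions driven by the skein relations. Substituting the formula
\[
\cF^2(\jw{4})=\tfrac{1}{[3]([3]-1)}\jw{4}+(|G|-1)\rhoE-\tfrac{|G|-1}{[3]-1}p_1
\]
derived just before the corollary, together with $|G|-1=\tfrac{[3]^2-[3]-1}{[3]}$ from Remark \ref{rem:GSize}, converts the first form into the second (the coefficient $\tfrac{1}{|G|-1}\cdot\tfrac{1-[3]([3]-1)}{[3]([3]-1)}$ collapses to $-\tfrac{1}{[3]-1}$). Writing $\jw{4}+p_1=\sum_{g\in G}p_g$ (using $\jw{4}=\sum_{g\neq 1}p_g$) gives the third form. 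Relation \eqref{rel:E} identifies $\sum_{g\in G}p_g=\rhoJW$, yielding the fourth form. Substituting $\rhoJW=\id-\rhoE$ and $\rhoE=\tfrac{1}{[3]}$ times the cup-cap box then gives the fifth form.

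The main obstacle is really just the opening identification $\cF^2_{\cP_\bullet}(p_g)=\cF_{\cR_\bullet}(p_g)$: one must recognize that uncappability of a depth-4 projection in $\cP_{4,+}$ is exactly the rectangular uncappability condition needed to apply Proposition \ref{prop:RectangularLowWeight}, and one must handle $g=1$ separately via the I=H relation because $p_1$ is not a depth-4 projection. Once this hurdle is cleared, the rest of the proof is mechanical, with the only subtlety being careful bookkeeping of the coefficients involving $[3]$ and $|G|$ in the telescoping between the first and second forms.
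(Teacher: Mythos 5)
Your route is the one the paper intends: Proposition \ref{prop:2ClickRotation} turns Conjecture \ref{conj:Main} into the first displayed formula, Proposition \ref{prop:RectangularLowWeight} identifies $\cF^2_{\cP_\bullet}$ with $\cF_{\cR_\bullet}$ on the $p_g$, the $g=1$ case is Relation \eqref{rel:IH} (equivalently Corollary \ref{cor:RotateH}), and the remaining equalities are the substitutions you describe; your coefficient bookkeeping for the passage from the first to the second form, using $|G|-1=\tfrac{[3]^2-[3]-1}{[3]}$, is correct.

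Two corrections are needed. First, your justification of the key identification is wrong as stated: $p_g$ is \emph{not} uncappable in $\cP_{4,+}$. It is a projection with $\Tr(p_g)>0$, so the side caps (the conditional expectations onto $\cP_{3,\pm}$) are nonzero; this is exactly why the paper's uncappability lemma is stated only for trace-zero elements and its corollary only for differences $p_g-p_h$. What you actually need is strictly weaker, namely the rectangular uncappability hypothesis of Proposition \ref{prop:RectangularLowWeight} (vanishing of the top and bottom adjacent caps), and this does hold because $p_g\le \jw{4}=\sum_{h\ne 1}p_h$ and $E_i\,\jw{4}=0$; so the step is repaired by a one-line argument, but not by the one you gave. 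Second, if you actually carry out your final substitution you get
\begin{equation*}
\rhoE-\tfrac{1}{[3]-1}\rhoJW=\tfrac{1}{[3]-1}\bigl([3]\rhoE-(\rhoE+\rhoJW)\bigr),
\end{equation*}
which is $+\tfrac{1}{[3]-1}$ times (cup-cap minus the two-strand identity), the \emph{negative} of the fifth displayed form. The first four forms are mutually consistent with Relation \eqref{rel:IH} (which passes both the capping and the trace checks), so the sign in the fifth line — and in the matching formulas in the introduction and Proposition \ref{prop:TrueForSquaredg} — appears to be a typo; your write-up reproduces it rather than derives it, so you should flag the discrepancy instead of asserting that the substitution ``gives the fifth form.''
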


%%%%%%%%%%%%%%%%%%%%%%%%%%%%%%%%%%%%%%%%%%%%%%%%%%%%%%%%%%%%%%%%%%
%%%%%%%%%%%%%%%%%%%%%%%%%%%%%%%%%%%%%%%%%%%%%%%%%%%%%%%%%%%%%%%%%%
%%%%%%%%%%%%%%%%%%%%%%%%%%%%%%%%%%%%%%%%%%%%%%%%%%%%%%%%%%%%%%%%%%
\section{An `almost' $G$-action on $\cR_\bullet$}\label{sec:ActionOnR}

We now construct an `almost' $G$-action on $\cR_\bullet$.
This corresponds to an action of $G$ on $\frac{1}{2}\cP_+$.
We define for each $g\in G$ a map $\Phi_g$ on the unshaded factor planar algebra $\cR_\bullet$.

\begin{defn}
Recall that the group $G$ can be seen in $\frac{1}{2}\cP_+$ inside $\cP_{6,+}$ as the minimal projections at depth $6$ of $\Gamma_+$, together with the image of the trivial object in $\cP_{6,+}$ given by
$$
e=\frac{1}{[4]}\,
\begin{tikzpicture}[baseline=-.1cm]
	\fill[shaded] (-.2,-.4) arc (180:0:.2cm);
	\fill[shaded] (-.2,.4) arc (-180:0:.2cm);
	\draw[thick, theta] (-.2,-.4) arc (180:0:.2cm);
	\draw[thick, theta] (-.2,.4) arc (-180:0:.2cm);
	\nbox{}{(0,0)}{.4}{0}{0}{}
\end{tikzpicture}
\text{ where }
\begin{tikzpicture}[baseline=-.1cm]
	\fill[shaded] (0,-.4) rectangle (.4,.4);
	\draw[thick, theta] (0,-.4) -- (0,.4);
	\nbox{}{(0,0)}{.4}{0}{0}{}
\end{tikzpicture}
=\jw{3}\in\cP_{3,+}.
$$
Here, we switch the convention of the unit of the group to $e$ instead of $1$ to not confuse the empty diagram $1$ with the projection $e\in \cP_{6,+}$.
(While the empty diagram is the identity of $\cP_{0,+}$, $e$ is not the identity of $\cP_{6,+}$!)
\end{defn}

The group multiplication is given by a multiple of the coproduct.

\begin{lem}\label{lem:GCoproduct}
In $\cP_{6,+}$, the coproduct 
$g * h = 
\begin{tikzpicture}[baseline = -.1cm]
	\fill[shaded] (-.5,-.7) rectangle (.5,.7);
	\fill[white](-.3,-.3) arc (-180:0:.3cm) -- (.3,.3) arc (0:180:.3cm);
	\draw[thick, theta] (-.5,-.7) -- (-.5,.7);
	\draw[thick, theta] (.5,-.7) -- (.5,.7);
	\nbox{unshaded}{(-.4,0)}{.3}{0}{0}{$g$}
	\nbox{unshaded}{(.4,0)}{.3}{0}{0}{$h$}
	\draw[thick, theta] (-.3,.3) arc (180:0:.3cm);
	\draw[thick, theta] (-.3,-.3) arc (-180:0:.3cm);
\end{tikzpicture}
=
[4]^{-1}\,
\begin{tikzpicture}[baseline = -.1cm]
	\fill[shaded] (-.2,-.7) rectangle (.2,.7);
	\draw[thick, theta] (-.2,-.7) -- (-.2,.7);
	\draw[thick, theta] (.2,-.7) -- (.2,.7);
	\nbox{unshaded}{(0,0)}{.3}{0}{0}{$gh$}
\end{tikzpicture}
$.
\end{lem}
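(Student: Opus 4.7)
The plan is to exploit that $g, h$ correspond to $1$-dimensional (invertible) objects in the fusion category $\frac{1}{2}\cP_+$, which forces $g*h$ into a one-dimensional subspace of $\cP_{6,+}$, and then to identify the proportionality constant by a single trace computation.

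First, since $g\otimes h\cong gh$ is a simple invertible object, we have $\End_{\frac{1}{2}\cP_+}(gh)=\bbC\cdot\id_{gh}$. The coproduct $g*h$ factors through the isomorphism $g\otimes h\cong gh$ in $\frac{1}{2}\cP_+$: the three connecting $\jw{3}$-strands on the top and bottom of the coproduct picture implement precisely the structure maps of $g\otimes h$. Consequently, as an element of $\cP_{6,+}$, the element $g*h$ lies in the one-dimensional summand $\bbC\cdot(gh)$, and $g*h = c\cdot(gh)$ for some scalar $c$ to be determined.

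Next, to pin down $c$, apply the unnormalized Markov trace to both sides. Because all three projections $g, h, gh$ correspond to invertible objects of quantum dimension $1$, their traces at depth $6$ agree: $\tr(g)=\tr(h)=\tr(gh)=:\tau$. Thus it suffices to show $\tr(g*h)=\tau/[4]$.

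To compute $\tr(g*h)$ diagrammatically, close off the coproduct picture on the right. The outer $\jw{3}$-strands of $g$ and of $h$ close up into partial-trace loops on either side; using idempotency $g^2=g$, $h^2=h$ together with the fact that invertible projections have scalar-valued partial traces, these collapse, and the inner $\jw{3}$-connectors running between $g$ and $h$ combine into a single closed $\jw{3}$-loop of value $[4]$. Chasing the scalars yields $\tr(g*h)=\tau/[4]$, whence $c=[4]^{-1}$. The only mildly delicate step is the diagrammatic evaluation in this last paragraph, which requires careful bookkeeping with the loop value $[4]$ of the Jones-Wenzl idempotent $\jw{3}$; this is otherwise a standard planar calculation relying only on the idempotency of $g$ and $h$ and the Jones-Wenzl loop value.
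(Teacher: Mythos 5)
Your proof is correct and follows essentially the same route as the paper's: since $g\otimes h\cong gh$ is simple, $g*h$ must be a scalar multiple of the minimal projection $gh$, and the scalar is then determined by taking traces. (The paper's proof is exactly these two sentences; the only quibble with your more detailed trace computation is that closing off both outer strands contributes two partial-trace factors, so the closed diagram evaluates to $\Tr(g)\Tr(h)/[4]=\tau^2/[4]$ rather than $\tau/[4]$ — harmless here since $\tau=\dim(g)=1$.)
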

\begin{proof}
We know $g\otimes h\cong gh$ and $g*h$ is self adjoint, so $g*h=\lambda gh$ for some $\lambda\in \bbR$.
Taking traces shows that $\lambda=[4]^{-1}$.
\end{proof}

Recall that $\rho g \cong g^{-1}\rho$ for all $g\in G$.
This means there is exactly one non-zero morphism up to scaling between the two.

\begin{defn}
For $g\in G$, we define the following element of $\cP_{6,+}$:
$$
V_g = 
[3]
\begin{tikzpicture}[baseline = -.1cm]
	\draw[thick, theta] (.1,1.1) -- (.1,1.4);
	\draw[thick, theta] (.3,1.1) -- (.3,1.4);
	\draw[thick, theta] (-.3,-1.1) -- (-.3,-1.4);
	\draw[thick, theta] (-.1,-1.1) -- (-.1,-1.4);
	\draw[thick, rho] (-.2,-.3) -- (0,.3);
	\draw[thick, rho] (0,-.3) -- (.2,.3);
	\draw[thick, rho] (-.4,-.3) -- (-.4,1.4);
	\draw[thick, rho] (.4,.3) -- (.4,-1.4);
	\nbox{unshaded}{(-.2,-.7)}{.4}{0}{0}{$g$}
	\nbox{unshaded}{(.2,.7)}{.4}{0}{0}{$g$}
	\node at (-.75,-.7) {$\star$};	
	\node at (.75,.7) {$\star$};	
\end{tikzpicture}
=
[3]
\begin{tikzpicture}[baseline = -.1cm]
	\draw[thick, rho] (0,1.1) -- (0,1.4);
	\draw[thick, rho] (.2,1.1) -- (.2,1.4);
	\draw[thick, rho] (.4,1.1) -- (.4,1.4);
	\draw[thick, rho] (-.4,-1.1) -- (-.4,-1.4);
	\draw[thick, rho] (-.2,-1.1) -- (-.2,-1.4);
	\draw[thick, rho] (0,-1.1) -- (0,-1.4);
	\draw[thick, rho] (-.2,-.3) -- (0,.3);
	\draw[thick, rho] (0,-.3) -- (.2,.3);
	\draw[thick, rho] (-.4,-.3) -- (-.4,1.4);
	\draw[thick, rho] (.4,.3) -- (.4,-1.4);
	\nbox{unshaded}{(-.2,-.7)}{.4}{0}{0}{$g$}
	\nbox{unshaded}{(.2,.7)}{.4}{0}{0}{$g$}
	\node at (-.75,-.7) {$\star$};	
	\node at (.75,.7) {$\star$};	
\end{tikzpicture}
=
\begin{tikzpicture}[baseline = -.1cm]
	\draw[thick, rho] (.4,-.8) -- (-.4,.8);
	\draw[mid>] (-.4,-.8) -- (-.15,-.3);
	\node at (.1,.7) {\scriptsize{$g$}};
	\draw[mid<] (.15,.3) -- (.4,.8);
	\node at (-.5,-.6) {\scriptsize{$g$}};
	\ncircle{unshaded}{(0,0)}{.4}{180}{$V_g$}
\end{tikzpicture}
\,.
$$
We may think of $V_g$ as a crossing as above, where we use an oriented strand labelled by $g$ to denote three $\rho$-strands cabled by $g$, and the direction gives the location of the $\star$.
Here, we use the convention that $V_e$ is given by
$$
V_e = 
\frac{1}{[4]}\,
\begin{tikzpicture}[baseline=-.1cm]
	\fill[shaded] (0,.4) arc (-180:0:.2cm);
	\fill[shaded] (-.4,-.4) arc (180:0:.2cm);
	\draw[thick, theta] (0,.4) arc (-180:0:.2cm);
	\draw[thick, theta] (-.4,-.4) arc (180:0:.2cm);
	\draw[thick, rho] (.4,-.4) -- (-.4,.4);
\end{tikzpicture}\,.
$$
Note that we also have
$$
V_g^* = 
[3]
\begin{tikzpicture}[baseline = -.1cm, yscale=-1]
	\draw[thick, theta] (.1,1.1) -- (.1,1.4);
	\draw[thick, theta] (.3,1.1) -- (.3,1.4);
	\draw[thick, theta] (-.3,-1.1) -- (-.3,-1.4);
	\draw[thick, theta] (-.1,-1.1) -- (-.1,-1.4);
	\draw[thick, rho] (-.2,-.3) -- (0,.3);
	\draw[thick, rho] (0,-.3) -- (.2,.3);
	\draw[thick, rho] (-.4,-.3) -- (-.4,1.4);
	\draw[thick, rho] (.4,.3) -- (.4,-1.4);
	\nbox{unshaded}{(-.2,-.7)}{.4}{0}{0}{$g$}
	\nbox{unshaded}{(.2,.7)}{.4}{0}{0}{$g$}
	\node at (-.75,-.7) {$\star$};	
	\node at (.75,.7) {$\star$};	
\end{tikzpicture}
=
[3]
\begin{tikzpicture}[baseline = -.1cm, yscale=-1]
	\draw[thick, rho] (0,1.1) -- (0,1.4);
	\draw[thick, rho] (.2,1.1) -- (.2,1.4);
	\draw[thick, rho] (.4,1.1) -- (.4,1.4);
	\draw[thick, rho] (-.4,-1.1) -- (-.4,-1.4);
	\draw[thick, rho] (-.2,-1.1) -- (-.2,-1.4);
	\draw[thick, rho] (0,-1.1) -- (0,-1.4);
	\draw[thick, rho] (-.2,-.3) -- (0,.3);
	\draw[thick, rho] (0,-.3) -- (.2,.3);
	\draw[thick, rho] (-.4,-.3) -- (-.4,1.4);
	\draw[thick, rho] (.4,.3) -- (.4,-1.4);
	\nbox{unshaded}{(-.2,-.7)}{.4}{0}{0}{$g$}
	\nbox{unshaded}{(.2,.7)}{.4}{0}{0}{$g$}
	\node at (-.75,-.7) {$\star$};	
	\node at (.75,.7) {$\star$};	
\end{tikzpicture}
=
\begin{tikzpicture}[baseline = -.1cm, yscale=-1]
	\draw[thick, rho] (.4,-.8) -- (-.4,.8);
	\draw[mid<] (-.4,-.8) -- (-.15,-.3);
	\node at (.1,.7) {\scriptsize{$g$}};
	\draw[mid>] (.15,.3) -- (.4,.8);
	\node at (-.5,-.6) {\scriptsize{$g$}};
	\ncircle{unshaded}{(0,0)}{.4}{180}{$V_g^*$}
\end{tikzpicture}\,.
$$

Using $V_g$ and $V_g^*$, we define the map $\Phi_g$ on $x\in \cR_{n}$ by encircling $x$ by a strand whose orientation reverses as it crosses the $\rho$-strands connected to $x$.
The orientation is clockwise in the distinguished region of $x$.
We replace crossings with either $V_g$ or $V_{g}^*$ depending on the crossings.
This means that if we travel on the $g$-strand from an unshaded region to a shaded region, we replace the crossing with $V_g$, and if we cross from shaded to unshaded we replace the crossing with $V_g^*$.
\end{defn}

\begin{rem}
It is easy to see that for $x\in \cR_n$ and $g\in G$, $\Phi_g(x)\in \cR_n$.
When $g=e$, $\Phi_e$ is the identity.
When $g\neq e$, $V_g, V_g^*\in \cR_4$, so $\Phi_g(x)\in\cR_n$.
\end{rem}

\begin{ex}
When $x\in \cR_{2}$, we have
$
\Phi_g(x) = 
\begin{tikzpicture}[baseline = -.1cm]
	\pgfmathsetmacro{\innerRadius}{.9}
	\pgfmathsetmacro{\middleRadius}{1.1}
	\pgfmathsetmacro{\outerRadius}{1.5}
	\draw[mid>] (225:\innerRadius) arc (225:135:\innerRadius);
	\draw[mid<] (135:\innerRadius) arc (135:45:\innerRadius);
	\draw[mid>] (45:\innerRadius) arc (45:-45:\innerRadius);
	\draw[mid<] (315:\innerRadius) arc (315:225:\innerRadius);
	\draw[thick, rho] (0,0) -- (45:\outerRadius);
	\draw[thick, rho] (0,0) -- (135:\outerRadius);
	\draw[thick, rho] (0,0) -- (225:\outerRadius);
	\draw[thick, rho] (0,0) -- (315:\outerRadius);
	\node at (0:\middleRadius) {\scriptsize{$g$}};
	\node at (90:\middleRadius) {\scriptsize{$g$}};
	\node at (180:\middleRadius) {\scriptsize{$g$}};
	\node at (270:\middleRadius) {\scriptsize{$g$}};
	\ncircle{unshaded}{(45:\innerRadius)}{.27}{0}{{\scriptsize{$V_{g}^*$}}}
	\ncircle{unshaded}{(135:\innerRadius)}{.27}{180}{{\scriptsize{$V_g$}}}
	\ncircle{unshaded}{(225:\innerRadius)}{.27}{180}{{\scriptsize{$V_{g}^*$}}}
	\ncircle{unshaded}{(315:\innerRadius)}{.27}{0}{{\scriptsize{$V_g$}}}
	\ncircle{unshaded}{(0,0)}{.27}{180}{$x$}
\end{tikzpicture}
$.
\end{ex}

\begin{lem}\label{lem:OneClickRotationCompatible}
There is a constant $\theta_g\in U(1)$ such that $\cF(V_g^*) = \theta_g V_{g^{-1}}$ and $\cF^{-1}(V_g)=\theta_g^{-1} V_{g^{-1}}^*$.
Moreover, $\theta_g=\theta_{g^{-1}}$ for all $g\in G$, and $\theta_e=1$.
\end{lem}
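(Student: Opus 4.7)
The plan is to recognize each $V_g$ as a normalized basis vector of a one-dimensional Hom space, and then to compare $\cF(V_g^*)$ against $V_{g^{-1}}$ within that Hom space.

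First I would interpret $V_g$ as a non-zero intertwiner between the simple objects $g\rho$ and $\rho g^{-1}$ in $\frac{1}{2}\cP_+$. Since $\theta(h)=h^{-1}$ gives $\rho g^{-1}\cong g\rho$ and both sides are simple, the Hom space $\Hom(g\rho,\rho g^{-1})$ is one-dimensional; the factor of $[3]$ in the definition of $V_g$ is chosen precisely so that $V_g$ is unit-normalized with respect to the Markov trace (equivalently so that $V_g^*V_g$ is the identity endomorphism of the relevant simple). The dual Hom space in which $V_g^*$ lives is therefore also one-dimensional.

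Next I would observe that the one-click rotation $\cF$ cyclically permutes source and target strands at depth 6 (using the symmetric self-duality identification of footnote~\ref{footnote:SymmetricSelfDuality}). Applied to $V_g^*$, the cyclic permutation lands $\cF(V_g^*)$ in exactly the one-dimensional Hom space spanned by $V_{g^{-1}}$, so there is a unique scalar $\theta_g$ with $\cF(V_g^*)=\theta_g V_{g^{-1}}$. Taking inner products against $V_{g^{-1}}$ on both sides and using that $\cF$ preserves the Markov trace together with the matched unit normalizations of $V_g^*$ and $V_{g^{-1}}$ forces $|\theta_g|=1$.

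The second identity follows by taking adjoints: rotation satisfies $\cF(x)^*=\cF^{-1}(x^*)$, and $\theta_g\in U(1)$ gives $\overline{\theta_g}=\theta_g^{-1}$, so applying $*$ to $\cF(V_g^*)=\theta_g V_{g^{-1}}$ yields $\cF^{-1}(V_g)=\theta_g^{-1}V_{g^{-1}}^*$. For $\theta_e=1$, since $V_e$ is a cap-cup diagram on the $e=\jw{3}$ strands crossed by a single $\rho$-strand, a direct computation gives $\cF(V_e^*)=V_e$. For $\theta_g=\theta_{g^{-1}}$, substituting $g\mapsto g^{-1}$ in the first identity gives $\cF(V_{g^{-1}}^*)=\theta_{g^{-1}}V_g$; applying $\cF$ to the (now established) second identity and combining yields $V_g=\theta_g^{-1}\cF(V_{g^{-1}}^*)=\theta_g^{-1}\theta_{g^{-1}}V_g$, whence $\theta_{g^{-1}}=\theta_g$.

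The main obstacle I expect is the careful diagrammatic bookkeeping around shading and symmetric self-duality when computing the rotation $\cF(V_g^*)$: one has to verify that the configuration of $\rho$-strands and $g$-labelled strands on top versus bottom of $V_g^*$ after one cyclic shift matches exactly that of $V_{g^{-1}}$, and not that of $V_g$ itself, and moreover that the chosen normalizing factor $[3]$ aligns on both sides. Once that identification is pinned down, the rest of the lemma is a short consequence of the one-dimensionality of the Hom space and of the unitarity of $\cF$.
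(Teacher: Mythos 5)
Your proposal is correct and follows essentially the same route as the paper: existence and uniqueness of $\theta_g$ from the one-dimensionality of the Hom space between the isomorphic simples $g^{-1}\rho$ and $\rho g$, unitarity from norm preservation under $\cF$, the second identity by taking adjoints, $\theta_g=\theta_{g^{-1}}$ by applying $\cF$ to that identity and comparing with the defining equation for $\theta_{g^{-1}}$, and $\theta_e=1$ by direct diagrammatic computation. The only difference is that you spell out the normalization and shading bookkeeping that the paper leaves implicit.
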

\begin{proof}
There is exactly one map up to a scalar from $g^{-1}\rho$ to $\rho g$, so there is a constant $\theta_g\neq 0$ such that $\cF(V_g^*)=\theta_g V_{g^{-1}}$.
Since the norm squared of $\cF(V_g^*)$ equals the norm squared of $V_{g^{-1}}$, $\theta_g\in U(1)$.
Now taking adjoints, we have $\cF^{-1}(V_{g})=\theta_g^{-1}V_{g^{-1}}^*$.

We now apply $\cF$ to the equation $\cF^{-1}(V_{g})=\theta_g^{-1}V_{g^{-1}}^*$ to get the equation $\cF(V_{g^{-1}}^*)=\theta_g V_g$. 
This means $\theta_g = \theta_{g^{-1}}$ by the definition of $\theta_{g^{-1}}$.

Finally, a simple diagrammatic calculation shows $\theta_e=1$.
\end{proof}

\begin{cor}
The map $\Phi$ is also given as follows. 
First, encircle $x$ by a strand whose orientation reverses as it crosses the $\rho$-strands connected to $x$.
The orientation is clockwise in the distinguished region of $x$.
We replace crossings with either $V_{g^{-1}}$ or $V_{g^{-1}}^*$ depending on the crossings.

For example, when $x\in \cR_{2}$ we have
$
\Phi_g(x) = 
\begin{tikzpicture}[baseline = -.1cm]
	\pgfmathsetmacro{\innerRadius}{1}
	\pgfmathsetmacro{\middleRadius}{1.2}
	\pgfmathsetmacro{\outerRadius}{1.6}
	\draw[mid>] (225:\innerRadius) arc (225:135:\innerRadius);
	\draw[mid<] (135:\innerRadius) arc (135:45:\innerRadius);
	\draw[mid>] (45:\innerRadius) arc (45:-45:\innerRadius);
	\draw[mid<] (315:\innerRadius) arc (315:225:\innerRadius);
	\draw[thick, rho] (0,0) -- (45:\outerRadius);
	\draw[thick, rho] (0,0) -- (135:\outerRadius);
	\draw[thick, rho] (0,0) -- (225:\outerRadius);
	\draw[thick, rho] (0,0) -- (315:\outerRadius);
	\node at (0:\middleRadius) {\scriptsize{$g$}};
	\node at (90:\middleRadius) {\scriptsize{$g$}};
	\node at (180:\middleRadius) {\scriptsize{$g$}};
	\node at (270:\middleRadius) {\scriptsize{$g$}};
	\ncircle{unshaded}{(45:\innerRadius)}{.4}{90}{{\scriptsize{$V_{g^{-1}}$}}}
	\ncircle{unshaded}{(135:\innerRadius)}{.4}{90}{{\scriptsize{$V_{g^{-1}}^*$}}}
	\ncircle{unshaded}{(225:\innerRadius)}{.4}{270}{{\scriptsize{$V_{g^{-1}}$}}}
	\ncircle{unshaded}{(315:\innerRadius)}{.4}{270}{{\scriptsize{$V_{g^{-1}}^*$}}}
	\ncircle{unshaded}{(0,0)}{.27}{180}{$x$}
\end{tikzpicture}
$.
\end{cor}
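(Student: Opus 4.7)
The plan is to deduce the corollary from Lemma~\ref{lem:OneClickRotationCompatible} by swapping $V_g^*$ for $V_{g^{-1}}$ and $V_g$ for $V_{g^{-1}}^*$ at each crossing, at the cost of phases $\theta_g^{\pm 1}$ that then cancel globally.

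First I would verify the elementary combinatorial fact: for $x\in\cR_n$ there are $2n$ $\rho$-strands emerging from $x$, and the encircling $g$-strand, whose orientation reverses at each crossing, produces a cyclic alternating sequence of crossings, yielding exactly $n$ of type $V_g$ and $n$ of type $V_g^*$.

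Next, I would rewrite Lemma~\ref{lem:OneClickRotationCompatible} as $V_g^*=\theta_g\,\cF^{-1}(V_{g^{-1}})$ and $V_g=\theta_g^{-1}\,\cF(V_{g^{-1}}^*)$. Since inserting $\cF^{\pm1}(Y)$ at a four-valent vertex is, by planar isotopy, the same as inserting $Y$ at the same vertex with the 4-box rotated by one click, each substitution converts a $V_g^*$- (resp.\ $V_g$-)labelled crossing in the original prescription into a $V_{g^{-1}}$- (resp.\ $V_{g^{-1}}^*$-)labelled crossing with the $\star$-marker rotated by one click; inspection of the pictures in the corollary's statement shows that this is precisely the convention used there. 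The accumulated scalar is $\theta_g^{\,n}\cdot\theta_g^{-n}=1$, so the two prescriptions represent the same element of $\cR_n$.

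The main obstacle is the diagrammatic bookkeeping at a typical crossing: one has to verify that the one-click rotation of the 4-box in $\cR_4$ (as implemented by $\cF$) genuinely corresponds to the repositioning of the $\star$-marker shown in the alternative picture, and that the orientation convention on the encircling $g$-strand is preserved throughout. This is a straightforward but careful planar-isotopy check that ultimately justifies the phase-cancellation argument above.
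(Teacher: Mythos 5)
Your proposal is correct and is exactly the intended argument: the corollary is an immediate consequence of Lemma \ref{lem:OneClickRotationCompatible}, obtained by substituting $V_g^*=\theta_g\,\cF^{-1}(V_{g^{-1}})$ and $V_g=\theta_g^{-1}\,\cF(V_{g^{-1}}^*)$ at each of the $2n$ crossings (which alternate in type, giving $n$ of each), so the phases cancel as $\theta_g^{\,n}\theta_g^{-n}=1$, and the one-click rotations account precisely for the repositioned $\star$-markers in the stated pictures.
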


\begin{cor}\label{cor:1ClickProblem}
We have $\Phi_g \circ \cF_{\cR_\bullet} = \cF_{\cR_\bullet} \circ \Phi_{g^{-1}}$.
\end{cor}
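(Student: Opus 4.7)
The plan is to verify the identity $\Phi_g(\cF_{\cR_\bullet}(x)) = \cF_{\cR_\bullet}(\Phi_{g^{-1}}(x))$ for arbitrary $x\in\cR_n$ by a direct diagrammatic calculation, using Lemma \ref{lem:OneClickRotationCompatible} as the sole input. I would start from the right-hand side: unpack the diagram for $\Phi_{g^{-1}}(x)$, namely $x$ sitting at the center with $2n$ $\rho$-strands going outward, surrounded by a loop of $g^{-1}$-cable that crosses each $\rho$-strand once. By the definition of $\Phi$, the $2n$ crossings are realized by $n$ copies of $V_{g^{-1}}$ and $n$ copies of $V_{g^{-1}}^*$, alternating according to the unshaded-to-shaded convention.

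Next, I would apply $\cF_{\cR_\bullet}$ by rotating the outer frame one click. This rotation has two effects: it rotates the central $x$ to $\cF_{\cR_\bullet}(x)$, and it locally rotates each of the surrounding crossing boxes. Applying Lemma \ref{lem:OneClickRotationCompatible} with $g$ replaced by $g^{-1}$ and invoking $\theta_g=\theta_{g^{-1}}$, each $V_{g^{-1}}^*$ box transforms to $\theta_g V_g$. The corresponding transformation rule for each $V_{g^{-1}}$ box, namely into a scalar multiple of $V_g^*$ with factor $\theta_g^{-1}$, follows by taking adjoints of the lemma's relations (using the compatibility $(\cF y)^* = \cF^{-1}(y^*)$ of rotation with the $*$-structure). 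The accumulated phase is $\theta_g^n\cdot\theta_g^{-n}=1$, so the scalars cancel. The resulting diagram has $\cF_{\cR_\bullet}(x)$ at the center encircled by a $g$-cable loop with $V_g$ and $V_g^*$ boxes at the crossings, i.e., $\Phi_g(\cF_{\cR_\bullet}(x))$.

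The main obstacle is the careful bookkeeping: at each of the $2n$ crossings, one must verify that the local rotation sends the $V_{g^{-1}}$ or $V_{g^{-1}}^*$ box to the box of the correct type (either $V_g$ or $V_g^*$) in the position required by the $\Phi_g(\cF_{\cR_\bullet}(x))$ configuration. In particular, one must check that the shading/orientation rules that govern whether a crossing is tagged $V_g$ or $V_g^*$ transform compatibly with the global rotation. A convenient sanity check is provided by the unlabeled corollary immediately preceding the statement, which gives an alternative description of $\Phi_g$ in terms of $V_{g^{-1}}, V_{g^{-1}}^*$ boxes; this already encodes the symmetry between $g$ and $g^{-1}$ that is the geometric content of Corollary \ref{cor:1ClickProblem}, and its derivation rests on exactly the same rotation identities from Lemma \ref{lem:OneClickRotationCompatible} that drive the present argument.
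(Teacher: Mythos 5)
Your proposal is correct and takes essentially the same route the paper intends: the paper leaves this corollary unproved, deriving it implicitly from Lemma \ref{lem:OneClickRotationCompatible} via the immediately preceding unlabeled corollary (the alternative presentation of $\Phi_g$ using $V_{g^{-1}}$ and $V_{g^{-1}}^*$ boxes), and your box-by-box conversion under the one-click rotation with the cancelling phases $\theta_g^{n}\theta_g^{-n}=1$ is exactly the computation underlying that chain.
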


\begin{rem}\label{rem:AutomorphismSkein}
Since each $g\in G$ has dimension 1, we have the usual skein relations for $g$-strands:
$$
\begin{tikzpicture}[baseline = -.1cm]	
	\nbox{unshaded}{(0,0)}{.4}{0}{0}{}
	\draw[mid>] (-.2,.4) arc (-180:0:.2);
	\draw[mid<] (-.2,-.4) arc (180:0:.2);	
\end{tikzpicture}
=
\begin{tikzpicture}[baseline = -.1cm]	
	\nbox{unshaded}{(0,0)}{.4}{0}{0}{}
	\draw[mid<] (-.2,-.4) -- (-.2,.4);
	\draw[mid>] (.2,-.4) -- (.2,.4);
\end{tikzpicture}
\text{ and }
\begin{tikzpicture}[baseline = -.1cm]	
	\nbox{unshaded}{(0,0)}{.4}{0}{0}{}
	\draw[mid<] (-.2,.4) arc (-180:0:.2);
	\draw[mid>] (-.2,-.4) arc (180:0:.2);	
\end{tikzpicture}
=
\begin{tikzpicture}[baseline = -.1cm]	
	\nbox{unshaded}{(0,0)}{.4}{0}{0}{}
	\draw[mid>] (-.2,-.4) -- (-.2,.4);
	\draw[mid<] (.2,-.4) -- (.2,.4);
\end{tikzpicture}\,.
$$
\end{rem}

\begin{prop}\label{prop:ReidemeisterII}
The $g$-strand and the $\rho$-strand together with $V_g,V_g^*,V_{g^{-1}},V_{g^{-1}}^*$, satisfy the Reidemeister II relations
\begin{enumerate}[(1)]
\item
$
\begin{tikzpicture}[baseline = -.1cm]
	\draw[mid>] (-.4,.2) -- ( .4, .2);	
	\draw[thick, rho] (-.4,-.2) -- (.4, -.2);
	\draw[thick] (-.4,-.4)--(-.4,.4)--(.4,.4)--(.4,-.4)--(-.4,-.4);
\end{tikzpicture}
=
\begin{tikzpicture}[baseline = -.1cm]
	\draw[thick, rho] (-1.4,-.4)-- (-.6,0) .. controls ++(45:.6cm) and ++(135:.6cm) .. (.6,0)--(1.4,-.4);	
	\draw[mid>] (-1.4,0) -- (-.9, 0);
	\draw[mid<] (-.6,0) -- (.6, 0);
	\draw[mid>] (1,0) -- (1.4, 0);
	\draw[thick, unshaded] (.6,0) circle (.4cm);
	\node at (.6,0) {\scriptsize{$V_{g}^*$}};
	\node at (.6,.55) {$\star$};
	\draw[thick, unshaded] (-.6,0) circle (.4cm);
	\node at (-.6,0) {\scriptsize{$V_g$}};
	\node at (-.6,.55) {$\star$};
\end{tikzpicture}
=
\begin{tikzpicture}[baseline = -.1cm]
	\draw[thick, rho] (-1.4,-.4)-- (-.6,0) .. controls ++(45:.6cm) and ++(135:.6cm) .. (.6,0)--(1.4,-.4);	
	\draw[mid>] (-1.4,0) -- (-.9, 0);
	\draw[mid<] (-.6,0) -- (.6, 0);
	\draw[mid>] (1,0) -- (1.4, 0);
	\draw[thick, unshaded] (.6,0) circle (.4cm);
	\node at (.6,0) {\scriptsize{$V_{g}^*$}};
	\node at (.6,-.55) {$\star$};
	\draw[thick, unshaded] (-.6,0) circle (.4cm);
	\node at (-.6,0) {\scriptsize{$V_g$}};
	\node at (-.6,-.55) {$\star$};
\end{tikzpicture}
$, and
\item
$
\begin{tikzpicture}[baseline = -.1cm]
	\draw[mid<] (-.4,.2) -- ( .4, .2);	
	\draw[thick, rho] (-.4,-.2) -- (.4, -.2);
	\draw[thick] (-.4,-.4)--(-.4,.4)--(.4,.4)--(.4,-.4)--(-.4,-.4);
\end{tikzpicture}
=
\begin{tikzpicture}[baseline = -.1cm]
	\draw[thick, rho] (-1.4,-.4)-- (-.6,0) .. controls ++(45:.6cm) and ++(135:.6cm) .. (.6,0)--(1.4,-.4);	
	\draw[mid<] (-1.4,0) -- (-.9, 0);
	\draw[mid>] (-.6,0) -- (.6, 0);
	\draw[mid<] (1,0) -- (1.4, 0);
	\draw[thick, unshaded] (.6,0) circle (.4cm);
	\node at (.6,0) {\scriptsize{$V_{g^{-1}}^*$}};
	\node at (.6,.55) {$\star$};
	\draw[thick, unshaded] (-.6,0) circle (.4cm);
	\node at (-.6,0) {\scriptsize{$V_{g^{-1}}$}};
	\node at (-.6,.55) {$\star$};
\end{tikzpicture}
=
\begin{tikzpicture}[baseline = -.1cm]
	\draw[thick, rho] (-1.4,-.4)-- (-.6,0) .. controls ++(45:.6cm) and ++(135:.6cm) .. (.6,0)--(1.4,-.4);	
	\draw[mid<] (-1.4,0) -- (-.9, 0);
	\draw[mid>] (-.6,0) -- (.6, 0);
	\draw[mid<] (1,0) -- (1.4, 0);
	\draw[thick, unshaded] (.6,0) circle (.4cm);
	\node at (.6,0) {\scriptsize{$V_{g^{-1}}^*$}};
	\node at (.6,-.55) {$\star$};
	\draw[thick, unshaded] (-.6,0) circle (.4cm);
	\node at (-.6,0) {\scriptsize{$V_{g^{-1}}$}};
	\node at (-.6,-.55) {$\star$};
\end{tikzpicture}\,.
$
\end{enumerate}
\end{prop}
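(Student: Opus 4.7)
My plan is to prove both relations by Schur's lemma together with a normalization check. First I would use the fusion rule $\rho g = \theta(g)\rho = g^{-1}\rho$ to conclude that $\rho\otimes g$ and $g^{-1}\otimes\rho$ are isomorphic simple objects in $\frac{1}{2}\cP_+$. Hence $\Hom(\rho\otimes g,\, g^{-1}\otimes\rho)$ is one-dimensional, so $V_g$ is, up to a nonzero scalar, the unique intertwiner in this space, and $V_g^*$ spans the dual Hom space. The right-hand side of (1) is then a composition in $\End(g\otimes\rho)\cong \bbC$, so by Schur's lemma it equals a scalar multiple of $\id_{g\otimes\rho}$, which is the left-hand side up to that scalar.

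Next I would identify the scalar as $1$ by computing traces on both sides. The trace of $\id_{g\otimes\rho}$ equals $\dim(g\rho)=[3]$. The trace of the right-hand side is a closed diagram which, after substituting the definition of $V_g$ with its $[3]$ prefactor, evaluates using Lemma \ref{lem:GCoproduct} (to merge the two $g$-projections produced by $V_g$ and $V_g^*$ via the coproduct, contributing a factor of $[4]^{-1}$) together with the $\jw{2}$-skein relations from Proposition \ref{prop:RhoSkeinRelations} to reduce the cabled $\rho$-strands. The prefactor $[3]$ in the definition of $V_g$ was chosen precisely so that these factors combine to $[3]$, confirming the scalar is $1$.

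The two variants on each line (with $\star$ at the top versus the bottom of both boxes) correspond to applying the one-click rotation to both $V_g$ and $V_g^*$ simultaneously; by Lemma \ref{lem:OneClickRotationCompatible}, these rotations produce phases $\theta_g^{-1}$ and $\theta_g$ which multiply to $1$, so the two variants are equal. Relation (2) is the image of (1) under the substitution $g\mapsto g^{-1}$ (which reverses arrows on the $g$-strand and replaces $V_g$ with $V_{g^{-1}}$), so it follows immediately from (1) applied to $g^{-1}$.

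The main obstacle I anticipate is the bookkeeping in the trace computation: tracking the $[3]^2$ from the two $V_g$ prefactors, the $[4]^{-1}$ from the $g$-coproduct, and the various loop values $[3]$ from the Jones-Wenzl relations requires careful diagrammatic simplification, especially around the cabled interaction between the two $g$-projections and the intervening $\rho$-strands. Conceptually, however, the content is simply that $V_g$ is normalized to be a unitary intertwiner between simple objects, so once the normalization is verified, both Reidemeister II relations are immediate consequences of Schur's lemma.
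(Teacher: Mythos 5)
Your Schur's-lemma argument for the \emph{first} equality in (1) is sound and is a genuinely different organization from the paper's proof: since $g\otimes\rho$ is simple, the composite on the right is a scalar multiple of the identity, and the scalar is pinned down by a trace (equivalently, norm) computation that verifies the $[3]$ normalization of $V_g$. The paper instead reduces the composite diagram directly to the identity by expanding $V_g$ and $V_g^*$ into their cabled definitions and applying the zig-zag relations for the invertible object $g$ from Remark \ref{rem:AutomorphismSkein}; this is essentially the same bookkeeping you defer to your trace computation, just packaged so that the normalization and the Schur step are handled in one pass. Your reduction of (2) to (1) by $g\mapsto g^{-1}$ is exactly what the paper does, and you should also dispose of the (trivial but separately defined) case $g=e$, where $V_e$ is given by a different formula.

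There is, however, a genuine gap in your treatment of the \emph{second} equality in each part, i.e.\ the independence of the $\star$ placements. You claim the two variants differ by a one-click rotation of each box and that Lemma \ref{lem:OneClickRotationCompatible} contributes cancelling phases $\theta_g^{\pm1}$. This fails on two counts. First, moving the $\star$ from the top of a box to the bottom is a rotation by $\pi$, which for a box with four boundary intervals (two $\rho$ and two $g$) is a two-click rotation, not one. Second, and more seriously, Lemma \ref{lem:OneClickRotationCompatible} says $\cF(V_g^*)=\theta_g V_{g^{-1}}$: a one-click rotation does not return a phase times the same generator, it changes the label from $g$ to $g^{-1}$ (and swaps starred and unstarred versions), so your proposed mechanism would relate the diagrams of part (1) to those of part (2) rather than the two $\star$-variants within part (1). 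The correct justification is that $V_g$ and $V_g^*$ are each invariant under the half-rotation, which is visible directly from their explicit cabled definitions (the defining diagram of $V_g$ is symmetric under rotation by $\pi$); this is the rotation-invariance the paper invokes for this step. With that repair, the rest of your outline goes through.
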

\begin{proof}
The case $g=e$ is trivial. 
When $g\neq e$, we prove (1), and (2) follows by replacing $g$ with $g^{-1}$.
The second equality follows from the fact that each of $V_g,V_g^*,V_{g^{-1}}, V_{g^{-1}}^*$ is fixed under $\cF_{\cR_\bullet}^4$.
To prove the first equality in (1), we see
$$
\begin{tikzpicture}[baseline = -.1cm, rotate=90]
	\draw[thick, rho] (-1.4,-.4)-- (-.6,0) .. controls ++(45:.6cm) and ++(135:.6cm) .. (.6,0)--(1.4,-.4);	
	\draw[mid>] (-1.4,0) -- (-.9, 0);
	\draw[mid<] (-.6,0) -- (.6, 0);
	\draw[mid>] (1,0) -- (1.4, 0);
	\draw[thick, unshaded] (.6,0) circle (.4cm);
	\node at (.6,0) {\scriptsize{$V_{g}^*$}};
	\node at (.6,.55) {$\star$};
	\draw[thick, unshaded] (-.6,0) circle (.4cm);
	\node at (-.6,0) {\scriptsize{$V_g$}};
	\node at (-.6,.55) {$\star$};
\end{tikzpicture}
=
[3]^2
\begin{tikzpicture}[baseline = -.1cm]
	\draw[thick, rho] (-.2,1) -- (-.2,1.2);
	\draw[thick, rho] (0,1) -- (0,1.2);
	\draw[thick, rho] (-.2,-1) -- (-.2,-1.2);
	\draw[thick, rho] (0,-1) -- (0,-1.2);
	\draw[thick, rho] (-.2,-.6) -- (0,-.2) -- (0,.2) -- (-.2,.6);
	\draw[thick, rho] (0,-.6) -- (.2,-.2) -- (.2,.2) -- (0,.6);
	\draw[thick, rho] (-.4,-1.2) -- (-.4,1.2);
	\draw[thick, rho] (.4,-1.2) -- (.4,1.2);
	\nbox{unshaded}{(-.2,.8)}{.2}{.1}{.1}{$g$}
	\nbox{unshaded}{(.2,0)}{.2}{.1}{.1}{$g$}
	\nbox{unshaded}{(-.2,-.8)}{.2}{.1}{.1}{$g$}
	\node at (-.65,-.8) {$\star$};	
	\node at (.65,0) {$\star$};	
	\node at (-.65,.8) {$\star$};
\end{tikzpicture}
=
[3]^2
\begin{tikzpicture}[baseline = -.1cm]
	\draw[thick, rho] (-.4,-.4) arc (-90:0:.15cm) -- (-.25,.25) arc (0:90:.15cm);
	\draw[thick, rho] (-.4,-.6) arc (-90:0:.4cm);
	\draw[thick, rho] (-.4,-.8) arc (-90:0:.6cm);
	\draw[thick, rho] (-.4,.6) arc (90:0:.4cm);
	\draw[thick, rho] (-.4,.8) arc (90:0:.6cm);
	\draw[thick, rho] (-.8,-.4) arc (90:180:.8cm);
	\draw[thick, rho] (-.8,-.6) arc (90:180:.6cm);
	\draw[thick, rho] (-.8,-.8) arc (90:180:.4cm);
	\draw[thick, rho] (-.8,.4) arc (270:180:.8cm);
	\draw[thick, rho] (-.8,.6) arc (270:180:.6cm);
	\draw[thick, rho] (-.8,.8) arc (270:180:.4cm);
	\draw[thick, rho] (.4,-1.2) -- (.4,1.2);
	\nbox{unshaded}{(-.6,.6)}{.3}{-.1}{-.1}{\rotatebox{90}{$g$}}
	\nbox{unshaded}{(.2,0)}{.2}{.1}{.1}{$g$}
	\nbox{unshaded}{(-.6,-.6)}{.3}{-.1}{-.1}{\rotatebox{-90}{$g$}}
	\node at (-.6,-.15) {$\star$};	
	\node at (.65,0) {$\star$};	
	\node at (-.6,.15) {$\star$};
\end{tikzpicture}
=
[3]^2
\begin{tikzpicture}[baseline = -.1cm]
	\draw[thick, rho] (.2,.2) arc (180:0:.2cm) -- (.6,-.2) arc (0:-180:.2cm);
	\draw[thick, rho] (0,.2) arc (180:0:.4cm);
	\draw[thick, rho] (-.2,.2) arc (180:0:.6cm);
	\draw[thick, rho] (0,-.2) arc (-180:0:.4cm);
	\draw[thick, rho] (-.2,-.2) arc (-180:0:.6cm);
	\draw[thick, rho] (-1.2,-1.2) -- (-1.2,1.2);
	\draw[thick, rho] (-1,-1.2) -- (-1,1.2);
	\draw[thick, rho] (-.8,-1.2) -- (-.8,1.2);
	\draw[thick, rho] (1.2,-1.2) -- (1.2,1.2);
	\nbox{unshaded}{(-1,0)}{.2}{.1}{.1}{$g$}
	\nbox{unshaded}{(0,0)}{.2}{.1}{.1}{$g$}
	\nbox{unshaded}{(1,0)}{.2}{.1}{.1}{$g$}
	\node at (-1.45,0) {$\star$};	
	\node at (.45,0) {$\star$};	
	\node at (1.45,0) {$\star$};
\end{tikzpicture}
=
\begin{tikzpicture}[baseline = -.1cm]
	\draw[thick, rho] (-.2,-1.2) -- (-.2,1.2);
	\draw[thick, rho] (0,-1.2) -- (0,1.2);
	\draw[thick, rho] (.2,-1.2) -- (.2,1.2);
	\draw[thick, rho] (.5,-1.2) -- (.5,1.2);
	\nbox{unshaded}{(0,0)}{.2}{.1}{.1}{$g$}
	\node at (-.45,0) {$\star$};	
\end{tikzpicture}
$$
using the skein relation from Remark \ref{rem:AutomorphismSkein} for $g$-cabled strands.
\end{proof}

\begin{prop}\label{prop:GradedMultiplication}
The map $\Phi_g$ is compatible with the graded multiplication operator given for $x\in \cR_m$ and $y\in \cR_n$ by
$$
x\wedge y = 
\begin{tikzpicture}[baseline = -.1cm]	
	\draw[thick, rho] (.4,.3) -- (.4,.7);
	\draw[thick, rho] (-.4,.3) -- (-.4,.7);
	\node at (-.2,.5) {\scriptsize{$m$}};
	\node at (.6,.5) {\scriptsize{$n$}};
	\nbox{unshaded}{(-.4,0)}{.3}{0}{0}{$x$}
	\nbox{unshaded}{(.4,0)}{.3}{0}{0}{$y$}
\end{tikzpicture}\,.
$$ 
\end{prop}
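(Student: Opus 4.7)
The plan is to transform $\Phi_g(x) \wedge \Phi_g(y)$ into $\Phi_g(x \wedge y)$ by planar isotopy and a cancellation of $g$-strand bigons in the region between $x$ and $y$. The case $g = e$ is trivial since $\Phi_e$ is the identity on $\cR_\bullet$, so assume $g \neq e$.

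First, I would draw $\Phi_g(x) \wedge \Phi_g(y)$ in the linear picture: two $g$-loops sitting side by side, one encircling $x$ (crossing its $2m$ $\rho$-strands via $V_g$ and $V_g^*$) and one encircling $y$ (crossing its $2n$ $\rho$-strands via $V_g$ and $V_g^*$). Between $x$ and $y$ there are no $\rho$-strands, so the right vertical segment of the $x$-loop and the left vertical segment of the $y$-loop can be freely isotoped toward each other. After isotopy, these two $g$-strand pieces run antiparallel with no other strands in their neighborhood, connected by small arcs at the top and bottom coming from the tops/bottoms of the two loops.

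Next, I would cancel the resulting $g$-bigon between $x$ and $y$. The two antiparallel $g$-strand segments, together with the connecting arcs, form a closed $g$-loop that is isolated from the rest of the diagram; once this loop is removed the remaining diagram is exactly a single $g$-loop encircling $x \wedge y$, which is $\Phi_g(x \wedge y)$. The removal is justified by the invertibility of $g$ in $\tfrac{1}{2}\cP_+$ (i.e.\ $g \otimes g^{-1} \cong e$), which tells us that a closed bigon of a $g$-strand and its reverse (made from three $\rho$-strand cables joined by $V_g$ and $V_g^*$) is proportional to the empty diagram. I would compute this proportionality constant by expanding $V_g$ according to its definition and applying Lemma \ref{lem:GCoproduct}: the two adjacent $g$-projections coming from adjacent $V_g$'s collapse to a single $gg^{-1}=e$ projection (with scalar $[4]^{-1}$), and the closed $\rho$-cables then evaluate via the loop relations of Proposition \ref{prop:RhoSkeinRelations}.

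The main obstacle will be keeping track of normalizations. The factor of $[3]$ built into each $V_g$, the factor $[4]^{-1}$ in $V_e$, and the bubble and loop values in Proposition \ref{prop:RhoSkeinRelations} all have to conspire so that the isolated $g$-bigon evaluates to exactly $1$; otherwise the identity would acquire a spurious scalar. The verification amounts to a careful diagrammatic calculation combining Lemma \ref{lem:GCoproduct} with the $\rho$-strand skein relations, and is the only nonformal part of the argument. Once this bookkeeping is done, the topological picture gives the equality $\Phi_g(x \wedge y) = \Phi_g(x) \wedge \Phi_g(y)$ for all $x \in \cR_m$, $y \in \cR_n$.
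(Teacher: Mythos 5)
There is a genuine gap in the central step. In $\Phi_g(x)\wedge\Phi_g(y)$ the two $g$-loops are \emph{disjoint} closed curves placed side by side; the right-hand arc of the loop around $x$ and the left-hand arc of the loop around $y$ are adjacent, but they are \emph{not} ``connected by small arcs at the top and bottom'' --- each of those arcs closes up through its own loop's traverse over its own box. Consequently there is no isolated closed $g$-loop sitting between $x$ and $y$, and no isotopy can produce one: two disjoint unnested circles and a big circle with a small circle inside it are not isotopic configurations in the plane. More importantly, deleting a closed component can never \emph{reconnect} the remaining strands, so even if an isolated loop were present and evaluated to $1$, removing it would leave you with pieces of two loops, not with the single loop encircling $x\wedge y$ that you need. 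The statement you propose to verify (``a closed $g$-bigon is proportional to the empty diagram,'' i.e.\ $\mathrm{ev}\circ\mathrm{coev}=\dim g=1$) is strictly weaker than what the merging of the loops requires, which is the identity $\mathrm{id}_{g\otimes g^{-1}}=\mathrm{coev}\circ\mathrm{ev}$, i.e.\ the relation of Remark \ref{rem:AutomorphismSkein} replacing a pair of oppositely oriented turnbacks by two through-strands.

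That relation is exactly the paper's (one-line) proof: in the $\rho$-free region between $x$ and $y$ the two facing arcs form a turnback pair, and Remark \ref{rem:AutomorphismSkein} (valid because $g$ is invertible, so $g\otimes g^{-1}\cong e$ with multiplicity one and quantum dimension $1$) converts them into two horizontal through-strands, fusing the two loops into a single loop around $x\wedge y$; the $V_g,V_g^*$ insertions are untouched. Note also that your worry about normalization is moot for this route: the relation in Remark \ref{rem:AutomorphismSkein} carries no scalar precisely because $\dim g=1$, so there is no bookkeeping with $[3]$, $[4]^{-1}$, or the loop values of Proposition \ref{prop:RhoSkeinRelations} to be done. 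To repair your argument, replace the ``remove an isolated bigon'' step by a single application of Remark \ref{rem:AutomorphismSkein} in the region between the two boxes.
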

\begin{proof}
Just use the skein relation in Remark \ref{rem:AutomorphismSkein}:
\begin{align*}
\Phi_g(x)\wedge \Phi_g(y) 
&=
\begin{tikzpicture}[baseline = .5cm]	
	\draw[thick, rho] (-.7,.3) -- (-.7,1.5);
	\draw[thick, rho] (.7,.3) -- (.7,1.5);
	\node at (0,.8) {\scriptsize{$\cdots$}};
	\draw[mid<] (-.8,1) arc (90:270:.6) -- (.8,-.2) arc (-90:90:.6cm);
	\draw[mid>] (-.2,1) -- (-.55,1);
	\draw[mid<] (.2,1) -- (.55,1);
	\ncircle{unshaded}{(-.7,1)}{.25}{135}{{\scriptsize{$V_g$}}}
	\ncircle{unshaded}{(.7,1)}{.25}{45}{{\scriptsize{$V_{g}^*$}}}
	\nbox{unshaded}{(0,.3)}{.2}{.7}{.7}{$x$}
\end{tikzpicture}\,\,
\begin{tikzpicture}[baseline = .5cm]	
	\draw[thick, rho] (-.7,.3) -- (-.7,1.5);
	\draw[thick, rho] (.7,.3) -- (.7,1.5);
	\node at (0,.8) {\scriptsize{$\cdots$}};
	\draw[mid<] (-.8,1) arc (90:270:.6) -- (.8,-.2) arc (-90:90:.6cm);
	\draw[mid>] (-.2,1) -- (-.55,1);
	\draw[mid<] (.2,1) -- (.55,1);
	\ncircle{unshaded}{(-.7,1)}{.25}{135}{{\scriptsize{$V_g$}}}
	\ncircle{unshaded}{(.7,1)}{.25}{45}{{\scriptsize{$V_{g}^*$}}}
	\nbox{unshaded}{(0,.3)}{.2}{.7}{.7}{$y$}
\end{tikzpicture}
=
\begin{tikzpicture}[baseline = .5cm]	
	\draw[thick, rho] (-.7,.3) -- (-.7,1.5);
	\draw[thick, rho] (.7,.3) -- (.7,1.5);
	\draw[thick, rho] (1.5,.3) -- (1.5,1.5);
	\draw[thick, rho] (2.9,.3) -- (2.9,1.5);
	\node at (0,.8) {\scriptsize{$\cdots$}};
	\node at (2.2,.8) {\scriptsize{$\cdots$}};
	\draw[mid<] (-.8,1) arc (90:270:.6) -- (3,-.2) arc (-90:90:.6cm);
	\draw[mid>] (-.2,1) -- (-.55,1);
	\draw[mid<] (.2,1) -- (.55,1);
	\draw[mid>] (2,1) -- (1.65,1);
	\draw[mid<] (2.4,1) -- (2.75,1);
	\draw[mid>] (.8,1) -- (1.5,1);
	\ncircle{unshaded}{(-.7,1)}{.25}{135}{{\scriptsize{$V_g$}}}
	\ncircle{unshaded}{(.7,1)}{.25}{60}{{\scriptsize{$V_{g}^*$}}}
	\ncircle{unshaded}{(1.5,1)}{.25}{120}{{\scriptsize{$V_g$}}}
	\ncircle{unshaded}{(2.9,1)}{.25}{45}{{\scriptsize{$V_{g}^*$}}}
	\nbox{unshaded}{(0,.3)}{.2}{.7}{.7}{$x$}
	\nbox{unshaded}{(2.2,.3)}{.2}{.7}{.7}{$y$}
\end{tikzpicture}
=
\Phi_g(x\wedge y).
\qedhere
\end{align*}
\end{proof}

\begin{cor}\label{cor:ActionOnEvenHalf}
The `almost' action of $\Phi_g$ on $\cR_\bullet$ induces the action of $g^2$ on $\frac{1}{2}\cP_+$.
\end{cor}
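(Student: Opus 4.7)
The plan is to translate $\Phi_g$ from its planar-algebraic definition into the associated operation on the fusion category $\frac{1}{2}\cR_+=\frac{1}{2}\cP_+$, identify that operation with conjugation by the sector $g$, and then compute explicitly on simples.

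First I would use Proposition \ref{prop:GradedMultiplication} (compatibility of $\Phi_g$ with the wedge product $\wedge$), together with the fact that each $\Phi_g$ is a unital $*$-homomorphism on each $\cR_n$, and the identity $\Phi_g(\id_\rho)=\id_\rho$ coming from Reidemeister II (Proposition \ref{prop:ReidemeisterII}). These combine to show that $\Phi_g$ is compatible with the strand-adding inclusion $\cR_n\hookrightarrow\cR_{n+1}$ and sends minimal projections to minimal projections of the same trace. Hence $\Phi_g$ descends to a well-defined bijection $\phi_g$ on isomorphism classes of simples of $\frac{1}{2}\cP_+$, independently of the depth at which one picks a representative.

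Second, I would identify $\phi_g$ with categorical conjugation $X\mapsto gXg^{-1}$. Diagrammatically, the defining picture of $\Phi_g(x)$ encircles $x$ by an oriented $g$-strand whose orientation reverses at each crossing with a $\rho$-strand via $V_g\colon\rho g^{-1}\to g\rho$ or its adjoint. Cutting the $g$-loop open shows that it places a sector $g$ on one side of $x$ and a sector $g^{-1}$ on the other, with the $V_g$'s implementing the identifications $\rho g^{-1}\cong g\rho$ needed to slide those sectors past the $\rho$-strands bounding $x$. On isomorphism classes this is exactly conjugation, so $\phi_g(X)=gXg^{-1}$.

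Third, I would evaluate the conjugation on the two $G$-orbits of simples. For $h\in G$, abelianness gives $ghg^{-1}=h$. For $h\rho\in G\rho$, the standing hypothesis $\theta(g)=g^{-1}$ gives $\rho g^{-1}=\theta(g^{-1})\rho=g\rho$, so
\[
g(h\rho)g^{-1}=(gh)(\rho g^{-1})=(gh)(g\rho)=g^2 h\rho.
\]
Hence $\phi_g$ is trivial on the $G$-part and is left-multiplication by $g^2$ on the $G\rho$-part of $\frac{1}{2}\cP_+$, which is precisely the action of $g^2$.

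The main obstacle is Step 2 --- rigorously identifying the encircling-with-$V_g$ construction with categorical conjugation by $g$. The cleanest direct verification is to compute $\Phi_g(p_h)$ for $p_h\in\cR_2$ by expanding each of the four $V_g$, $V_g^*$ via its definition as $[3]$ times a diagram containing two $g$-boxes, pairing the $g$-boxes on each side of the loop, collapsing each pair via the coproduct $g*g=[4]^{-1}g^2$ (Lemma \ref{lem:GCoproduct}), and recognizing the resulting projection as $p_{g^2 h}$. The two $g$-boxes inside each $V_g$ are precisely what produce the factor $g^2$ (rather than $g$ or $1$) in the induced action.
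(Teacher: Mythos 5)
Your Step 1 and your closing verification paragraph essentially reproduce the paper's argument (Reidemeister II from Proposition \ref{prop:ReidemeisterII} plus Proposition \ref{prop:GradedMultiplication} show $\Phi_g(p_h)$ is a nonzero projection of trace $[3]$, after which one identifies its class), but Step 2 contains a genuine error that propagates to a wrong conclusion in Step 3. Cutting the encircling $g$-loop open does \emph{not} uniformly place $g$ on one side of $x$ and $g^{-1}$ on the other: the orientation of the loop reverses at every crossing with a $\rho$-strand, so for $x\in\cR_n$ the left and right arcs of the loop carry opposite vertical orientations only when $n$ is even (the loop passes $n$ crossings going over the top). For $n$ odd the two arcs carry the same orientation, and the operation on the underlying object is $X\mapsto g\otimes X\otimes g$, not $g\otimes X\otimes g^{-1}$. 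The invertible objects $h\in G$ occur at odd depth ($h\subset\rho^{\otimes 3}$), which is exactly where your conjugation formula fails: you conclude the induced action is trivial on the $G$-part via $ghg^{-1}=h$, whereas the statement being proved --- and the paper's proof --- assert $\Phi_g(h)\cong g^2h$; ``the action of $g^2$'' means fusion with $g^2$ on every simple, not $\Ad_g$. Your answer on $G\rho$ comes out right only because $\Ad_g$ and fusion with $g^2$ happen to coincide there (both send $h\rho$ to $g^2h\rho$ via $\rho\otimes g^{-1}\cong g\rho$); this coincidence does not extend to the group part. A concrete check that the $G$-part cannot be trivial: $h\le p_h\otimes\id_\rho$ in $\cR_3$, and $\Phi_g$ is multiplicative and compatible with adding a strand on the right (Proposition \ref{prop:GradedMultiplication} with $y=\id_\rho$, using $\Phi_g(\id_\rho)=\id_\rho$ from Reidemeister II), so $\Phi_g(h)\le p_{g^2h}\otimes\id_\rho$; the unique invertible summand of $(g^2h\rho)\otimes\rho$ is $g^2h$, not $h$.

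The paper avoids this pitfall by never invoking a global conjugation description: it treats the two cases separately, proving $\Phi_g(p_h)=p_{g^2h}$ by the projection-plus-trace argument together with the depth-$2$ identification $g\otimes(h\rho)\otimes g^{-1}\cong g^2h\rho$, and asserting that the (differently oriented) computation for $h\in\cS_{3,+}$ is analogous. To salvage your approach you would need to track the parity of the number of legs when cutting the loop, or argue uniformly that $\Phi_g(X)\cong g^2\otimes X$ by pushing the right-hand group strand through the $\rho$-strands of $X$ using $\rho\otimes g^{\pm 1}\cong g^{\mp 1}\otimes\rho$.
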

\begin{proof}
For $h,g\in G$, we will show that $\Phi_g(p_h\in \cS_{2,+}) =p_{g^2h}$ and $\Phi_g(h\in \cS_{3,+}) \cong g^2h$.
As both proofs are similar, we will only show $\Phi_g(p_h)=p_{g^2h}$.

First, by Propositions \ref{prop:ReidemeisterII} and \ref{prop:GradedMultiplication} together with the fact that $h$ is an orthogonal projection, we see $\Phi_g(p_h)^2 = \Phi_g(p_h)=\Phi_g(p_h)^*$, i.e., $\Phi_g(p_h)$ is an orthogonal projection.
Next, taking the trace, we see $\Phi_g(p_h)\neq 0$ by sphericality and again using the Reidemeister II relation from Proposition \ref{prop:ReidemeisterII}:
$$
\begin{tikzpicture}[baseline = -.1cm]
	\draw[thick, rho] (-.4,.9) -- (-.4,-.9) .. controls ++(270:.6cm) and ++(270:.6cm) .. (2.4,-.9) -- (2.4,.9) .. controls ++(90:.6cm) and ++(90:.6cm) .. (-.4,.9);
	\draw[thick, rho] (.4,.9) -- (.4,-.9) .. controls ++(270:.3cm) and ++(270:.3cm) .. (1.6,-.9) -- (1.6,.9) .. controls ++(90:.3cm) and ++(90:.3cm) .. (.4,.9);
	\draw[mid>] (-.4,-.6) arc (270:90:.6cm);
	\draw[mid>] (.4,.6) arc (90:-90:.6cm);
	\draw[mid>] (.4,.6) -- (-.4,.6);
	\draw[mid>] (-.4,-.6) -- (.4,-.6);
	\nbox{unshaded}{(0,0)}{.2}{.3}{.3}{$p_h$}
	\ncircle{unshaded}{(-.4,.6)}{.25}{150}{{\scriptsize{$V_g$}}}
	\ncircle{unshaded}{(.4,.6)}{.25}{30}{{\scriptsize{$V_{g}^*$}}}
	\ncircle{unshaded}{(-.4,-.6)}{.25}{210}{{\scriptsize{$V_g$}}}
	\ncircle{unshaded}{(.4,-.6)}{.25}{-30}{{\scriptsize{$V_{g}^*$}}}
\end{tikzpicture}
=
\begin{tikzpicture}[baseline = -.1cm, xscale=-1]
	\draw[thick, rho] (-.4,.9) -- (-.4,-.9) .. controls ++(270:.6cm) and ++(270:.6cm) .. (2.4,-.9) -- (2.4,.9) .. controls ++(90:.6cm) and ++(90:.6cm) .. (-.4,.9);
	\draw[thick, rho] (.4,.9) -- (.4,-.9) .. controls ++(270:.3cm) and ++(270:.3cm) .. (1.6,-.9) -- (1.6,.9) .. controls ++(90:.3cm) and ++(90:.3cm) .. (.4,.9);
	\draw[mid>] (-.4,-.6) arc (270:90:.6cm);
	\draw[mid>] (.4,.6) arc (90:-90:.6cm);
	\draw[mid>] (.4,.6) -- (-.4,.6);
	\draw[mid>] (-.4,-.6) -- (.4,-.6);
	\nbox{unshaded}{(2,0)}{.2}{.3}{.3}{$p_h$}
	\ncircle{unshaded}{(-.4,.6)}{.25}{150}{{\scriptsize{$V_g$}}}
	\ncircle{unshaded}{(.4,.6)}{.25}{30}{{\scriptsize{$V_{g}^*$}}}
	\ncircle{unshaded}{(-.4,-.6)}{.25}{210}{{\scriptsize{$V_g$}}}
	\ncircle{unshaded}{(.4,-.6)}{.25}{-30}{{\scriptsize{$V_{g}^*$}}}
\end{tikzpicture}=\Tr(p_h)=[3].
$$
Finally, it's obvious that $\Phi_g(p_h)\cong g\otimes (h\rho)\otimes g^{-1}\cong g^2h\rho$, so $\Phi_g(p_h)=p_{g^2h}$ since $\cR_2$ is abelian.
\end{proof}

\begin{prop}\label{prop:TrueForSquaredg}
For every $g\in G$, we have 
$$
\cF_{\cR_\bullet}(p_{g^2})=p_{g^{-2}}
- \frac{1}{[3]-1} 
\left(\,
\begin{tikzpicture}[baseline = -.1cm]
	\nbox{unshaded}{(0,0)}{.4}{0}{0}{}
	\draw[thick, rho] (-.2,.4) arc (-180:0:.2);
	\draw[thick, rho] (-.2,-.4) arc (180:0:.2);	
\end{tikzpicture}
-
\begin{tikzpicture}[baseline = -.1cm]
	\draw[thick, rho] (-.2,-.4) -- ( -.2, .4);	
	\draw[thick, rho] (.2,-.4) -- (.2, .4);
	\nbox{}{(0,0)}{.4}{0}{0}{}
\end{tikzpicture}
\,\right).
$$
(Compare the above equation with Corollary \ref{cor:EquivalentConjecture}.)
\end{prop}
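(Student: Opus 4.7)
The plan is to apply the $G$-action $\Phi_g$ constructed in Section \ref{sec:ActionOnR} to the $I = H$ skein relation \eqref{rel:IH}, exploiting its compatibility with one-click rotation (Corollary \ref{cor:1ClickProblem}) and its effect on the projections $p_h$ (Corollary \ref{cor:ActionOnEvenHalf}).

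First I would rewrite \eqref{rel:IH} to isolate the one-click rotation of $p_1$. Since $p_1$ is the vertical trivalent-pair diagram by the definition just before Definition \ref{defn:ReducedPlanarAlgebra}, and since the $1$-click rotation carries this vertical trivalent pair to the horizontal trivalent pair, $\cF_{\cR_\bullet}(p_1) = H$. Combined with \eqref{rel:IH}, this produces an identity in $\cR_2$ of the form
\begin{align*}
\cF_{\cR_\bullet}(p_1) \;=\; p_1 \,+\, \frac{1}{[3]-1}\left(\,
\begin{tikzpicture}[baseline = -.1cm]
\nbox{unshaded}{(0,0)}{.4}{0}{0}{}
\draw[thick, rho] (-.2,.4) arc (-180:0:.2);
\draw[thick, rho] (-.2,-.4) arc (180:0:.2);
\end{tikzpicture}
\,-\,
\begin{tikzpicture}[baseline = -.1cm]
\draw[thick, rho] (-.2,-.4) -- ( -.2, .4);
\draw[thick, rho] (.2,-.4) -- (.2, .4);
\nbox{}{(0,0)}{.4}{0}{0}{}
\end{tikzpicture}\,\right).
\end{align*}
Readers who prefer can avoid the geometric observation $\cF_{\cR_\bullet}(p_1) = H$ and instead derive this identity by expanding $\jw{4} = \mathrm{id} - \rhoE - p_1$ (from \eqref{rel:E}), invoking Corollary \ref{cor:RotateH}, and applying Proposition \ref{prop:RectangularLowWeight}.

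Next I apply $\Phi_g$ to both sides. Corollary \ref{cor:1ClickProblem} and Corollary \ref{cor:ActionOnEvenHalf} combine to give
$$\Phi_g\bigl(\cF_{\cR_\bullet}(p_1)\bigr) \;=\; \cF_{\cR_\bullet}\bigl(\Phi_{g^{-1}}(p_1)\bigr) \;=\; \cF_{\cR_\bullet}(p_{g^{-2}}),$$
and Corollary \ref{cor:ActionOnEvenHalf} also yields $\Phi_g(p_1) = p_{g^2}$. Granted the claim that $\Phi_g$ fixes the two Temperley-Lieb diagrams $E$ (cap-cup) and $\mathrm{id}$ (see the next paragraph), the equation becomes
$$\cF_{\cR_\bullet}(p_{g^{-2}}) \;=\; p_{g^2} + \frac{1}{[3]-1}\bigl(E - \mathrm{id}\bigr),$$
and relabeling $g \mapsto g^{-1}$ is exactly the formula in the statement (after the rearrangement $E - \mathrm{id} = -(\mathrm{id} - E)$).

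The main technical step is to verify $\Phi_g(E) = E$ and $\Phi_g(\mathrm{id}) = \mathrm{id}$. For $\Phi_g(\mathrm{id})$, the encircling $g$-strand crosses the two parallel $\rho$-strands at four points decorated by $V_g$ and $V_g^*$; two applications of the Reidemeister II moves from Proposition \ref{prop:ReidemeisterII} pull the $g$-strand past each $\rho$-strand by collapsing the adjacent $V_g/V_g^*$ pairs, and the $g$-strand retracts to a closed loop contributing $d(g) = 1$ because $g \in G$ is invertible. The argument for $\Phi_g(E)$ is entirely parallel, sliding the $g$-strand past the cap and the cup of $E$. This bookkeeping of the four $V_g$/$V_g^*$ decorations is the only nontrivial diagrammatic input; everything else is a formal substitution using the corollaries established in Section \ref{sec:ActionOnR}.
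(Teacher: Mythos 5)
Your proposal is correct and is essentially the paper's own proof: both arguments establish the $g=1$ case $\cF_{\cR_\bullet}(p_1)=H$ from Relation \eqref{rel:IH} and then transport it using $\Phi_g(p_1)=p_{g^2}$ (Corollary \ref{cor:ActionOnEvenHalf}) together with the intertwining relation $\Phi_g\circ\cF_{\cR_\bullet}=\cF_{\cR_\bullet}\circ\Phi_{g^{-1}}$ (Corollary \ref{cor:1ClickProblem}); whether one applies $\Phi_g$ to both sides and relabels $g\mapsto g^{-1}$, as you do, or writes $\cF_{\cR_\bullet}(p_{g^2})=\Phi_{g^{-1}}(\cF_{\cR_\bullet}(p_1))$ directly, as the paper does, is immaterial. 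You are slightly more careful than the paper in one spot: the paper silently moves $\Phi_{g^{-1}}$ past the Temperley--Lieb correction term, whereas you verify $\Phi_g(\id)=\id$ and $\Phi_g([3]\,\rhoE)=[3]\,\rhoE$ via the Reidemeister II moves of Proposition \ref{prop:ReidemeisterII} and $\dim(g)=1$, which is exactly the right justification (it is the same mechanism used in the proof of Corollary \ref{cor:ActionOnEvenHalf}). One caveat: your intermediate identity $\cF_{\cR_\bullet}(p_1)=p_1+\frac{1}{[3]-1}\bigl([3]\,\rhoE-\id\bigr)$ is the one actually forced by Relation \eqref{rel:IH} and by Corollary \ref{cor:RotateH}, so your final formula carries the opposite sign on the lower-order term from the one printed in the statement; that printed sign (which recurs in the introduction, in the last display of Corollary \ref{cor:EquivalentConjecture}, and in the paper's own proof) is inconsistent with the intermediate lines of Corollary \ref{cor:EquivalentConjecture} and appears to be a typo transposing the two Temperley--Lieb diagrams, so your version is the correct one and should not be ``fixed'' to match the display.
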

\begin{proof}
By Relation \ref{rel:IH}, we know 
$$
\cF_{\cR_\bullet}(p_1) = p_{1} 
- \frac{1}{[3]-1} 
\left(\,
\begin{tikzpicture}[baseline = -.1cm]
	\nbox{unshaded}{(0,0)}{.4}{0}{0}{}
	\draw[thick, rho] (-.2,.4) arc (-180:0:.2);
	\draw[thick, rho] (-.2,-.4) arc (180:0:.2);	
\end{tikzpicture}
-
\begin{tikzpicture}[baseline = -.1cm]
	\draw[thick, rho] (-.2,-.4) -- ( -.2, .4);	
	\draw[thick, rho] (.2,-.4) -- (.2, .4);
	\nbox{}{(0,0)}{.4}{0}{0}{}
\end{tikzpicture}
\,\right).
$$
Apply $\Phi_g$ to the equation to see
\begin{align*}
\cF_{\cR_\bullet}(p_{g^2})
&= 
\cF_{\cR_\bullet}(\Phi_g(p_1))
\\&=
\Phi_{g^{-1}}(\cF_{\cR_\bullet}(p_1))
\\&=
\Phi_{g^{-1}}\left(p_{1} 
- \frac{1}{[3]-1} 
\left(\,
\begin{tikzpicture}[baseline = -.1cm]
	\nbox{unshaded}{(0,0)}{.4}{0}{0}{}
	\draw[thick, rho] (-.2,.4) arc (-180:0:.2);
	\draw[thick, rho] (-.2,-.4) arc (180:0:.2);	
\end{tikzpicture}
-
\begin{tikzpicture}[baseline = -.1cm]
	\draw[thick, rho] (-.2,-.4) -- ( -.2, .4);	
	\draw[thick, rho] (.2,-.4) -- (.2, .4);
	\nbox{}{(0,0)}{.4}{0}{0}{}
\end{tikzpicture}
\,\right)
\right)
\\&=
\Phi_{g^{-1}}(p_{1})  
- \frac{1}{[3]-1} 
\left(\,
\begin{tikzpicture}[baseline = -.1cm]
	\nbox{unshaded}{(0,0)}{.4}{0}{0}{}
	\draw[thick, rho] (-.2,.4) arc (-180:0:.2);
	\draw[thick, rho] (-.2,-.4) arc (180:0:.2);	
\end{tikzpicture}
-
\begin{tikzpicture}[baseline = -.1cm]
	\draw[thick, rho] (-.2,-.4) -- ( -.2, .4);	
	\draw[thick, rho] (.2,-.4) -- (.2, .4);
	\nbox{}{(0,0)}{.4}{0}{0}{}
\end{tikzpicture}
\,\right)
\\&=
p_{g^{-2}}
- \frac{1}{[3]-1} 
\left(\,
\begin{tikzpicture}[baseline = -.1cm]
	\nbox{unshaded}{(0,0)}{.4}{0}{0}{}
	\draw[thick, rho] (-.2,.4) arc (-180:0:.2);
	\draw[thick, rho] (-.2,-.4) arc (180:0:.2);	
\end{tikzpicture}
-
\begin{tikzpicture}[baseline = -.1cm]
	\draw[thick, rho] (-.2,-.4) -- ( -.2, .4);	
	\draw[thick, rho] (.2,-.4) -- (.2, .4);
	\nbox{}{(0,0)}{.4}{0}{0}{}
\end{tikzpicture}
\,\right),
\end{align*}
where we used $\Phi_{g^{-1}} \circ \cF_{\cR_\bullet} = \cF_{\cR_\bullet} \circ \Phi_{g}$ by Corollary \ref{cor:1ClickProblem}.
\end{proof}

We can now prove our main theorem, which says that Conjecture \ref{conj:Main} is true for $|G|$ odd.

\begin{proof}[Proof of Theorem \ref{thm:Main}]
Note that the action induced by $\Phi$ of $G$ on $G\rho=\set{h\rho}{h\in G}$ is freely transitive exactly when $|G|$ is odd.
This is because the action is given by $\Phi_g(h\rho)=g^2h\rho$ and $\Phi_g(h)=g^2h$ by Corollary \ref{cor:ActionOnEvenHalf}, and the map $g\mapsto g^2$ is an automorphism of $G$ when $G$ is odd.

Thus for every $g\in G$, the equation in Corollary \ref{cor:EquivalentConjecture} holds by Proposition \ref{prop:TrueForSquaredg}, which concludes the proof.
\end{proof}

%%%%%%%%%%%%%%%%%%%%%%%%%%%%%%%%%%%%%%%%%%%%%%%%%%%%%%%%%%%%%%
\subsection{Lifting involutions to the center}

\begin{defn}
Let $\cQ_\bullet$ be the the planar subalgebra of $\cR_\bullet$ generated by 
$$
\set{
\begin{tikzpicture}[baseline = -.1cm]
	\draw[thick, rho] (-.1,.5) -- ( -.1, -.5);	
	\draw[thick, rho] (.1,.5) -- ( .1, -.5);	
	\nbox{unshaded}{(0,0)}{.25}{0}{0}{$p_g$}
\end{tikzpicture}
}{g\neq 1}
\text{ and }
\begin{tikzpicture}[baseline = -.1cm]
	\draw[thick, rho] (-.15,.6) --  ( -.15, -.6);	
	\draw[thick, rho] (.15,.6) --  ( .15, -.6);	
	\nbox{unshaded}{(0,0)}{.3}{0}{0}{$p_1$}
\end{tikzpicture}
:=
\begin{tikzpicture}[baseline = -.1cm]
	\draw[thick, rho] (-.2,.6) --  ( -.2, -.6);	
	\draw[thick, rho] (.2,.6) --  ( .2, -.6);	
	\nbox{unshaded}{(0,0)}{.4}{0}{0}{}
	\filldraw[rho] (0,.2) circle (.05cm);
	\filldraw[rho] (0,-.2) circle (.05cm);
	\draw[thick, rho] (-.2,.4) arc (-180:0:.2);
	\draw[thick, rho] (-.2,-.4) arc (180:0:.2);	
	\draw[thick, rho] (0,-.2) -- (0, .2);
\end{tikzpicture}\,,
$$
under the $\rho$-strand planar operad, whose tangles do not contain trivalent vertices.
\end{defn}

We expect the following conjecture to be true, but it seems to be highly non-trivial at this time.
We will prove it for the case $|G|$ is odd in Theorem \ref{thm:GeneratedByTwoBoxes}.

\begin{conj}\label{conj:GeneratedByTwoBoxes}
$\cQ_\bullet=\cR_\bullet$.
\end{conj}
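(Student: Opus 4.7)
The approach is to show that $\cQ_\bullet$ is preserved by the $G$-action $\Phi$ of Section \ref{sec:ActionOnR}, and then to exploit the fact that $g \mapsto g^2$ is an automorphism of $G$ when $|G|$ is odd, so that the induced action on simples of $\frac{1}{2}\cP_+$ is transitive on each orbit $G$ and $G\rho$.

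First, $\cQ_2 = \cR_2$ is immediate: together with the Temperley-Lieb cup-cap (projecting onto the trivial summand of $\rho \otimes \rho$), the generators $\{p_g : g\neq 1\} \cup \{p_1\}$ span all minimal projections of $\cR_2$. The critical step is to show $V_g \in \cQ_4$ for every $g \in G$. My plan is to realize $V_g$ as the unique (up to scalar) partial isometry between the two natural embeddings of the simple $g\rho$ in $\rho^{\otimes 4}$, corresponding to picking out $g\rho$ in the leftmost or rightmost pair of $\rho$-strands via $p_g$. Including $p_g$ into $\cR_4$ by padding with identity $\rho$-strands and folding with Temperley-Lieb cups/caps yields (up to normalization) the two minimal projections onto these copies of $g\rho$; the partial isometry between them is then $V_g$ up to a phase, which is fixed by the Reidemeister II identity of Proposition \ref{prop:ReidemeisterII} and the rotational phase identity of Lemma \ref{lem:OneClickRotationCompatible}. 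Since every step is planar and uses only $p_g \in \cQ_2$ together with Temperley-Lieb, the resulting element lies in $\cQ_4$.

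Once $V_g \in \cQ_4$, the definition of $\Phi_g$ makes plain that $\Phi_g(\cQ_n) \subseteq \cQ_n$ for every $n$. By Corollary \ref{cor:ActionOnEvenHalf} the induced action on simples is $h \mapsto g^2 h$, which for $|G|$ odd permutes $G$ and $G\rho$ transitively. Applying $\Phi_g$ to the Temperley-Lieb projection onto the trivial summand of $\rho^{\otimes n}$ (for even $n$), which always lies in $\cQ_n$, produces the minimal projection onto every $h \in G$; likewise, applying $\Phi_g$ to the $p_1$-based projection onto $\rho \subseteq \rho^{\otimes n}$ produces the minimal projection onto every $h\rho$. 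Combined with planar inclusions and conditional expectations to produce matrix units connecting different copies of the same simple (a standard finite-dimensional $C^*$-algebra argument), this forces $\cQ_n = \cR_n$ at every $n$.

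The main obstacle is the construction of $V_g$ inside $\cQ_4$. Unlike the rotation formula of Proposition \ref{prop:TrueForSquaredg}, which tautologically stays within $\cQ_2$, the element $V_g$ is a genuine 4-box encoding the half-braiding $\rho g^{-1} \cong g\rho$, and extracting it purely from 2-box data is the subtle part. The symmetric self-duality of $\rho$ (footnote \ref{footnote:SymmetricSelfDuality}) together with Lemma \ref{lem:OneClickRotationCompatible} will likely be indispensable for pinning down the correct scalar normalization that makes the planar expression actually equal $V_g$ rather than just a scalar multiple.
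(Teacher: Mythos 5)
Your plan hinges on showing $V_g\in\cQ_4$, and the argument you give for this step does not work: it is circular. The two projections onto the ``left'' and ``right'' copies of $g\rho$ inside $\rho^{\otimes 4}$ (obtained by padding $p_g$ with identity strands and Temperley--Lieb cups) do lie in $\cQ_4$, but the partial isometry connecting them is an element of $\cR_4$ whose membership in $\cQ_4$ is precisely the kind of statement the conjecture is asserting. Murray--von Neumann equivalence of two projections in the ambient algebra $\cR_4$ does not imply their equivalence in the planar subalgebra $\cQ_4$; a priori they could lie in different central summands of $\cQ_4$, which is exactly what happens whenever a planar subalgebra generated by $2$-boxes is proper. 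Neither Proposition \ref{prop:ReidemeisterII} nor Lemma \ref{lem:OneClickRotationCompatible} produces a planar formula for $V_g$ in terms of the $p_g$'s; they only constrain phases once existence inside $\cQ_4$ is granted. The same issue recurs at the end of your argument: ``producing matrix units connecting different copies of the same simple'' is not a standard consequence of containing all the minimal projections --- the off-diagonal partial isometries are the whole difficulty.

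The paper's proof (Theorem \ref{thm:GeneratedByTwoBoxes}) sidesteps intertwiners like $V_g$ entirely. It works one level down, in $\cS_{3,+}\cong\cR_3$, and exhibits explicit elements $\alpha_{i,j}$ and $\gamma_{h,k,\ell}=\beta_{h,k,\ell}-c_{h,k,\ell}\alpha_{h,\ell}$ that are manifestly planar in the generators $p_g$. The content is then a linear-independence computation (Corollary \ref{cor:BasisOfS3}) showing these form a basis of $\cS_{3,+}$; the $G$-action $\Psi_g$ enters only as a computational device, via Facts \ref{facts:S3Basis}, to reduce the inner products $\|\beta_{h,k,\ell}\|_2^2$ to triangles containing $p_1$, which can be expanded using Equation \eqref{eq:V}. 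This reduction is where $|G|$ odd is used (every element has a square root, Lemma \ref{lem:Nonzero}). Once $\cQ_3=\cR_3$, the depth-$4$ condition finishes the argument via the Wenzl relation or the principal graph. If you want to salvage your approach, the missing ingredient is an explicit planar expression for $V_g$ in the $2$-box generators, and it is not clear such an expression exists short of first proving $\cQ_3=\cR_3$ by the paper's route.
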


We now show that each involution in $G$, i.e., a $g\in G$ with $g^2=1$, lifts to the center of the projection category of $\cQ_\bullet$, i.e., $\cZ(\Pro(\cQ_\bullet))$ \cite{1208.5505}.
By Corollary \ref{cor:ActionOnEvenHalf}, each $\Phi_g$ restricts to an automorphism of $\cQ_\bullet$.

\begin{lem}\label{lem:IdentityAutomorphism}
If $g^2=1$, then $\Phi_g = \id_{\cQ_\bullet}$.
\end{lem}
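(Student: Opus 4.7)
The plan is to show that $\Phi_g$ restricts to a planar algebra endomorphism of $\cQ_\bullet$ that fixes every generator, from which $\Phi_g|_{\cQ_\bullet} = \id_{\cQ_\bullet}$ follows at once. Recall that $\cQ_\bullet$ is by definition the planar subalgebra of $\cR_\bullet$ generated by $\{p_h : h \in G\}$ under the $\rho$-strand planar operad, i.e., planar tangles that use only $\rho$-strands and no trivalent vertices.

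First I would verify that when $g^2 = 1$ the map $\Phi_g$ fixes each generator. By Corollary \ref{cor:ActionOnEvenHalf}, for every $h \in G$ we have $\Phi_g(p_h) = p_{g^2 h} = p_h$, and this covers the distinguished generator $p_1$ (the minimal projection at $\rho \in \Lambda$) as well as the $p_h$ for $h \neq 1$.

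Next I would verify that $\Phi_g$ is a morphism for the $\rho$-strand planar operad acting on $\cQ_\bullet$. The map $\Phi_g$ is implemented by encircling a diagram with an oriented $g$-loop, with each crossing of the $g$-loop and a $\rho$-strand replaced by $V_g$ or $V_g^*$ according to shading. The Reidemeister II relations of Proposition \ref{prop:ReidemeisterII}, combined with the skein relations of Remark \ref{rem:AutomorphismSkein} for $g$-cabled strands, allow the $g$-loop to be isotoped freely past any $\rho$-strand. This is exactly what is needed to check that $\Phi_g$ commutes with the action of an arbitrary tangle built from $\rho$-strands with no trivalent vertices; Proposition \ref{prop:GradedMultiplication} is the prototypical instance of this argument, and the same pattern handles caps, cups, and nested insertions. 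Moreover, since $g = g^{-1}$, Corollary \ref{cor:1ClickProblem} specializes to $\Phi_g \circ \cF_{\cR_\bullet} = \cF_{\cR_\bullet} \circ \Phi_g$, so $\Phi_g$ also commutes with rotation.

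Combining the two steps, $\Phi_g$ is a planar algebra endomorphism of $\cQ_\bullet$ that fixes every element of a generating set, and therefore equals $\id_{\cQ_\bullet}$. The main subtlety to nail down is the isotopy argument in the second step; however, since the generators of $\cQ_\bullet$ are precisely the projections covered by Corollary \ref{cor:ActionOnEvenHalf} and the operad explicitly excludes trivalent vertices, I expect this to be routine bookkeeping rather than a substantive obstacle.
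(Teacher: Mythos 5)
Your proposal is correct and follows essentially the same route as the paper: the paper's proof simply observes that $\Phi_g(p_h)=p_{g^2h}=p_h$ fixes every generator of $\cQ_\bullet$, having already asserted (via Corollary \ref{cor:ActionOnEvenHalf} and the surrounding propositions) that $\Phi_g$ restricts to an automorphism of $\cQ_\bullet$. You spell out the operad-compatibility step (Reidemeister II, graded multiplication, and the rotation compatibility that becomes available precisely because $g=g^{-1}$) which the paper leaves implicit, but the argument is the same.
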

\begin{proof}
In the proof of Corollary \ref{cor:ActionOnEvenHalf}, we showed that $\Phi_g(p_h)=p_{g^2h}$ for all $h\in H$. 
Since $g^2=1$, $\Phi_g$ fixes every $p_h$, which generate $\cQ_\bullet$ as a planar algebra.
\end{proof}

\begin{prop}
For all $\varphi\in \cQ_n$, we have 
$$
\begin{tikzpicture}[baseline = -.1cm]
	\coordinate (a) at (.35,-1.55);
	\draw (.2,.2) -- (1.35,1.35);
	\draw (-1.7,-1.7) -- (-.2,-.2);
	\node at (-1.6,-1.4) {\scriptsize{$g$}};
	\node at (1,1.2) {\scriptsize{$g$}};
	\node at (.02,.02) {\rotatebox{45}{{\scriptsize{$\cdots$}}}};
	\node at ($(a) + (.57,.37)$) {\rotatebox{45}{{\scriptsize{$\cdots$}}}};
	\node at ($(a) + (-1.53,2.47)$) {\rotatebox{45}{{\scriptsize{$\cdots$}}}};
	\node at ($(a) + (.05,.9)$) {\rotatebox{45}{{\scriptsize{$\cdots$}}}};
	\draw[thick, rho] ($(a) + (-1.55,2.75)$) -- ($(a) + (-1.05,2.25)$) .. controls ++(-45:.35cm) and ++(90:.35cm) .. (.5,.5) .. controls ++(270:.35cm) and ++(135:.35cm) .. ($(a) + (.35,.85)$)  -- ($(a) + (.85,.35)$);
	\draw[thick, rho] ($(a) + (-1.85,2.45)$) -- ($(a) + (-1.35,1.95)$) .. controls ++(-45:.35cm) and ++(180:.35cm) .. (-.5,-.5) .. controls ++(0:.35cm) and ++(135:.35cm) .. ($(a) + (.05,.55)$) -- ($(a) + (.55,.05)$);
	\draw[thick, rho] ($(a) + (-2.05,2.25)$) -- ($(a) + (-1.55,1.75)$) .. controls ++(-45:.35cm) and ++(180:.35cm) .. (-1,-1) .. controls ++(0:.35cm) and ++(135:.35cm) .. ($(a) + (-.15,.35)$) -- ($(a) + (.35,-.15)$);
	\ncircle{unshaded}{(.5,.5)}{.25}{180}{{\scriptsize{$V_{g}^\bullet$}}}
	\ncircle{unshaded}{(-.5,-.5)}{.25}{180}{{\scriptsize{$V_{g}^*$}}}
	\ncircle{unshaded}{(-1,-1)}{.25}{180}{{\scriptsize{$V_{g}$}}}
	\draw[thick, unshaded] ($(a) + (.4,1)$) -- ($(a) + (-.3,.3)$)-- (a) -- ($(a) + (.7,.7)$) -- ($(a) + (.4,1)$);
	\node at ($(a) + (.2,.5)$)  {\rotatebox{45}{$\varphi$}};
\end{tikzpicture}
=
\begin{tikzpicture}[baseline = -.1cm]
	\coordinate (a) at (.35,-1.55);
	\coordinate (b) at (-1.25,.05);
	\draw (.2,.2) -- (1.35,1.35);
	\draw (-1.7,-1.7) -- (-.2,-.2);
	\node at (-1.6,-1.4) {\scriptsize{$g$}};
	\node at (1,1.2) {\scriptsize{$g$}};
	\node at (.02,.02) {\rotatebox{45}{{\scriptsize{$\cdots$}}}};
	\node at ($(a) + (.57,.37)$) {\rotatebox{45}{{\scriptsize{$\cdots$}}}};
	\node at ($(a) + (-1.53,2.47)$) {\rotatebox{45}{{\scriptsize{$\cdots$}}}};
	\node at ($(b) + (.57,.37)$) {\rotatebox{45}{{\scriptsize{$\cdots$}}}};
	\draw[thick, rho] ($(a) + (-1.55,2.75)$) -- ($(a) + (-1.05,2.25)$) .. controls ++(-45:.35cm) and ++(90:.35cm) .. (.5,.5) .. controls ++(270:.35cm) and ++(135:.35cm) .. ($(a) + (.35,.85)$)  -- ($(a) + (.85,.35)$);
	\draw[thick, rho] ($(a) + (-1.85,2.45)$) -- ($(a) + (-1.35,1.95)$) .. controls ++(-45:.35cm) and ++(180:.35cm) .. (-.5,-.5) .. controls ++(0:.35cm) and ++(135:.35cm) .. ($(a) + (.05,.55)$) -- ($(a) + (.55,.05)$);
	\draw[thick, rho] ($(a) + (-2.05,2.25)$) -- ($(a) + (-1.55,1.75)$) .. controls ++(-45:.35cm) and ++(180:.35cm) .. (-1,-1) .. controls ++(0:.35cm) and ++(135:.35cm) .. ($(a) + (-.15,.35)$) -- ($(a) + (.35,-.15)$);
	\ncircle{unshaded}{(.5,.5)}{.25}{180}{{\scriptsize{$V_{g}^\bullet$}}}
	\ncircle{unshaded}{(-.5,-.5)}{.25}{180}{{\scriptsize{$V_{g}^*$}}}
	\ncircle{unshaded}{(-1,-1)}{.25}{180}{{\scriptsize{$V_{g}$}}}
	\draw[thick, unshaded] ($(b) + (.4,1)$) -- ($(b) + (-.3,.3)$)-- (b) -- ($(b) + (.7,.7)$) -- ($(b) + (.4,1)$);
	\node at ($(b) + (.2,.5)$)  {\rotatebox{45}{$\varphi$}};
\end{tikzpicture}
$$
where the $\bullet$ is either blank or $*$ depending on the parity of $n$.
\end{prop}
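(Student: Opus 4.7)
The plan is to reduce the sliding identity to the generators of $\cQ_\bullet$ and invoke Lemma \ref{lem:IdentityAutomorphism}. The content of the proposition is essentially an ambient-isotopy statement: the $g$-strand can be slid across $\varphi$, shifting $\varphi$ from position $(a)$ to position $(b)$, because $\varphi \in \cQ_\bullet$ is fixed by $\Phi_g$ (since we are in the lifting-involutions context and $g^2 = 1$).

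First, I would verify the identity on the generators $\{p_h : h \in G\}$ of $\cQ_\bullet$ as a planar subalgebra under the $\rho$-strand operad. For a single $p_h$, unpacking the definition of $\Phi_g$ shows that encircling $p_h$ by a $g$-loop with $V_g, V_g^*$ inserted at each crossing yields $\Phi_g(p_h) = p_{g^2 h}$ by Corollary \ref{cor:ActionOnEvenHalf}. Since $g^2 = 1$, Lemma \ref{lem:IdentityAutomorphism} gives $\Phi_g(p_h) = p_h$. Rewriting this equality diagrammatically as an equation between two local pictures inside a larger tangle produces exactly the sliding identity in the case $\varphi = p_h$.

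Second, I would upgrade to arbitrary $\varphi \in \cQ_n$ by induction on the way $\varphi$ is built from the generators via the $\rho$-strand planar operad. The key closure properties to check are: (i) horizontal stacking $\varphi_1 \wedge \varphi_2$ (handled by Proposition \ref{prop:GradedMultiplication}, which shows $\Phi_g$ is compatible with $\wedge$, so one slides the $g$-strand past $\varphi_1$ first and then past $\varphi_2$); (ii) vertical multiplication and capping, where after sliding past one factor the two ``internal'' $V_g, V_g^*$ loops reduce to identity $\rho$-strands by the Reidemeister II relations of Proposition \ref{prop:ReidemeisterII}; and (iii) adding through-strands, which is again handled by Proposition \ref{prop:ReidemeisterII}. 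At every step, the residual configuration of $g$-crossings on the two sides of $\varphi$ agrees with the form in the statement, modulo Reidemeister II reductions.

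The main obstacle I expect is bookkeeping rather than conceptual: correctly tracking orientation-dependent occurrences of $V_g$ versus $V_g^*$ (and the corresponding $\bullet \in \{\text{blank},*\}$ determined by the parity of $n$), and verifying that the inductive sliding produces exactly the local configuration depicted on the right-hand side rather than a twisted variant. This is where the Reidemeister II identities from Proposition \ref{prop:ReidemeisterII} and the skein relations of Remark \ref{rem:AutomorphismSkein} must be applied repeatedly to absorb spurious $V_g V_g^*$ pairs. Once this bookkeeping is in place, the argument is a direct induction using $\Phi_g = \id_{\cQ_\bullet}$ from Lemma \ref{lem:IdentityAutomorphism}.
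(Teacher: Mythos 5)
Your overall strategy --- reduce the sliding identity to the fact that $\Phi_g=\id_{\cQ_\bullet}$ for an involution $g$ --- is the right idea and shares its key ingredient with the paper, but your execution diverges from the paper's and is weaker in two respects. The paper's proof is a one-liner: form the difference of the two diagrams and compute its norm squared. Expanding $\|L-R\|_2^2$ gives four closed diagrams; in each one the $g$-strand closes up into a loop that encircles $\varphi$, $\varphi^*$, or $\varphi\varphi^*$, so each term is a trace of $\Phi_g$ applied to an element of $\cQ_\bullet$. By Lemma \ref{lem:IdentityAutomorphism} all four terms equal $\Tr(\varphi\varphi^*)$, so the difference vanishes. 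This avoids any induction and any explicit manipulation of crossings.

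By contrast, your proposal (i) restricts Lemma \ref{lem:IdentityAutomorphism} to the generators $p_h$ and then rebuilds the general case by structural induction over the planar operad, and (ii) defers to ``bookkeeping'' exactly the step where the real difficulty lives. On (i): Lemma \ref{lem:IdentityAutomorphism} already gives $\Phi_g(\varphi)=\varphi$ for \emph{every} $\varphi\in\cQ_n$, so once you have converted ``encirclement fixes $\varphi$'' into ``the $g$-strand slides past $\varphi$'' (via the Reidemeister II relations of Proposition \ref{prop:ReidemeisterII}, which make the wall of crossings invertible), there is nothing left to induct on; the induction re-derives facts the paper has already established in Propositions \ref{prop:ReidemeisterII} and \ref{prop:GradedMultiplication}. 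On (ii): your list of closure operations (horizontal stacking, multiplication, caps, through-strands) omits the rotation, and this is the one generating tangle that is genuinely delicate here --- by Corollary \ref{cor:1ClickProblem} the rotation intertwines $\Phi_g$ with $\Phi_{g^{-1}}$, and by Lemma \ref{lem:OneClickRotationCompatible} rotating a crossing introduces the phase $\theta_g$ and swaps $V_g$ with $V_{g^{-1}}^*$. These issues are resolvable precisely because $g=g^{-1}$ and because the same phases appear on both sides of the identity, but as written your sketch asserts rather than verifies this, and it is the only nontrivial point of the whole argument. I would recommend replacing the induction with the paper's norm-squared computation, or at least with the direct statement that Reidemeister II exhibits the encirclement as conjugation by an invertible element, so that $\Phi_g(\varphi)=\varphi$ is literally equivalent to the displayed commutation relation.
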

\begin{proof}
Take the norm squared of the difference and use Lemma \ref{lem:IdentityAutomorphism}.
\end{proof}

\begin{cor}
Suppose $g\in G$ is an involution.
The map $e_g: X\to \Hom(g\otimes X, X\otimes g)$ for simple $X$ by $e_g(\rho)=V_g$ and
$$
e_g(h) =
\begin{tikzpicture}[baseline = -.1cm]
	\draw (-1.35,-1.35) -- (1.35,1.35);
	\node at (-1.2,-1) {\scriptsize{$g$}};
	\node at (1,1.2) {\scriptsize{$g$}};
	\draw[thick, rho] (-1.05,1.35) -- (-.55,.85) .. controls ++(-45:.35cm) and ++(90:.35cm) .. (.5,.5) .. controls ++(270:.35cm) and ++(135:.35cm) .. (.85,-.55) -- (1.35,-1.05);
	\draw[thick, rho] (-1.2,1.2) -- (1.2,-1.2);
	\draw[thick, rho] (-1.35,1.05) -- (-.85,.55) .. controls ++(-45:.35cm) and ++(180:.35cm) .. (-.5,-.5) .. controls ++(0:.35cm) and ++(135:.35cm) .. (.55,-.85) -- (1.05,-1.35);
	\ncircle{unshaded}{(.5,.5)}{.27}{170}{{\scriptsize{$V_{g}$}}}
	\ncircle{unshaded}{(0,0)}{.27}{110}{{\scriptsize{$V_g^*$}}}
	\ncircle{unshaded}{(-.5,-.5)}{.27}{180}{{\scriptsize{$V_{g}$}}}
	\draw[thick, unshaded] (-.9,.4) -- (-.4,.9) -- (-.7,1.2) -- (-1.2,.7) -- (-.9,.4);
	\node at (-.8,.8) {\rotatebox{45}{$h$}};
	\draw[thick, unshaded] (.9,-.4) -- (.4,-.9) -- (.7,-1.2) -- (1.2,-.7) -- (.9,-.4);
	\node at (.8,-.8) {\rotatebox{45}{$h$}};
\end{tikzpicture}
$$
naturally extends to a half-braiding.
Hence $(g,e_g)$ defines an element in the center $\cZ(\Pro(\cQ_\bullet))$.
\end{cor}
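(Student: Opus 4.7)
The plan is to verify the half-braiding axioms directly using the preceding proposition and the Reidemeister II relations from Proposition \ref{prop:ReidemeisterII}. Recall that a half-braiding for $(g,e_g) \in \cZ(\Pro(\cQ_\bullet))$ requires (i) a family of isomorphisms $e_g(X) \in \Hom(g \otimes X, X \otimes g)$ indexed by objects $X$, (ii) naturality: $(\varphi \otimes \id_g) \circ e_g(X) = e_g(Y) \circ (\id_g \otimes \varphi)$ for every $\varphi : X \to Y$ in $\Pro(\cQ_\bullet)$, (iii) the monoidal compatibility $e_g(X \otimes Y) = (\id_X \otimes e_g(Y)) \circ (e_g(X) \otimes \id_Y)$, and (iv) $e_g(1) = \id_g$.

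First I would extend $e_g$ to all objects $X \in \Pro(\cQ_\bullet)$, not just simples. By Definition \ref{defn:ReducedPlanarAlgebra}, every projection in $\cQ_\bullet$ sits inside some $\cQ_n \subset \cR_n = \Hom(\rho^{\otimes n}, \rho^{\otimes n})$, so it suffices to define $e_g(\rho^{\otimes n})$ and verify naturality against sub-projections. The natural definition is to set $e_g(\rho^{\otimes n})$ to be the $n$-fold horizontal stack of $V_g$'s, i.e., the diagram consisting of a single $g$-strand crossing $n$ consecutive $\rho$-strands, with each crossing resolved as $V_g$ or $V_g^*$ according to the prescription in Section \ref{sec:ActionOnR}. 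This makes the monoidal compatibility (iii) true essentially by definition, as the composition on the right hand side of (iii) is exactly the concatenated diagram.

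Next I would verify the key axiom, naturality (ii). This is precisely the content of the proposition immediately preceding the corollary, which uses Lemma \ref{lem:IdentityAutomorphism} ($\Phi_g = \id$ on $\cQ_\bullet$ since $g^2=1$) to show that a $g$-strand may slide freely past any $\varphi \in \cQ_n$. For a morphism $\varphi : p \to q$ between projections $p \leq \rho^{\otimes m}$ and $q \leq \rho^{\otimes n}$, naturality is obtained by sandwiching the sliding identity between $p$ and $q$ and invoking that $\varphi = q \varphi p$. For this to work I need to check that the diagrams for $e_g(p)$ and $e_g(q)$ really are obtained by compressing the $e_g(\rho^{\otimes m})$, $e_g(\rho^{\otimes n})$ diagrams by the projections, which is immediate from the definition. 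The unit axiom (iv) is the empty-diagram case of (iii).

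It remains to check that $e_g(X)$ is genuinely an isomorphism. This follows from Proposition \ref{prop:ReidemeisterII}: stacking the crossing $V_g$ on top of the crossing $V_g^*$ (in the $g=g^{-1}$ case) yields the identity, so $e_g(\rho^{\otimes n})$ has a two-sided inverse given by the reflected diagram. The main obstacle is really bookkeeping around the alternating pattern of $V_g$ versus $V_g^*$ dictated by shading — in particular making sure that the $*$ decorations align so that the preceding proposition applies. Once this is done, the pair $(g, e_g)$ satisfies all the half-braiding axioms and hence defines an object of $\cZ(\Pro(\cQ_\bullet))$.
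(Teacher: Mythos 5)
Your proposal is correct and follows essentially the same route the paper intends: the corollary is left without an explicit proof precisely because naturality is the content of the immediately preceding proposition (which rests on Lemma \ref{lem:IdentityAutomorphism}, $\Phi_g=\id_{\cQ_\bullet}$ for $g^2=1$), invertibility is Reidemeister II from Proposition \ref{prop:ReidemeisterII}, and the monoidal compatibility holds by construction of the cabled crossing. Your elaboration of these points, including the extension from simples to $\rho^{\otimes n}$ and compression by projections, is a faithful filling-in of the details the authors omit.
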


\begin{cor}
Suppose $g\in G$ is an involution.
The twist factor of $(g,e_g)$ is given by $\theta_g$.
\end{cor}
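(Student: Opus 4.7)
The plan is to compute $\theta_{(g,e_g)}$ by unpacking the standard diagrammatic definition of the twist in a pivotal category and reducing using the skein identities established earlier. Since $g$ is an involution, $g$ is a simple self-dual invertible object in $\frac{1}{2}\cP_+$ with $g\otimes g\cong 1$, so the twist of $(g,e_g)$ is a scalar endomorphism of $g$, equivalently a scalar $\theta_{(g,e_g)}\in U(1)$ obtained by closing the self-half-braiding $e_g(g)\colon g\otimes g\to g\otimes g$ against the (co)evaluation on $g$.

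First, I would draw out $e_g(g)$ using the explicit formula from the preceding corollary: a diagonal $g$-cable passing through the three $\rho$-strands that realize $g$ at depth $6$ of $\Gamma_+$, with crossings replaced by $V_g$ (or $V_g^*$) according to the shading convention. This gives the twist diagram as a closed $g$-loop encircling three $\rho$-strands with three insertions of the distinguished intertwiner.

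Second, I would simplify using Proposition \ref{prop:ReidemeisterII}. When the $g$-strand is closed into a loop, two of the three crossings become planar-isotopic in a way that the Reidemeister II relations apply, cancelling two adjacent $V_g, V_g^*$ pairs. This leaves one unmatched crossing at the turnaround of the loop, together with a cabled identity on $g$ formed by the three residual $\rho$-strands. By Lemma \ref{lem:IdentityAutomorphism}, since $g^2 = 1$ the automorphism $\Phi_g$ acts trivially on $\cQ_\bullet$, which I would use to certify that the cabled $g$-structure surrounding the residual crossing evaluates to the plain identity on $g$.

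Third, the unmatched crossing is exactly a $V_g^*$ rotated by one click (this is the origin of the twist — closing the strand into a loop induces one rotation), so by Lemma \ref{lem:OneClickRotationCompatible} it contributes precisely the scalar $\theta_g$, using $g = g^{-1}$ so that $\cF(V_g^*) = \theta_g V_{g^{-1}} = \theta_g V_g$, which then cancels with the remaining crossing via a final Reidemeister II. Thus the closed diagram evaluates to $\theta_g \cdot \id_g$, giving $\theta_{(g,e_g)} = \theta_g$.

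The hard part will be bookkeeping the positions of the $\star$'s (the distinguished regions of $V_g$ and $V_g^*$) as the $g$-cable is closed, and verifying that exactly one net one-click rotation occurs rather than some cancelling combination. A secondary concern is ensuring that the normalization of $\coev_g\colon 1\to g\otimes g$ and $\ev_g\colon g\otimes g\to 1$ used implicitly in the closure does not introduce an extra scalar; since $g$ has quantum dimension $1$ this should drop out, but it must be checked against the conventions fixed in the definition of $V_g$.
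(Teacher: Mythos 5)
Your high-level intuition is right: the twist $\theta_{(g,e_g)}$ does come from exactly one net one-click rotation of a crossing induced by closing the $g$-loop, via $\cF(V_g^*)=\theta_g V_{g^{-1}}=\theta_g V_g$ for an involution $g$. That single step is indeed how the paper extracts the scalar $\theta_g$. But the mechanism you propose for reducing the rest of the diagram does not work. The half-braiding $e_g(g)$ has exactly three crossings, $V_g$, $V_g^*$, $V_g$, sitting on three \emph{distinct parallel} $\rho$-strands of the cable representing $g$; the $g$-strand passes straight through all three without doubling back between them. Proposition \ref{prop:ReidemeisterII} only cancels a $V_g,V_g^*$ pair when the $g$-strand makes a U-turn over a \emph{single} $\rho$-strand, so no Reidemeister~II cancellation is available here. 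Your accounting is also inconsistent: you propose to cancel ``two adjacent $V_g,V_g^*$ pairs,'' then keep ``one unmatched crossing,'' then cancel that ``with the remaining crossing'' --- that is at least five crossings' worth of moves applied to a diagram with three. Finally, Lemma \ref{lem:IdentityAutomorphism} asserts that $\Phi_g=\id_{\cQ_\bullet}$ as a planar algebra map; it does not evaluate a closed $g$-cabled tangle to the identity and cannot play the role you assign it.

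What the paper actually does after the one-click rotation is: with all three crossings now equal to $V_g$ (and the prefactor $\theta_g$ extracted), it expands each $V_g$ by its definition as $[3]$ times a $g$-cabled crossing, producing $\theta_g[3]^3$ times a tangle built entirely from $g$-boxes and cabled strands. That tangle is then resolved to the plain $g$-strand using the skein relations for the invertible object $g$ from Remark \ref{rem:AutomorphismSkein} (isotoping the dimension-one strands and absorbing the $[3]^{-3}$ normalizations), yielding $\theta_g\cdot\id_g$. So to repair your argument you should replace both the Reidemeister~II cancellations and the appeal to Lemma \ref{lem:IdentityAutomorphism} with this expansion-and-resolution step; the one place Lemma \ref{lem:OneClickRotationCompatible} is used --- on the middle $V_g^*$ --- is the part you already have right.
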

\begin{proof}
$
\begin{tikzpicture}[baseline = -.1cm]
	\draw[mid>] (0,-.8) -- (0,-.2);
	\draw[mid<] (0,.8) -- (0,.2);
	\node at (-.2,-.6) {\scriptsize{$g$}};
	\node at (-.2,.6) {\scriptsize{$g$}};
	\coordinate (a) at (0,0);
	\fill[unshaded] ($ (a) - (.1,.3) $) rectangle ($ (a) + (.3,.3) $);
	\draw[mid>] ($ (a) + (.3,.2) $) arc (90:-90:.2cm);
	\draw ($ (a) + (0,.3) $)  .. controls ++(270:.2cm) and ++(180:.2cm) .. ($ (a) + (.3,-.2) $);
	\draw[super thick, white] ($ (a) + (0,-.3) $)  .. controls ++(90:.2cm) and ++(180:.2cm) .. ($ (a) + (.3,.2) $);
	\draw ($ (a) + (0,-.3) $)  .. controls ++(90:.2cm) and ++(180:.2cm) .. ($ (a) + (.3,.2) $);
\end{tikzpicture}
=
\begin{tikzpicture}[baseline = -.1cm]
	\draw (-1.35,-1.35) -- (.8,.8) .. controls ++(45:1.8cm) and ++(-45:1.8cm) .. (.8,-.8);
	\node at (-1.2,-1) {\scriptsize{$g$}};
	\node at (1,1.2) {\scriptsize{$g$}};
	\draw[thick, rho] (-1.05,1.35) -- (-.55,.85) .. controls ++(-45:.35cm) and ++(90:.35cm) .. (.5,.5) .. controls ++(270:.35cm) and ++(135:.35cm) .. (.85,-.55);
	\draw[thick, rho] (-1.2,1.2) -- (.8,-.8);
	\draw[thick, rho] (-1.35,1.05) -- (-.85,.55) .. controls ++(-45:.35cm) and ++(180:.35cm) .. (-.5,-.5) .. controls ++(0:.35cm) and ++(135:.35cm) .. (.55,-.85);
	\ncircle{unshaded}{(.5,.5)}{.27}{170}{{\scriptsize{$V_{g}$}}}
	\ncircle{unshaded}{(0,0)}{.27}{110}{{\scriptsize{$V_g^*$}}}
	\ncircle{unshaded}{(-.5,-.5)}{.27}{180}{{\scriptsize{$V_{g}$}}}
	\draw[thick, unshaded] (-.9,.4) -- (-.4,.9) -- (-.7,1.2) -- (-1.2,.7) -- (-.9,.4);
	\node at (-.8,.8) {\rotatebox{45}{$g$}};
	\draw[thick, unshaded] (.9,-.4) -- (.4,-.9) -- (.7,-1.2) -- (1.2,-.7) -- (.9,-.4);
	\node at (.8,-.8) {\rotatebox{45}{$g$}};
\end{tikzpicture}
=
\theta_g
\begin{tikzpicture}[baseline = -.1cm]
	\draw (-1.35,-1.35) -- (.8,.8) .. controls ++(45:1.8cm) and ++(-45:1.8cm) .. (.8,-.8);
	\node at (-1.2,-1) {\scriptsize{$g$}};
	\node at (1,1.2) {\scriptsize{$g$}};
	\draw[thick, rho] (-1.05,1.35) -- (-.55,.85) .. controls ++(-45:.35cm) and ++(90:.35cm) .. (.5,.5) .. controls ++(270:.35cm) and ++(135:.35cm) .. (.85,-.55);
	\draw[thick, rho] (-1.2,1.2) -- (.8,-.8);
	\draw[thick, rho] (-1.35,1.05) -- (-.85,.55) .. controls ++(-45:.35cm) and ++(180:.35cm) .. (-.5,-.5) .. controls ++(0:.35cm) and ++(135:.35cm) .. (.55,-.85);
	\ncircle{unshaded}{(.5,.5)}{.27}{180}{{\scriptsize{$V_{g}$}}}
	\ncircle{unshaded}{(0,0)}{.27}{180}{{\scriptsize{$V_g$}}}
	\ncircle{unshaded}{(-.5,-.5)}{.27}{180}{{\scriptsize{$V_{g}$}}}
	\draw[thick, unshaded] (-.9,.4) -- (-.4,.9) -- (-.7,1.2) -- (-1.2,.7) -- (-.9,.4);
	\node at (-.8,.8) {\rotatebox{45}{$g$}};
	\draw[thick, unshaded] (.9,-.4) -- (.4,-.9) -- (.7,-1.2) -- (1.2,-.7) -- (.9,-.4);
	\node at (.8,-.8) {\rotatebox{45}{$g$}};
\end{tikzpicture}
=\theta_g[3]^3
\begin{tikzpicture}[baseline = 0cm]
	\draw[thick, rho] (-.8,-1.4) -- (-.8,-1.7);
	\draw[thick, rho] (-.6,-1.4) -- (-.6,-1.7);
	\draw[thick, rho] (-.4,-1.4) -- (-.4,-1.7);
	\draw[thick, rho] (.8,1.4) arc (180:0:.1cm) -- (1, 1) arc (0:-180:.1cm);
	\draw[thick, rho] (.6,1.4) arc (180:0:.3cm) -- (1.2,.2) arc (0:-180:.4cm);
	\draw[thick, rho] (.4,1.4) arc (180:0:.5cm) -- (1.4,-.6) arc (0:-180:.7cm);	

	\draw[thick, rho] (-.8,-1.2) -- (-.8,1.9);
	\draw[thick, rho] (-.4,-.4) -- (-.4,1.9);
	\draw[thick, rho] (0,.4) -- (0,1.9);
	\draw[thick, rho] (-.7,-1.2) -- (.5,1.2);
	\draw[thick, rho] (-.5,-1.2) -- (.7,1.2);
	\nbox{unshaded}{(-.2,-.4)}{.2}{.1}{.1}{$g$}
	\nbox{unshaded}{(.2,.4)}{.2}{.1}{.1}{$g$}
	\nbox{unshaded}{(-.6,-1.2)}{.2}{.1}{.1}{$g$}
	\nbox{unshaded}{(.6,1.2)}{.2}{.1}{.1}{$g$}
\end{tikzpicture}=
\theta_g
\begin{tikzpicture}[baseline = -.1cm]
	\draw[mid>] (0,-.8) -- (0,.8);
	\node at (-.2,0) {\scriptsize{$g$}};
\end{tikzpicture}\,.
$
\end{proof}

%%%%%%%%%%%%%%%%%%%%%%%%%%%%%%%%%%%%%%%%%%%%%%%%%%%%%%%%%%%%%%%%%%
%%%%%%%%%%%%%%%%%%%%%%%%%%%%%%%%%%%%%%%%%%%%%%%%%%%%%%%%%%%%%%%%%%
%%%%%%%%%%%%%%%%%%%%%%%%%%%%%%%%%%%%%%%%%%%%%%%%%%%%%%%%%%%%%%%%%%
\section{The $G$-action on $\cS_\bullet$ and bases for $\cS_{3,+}$}\label{sec:ActionOnS}

We saw each $\Phi_g$ almost gives an automorphism of the unshaded factor planar algebra $\cR_\bullet$, except for the problem with the 1-click rotation from Corollary \ref{cor:1ClickProblem}.
However, $\Phi_g$ is compatible with the 2-click rotation, which suggests that the $\Phi_g$'s can be used to construct automorphisms of the shaded subfactor planar algebra $\cS_\bullet$.
There is a slight technicality here -- when we identify $\cS_{n,\pm}$ with $\cR_n$, the map $\Phi_g$ goes from $\cS_{n,\pm}\to \cS_{n,\mp}$.
Hence to get a planar algebra map, we need to also use the symmetric self-duality $\Delta_\pm:\cS_\pm\to \cS_\mp$ which reverses the shading. 

\begin{defn}
For $g\in G$, define $\Psi_g$ on $\cS_{n,\pm}=\cR_n$ by $\Psi_g=\Delta_\mp\circ \Phi_{g^{\pm 1}}$.
\end{defn}

\begin{ex}
When $x\in \cS_{2,+}$ and $y\in \cS_{3,-}$, we have
\begin{align*}
\Psi_g(x) &= 
\Delta_-\left(
\begin{tikzpicture}[baseline = -.1cm]
	\pgfmathsetmacro{\innerRadius}{.9}
	\pgfmathsetmacro{\middleRadius}{1.1}
	\pgfmathsetmacro{\outerRadius}{1.5}
	\fill[shaded] (0,0) -- (45:\innerRadius) arc (45:135:\innerRadius) -- (0,0);
	\fill[shaded] (0,0) -- (225:\innerRadius) arc (225:315:\innerRadius) -- (0,0);
	\fill[shaded] (-45:\innerRadius) arc (-45:45:\innerRadius) -- (45:\outerRadius) arc (45:-45:\outerRadius) -- (-45:\innerRadius);
	\fill[shaded] (135:\innerRadius) arc (135:225:\innerRadius) -- (225:\outerRadius) arc (225:135:\outerRadius) -- (135:\innerRadius);
	\draw[mid>] (225:\innerRadius) arc (225:135:\innerRadius);
	\draw[mid<] (135:\innerRadius) arc (135:45:\innerRadius);
	\draw[mid>] (45:\innerRadius) arc (45:-45:\innerRadius);
	\draw[mid<] (315:\innerRadius) arc (315:225:\innerRadius);
	\draw[thick, rho] (0,0) -- (45:\outerRadius);
	\draw[thick, rho] (0,0) -- (135:\outerRadius);
	\draw[thick, rho] (0,0) -- (225:\outerRadius);
	\draw[thick, rho] (0,0) -- (315:\outerRadius);
	\node at (0:\middleRadius) {\scriptsize{$g$}};
	\node at (90:\middleRadius) {\scriptsize{$g$}};
	\node at (180:\middleRadius) {\scriptsize{$g$}};
	\node at (270:\middleRadius) {\scriptsize{$g$}};
	\ncircle{unshaded}{(45:\innerRadius)}{.27}{0}{{\scriptsize{$V_{g}^*$}}}
	\ncircle{unshaded}{(135:\innerRadius)}{.27}{180}{{\scriptsize{$V_g$}}}
	\ncircle{unshaded}{(225:\innerRadius)}{.27}{180}{{\scriptsize{$V_{g}^*$}}}
	\ncircle{unshaded}{(315:\innerRadius)}{.27}{0}{{\scriptsize{$V_g$}}}
	\ncircle{unshaded}{(0,0)}{.27}{180}{$x$}
\end{tikzpicture}
\right)
=
\Delta_-\left(
\begin{tikzpicture}[baseline = -.1cm]
	\pgfmathsetmacro{\innerRadius}{1}
	\pgfmathsetmacro{\middleRadius}{1.2}
	\pgfmathsetmacro{\outerRadius}{1.6}
	\fill[shaded] (0,0) -- (45:\innerRadius) arc (45:135:\innerRadius) -- (0,0);
	\fill[shaded] (0,0) -- (225:\innerRadius) arc (225:315:\innerRadius) -- (0,0);
	\fill[shaded] (-45:\innerRadius) arc (-45:45:\innerRadius) -- (45:\outerRadius) arc (45:-45:\outerRadius) -- (-45:\innerRadius);
	\fill[shaded] (135:\innerRadius) arc (135:225:\innerRadius) -- (225:\outerRadius) arc (225:135:\outerRadius) -- (135:\innerRadius);
	\draw[mid>] (225:\innerRadius) arc (225:135:\innerRadius);
	\draw[mid<] (135:\innerRadius) arc (135:45:\innerRadius);
	\draw[mid>] (45:\innerRadius) arc (45:-45:\innerRadius);
	\draw[mid<] (315:\innerRadius) arc (315:225:\innerRadius);
	\draw[thick, rho] (0,0) -- (45:\outerRadius);
	\draw[thick, rho] (0,0) -- (135:\outerRadius);
	\draw[thick, rho] (0,0) -- (225:\outerRadius);
	\draw[thick, rho] (0,0) -- (315:\outerRadius);
	\node at (0:\middleRadius) {\scriptsize{$g$}};
	\node at (90:\middleRadius) {\scriptsize{$g$}};
	\node at (180:\middleRadius) {\scriptsize{$g$}};
	\node at (270:\middleRadius) {\scriptsize{$g$}};
	\ncircle{unshaded}{(45:\innerRadius)}{.4}{90}{{\scriptsize{$V_{g^{-1}}$}}}
	\ncircle{unshaded}{(135:\innerRadius)}{.4}{90}{{\scriptsize{$V_{g^{-1}}^*$}}}
	\ncircle{unshaded}{(225:\innerRadius)}{.4}{270}{{\scriptsize{$V_{g^{-1}}$}}}
	\ncircle{unshaded}{(315:\innerRadius)}{.4}{270}{{\scriptsize{$V_{g^{-1}}^*$}}}
	\ncircle{unshaded}{(0,0)}{.27}{180}{$x$}
\end{tikzpicture}
\right)
\text{ and }
\\
\Psi_g(y) &=
\Delta_+\left(
\begin{tikzpicture}[baseline = -.1cm]
	\pgfmathsetmacro{\innerRadius}{1.2}
	\pgfmathsetmacro{\middleRadius}{1.4}
	\pgfmathsetmacro{\outerRadius}{1.8}
	\fill[shaded] (-30:\innerRadius)  arc (-30:30:\innerRadius) -- (30:\outerRadius) arc (30:-30:\outerRadius) -- (-30:\innerRadius);
	\fill[shaded] (90:\innerRadius)  arc (90:150:\innerRadius) -- (150:\outerRadius) arc (150:90:\outerRadius) -- (90:\innerRadius);
	\fill[shaded] (210:\innerRadius)  arc (210:270:\innerRadius) -- (270:\outerRadius) arc (270:210:\outerRadius) -- (210:\innerRadius);
	\fill[shaded] (0,0) -- (210:\innerRadius) arc (210:150:\innerRadius) -- (0,0);
	\fill[shaded] (0,0) -- (30:\innerRadius) arc (30:90:\innerRadius) -- (0,0);
	\fill[shaded] (0,0) -- (270:\innerRadius) arc (270:330:\innerRadius) -- (0,0);
	\draw[mid<] (210:\innerRadius) arc (210:150:\innerRadius);
	\draw[mid>] (150:\innerRadius) arc (150:90:\innerRadius);
	\draw[mid<] (90:\innerRadius) arc (90:30:\innerRadius);
	\draw[mid>] (30:\innerRadius) arc (30:-30:\innerRadius);
	\draw[mid<] (330:\innerRadius) arc (330:270:\innerRadius);
	\draw[mid>] (270:\innerRadius) arc (270:210:\innerRadius);
	\draw[thick, rho] (0,0) -- (30:\outerRadius);
	\draw[thick, rho] (0,0) -- (90:\outerRadius);
	\draw[thick, rho] (0,0) -- (150:\outerRadius);
	\draw[thick, rho] (0,0) -- (210:\outerRadius);
	\draw[thick, rho] (0,0) -- (270:\outerRadius);
	\draw[thick, rho] (0,0) -- (330:\outerRadius);
	\node at (0:\middleRadius) {\scriptsize{$g$}};
	\node at (60:\middleRadius) {\scriptsize{$g$}};
	\node at (120:\middleRadius) {\scriptsize{$g$}};
	\node at (180:\middleRadius) {\scriptsize{$g$}};
	\node at (240:\middleRadius) {\scriptsize{$g$}};
	\node at (300:\middleRadius) {\scriptsize{$g$}};
	\ncircle{unshaded}{(150:\innerRadius)}{.4}{210}{{\scriptsize{$V_{g^{-1}}$}}}
	\ncircle{unshaded}{(90:\innerRadius)}{.4}{30}{{\scriptsize{$V_{g^{-1}}^*$}}}
	\ncircle{unshaded}{(30:\innerRadius)}{.4}{90}{{\scriptsize{$V_{g^{-1}}$}}}
	\ncircle{unshaded}{(330:\innerRadius)}{.4}{270}{{\scriptsize{$V_{g^{-1}}^*$}}}
	\ncircle{unshaded}{(270:\innerRadius)}{.4}{330}{{\scriptsize{$V_{g^{-1}}$}}}
	\ncircle{unshaded}{(210:\innerRadius)}{.4}{150}{{\scriptsize{$V_{g^{-1}}^*$}}}
	\ncircle{unshaded}{(0,0)}{.27}{180}{$y$}
\end{tikzpicture}
\right)
= 
\Delta_+\left(
\begin{tikzpicture}[baseline = -.1cm]
	\pgfmathsetmacro{\innerRadius}{1.1}
	\pgfmathsetmacro{\middleRadius}{1.3}
	\pgfmathsetmacro{\outerRadius}{1.7}
	\fill[shaded] (-30:\innerRadius)  arc (-30:30:\innerRadius) -- (30:\outerRadius) arc (30:-30:\outerRadius) -- (-30:\innerRadius);
	\fill[shaded] (90:\innerRadius)  arc (90:150:\innerRadius) -- (150:\outerRadius) arc (150:90:\outerRadius) -- (90:\innerRadius);
	\fill[shaded] (210:\innerRadius)  arc (210:270:\innerRadius) -- (270:\outerRadius) arc (270:210:\outerRadius) -- (210:\innerRadius);
	\fill[shaded] (0,0) -- (210:\innerRadius) arc (210:150:\innerRadius) -- (0,0);
	\fill[shaded] (0,0) -- (30:\innerRadius) arc (30:90:\innerRadius) -- (0,0);
	\fill[shaded] (0,0) -- (270:\innerRadius) arc (270:330:\innerRadius) -- (0,0);
	\draw[mid<] (210:\innerRadius) arc (210:150:\innerRadius);
	\draw[mid>] (150:\innerRadius) arc (150:90:\innerRadius);
	\draw[mid<] (90:\innerRadius) arc (90:30:\innerRadius);
	\draw[mid>] (30:\innerRadius) arc (30:-30:\innerRadius);
	\draw[mid<] (330:\innerRadius) arc (330:270:\innerRadius);
	\draw[mid>] (270:\innerRadius) arc (270:210:\innerRadius);
	\draw[thick, rho] (0,0) -- (30:\outerRadius);
	\draw[thick, rho] (0,0) -- (90:\outerRadius);
	\draw[thick, rho] (0,0) -- (150:\outerRadius);
	\draw[thick, rho] (0,0) -- (210:\outerRadius);
	\draw[thick, rho] (0,0) -- (270:\outerRadius);
	\draw[thick, rho] (0,0) -- (330:\outerRadius);
	\node at (0:\middleRadius) {\scriptsize{$g$}};
	\node at (60:\middleRadius) {\scriptsize{$g$}};
	\node at (120:\middleRadius) {\scriptsize{$g$}};
	\node at (180:\middleRadius) {\scriptsize{$g$}};
	\node at (240:\middleRadius) {\scriptsize{$g$}};
	\node at (300:\middleRadius) {\scriptsize{$g$}};
	\ncircle{unshaded}{(150:\innerRadius)}{.27}{90}{{\scriptsize{$V_{g}^*$}}}
	\ncircle{unshaded}{(90:\innerRadius)}{.27}{150}{{\scriptsize{$V_{g}$}}}
	\ncircle{unshaded}{(30:\innerRadius)}{.27}{330}{{\scriptsize{$V_{g}^*$}}}
	\ncircle{unshaded}{(330:\innerRadius)}{.27}{30}{{\scriptsize{$V_{g}$}}}
	\ncircle{unshaded}{(270:\innerRadius)}{.27}{210}{{\scriptsize{$V_{g}^*$}}}
	\ncircle{unshaded}{(210:\innerRadius)}{.27}{270}{{\scriptsize{$V_{g}$}}}
	\ncircle{unshaded}{(0,0)}{.27}{180}{$y$}
\end{tikzpicture}
\right)
\,.
\end{align*}
\end{ex}

\begin{lem}\label{lem:1ClickFixed}
On $\cS_-$, $\Psi_g=\cF_{\cS_\bullet}^{-1}\circ \Psi_g\circ \cF_{\cS_\bullet}$
\end{lem}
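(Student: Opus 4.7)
We want to prove $\cF_{\cS_\bullet}\circ \Psi_g = \Psi_g\circ \cF_{\cS_\bullet}$ as maps $\cS_{n,-}\to\cS_{n,+}$ (which is equivalent to the stated identity). Substituting $\Psi_g|_{\cS_{n,-}}=\Delta_+\circ\Phi_{g^{-1}}$ and $\Psi_g|_{\cS_{n,+}}=\Delta_-\circ\Phi_g$, the claim becomes the identity
\[
\cF_{\cS_\bullet}\circ\Delta_+\circ\Phi_{g^{-1}} \;=\; \Delta_-\circ\Phi_g\circ\cF_{\cS_\bullet}
\]
of linear maps on the common underlying vector space $\cR_n$. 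The plan is to verify this identity diagrammatically, using Corollary \ref{cor:1ClickProblem} (which provides the $g\leftrightarrow g^{-1}$ swap caused by rotation, $\Phi_g\circ\cF_{\cR_\bullet} = \cF_{\cR_\bullet}\circ\Phi_{g^{-1}}$) together with Lemma \ref{lem:OneClickRotationCompatible} (which tracks how $V_g$ and $V_g^*$ transform under a single click).

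Concretely, I would draw $\Psi_g(x)$ as in the example following the definition of $\Psi_g$: the box $x$ sits in the center, encircled by $g$-cabled $\rho$-strands that cross via alternating $V_g, V_g^*$ boxes, with a $\Delta$-reshading applied. Applying $\cF_{\cS_\bullet}$ rotates one $\rho$-strand around the diagram. By Lemma \ref{lem:OneClickRotationCompatible}, every boundary $V_g$ (resp.\ $V_g^*$) that is rotated past its supporting strand is converted into a $V_{g^{-1}}^*$ (resp.\ $V_{g^{-1}}$), picking up a phase $\theta_g^{\pm 1}$. These phases appear in complementary positions around the boundary and cancel pairwise. After simplifying the rearranged $g^{-1}$-cabled crossings via the Reidemeister II moves of Proposition \ref{prop:ReidemeisterII} and applying Corollary \ref{cor:1ClickProblem} to commute the resulting $\Phi_g$ past the newly-appeared $\cF_{\cR_\bullet}$, the output matches $\Psi_g(\cF_{\cS_\bullet}(x))\in\cS_{n,+}$, whose definition precisely uses $\Phi_g$ rather than $\Phi_{g^{-1}}$ together with $\Delta_-$ in place of $\Delta_+$.

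The main obstacle will be the bookkeeping around the shading-reversing maps $\Delta_\pm$ and the alternation of $V_g/V_g^*$ around the diagram, which together produce several subtle sign and phase changes. Conceptually, however, the identity holds because the definition $\Psi_g|_{\cS_{n,\pm}}=\Delta_\mp\circ\Phi_{g^{\pm 1}}$ is engineered precisely so that the $g\mapsto g^{-1}$ transformation of Corollary \ref{cor:1ClickProblem} is absorbed by the $g^{\pm 1}$ exponent, and the shading-flip produced by one-click rotation is absorbed by the $\Delta_\mp$ factor. Thus the two ``defects'' of $\Phi_g$ as a planar algebra map (the rotation anomaly and the shading mismatch) cancel when assembled into $\Psi_g$, and the lemma is exactly the assertion that this cancellation is consistent.
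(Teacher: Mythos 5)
Your reduction to the identity $\cF_{\cS_\bullet}\circ\Delta_+\circ\Phi_{g^{-1}}=\Delta_-\circ\Phi_g\circ\cF_{\cS_\bullet}$ and your reliance on Corollary \ref{cor:1ClickProblem} are exactly the paper's approach; the paper then finishes in one line by composing that corollary with the relation $\Delta_+\circ\cF_{\cS_\bullet}=\cF_{\cS_\bullet}\circ\Delta_-$. The one point you must make explicit is precisely this commutation of $\Delta_\pm$ with the one-click rotation: you defer it to ``bookkeeping around the shading-reversing maps,'' but it is not something to be established by chasing $V_g$-boxes and phases around the boundary --- it is an input, namely the defining property of $\Delta$ being a \emph{symmetric} self-duality of $\rho$ (see Definition \ref{defn:ReducedPlanarAlgebra}). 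Once that is cited, the entire diagrammatic verification you outline (in particular the re-derivation of the $g\mapsto g^{-1}$ swap from Lemma \ref{lem:OneClickRotationCompatible} and the cancellation of the $\theta_g^{\pm1}$ phases) is redundant: the lemma follows formally from the two commutation relations without drawing any pictures.
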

\begin{proof}
Identifying $\cS_{n,\pm}=\cR_n$, we have $\cF_{\cS_\bullet}=\cF_{\cR_\bullet}$. 
Since $\Delta$ is a symmetric self-duality, $\Delta_+ \circ \cF_{\cS_\bullet}=\cF_{\cS_\bullet}\circ \Delta_-$.
By Corollary \ref{cor:1ClickProblem}, $\cF_{\cR_\bullet} \circ \Phi_{g^{-1}}=\Phi_g \circ \cF_{\cR_\bullet}$.
Combining these, we have
\begin{align*}
\Psi_g 
&= \Delta_+ \circ \Phi_{g^{-1}} 
= (\Delta_+ \circ \cF_{\cR_\bullet}^{-1})\circ (\cF_{\cR_\bullet} \circ \Phi_{g^{-1}})
=\cF_{\cR_\bullet}^{-1}\circ (\Delta_- \circ \Phi_g) \circ \cF_{\cR_\bullet}
=\cF_{\cS_\bullet}^{-1} \circ \Psi_g \circ \cF_{\cS_\bullet}.
\qedhere
\end{align*}
\end{proof}

\begin{prop}
The map $\Psi_g$ is a shaded planar algebra automorphism.
\end{prop}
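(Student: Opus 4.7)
The plan is to verify that $\Psi_g$ commutes with the action of every shaded planar tangle by checking this on a generating set for the shaded planar operad. A convenient generating set consists of the multiplication tangles, the inclusion tangles (adding outer strands), the conditional expectation/partial trace tangles (capping outer strands), the Jones projections, and the one-click rotation. Since each $\Delta_\pm$ is by definition a symmetric self-duality, hence a shading-reversing planar algebra isomorphism, the entire burden is to check that $\Phi_{g^{\pm 1}}$ respects each of these tangles under the identifications $\cS_{n,\pm}\cong \cR_n$.

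For multiplication, the $\cS_{n,\pm}$-product is precisely the graded product on $\cR_n$, so compatibility follows immediately from Proposition \ref{prop:GradedMultiplication}. For the inclusion and expectation tangles, the corresponding operation on $\cR_n$ amounts to inserting a straight $\rho$-strand or a cup/cap at the outermost position; the Reidemeister II identities of Proposition \ref{prop:ReidemeisterII} then allow the $g$-cable encircling the diagram to slide across the new straight strand or cap. The Jones projection tangles produce elements of Temperley-Lieb inside $\cR_\bullet$, which are built from cups and caps, so compatibility reduces to the previous case.

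The crucial step is the one-click rotation, where the defect $\cF_{\cR_\bullet}\circ \Phi_{g^{-1}}=\Phi_g\circ \cF_{\cR_\bullet}$ of Corollary \ref{cor:1ClickProblem} is precisely compensated by the shading swap in $\Delta$. Lemma \ref{lem:1ClickFixed} verifies this on $\cS_-$, and the analogous calculation on $\cS_+$ is obtained by conjugating that identity by $\cF_{\cS_\bullet}$ and using the symmetric self-duality relation $\cF_{\cS_\bullet}\circ \Delta_+ =\Delta_-\circ \cF_{\cS_\bullet}$ together with the swap $g\leftrightarrow g^{-1}$ built into the definition $\Psi_g = \Delta_\mp\circ \Phi_{g^{\pm 1}}$. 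Invertibility then follows from the composition rule $\Phi_g\circ \Phi_h=\Phi_{gh}$ noted in the introduction combined with $\Delta_\pm\circ \Delta_\mp = \id$, yielding $\Psi_g\circ \Psi_{g^{-1}}=\id$.

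The main obstacle I anticipate is the careful bookkeeping of which of $g$ or $g^{-1}$ is used by $\Phi$ on each shading, and verifying that the shading-swap in $\Delta$ commutes consistently past each elementary tangle through the composition $\Delta\circ \Phi$. Since the shading-swap interacts nontrivially with the one-click rotation but trivially with multiplication and annular tangles, the verifications split cleanly by tangle type, and the only genuinely new content beyond what was established in Propositions \ref{prop:ReidemeisterII} and \ref{prop:GradedMultiplication} and Lemma \ref{lem:1ClickFixed} is this shading-tracking on generators.
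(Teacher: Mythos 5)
Your proposal is correct and follows essentially the same route as the paper: reduce to a generating set consisting of the graded multiplication, cups/caps, and the one-click rotation, then invoke Proposition \ref{prop:GradedMultiplication}, the Reidemeister II relations of Proposition \ref{prop:ReidemeisterII}, and Lemma \ref{lem:1ClickFixed} respectively. Your additional remarks on deducing the $\cS_+$ case of rotation-compatibility from Lemma \ref{lem:1ClickFixed} and on invertibility via $\Phi_g\circ\Phi_h=\Phi_{gh}$ are sensible refinements of details the paper leaves implicit.
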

\begin{proof}
We need to show that $\Psi_g$ commutes with a set of generating tangles for the planar operad. 
We can use the graded multiplication operator and the annular tangles, which are clearly generated by adding cups and caps and the 1-click rotation operator.

The fact that $\Psi_g$ is compatible with the 1-click rotation is exactly Lemma \ref{lem:1ClickFixed}.
We see that $\Psi_g$ is compatible with cups and caps by the Reidemeister II relation from Proposition \ref{prop:ReidemeisterII}.
Finally, $\Psi_g$ is compatible with the graded multiplication operator by Proposition \ref{prop:GradedMultiplication}.
\end{proof}

%%%%%%%%%%%%%%%%%%%%%%%%%%%%%%%%%%%%%%%%%%%%%%%%%%%%%%%%%%%%%%
\subsection{The $G$-action on $\cS_\bullet$}

We prove a few lemmas to calculate constants, after which we will see $\Psi$ gives an action of $G$ on $\cS_\bullet$.

\begin{cor}\label{cor:NormOfTrivalentVertex}
$
\begin{tikzpicture}[baseline = -.1cm]
	\draw[thick, theta] (-.5,-.6) -- (-.5,.6);
	\draw[thick, theta] (.5,-.6) -- (.5,.6);
	\draw[thick, theta] (-.3,-.6) -- (-.3,0);
	\draw[thick, theta] (.3,-.6) -- (.3,0);
	\nbox{unshaded}{(-.4,0)}{.3}{0}{0}{$g$}
	\nbox{unshaded}{(.4,0)}{.3}{0}{0}{$h$}
	\draw[thick, theta] (-.3,.3) arc (180:0:.3cm);
\end{tikzpicture}
=
\begin{tikzpicture}[baseline = .6cm]
	\draw[thick, theta] (-.5,-.6) -- (-.5,0);
	\draw[thick, theta] (.5,-.6) -- (.5,0);
	\draw[thick, theta] (-.3,-.6) -- (-.3,0);
	\draw[thick, theta] (.3,-.6) -- (.3,0);
	\draw[thick, theta] (-.5,.3) -- (-.1,.9);
	\draw[thick, theta] (.5,.3) -- (.1,.9);
	\draw[thick, theta] (-.1,1.5) -- (-.1,1.8);
	\draw[thick, theta] (.1,1.5) -- (.1,1.8);
	\nbox{unshaded}{(-.4,0)}{.3}{0}{0}{$g$}
	\nbox{unshaded}{(.4,0)}{.3}{0}{0}{$h$}
	\nbox{unshaded}{(0,1.2)}{.3}{0}{0}{$gh$}
	\draw[thick, theta] (-.3,.3) arc (180:0:.3cm);
\end{tikzpicture}
$
and
$
\begin{tikzpicture}[baseline = -.1cm]
	\draw[thick, theta] (-.5,-.6) -- (-.5,.6);
	\draw[thick, theta] (.5,-.6) -- (.5,.6);
	\nbox{unshaded}{(-.4,0)}{.3}{0}{0}{$g$}
	\nbox{unshaded}{(.4,0)}{.3}{0}{0}{$h$}
	\draw[thick, theta] (-.3,.3) arc (180:0:.3cm);
	\draw[thick, theta] (-.3,-.3) arc (-180:0:.3cm);
\end{tikzpicture}
=
\begin{tikzpicture}[baseline = -.1cm]
	\draw[thick, theta] (-.5,.3) -- (-.1,.9);
	\draw[thick, theta] (.5,.3) -- (.1,.9);
	\draw[thick, theta] (-.1,1.5) -- (-.1,1.8);
	\draw[thick, theta] (.1,1.5) -- (.1,1.8);
	\draw[thick, theta] (-.5,-.3) -- (-.1,-.9);
	\draw[thick, theta] (.5,-.3) -- (.1,-.9);
	\draw[thick, theta] (-.1,-1.5) -- (-.1,-1.8);
	\draw[thick, theta] (.1,-1.5) -- (.1,-1.8);
	\nbox{unshaded}{(-.4,0)}{.3}{0}{0}{$g$}
	\nbox{unshaded}{(.4,0)}{.3}{0}{0}{$h$}
	\nbox{unshaded}{(0,1.2)}{.3}{0}{0}{$gh$}
	\nbox{unshaded}{(0,-1.2)}{.3}{0}{0}{$gh$}
	\draw[thick, theta] (-.3,.3) arc (180:0:.3cm);
	\draw[thick, theta] (-.3,-.3) arc (-180:0:.3cm);
\end{tikzpicture}
=
\displaystyle
\frac{1}{[4]}
\begin{tikzpicture}[baseline = -.1cm]
	\draw[thick, theta] (-.1,-.6) -- (-.1,.6);
	\draw[thick, theta] (.1,-.6) -- (.1,.6);
	\nbox{unshaded}{(0,0)}{.3}{0}{0}{$gh$}
\end{tikzpicture}
$\,.
\end{cor}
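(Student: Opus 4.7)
The plan is to derive both identities from Lemma~\ref{lem:GCoproduct} (which gives $g * h = [4]^{-1} gh$) together with the elementary fact that $gh \in \cP_{6,+}$ is a minimal projection, hence an idempotent: $gh \cdot gh = gh$. The first identity is the substantive step; the second follows from it almost mechanically.

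For the first identity, I interpret both sides as morphisms in the Hom space $\Hom(g \otimes h, X)$, where $X$ denotes the two-theta-strand space in $\cP_{6,+}$ in which $gh$ sits as a sub-object. Because the two projections $g$ and $h$ appear at the bottom, the LHS factors through $g \otimes h$; by the fusion rule $g \otimes h \cong gh$ and semisimplicity of $\tfrac{1}{2}\cP_+$, this Hom space is one-dimensional and spanned by any nonzero morphism whose image lies in the $gh$-summand of $X$. The LHS is such a morphism, so the RHS---which is the LHS post-composed with the projector $gh$ on the top two strands---is obtained by applying an operator that acts as the identity on the $gh$-summand. Hence LHS $=$ RHS.

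For the second identity, I first apply the first identity at the top, and its vertical mirror image at the bottom, of the LHS to insert $gh$ boxes both above and below without changing the element, yielding the middle diagram. The part of the middle diagram strictly between the two $gh$ boxes is exactly the full coproduct $g * h$, which equals $[4]^{-1} gh$ by Lemma~\ref{lem:GCoproduct}. Using $gh \cdot gh = gh$ twice then collapses the composition $gh \cdot [4]^{-1} gh \cdot gh$ to $[4]^{-1} gh$, giving the final RHS. The main obstacle in this entire argument is the semisimplicity/fusion-rule justification that the half-coproduct lands in the $gh$-summand; once that is granted, everything else is routine planar-algebraic bookkeeping. An alternative, more computational route would expand the $gh$ box in the RHS of the first identity via Lemma~\ref{lem:GCoproduct} in reverse and then cancel the redundant merged strands diagrammatically, but the abstract Hom-space approach is cleaner.
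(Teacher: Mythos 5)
Your proposal is correct, but it proves the key first identity by a different mechanism than the paper. The paper's proof is the trace trick it uses throughout: set $D$ equal to the difference of the two sides, expand $\|D\|_2^2=\Tr(D^*D)$ into four closed diagrams, and evaluate each one using Lemma \ref{lem:GCoproduct} (every closed diagram that appears contains the full coproduct $g*h=[4]^{-1}gh$, so all four traces are computable and cancel), whence $D=0$. You instead argue abstractly: the half-coproduct is a morphism out of $g\otimes h\cong gh$, so by semisimplicity its image lies in the $gh$-isotypic component of the two-$\jw{3}$-strand object, and since $gh$ is invertible that component has multiplicity one and the box labelled $gh$ is exactly the projection onto it; post-composing therefore acts as the identity. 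Both arguments are valid. The Hom-space route is cleaner and explains \emph{why} the identity holds, at the cost of invoking the fusion rule $g\otimes h\cong gh$ and the multiplicity-one statement $\langle \jw{3}\overline{\jw{3}},gh\rangle=1$ (which does hold here, since the group elements appear as minimal projections on two $\jw{3}$-strands); the paper's computation is more mechanical but stays entirely inside the planar calculus and needs only the already-proved coproduct formula. Your treatment of the second identity — insert $gh$ above and below via the first identity and its adjoint, then collapse $gh\cdot(g*h)\cdot gh=[4]^{-1}gh$ using Lemma \ref{lem:GCoproduct} and idempotency — is exactly what the paper means by ``follows immediately.''
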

\begin{proof}
The first equation follows by taking the norm squared of the difference and applying Lemma \ref{lem:GCoproduct}.
The second equation then follows immediately.
\end{proof}

\begin{defn}
For $h,g\in G$, let the $(g,h,hg)$-trivalent vertex be given by
$$
Y_{g,h}=
\begin{tikzpicture}[baseline = -.1cm]
	\coordinate (aa) at (0,0);
	\filldraw (aa) circle (.05cm);
	\draw[mid>] (aa) -- (0,.6);
	\draw[mid<] (aa) -- (-.4,-.4);
	\draw[mid<] (aa) -- (.4,-.4);
	\node at (-.5,-.2) {\scriptsize{$g$}};
	\node at (.5,-.2) {\scriptsize{$h$}};
	\node at (-.3,.4) {\scriptsize{$gh$}};
\end{tikzpicture}
=
[4]^{1/2}
\begin{tikzpicture}[baseline = .6cm]
	\draw[thick, theta] (-.5,-.6) -- (-.5,0);
	\draw[thick, theta] (.5,-.6) -- (.5,0);
	\draw[thick, theta] (-.3,-.6) -- (-.3,0);
	\draw[thick, theta] (.3,-.6) -- (.3,0);
	\draw[thick, theta] (-.5,.3) -- (-.1,.9);
	\draw[thick, theta] (.5,.3) -- (.1,.9);
	\draw[thick, theta] (-.1,1.5) -- (-.1,1.8);
	\draw[thick, theta] (.1,1.5) -- (.1,1.8);
	\nbox{unshaded}{(-.4,0)}{.3}{0}{0}{$g$}
	\nbox{unshaded}{(.4,0)}{.3}{0}{0}{$h$}
	\nbox{unshaded}{(0,1.2)}{.3}{0}{0}{$gh$}
	\draw[thick, theta] (-.3,.3) arc (180:0:.3cm);
\end{tikzpicture},
$$
so that $Y_{g,h}^*Y_{g,h} = gh$ by Corollary \ref{cor:NormOfTrivalentVertex}.
Since $Y_{g,h}Y_{g,h}^*$ also has trace 1, we immediately have that
\begin{equation}
\begin{tikzpicture}[baseline = -.1cm]
	\draw[mid<] (-.2,.8) -- (-.2,-.8);
	\draw[mid<] (.2,.8) -- (.2,-.8);
	\node at (-.4,0) {\scriptsize{$g$}};
	\node at (0,0) {\scriptsize{$h$}};
\end{tikzpicture}
=
\begin{tikzpicture}[baseline = -.1cm]
	\draw[thick, theta] (-.5,-.8) -- (-.5,.8);
	\draw[thick, theta] (.5,-.8) -- (.5,.8);
	\draw[thick, theta] (-.3,-.8) -- (-.3,.8);
	\draw[thick, theta] (.3,-.8) -- (.3,.8);
	\nbox{unshaded}{(-.4,0)}{.3}{0}{0}{$g$}
	\nbox{unshaded}{(.4,0)}{.3}{0}{0}{$h$}
\end{tikzpicture}
=
[4]
\begin{tikzpicture}[baseline = -.1cm]
	\draw[thick, theta] (-.5,-1.8) -- (-.5,-1.2);
	\draw[thick, theta] (.5,-1.8) -- (.5,-1.2);
	\draw[thick, theta] (-.3,-1.8) -- (-.3,-1.2);
	\draw[thick, theta] (.3,-1.8) -- (.3,-1.2);
	\draw[thick, theta] (-.5,-.9) -- (-.1,-.3);
	\draw[thick, theta] (.5,-.9) -- (.1,.-.3);
	\draw[thick, theta] (-.5,1.8) -- (-.5,1.2);
	\draw[thick, theta] (.5,1.8) -- (.5,1.2);
	\draw[thick, theta] (-.3,1.8) -- (-.3,1.2);
	\draw[thick, theta] (.3,1.8) -- (.3,1.2);
	\draw[thick, theta] (-.5,.9) -- (-.1,.3);
	\draw[thick, theta] (.5,.9) -- (.1,.3);
	\nbox{unshaded}{(-.4,-1.2)}{.3}{0}{0}{$g$}
	\nbox{unshaded}{(.4,-1.2)}{.3}{0}{0}{$h$}
	\nbox{unshaded}{(-.4,1.2)}{.3}{0}{0}{$g$}
	\nbox{unshaded}{(.4,1.2)}{.3}{0}{0}{$h$}
	\nbox{unshaded}{(0,0)}{.3}{0}{0}{$gh$}
	\draw[thick, theta] (-.3,-.9) arc (180:0:.3cm);
	\draw[thick, theta] (-.3,.9) arc (-180:0:.3cm);
\end{tikzpicture}
=
\begin{tikzpicture}[baseline = -.1cm]
	\coordinate (aa) at (0,.4);
	\coordinate (bb) at (0,-.4);
	\filldraw (aa) circle (.05cm);
	\filldraw (bb) circle (.05cm);
	\draw[mid<] (aa) -- (bb);
	\draw[mid<] (bb) -- (-.4,-.8);
	\draw[mid<] (bb) -- (.4,-.8);
	\draw[mid>] (aa) -- (-.4,.8);
	\draw[mid>] (aa) -- (.4,.8);
	\node at (-.5,-.6) {\scriptsize{$g$}};
	\node at (.5,-.6) {\scriptsize{$h$}};
	\node at (-.5,.6) {\scriptsize{$g$}};
	\node at (.5,.6) {\scriptsize{$h$}};
	\node at (-.3,0) {\scriptsize{$gh$}};
\end{tikzpicture}
\,.
\label{rel:Zip}
\end{equation}
\end{defn}

\begin{rem}
Note that the $(g,h,gh)$-trivalent vertex $Y_{g,h}$ is not in $\cR_\bullet$.
At this point, we do not know whether $Y_{g,h}Y_{g,h}^*$ is in $\cR_\bullet$.
\end{rem}

\begin{lem}
The $G$-trivalent vertices are associative, i.e.,
$
\begin{tikzpicture}[baseline = -.1cm]
	\draw[mid>] (-.6,-.8)--(-.3,-.2);
	\draw[mid>] (0,-.8)--(-.3,-.2);
	\draw[mid>] (-.3,-.2)--(0,.4);
	\draw[mid>] (.6,-.8)--(0,.4);
	\draw[mid>] (0,.4)--(0,.8);
	\node at (-.7,-.6) {\scriptsize{$g$}};
	\node at (.1,-.6) {\scriptsize{$h$}};
	\node at (.7,-.6) {\scriptsize{$k$}};
	\node at (-.5,0) {\scriptsize{$gh$}};
	\filldraw (-.3,-.2) circle (.05cm);
	\filldraw (0,.4) circle (.05cm);
\end{tikzpicture}
=
\begin{tikzpicture}[baseline = -.1cm]
	\draw[mid>] (.6,-.8)--(.3,-.2);
	\draw[mid>] (0,-.8)--(.3,-.2);
	\draw[mid>] (.3,-.2)--(0,.4);
	\draw[mid>] (-.6,-.8)--(0,.4);
	\draw[mid>] (0,.4)--(0,.8);
	\node at (-.7,-.6) {\scriptsize{$g$}};
	\node at (-.1,-.6) {\scriptsize{$h$}};
	\node at (.7,-.6) {\scriptsize{$k$}};
	\node at (.5,0) {\scriptsize{$hk$}};
	\filldraw (.3,-.2) circle (.05cm);
	\filldraw (0,.4) circle (.05cm);
\end{tikzpicture}
$\,.
\end{lem}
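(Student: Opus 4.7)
The strategy is to recognize both sides as isometries in the one-dimensional hom space $\Hom(g\otimes h\otimes k,\ ghk)$ and then pin down the relative phase by unfolding the definition of $Y_{\cdot,\cdot}$.

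Since $g,h,k\in G$ are invertible simple objects in $\frac{1}{2}\cP_+$, the triple tensor product $g\otimes h\otimes k$ is simple and isomorphic to $ghk$. Consequently $\Hom(g\otimes h\otimes k,\ ghk)$ is one-dimensional, and writing $A$ for the left-hand side and $B$ for the right-hand side, we have $A=\lambda B$ for some scalar $\lambda\in\C$.

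Next I would verify that both $A$ and $B$ are isometries, which forces $\lambda\in U(1)$. For the left side,
\begin{align*}
A^{*}A
&= (Y_{g,h}^{*}\otimes \id_k)\,(Y_{gh,k}^{*}\,Y_{gh,k})\,(Y_{g,h}\otimes \id_k) \\
&= (Y_{g,h}^{*}Y_{g,h})\otimes \id_k \;=\; \id_{g\otimes h\otimes k},
\end{align*}
using the normalization $Y_{\cdot,\cdot}^{*}Y_{\cdot,\cdot}=\id$ built into the definition of the $G$-trivalent vertex via Corollary \ref{cor:NormOfTrivalentVertex} together with the scaling $Y_{g,h}=[4]^{1/2}$ times the $(g\ast h)$-coproduct. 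The identical computation handles $B^{*}B=\id_{g\otimes h\otimes k}$.

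Finally, to show $\lambda=1$, I would expand each occurrence of $Y$ using $Y_{g,h}=[4]^{1/2}\cdot(g\ast h)$ and iterate Lemma \ref{lem:GCoproduct}. The key point is that the coproduct in $\cP_{6,+}$ is strictly associative at the level of planar tangles: the cabled $\jw{3}$-configurations representing $g$, $h$, and $k$ compose associatively on the nose, not merely up to scalar. Thus both $A$ and $B$ reduce to the \emph{same} scalar multiple (namely $[4]^{3/2}\cdot[4]^{-2}=[4]^{-1/2}$) of the triple coproduct $g\ast h\ast k$, giving $A=B$. The main obstacle is purely bookkeeping: tracking the three $[4]^{1/2}$ normalization factors along each composition and confirming that the planar definition of $\ast$ gives strict associativity, so that no residual phase can appear.
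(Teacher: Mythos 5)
Your overall strategy is sound and your decisive step is essentially the paper's proof, but the write-up has the emphasis inverted and misplaces the key citation. The paper does not need your steps (1) and (2) at all: it simply expands both composites into $\jw{3}$-cabled boxes and shows they reduce to the \emph{same} planar diagram, which is exactly your step (3); once that is done, the one-dimensionality of $\Hom(g\otimes h\otimes k, ghk)$ and the isometry property are redundant. The thin spot is in how you justify the reduction. The two composites are \emph{not} literally the same tangle: one contains an intermediate $gh$-labelled box and the other an intermediate $hk$-labelled box, so ``strict associativity on the nose'' is not automatic. The identity that lets you delete (equivalently, insert) the intermediate box across the cap is the first equation of Corollary \ref{cor:NormOfTrivalentVertex} --- which is what the paper invokes --- not Lemma \ref{lem:GCoproduct}; the latter only concerns the doubly-capped element $g*h\in\cP_{6,+}$ and by itself gives $(g*h)*k=g*(h*k)$ as scalars times $ghk$, not equality of the two morphisms $g\otimes h\otimes k\to ghk$. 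Relatedly, $Y_{g,h}$ is not $[4]^{1/2}$ times the coproduct $g*h$ (that element is capped on both top and bottom, while $Y_{g,h}$ is only half-capped; the correct relation is $Y_{g,h}Y_{g,h}^{*}=[4]\,(g*h)$), and your normalization bookkeeping is off: each side carries exactly two trivalent vertices, hence a factor $[4]^{1/2}\cdot[4]^{1/2}=[4]$ in front of the common flattened diagram, rather than the $[4]^{3/2}\cdot[4]^{-2}$ you quote. With the citation corrected to Corollary \ref{cor:NormOfTrivalentVertex} and the scaffolding stripped away, your argument becomes the paper's.
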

\begin{proof}
Using Corollary \ref{cor:NormOfTrivalentVertex}, we have
\begin{align*}
\begin{tikzpicture}[baseline = 1.1cm]
	\draw[thick, theta] (-.5,-.6) -- (-.5,0);
	\draw[thick, theta] (.5,-.6) -- (.5,0);
	\draw[thick, theta] (-.3,-.6) -- (-.3,0);
	\draw[thick, theta] (.3,-.6) -- (.3,0);
	\draw[thick, theta] (-.5,.3) -- (-.1,.9);
	\draw[thick, theta] (.5,.3) -- (.1,.9);
	\draw[thick, theta] (-.1,1.5) -- (.3,2.1) -- (.3,3);
	\draw[thick, theta] (.1,1.5)  .. controls ++(45:.7cm) and ++(100:.25cm) ..  (1.1,.3) -- (1.1,-.6);
	\draw[thick, theta] (1.3,-.6) -- (1.3,.3) -- (.5,2.1) -- (.5,3);
	\nbox{unshaded}{(-.4,0)}{.3}{0}{0}{$g$}
	\nbox{unshaded}{(.4,0)}{.3}{0}{0}{$h$}
	\nbox{unshaded}{(1.2,0)}{.3}{0}{0}{$k$}
	\nbox{unshaded}{(0,1.2)}{.3}{0}{0}{$gh$}
	\nbox{unshaded}{(.4,2.4)}{.3}{.1}{.1}{$ghk$}
	\draw[thick, theta] (-.3,.3) arc (180:0:.3cm);
\end{tikzpicture}
&=
\begin{tikzpicture}[baseline = 1.1cm]
	\draw[thick, theta] (-.5,-.6) -- (-.5,0);
	\draw[thick, theta] (.5,-.6) -- (.5,0);
	\draw[thick, theta] (-.3,-.6) -- (-.3,0);
	\draw[thick, theta] (.3,-.6) -- (.3,0);
	\draw[thick, theta] (.5,.3) .. controls ++(135:.25cm) and ++(225:.25cm) .. (.3,1.5)  .. controls ++(45:.5cm) and ++(100:.25cm) ..  (1.1,.3) -- (1.1,-.6);
	\draw[thick, theta] (-.5,-.6) -- (-.5,.3) -- (.3,2.1) -- (.3,3);
	\draw[thick, theta] (1.3,-.6) -- (1.3,.3) -- (.5,2.1) -- (.5,3);
	\nbox{unshaded}{(-.4,0)}{.3}{0}{0}{$g$}
	\nbox{unshaded}{(.4,0)}{.3}{0}{0}{$h$}
	\nbox{unshaded}{(1.2,0)}{.3}{0}{0}{$k$}
	\nbox{unshaded}{(.4,2.4)}{.3}{.1}{.1}{$ghk$}
	\draw[thick, theta] (-.3,.3) arc (180:0:.3cm);
\end{tikzpicture}
=
\begin{tikzpicture}[baseline = 1.1cm]
	\draw[thick, theta] (.5,-.6) -- (.5,0);
	\draw[thick, theta] (-.3,-.6) -- (-.3,0);
	\draw[thick, theta] (.3,-.6) -- (.3,0);
	\draw[thick, theta] (1.1,-.6) -- (1.1,0);
	\draw[thick, theta] (-.5,-.6) -- (-.5,.3) -- (.3,2.1) -- (.3,3);
	\draw[thick, theta] (1.3,-.6) -- (1.3,.3) -- (.5,2.1) -- (.5,3);
	\nbox{unshaded}{(-.4,0)}{.3}{0}{0}{$g$}
	\nbox{unshaded}{(.4,0)}{.3}{0}{0}{$h$}
	\nbox{unshaded}{(1.2,0)}{.3}{0}{0}{$k$}
	\nbox{unshaded}{(.4,2.4)}{.3}{.1}{.1}{$ghk$}
	\draw[thick, theta] (-.3,.3) arc (180:0:.3cm);
	\draw[thick, theta] (.5,.3) arc (180:0:.3cm);
\end{tikzpicture}
=
\begin{tikzpicture}[baseline = 1.1cm, xscale=-1]
	\draw[thick, theta] (-.5,-.6) -- (-.5,0);
	\draw[thick, theta] (.5,-.6) -- (.5,0);
	\draw[thick, theta] (-.3,-.6) -- (-.3,0);
	\draw[thick, theta] (.3,-.6) -- (.3,0);
	\draw[thick, theta] (.5,.3) .. controls ++(135:.25cm) and ++(225:.25cm) .. (.3,1.5)  .. controls ++(45:.5cm) and ++(100:.25cm) ..  (1.1,.3) -- (1.1,-.6);
	\draw[thick, theta] (-.5,-.6) -- (-.5,.3) -- (.3,2.1) -- (.3,3);
	\draw[thick, theta] (1.3,-.6) -- (1.3,.3) -- (.5,2.1) -- (.5,3);
	\nbox{unshaded}{(-.4,0)}{.3}{0}{0}{$k$}
	\nbox{unshaded}{(.4,0)}{.3}{0}{0}{$h$}
	\nbox{unshaded}{(1.2,0)}{.3}{0}{0}{$g$}
	\nbox{unshaded}{(.4,2.4)}{.3}{.1}{.1}{$ghk$}
	\draw[thick, theta] (-.3,.3) arc (180:0:.3cm);
\end{tikzpicture}
=
\begin{tikzpicture}[baseline = 1.1cm, xscale=-1]
	\draw[thick, theta] (-.5,-.6) -- (-.5,0);
	\draw[thick, theta] (.5,-.6) -- (.5,0);
	\draw[thick, theta] (-.3,-.6) -- (-.3,0);
	\draw[thick, theta] (.3,-.6) -- (.3,0);
	\draw[thick, theta] (-.5,.3) -- (-.1,.9);
	\draw[thick, theta] (.5,.3) -- (.1,.9);
	\draw[thick, theta] (-.1,1.5) -- (.3,2.1) -- (.3,3);
	\draw[thick, theta] (.1,1.5)  .. controls ++(45:.7cm) and ++(100:.25cm) ..  (1.1,.3) -- (1.1,-.6);
	\draw[thick, theta] (1.3,-.6) -- (1.3,.3) -- (.5,2.1) -- (.5,3);
	\nbox{unshaded}{(-.4,0)}{.3}{0}{0}{$k$}
	\nbox{unshaded}{(.4,0)}{.3}{0}{0}{$h$}
	\nbox{unshaded}{(1.2,0)}{.3}{0}{0}{$g$}
	\nbox{unshaded}{(0,1.2)}{.3}{0}{0}{$hk$}
	\nbox{unshaded}{(.4,2.4)}{.3}{.1}{.1}{$ghk$}
	\draw[thick, theta] (-.3,.3) arc (180:0:.3cm);
\end{tikzpicture}.
\qedhere
\end{align*}
\end{proof}

\begin{rem}
Suppose we did not assume that $\cR_\bullet$ was the reduced subfactor planar algebra of $\cP_\bullet$ at $\rho=\jw{2}$, and instead we started with a factor planar algebra with principal graph $\Lambda$ from Definition \ref{defn:ReducedPlanarAlgebra}.
In this case, we might have that the $G$-trivalent vertices are not associative.
Rather, there may be a non-trivial 3-cocycle giving a non-trivial associator for $G$.

In fact, there are such examples for $\bbZ/3\bbZ$ giving `twisted' Haagerup categories due to Ostrik \cite[Proposition 7.7, and the following paragraph]{1501.06869}.
\end{rem}

Recall from Lemma \ref{lem:OneClickRotationCompatible} that there is a distinguished 1-cochain $\theta\in C^1(G,U(1))$.

\begin{cor}\label{cor:Cocycle}
For all $g,h\in G$, there is a scalar $\mu_{g,h}\in U(1)$ such that
$$
\begin{tikzpicture}[baseline = -.1cm]
	\coordinate (aa) at (.8,.6);
	\coordinate (bb) at (-.8,-.6);
	\filldraw (aa) circle (.05cm);
	\filldraw (bb) circle (.05cm);
	\draw[thick, rho] (-.6,1) -- (.6,-1);
	\draw[mid<] (0,.5) -- (aa);
	\draw[mid<] (.4,-.3) -- (aa);
	\draw[mid<] (aa) -- (1.4,1);
	\draw[mid<] (bb) -- (-1.4,-1);
	\draw[mid<] (0,-.5) -- (bb);
	\draw[mid<] (-.4,.3) -- (bb);
	\node at (-1.2,-.6) {\scriptsize{$gh$}};
	\node at (1.2,.6) {\scriptsize{$gh$}};
	\node at (-.8,0) {\scriptsize{$g$}};
	\node at (.8,0) {\scriptsize{$h$}};
	\node at (-.3,-.8) {\scriptsize{$h$}};
	\node at (.3,.8) {\scriptsize{$g$}};
	\ncircle{unshaded}{(-.3,.5)}{.35}{180}{$V_g$}
	\ncircle{unshaded}{(.3,-.5)}{.35}{150}{$V_h$}
\end{tikzpicture}
=
\mu_{g,h}
\begin{tikzpicture}[baseline = -.1cm]
	\draw[thick, rho] (.4,-.8) -- (-.4,.8);
	\draw[mid>] (-.4,-.8) -- (-.15,-.3);
	\node at (.1,.7) {\scriptsize{$gh$}};
	\draw[mid<] (.15,.3) -- (.4,.8);
	\node at (-.6,-.6) {\scriptsize{$gh$}};
	\ncircle{unshaded}{(0,0)}{.4}{180}{$V_{gh}$}
\end{tikzpicture}\,.
$$
Moreover, $\mu\in Z^2(G,U(1))$, and $\mu_{g,h}\mu_{g^{-1},h^{-1}}=[(d\theta)(g,h)]^{-1}$.
\end{cor}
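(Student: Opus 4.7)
The proof has three substantive parts: existence of $\mu_{g,h}$, the cocycle condition, and the identity involving $\theta$.

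\textbf{Existence of $\mu_{g,h} \in U(1)$.} The plan is to observe that both sides of the proposed equation lie in the one-dimensional Hom space $\Hom(\rho(gh)^{-1},(gh)\rho)$: indeed $(gh)\rho$ is simple, and $(gh)\rho \cong \rho(gh)^{-1}$ since our standing assumption gives $\theta(gh)=(gh)^{-1}$. The right-hand side $V_{gh}$ is non-zero by construction, and the left-hand side is non-zero because collapsing one of its adjacent $V_gV_g^*$ or $V_hV_h^*$ pairs via Proposition \ref{prop:ReidemeisterII} produces a non-vanishing composite. Hence some scalar $\mu_{g,h}$ exists. To show $|\mu_{g,h}|=1$, I would compute the squared norm of the LHS by stacking the diagram on its adjoint and iteratively applying Proposition \ref{prop:ReidemeisterII} to cancel the resulting $V_gV_g^*$ and $V_hV_h^*$ pairs, reducing the result to $\|V_{gh}\|^2$.

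\textbf{The 2-cocycle condition.} For $g,h,k\in G$, I would consider the three-fold analog of the LHS: three crossings $V_g$, $V_h$, $V_k$ arranged sequentially across a $\rho$-strand, with $G$-trivalent vertices fusing $g,h,k$ into $ghk$ above and below. Two applications of the defining equation, in two different orders, give $\mu_{g,h}\mu_{gh,k}V_{ghk}$ and $\mu_{h,k}\mu_{g,hk}V_{ghk}$ respectively. Associativity of the $G$-trivalent vertices (proved immediately before the corollary) guarantees that the two orders produce identical starting diagrams, yielding the 2-cocycle identity $\mu_{g,h}\mu_{gh,k}=\mu_{g,hk}\mu_{h,k}$.

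\textbf{The relation $\mu_{g,h}\mu_{g^{-1},h^{-1}}=[(d\theta)(g,h)]^{-1}$.} I would first take the adjoint of the defining equation to obtain $\text{LHS}(g,h)^* = \overline{\mu_{g,h}}\,V_{gh}^*$, then apply the one-click rotation $\cF_{\cR_\bullet}$. By Lemma \ref{lem:OneClickRotationCompatible}, $\cF_{\cR_\bullet}(V_g^*)=\theta_g V_{g^{-1}}$ and $\cF_{\cR_\bullet}(V_h^*)=\theta_h V_{h^{-1}}$, so the rotated adjoint LHS equals $\theta_g\theta_h\cdot\text{LHS}(g^{-1},h^{-1})$ (each of the two crossings contributes one $\theta$-factor), while the rotated RHS equals $\overline{\mu_{g,h}}\,\theta_{gh}\,V_{(gh)^{-1}}$. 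Invoking the defining equation for $(g^{-1},h^{-1})$ and isolating scalars yields $\mu_{g^{-1},h^{-1}} = \overline{\mu_{g,h}}\,\theta_{gh}\theta_g^{-1}\theta_h^{-1}$; multiplying by $\mu_{g,h}$ and using $|\mu_{g,h}|=1$ gives the claimed identity, since $[(d\theta)(g,h)]^{-1} = \theta_{gh}\theta_g^{-1}\theta_h^{-1}$.

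\textbf{Main obstacle.} The principal subtlety is in the third step: one must verify that applying $\cF_{\cR_\bullet}$ to the adjoint LHS diagram really reproduces the LHS for $(g^{-1},h^{-1})$ up to the bookkeeping scalar $\theta_g\theta_h$, rather than some other element of the same one-dimensional Hom space. Because the diagram contains several oriented strand crossings and fusion vertices of $g,h$ into $gh$, tracking the cyclic boundary rotation and the reversed strand orientations through Lemma \ref{lem:OneClickRotationCompatible} requires careful pictorial matching with the conventions used to draw $V_{g^{-1}}$ and $V_{h^{-1}}$.
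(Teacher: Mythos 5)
Your proposal is correct and follows essentially the same route as the paper: one-dimensionality of $\Hom(\rho(gh)^{-1},(gh)\rho)$ plus a norm computation via Reidemeister II and unzipping to get the phase $\mu_{g,h}$, resolving the triple-crossing diagram in two orders for the cocycle identity, and applying $\cF$ to the adjoint of the defining equation together with Lemma \ref{lem:OneClickRotationCompatible} for the relation with $\theta$. The "main obstacle" you flag is exactly the pictorial bookkeeping the paper carries out explicitly in its two displayed computations, and your scalar accounting ($\theta_g\theta_h\mu_{g^{-1},h^{-1}}=\overline{\mu_{g,h}}\,\theta_{gh}$) matches the paper's conclusion.
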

\begin{proof}
Take the norm squared of each diagram, unzip the trivalent vertices, and use the Reidemeister II relation from Proposition \ref{prop:ReidemeisterII} to get that both closed diagrams equal $[3]$.
Since there is only one map up to scaling from $gh\otimes \rho$ to $\rho\otimes (gh)^{-1}$, both sides must be equal up to a phase, denoted $\mu_{g,h}$.

A straightforward calculation again by unzipping and using the Reidemeister II relation from Proposition \ref{prop:ReidemeisterII} shows that for $g,h,k\in G$, $\mu_{g,hk}\mu_{h,k}=\mu_{gh,k}\mu_{g,h}$, i.e., $\mu\in Z^2(G,U(1))$. 

For the final claim, we first look at
$$
\cF\left(\left(
\begin{tikzpicture}[baseline = -.1cm]
	\coordinate (aa) at (.8,.6);
	\coordinate (bb) at (-.8,-.6);
	\filldraw (aa) circle (.05cm);
	\filldraw (bb) circle (.05cm);
	\draw[thick, rho] (-.6,1) -- (.6,-1);
	\draw[mid<] (0,.5) -- (aa);
	\draw[mid<] (.4,-.3) -- (aa);
	\draw[mid<] (aa) -- (1.4,1);
	\draw[mid<] (bb) -- (-1.4,-1);
	\draw[mid<] (0,-.5) -- (bb);
	\draw[mid<] (-.4,.3) -- (bb);
	\node at (-1.2,-.6) {\scriptsize{$gh$}};
	\node at (1.2,.6) {\scriptsize{$gh$}};
	\node at (-.8,0) {\scriptsize{$g$}};
	\node at (.8,0) {\scriptsize{$h$}};
	\node at (-.3,-.8) {\scriptsize{$h$}};
	\node at (.3,.8) {\scriptsize{$g$}};
	\ncircle{unshaded}{(-.3,.5)}{.35}{180}{$V_g$}
	\ncircle{unshaded}{(.3,-.5)}{.35}{150}{$V_h$}
\end{tikzpicture}
\right)^*\right)
=
\begin{tikzpicture}[baseline = -.1cm]
	\coordinate (aa) at (.8,.6);
	\coordinate (bb) at (-.8,-.6);
	\filldraw (aa) circle (.05cm);
	\filldraw (bb) circle (.05cm);
	\draw[thick, rho] (-.6,1) -- (.6,-1);
	\draw[mid>] (0,.5) -- (aa);
	\draw[mid>] (.4,-.3) -- (aa);
	\draw[mid>] (aa) -- (1.4,1);
	\draw[mid>] (bb) -- (-1.4,-1);
	\draw[mid>] (0,-.5) -- (bb);
	\draw[mid>] (-.4,.3) -- (bb);
	\node at (-1.2,-.6) {\scriptsize{$gh$}};
	\node at (1.2,.6) {\scriptsize{$gh$}};
	\node at (-.8,0) {\scriptsize{$g$}};
	\node at (.8,0) {\scriptsize{$h$}};
	\node at (-.3,-.8) {\scriptsize{$h$}};
	\node at (.3,.8) {\scriptsize{$g$}};
	\ncircle{unshaded}{(-.3,.5)}{.37}{90}{{\scriptsize{$V_{g}^*$}}}
	\ncircle{unshaded}{(.3,-.5)}{.37}{90}{{\scriptsize{$V_{h}^*$}}}
\end{tikzpicture}
=
\theta_g
\theta_h
\begin{tikzpicture}[baseline = -.1cm]
	\coordinate (aa) at (.8,.6);
	\coordinate (bb) at (-.8,-.6);
	\filldraw (aa) circle (.05cm);
	\filldraw (bb) circle (.05cm);
	\draw[thick, rho] (-.6,1) -- (.6,-1);
	\draw[mid>] (0,.5) -- (aa);
	\draw[mid>] (.4,-.3) -- (aa);
	\draw[mid>] (aa) -- (1.4,1);
	\draw[mid>] (bb) -- (-1.4,-1);
	\draw[mid>] (0,-.5) -- (bb);
	\draw[mid>] (-.4,.3) -- (bb);
	\node at (-1.2,-.6) {\scriptsize{$gh$}};
	\node at (1.2,.6) {\scriptsize{$gh$}};
	\node at (-.8,0) {\scriptsize{$g$}};
	\node at (.8,0) {\scriptsize{$h$}};
	\node at (-.3,-.8) {\scriptsize{$h$}};
	\node at (.3,.8) {\scriptsize{$g$}};
	\ncircle{unshaded}{(-.3,.5)}{.35}{180}{\scriptsize{$V_{g^{-1}}$}}
	\ncircle{unshaded}{(.3,-.5)}{.35}{150}{\scriptsize{$V_{h^{-1}}$}}
\end{tikzpicture}.
$$
This must be equal to 
$$
\mu_{g,h}^{-1}
\cF\left(\left(
\begin{tikzpicture}[baseline = -.1cm]
	\draw[thick, rho] (.4,-.8) -- (-.4,.8);
	\draw[mid>] (-.4,-.8) -- (-.15,-.3);
	\node at (.1,.7) {\scriptsize{$gh$}};
	\draw[mid<] (.15,.3) -- (.4,.8);
	\node at (-.6,-.6) {\scriptsize{$gh$}};
	\ncircle{unshaded}{(0,0)}{.4}{180}{$V_{gh}$}
\end{tikzpicture}\,\,
\right)^*\right)
=
\mu_{g,h}^{-1}
\begin{tikzpicture}[baseline = -.1cm]
	\draw[thick, rho] (.4,-.8) -- (-.4,.8);
	\draw[mid<] (-.4,-.8) -- (-.15,-.3);
	\node at (.1,.7) {\scriptsize{$gh$}};
	\draw[mid>] (.15,.3) -- (.4,.8);
	\node at (-.6,-.6) {\scriptsize{$gh$}};
	\ncircle{unshaded}{(0,0)}{.4}{180}{\scriptsize{$V_{gh}^*$}}
\end{tikzpicture}\
=
\mu_{g,h}^{-1}\theta_{gh}
\begin{tikzpicture}[baseline = -.1cm]
	\draw[thick, rho] (.4,-.8) -- (-.4,.8);
	\draw[mid<] (-.4,-.8) -- (-.2,-.4);
	\node at (.1,.7) {\scriptsize{$gh$}};
	\draw[mid>] (.2,.4) -- (.4,.8);
	\node at (-.6,-.6) {\scriptsize{$gh$}};
	\ncircle{unshaded}{(0,0)}{.5}{180}{\scriptsize{$V_{(gh)^{-1}}$}}
\end{tikzpicture}\,.
$$
This means that we must have $\mu_{g,h}\mu_{g^{-1},h^{-1}} = (\theta_g\theta_{gh}^{-1}\theta_h)^{-1}=[(d\theta)(g,h)]^{-1}$.
\end{proof}

\begin{rem}
The significance of the final formula in Corollary \ref{cor:Cocycle} is that there is a strong relation between the structure constants of $\cS_\bullet$.
\end{rem}

\begin{cor}
On $\cR_n$, we have $\Phi_g\circ \Phi_h = \Phi_{gh}$, and similarly for the $\Psi_g$'s on $\cS_{n,\pm}$.
\end{cor}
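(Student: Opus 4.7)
The plan to establish $\Phi_g\circ\Phi_h=\Phi_{gh}$ on $\cR_n$ is to fuse the two concentric rings appearing in $\Phi_g(\Phi_h(x))$ into a single $gh$-ring. The outer $g$-ring carries $V_g$ or $V_g^*$ at each of the $2n$ crossings with the $\rho$-strands emanating from $x\in\cR_n$, and similarly for the inner $h$-ring. Between consecutive $\rho$-crossings the $g$- and $h$-strands run in parallel along an arc, and by the zip identity \eqref{rel:Zip} I may rewrite this parallel pair as the composition of $Y_{g,h}^*$, a single $gh$-strand, and $Y_{g,h}$. Performing this rewrite on every arc costs no scalar and produces a diagram in which a $gh$-ring runs around $x$, interrupted at each $\rho$-crossing by a local subdiagram of the form $(Y_{g,h}^*,V_g,V_h,Y_{g,h})$.

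Next, I apply Corollary \ref{cor:Cocycle} at each crossing. For a crossing where the shading forces the outer and inner crossings to be $V_g$ and $V_h$ respectively, the local picture is exactly the left-hand side of Corollary \ref{cor:Cocycle}, which replaces it by $\mu_{g,h}$ times a single $V_{gh}$ crossing. At a crossing with the opposite shading parity, the outer/inner crossings are $V_g^*$ and $V_h^*$; taking the adjoint of Corollary \ref{cor:Cocycle} shows that such a configuration is replaced by $\overline{\mu_{g,h}}=\mu_{g,h}^{-1}$ times $V_{gh}^*$. After all $2n$ local replacements are performed, the result is $x$ encircled by a single $gh$-ring with $V_{gh}$ or $V_{gh}^*$ at each crossing — namely, $\Phi_{gh}(x)$ — up to a global scalar.

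The scalar bookkeeping is the heart of the proof, and is where I expect the main obstacle to lie. Since the shading alternates as one walks around $x\in\cR_n$, exactly $n$ of the $2n$ crossings contribute $\mu_{g,h}$ and the other $n$ contribute $\mu_{g,h}^{-1}$, so the overall scalar is $\mu_{g,h}^{n}\cdot\mu_{g,h}^{-n}=1$. The identity $\Psi_g\circ\Psi_h=\Psi_{gh}$ on $\cS_{n,\pm}$ then follows by unpacking the definition $\Psi_g=\Delta_{\mp}\circ\Phi_{g^{\pm 1}}$, using $\Delta_+\Delta_-=\id$, and the compatibility between $\Delta$ and the $\Phi$'s that comes from $\Delta$ being a symmetric self-duality. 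Once the cancellation of cocycle factors is verified, the whole identity reduces to the already-established relations \eqref{rel:Zip} and Corollary \ref{cor:Cocycle}.
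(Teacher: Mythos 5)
Your proposal is correct and follows essentially the same route as the paper: the paper's proof likewise starts from the diagram for $\Phi_g\circ\Phi_h(x)$, applies Relation \eqref{rel:Zip} to fuse the parallel $g$- and $h$-strands, invokes Corollary \ref{cor:Cocycle} at each crossing, and observes that the alternating contributions of $\mu_{g,h}$ and $\overline{\mu_{g,h}}$ cancel. Your write-up merely spells out the shading/parity bookkeeping in more detail than the paper does.
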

\begin{proof}
Given $x\in \cS_{n,+}$, start with the diagram for $\Phi_g\circ \Phi_h(x)$, use Relation \eqref{rel:Zip}, and apply Corollary \ref{cor:Cocycle}.
We will get alternating contributions of $\mu_{g,h}$ and $\overline{\mu_{g,h}}=\mu_{g,h}^{-1}$, which cancel, leaving us with the diagram for $\Phi_{gh}(x)$.
The proof for $y\in \cS_{n,-}$ is similar.
\end{proof}

%%%%%%%%%%%%%%%%%%%%%%%%%%%%%%%%%%%%%%%%%%%%%%%%%%%%%%%%%%%%%%
\subsection{Bases for $\cS_{3,+}$}

We now define some distinguished elements for $\cS_{3,+}$.
When $|G|$ is odd, we show these elements form a basis for $\cS_{3,+}$.
We then prove Theorem \ref{thm:YB} and Conjecture \ref{conj:GeneratedByTwoBoxes} in the case $|G|$ is odd.

We use the notation 
$
p_\emptyset = \rhoE = \displaystyle\frac{1}{[3]}
\begin{tikzpicture}[baseline = -.1cm]
	\nbox{unshaded}{(0,0)}{.4}{0}{0}{}
	\draw[thick, rho] (-.2,.4) arc (-180:0:.2);
	\draw[thick, rho] (-.2,-.4) arc (180:0:.2);	
\end{tikzpicture}
$\,.

\begin{defn}
For $i,j\in G\cup \{\emptyset\}$ and $g,h,k\in G$, define
$$
\alpha_{i,j}=
\frac{1}{\Tr(p_i)^{1/2}\Tr(p_j)^{1/2}}
\begin{tikzpicture}[baseline=-.1cm]
	\pgfmathsetmacro{\height}{.8};
	\coordinate (a) at ($ 3/4*(0,-\height) $);
	\coordinate (b) at ($ 3/4*(0,\height) $);

	\fill[shaded] ($(a)+3/4*(0,-\height)+(-.15,0)$) -- ($(b)+3/4*(0,\height)+(-.15,0)$) -- ($(b)+3/4*(0,\height) + (.15,0)$) -- ($(b) + (.15,-.3)$) arc (-180:0:.15cm) -- ($(b)+3/4*(0,\height) + (.45,0)$) -- ($(b)+3/4*(0,\height) + (.75,0)$) -- ($(a)+3/4*(0,-\height) + (.75,0)$) -- ($(a)+3/4*(0,-\height) + (.45,0)$)  -- ($(a)+(.45,.3)$) arc (0:180:.15cm) -- ($(a)+3/4*(0,-\height) + (.15,0)$);

	\draw[thick, rho] ($(a)+(-.15,0)+3/4*(0,-\height)$) -- ($(b)+(-.15,0)+3/4*(0,\height)$);
	\draw[thick, rho] ($(a)+(.15,0)+3/4*(0,-\height)$)--($(a)+(.15,.3)$) arc (180:0:.15cm) -- ($(a)+(.45,0)+3/4*(0,-\height)$);
	\draw[thick, rho] ($(b)+(.15,0)+3/4*(0,\height)$)--($(b)+(.15,-.3)$) arc (-180:0:.15cm) -- ($(b)+(.45,0)+3/4*(0,\height)$);
	\nbox{unshaded}{(b)}{.3}{0}{0}{$p_j$}
	\nbox{unshaded}{(a)}{.3}{0}{0}{$p_i$}
\end{tikzpicture}
\text{ and }
\beta_{h,k,\ell}=
\begin{tikzpicture}[baseline=-.1cm]
	\pgfmathsetmacro{\width}{.8};
	\pgfmathsetmacro{\height}{.8};
	\coordinate (a) at ($ 3/4*(0,-\height) $);
	\coordinate (b) at ($ 3/4*(0,\height) $);
	\coordinate (c) at (\width,0);

	\fill[shaded] ($(a)+3/4*(0,-\height)$) -- ($(b)+3/4*(0,\height)$) -- ($(b)+3/4*(0,\height) + 1/2*(\width,0)$) -- (b) -- (c) -- (a) -- ($(a)+3/4*(0,-\height) + 1/2*(\width,0)$);
	\fill[shaded] ($(a)+3/4*(0,-\height)+(\width,0)$) -- ($(b)+3/4*(0,\height)+(\width,0)$) -- ($(b)+3/4*(0,\height)+3/2*(\width,0)$) -- ($(a)+3/4*(0,-\height)+3/2*(\width,0)$);

	\draw[thick, rho] (b) -- (c) -- (a);
	\draw[thick, rho] ($(a)+3/4*(0,-\height)$) -- ($(b)+3/4*(0,\height)$);
	\draw[thick, rho] ($(a)+3/4*(0,-\height)+(\width,0)$) -- ($(b)+3/4*(0,\height)+(\width,0)$);
	\draw[thick, rho] (a) -- ($(a)+3/4*(0,-\height) + 1/2*(\width,0)$);
	\draw[thick, rho] (b) -- ($(b)+3/4*(0,\height) + 1/2*(\width,0)$);
	\ncircle{unshaded}{(b)}{.3}{180}{$p_\ell$}
	\ncircle{unshaded}{(c)}{.3}{115}{$p_k$}
	\ncircle{unshaded}{(a)}{.3}{180}{$p_h$}
\end{tikzpicture}\,.
$$
\end{defn}

The following facts are straightforward.

\begin{facts}\label{facts:S3Basis}
\mbox{}
\begin{enumerate}[(1)]
\item
The elements $\{\alpha_{i,j}\}$ form a system of matrix units for the copy of $M_{|G|+1}(\bbC)$ corresponding to $\cI_{3,+}=\cS_{2,+}e_2\cS_{2,+}$.
\item
The inner product $\langle \beta_{g,h,k},\beta_{g',h',k'}\rangle$ is zero unless $g=g'$, $h=h'$, and $k=k'$.
\item
For all $g\in G$, we have $\Psi_g(\alpha_{i,j})=\alpha_{g^2i,g^2j}$, where we define $g^2\emptyset=\emptyset$.
\item
For all $g\in G$, we have $\Psi_g(\beta_{h,k,\ell})=\beta_{g^2h,g^2k,g^2\ell}$.
\end{enumerate}
\end{facts}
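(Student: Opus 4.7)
The plan is to verify each of the four assertions in turn, relying on three basic tools: the orthogonality and multiplicativity of the minimal projections $\{p_g\}_{g\in G}\cup\{p_\emptyset\}$ at depth 4, the fact (established in the previous section) that $\Psi_g$ is a shaded planar algebra automorphism, and the computation $\Phi_g(p_h)=p_{g^2h}$ from Corollary \ref{cor:ActionOnEvenHalf}.

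For (1), the strategy is to check the matrix-unit relations $\alpha_{i,j}\alpha_{k,\ell}=\delta_{j,k}\alpha_{i,\ell}$ and $\alpha_{i,j}^*=\alpha_{j,i}$ directly from the diagrams. When stacking $\alpha_{i,j}$ above $\alpha_{k,\ell}$, the middle caps glue $p_j$ to $p_k$ across an annular strand; the resulting closed configuration is $p_jp_k=\delta_{j,k}p_j$ by orthogonality, and the partial trace of $p_j$ across the remaining strand produces a scalar equal to $\Tr(p_j)$ divided by $[3]$, which is exactly what is needed for the normalization to collapse to that of $\alpha_{i,\ell}$. The $*$-relation is visible from the top-bottom symmetry of the defining picture. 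The index set $G\cup\{\emptyset\}$ has cardinality $|G|+1$, which matches the simple summands of $\cI_{3,+}=\cS_{2,+}e_2\cS_{2,+}$ coming from the decomposition of the depth-4 projections, so dimension counting identifies the span with $M_{|G|+1}(\bbC)$.

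For (2), I would compute $\langle \beta_{g,h,k},\beta_{g',h',k'}\rangle$ by stacking one triangle against the adjoint of the other; the three pairs of strands close up into three separate inner products, one for each corner projection, of the form $\langle p_g,p_{g'}\rangle$, $\langle p_h,p_{h'}\rangle$, $\langle p_k,p_{k'}\rangle$. Each such factor vanishes unless the two projections coincide, giving the claim.

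For (3) and (4), the argument is parallel. Since $\Psi_g$ is a planar algebra automorphism, it commutes with the planar tangles used to assemble $\alpha_{i,j}$ and $\beta_{h,k,\ell}$ out of the projections $p_i,p_j$ and $p_h,p_k,p_\ell$ respectively. Thus $\Psi_g$ applied to either building block simply replaces each projection by its image under $\Psi_g$. By Corollary \ref{cor:ActionOnEvenHalf} we have $\Psi_g(p_h)=p_{g^2h}$ for $h\in G$, and $\Psi_g(p_\emptyset)=p_\emptyset$ since $p_\emptyset\in\TL$ is fixed by every planar algebra automorphism; this justifies the convention $g^2\emptyset=\emptyset$.

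The only technicality that warrants any care is that $\Psi_g$ is defined using the symmetric self-duality $\Delta_\pm$, so one must verify that the shading swap built into $\Delta_\pm$ is harmless for these particular elements. But the defining diagrams of $\alpha_{i,j}$ and $\beta_{h,k,\ell}$ only use the depth-4 projections whose labels are preserved under symmetric self-duality (each $g\rho$ is self-dual by Corollary \ref{cor:SelfDual}), so no extra phase or relabeling is introduced. Everything else is bookkeeping, which is why the facts are labeled ``straightforward.''
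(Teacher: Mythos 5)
The paper offers no proof of these facts (it simply declares them straightforward), so there is no argument to compare against; your verifications are the natural ones and are essentially correct, with one quantitative slip and one check worth making explicit.

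The slip is in (1): the partial-trace factor is \emph{not} exactly what is needed for the normalization to collapse. Writing $\alpha_{i,j}=\Tr(p_i)^{-1/2}\Tr(p_j)^{-1/2}\,(p_j\otimes 1)\,E\,(p_i\otimes 1)$, where $E$ is the cup-over-cap tangle on the last two strands (so $E^2=[3]E$), one has $E(p_i\otimes 1)E=\frac{\Tr(p_i)}{[3]}E$, and hence the product of two $\alpha$'s is $\delta\cdot[3]^{-1}$ times a third $\alpha$ rather than $\delta$ times it; the $\alpha_{i,j}$ are matrix units only after rescaling by $[3]$. This is consistent with the paper's own later computation $\|\alpha_{h,\ell}\|_2^2=[3]^{-1}$ (a genuine matrix unit would have $\|\alpha_{h,\ell}\|_2^2=\Tr(\alpha_{\ell,\ell})=\dim\rho=[3]$, since the trace is unnormalized: the paper uses $\Tr(p_h)=[3]$). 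The discrepancy is harmless because everything downstream uses only that the $\alpha_{i,j}$ are an orthogonal spanning set of $\cI_{3,+}$, but as stated your normalization claim would not survive the direct computation you propose.

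For (3) and (4), the point needing care is not self-duality of the $g\rho$ but the shading of the input discs. Since $\Psi_g$ sends a positively shaded $2$-box $p_x$ to $p_{g^2x}$ but acts on a negatively shaded one through $\Phi_{g^{-1}}$, hence by $p_x\mapsto p_{g^{-2}x}$, you must confirm that all three discs in the triangle tangle for $\beta_{h,k,\ell}$ --- in particular the rotated disc carrying $p_k$ --- have their starred region unshaded. Inspecting the diagram, they do, so the conclusion $\Psi_g(\beta_{h,k,\ell})=\beta_{g^2h,g^2k,g^2\ell}$ stands; but the self-duality remark you offer does not by itself settle this. The rest (orthogonality in (2) via the local products $p_xp_{x'}=\delta_{x,x'}p_x$ appearing inside the closed pairing diagram, and $\Psi_g(p_\emptyset)=p_\emptyset$ since $p_\emptyset$ is Temperley--Lieb) is fine.
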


\begin{lem}
If $P_{\cI_{3,+}}$ is the orthogonal projection onto $\cI_{3,+}$ in $\cS_{3,+}$, then $P_{\cI_{3,+}}(\beta_{h,k,\ell}) = c_{h,k,\ell} \alpha_{h,\ell}$ where
$$
c_{h,k,\ell} =
\frac{1}{[3]}\,
\begin{tikzpicture}[baseline=-.1cm]
	\pgfmathsetmacro{\width}{.8};
	\pgfmathsetmacro{\height}{.8};
	\pgfmathsetmacro{\boxSize}{.3};
	\coordinate (a) at (0,-\height);
	\coordinate (b) at (0,\height);
	\coordinate (c) at ($ (\width,0) + 3/4*(\boxSize,0)$);

	\draw[thick, rho] (b) -- (c) -- (a) -- (b);
	\draw[thick, rho] (b) arc (0:180:{1/2*\width}) -- ($(a)-(\width,0)$) arc (-180:0:{1/2*\width});
	\draw[thick, rho] ($ (b) + (\boxSize,0) $) arc (90:-90:{\height});
	\ncircle{unshaded}{(b)}{\boxSize}{180}{$p_\ell$}
	\ncircle{unshaded}{(c)}{\boxSize}{115}{$p_k$}
	\ncircle{unshaded}{(a)}{\boxSize}{180}{$p_h$}
\end{tikzpicture}\,.
$$
\end{lem}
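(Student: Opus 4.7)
The plan is to exploit the matrix-unit structure of $\cI_{3,+}$ together with the orthogonality of minimal projections in $\cR_2$, and extract the coefficient of each $\alpha_{i,j}$ via the trace inner product.

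By Facts~\ref{facts:S3Basis}(1), the $\{\alpha_{i,j}\}_{i,j\in G\cup\{\emptyset\}}$ form a system of matrix units for $\cI_{3,+}\cong M_{|G|+1}(\bbC)$, so in particular they are pairwise orthogonal with respect to $\langle x,y\rangle=\tr(xy^*)$, and every diagonal matrix unit $\alpha_{i,i}$ has the same trace $\tau$. Hence
\[
P_{\cI_{3,+}}(\beta_{h,k,\ell})=\sum_{i,j}\frac{\tr(\beta_{h,k,\ell}\,\alpha_{j,i})}{\tau}\,\alpha_{i,j},
\]
and it suffices to compute $\tr(\beta_{h,k,\ell}\,\alpha_{j,i})$ for each $(i,j)$.

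First, I would show this trace vanishes unless $(i,j)=(h,\ell)$. Stacking the two diagrams, the bottom face of $\beta_{h,k,\ell}$ (which is $p_h$) meets the top face of $\alpha_{j,i}$ (which is $p_i$) along the two $\rho$-strands that do not travel along the straight-through strand of $\alpha_{j,i}$. Composing these minimal projections in $\cR_2$ yields $p_ip_h=\delta_{i,h}\,p_h$. A symmetric argument at the top of the stack, where $p_\ell$ meets $p_j$, forces $j=\ell$. Thus only the $(h,\ell)$-term survives.

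For the surviving term, I would use $p_h^2=p_h$ and $p_\ell^2=p_\ell$ to collapse the composed projections, leaving a closed diagram consisting of the triangle of $p_h,p_k,p_\ell$ together with a $\rho$-strand loop coming from the leftmost straight strand. The loop evaluates to $[3]$, and the remaining triangle-plus-outside-arcs picture is precisely the unnormalized figure in the definition of $c_{h,k,\ell}$, weighted by $\tr(p_h)^{-1/2}\tr(p_\ell)^{-1/2}$ from the normalization of $\alpha_{h,\ell}$. A parallel computation of $\tau=\tr(\alpha_{h,h})$ gives $[3]\cdot\tr(p_h)$ (the straight-strand loop times the closed $p_h$-bigon), after which the $\tr(p_h)^{-1}$ normalization, the $\tr(p_\ell)^{-1/2}$ normalization of $\alpha_{h,\ell}$, and the trace factors all cancel against the factor $[3]$, leaving exactly $c_{h,k,\ell}$ as the coefficient of $\alpha_{h,\ell}$.

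The main obstacle is purely bookkeeping: tracking how the $\tr(p_i)^{-1/2}$ normalizations in $\alpha_{i,j}$ combine with the $[3]$-loops from the straight strands to produce the clean $1/[3]$ prefactor, and verifying the strand orientations so that the composed $p_i$-and-$p_h$ pair really sits as a product of two 2-boxes in $\cR_2$ (as opposed to, e.g., being linked by a partial trace that would require Jones--Wenzl expansion first).
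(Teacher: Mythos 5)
Your proposal follows essentially the same route as the paper: expand $P_{\cI_{3,+}}(\beta_{h,k,\ell})$ against the orthogonal family $\{\alpha_{i,j}\}$, observe that only the $(h,\ell)$ term survives because $p_ip_h=\delta_{i,h}p_h$ and $p_jp_\ell=\delta_{j,\ell}p_\ell$, and identify the surviving coefficient $\langle\beta_{h,k,\ell},\alpha_{h,\ell}\rangle/\|\alpha_{h,\ell}\|_2^2$ with the closed diagram defining $c_{h,k,\ell}$. The only caution concerns your final bookkeeping paragraph: with the paper's normalization of the $\alpha_{i,j}$ one has $\|\alpha_{i,j}\|_2^2=[3]^{-1}$ (as computed later in the proof of Lemma \ref{lem:Nonzero}), not $\tau=[3]\cdot\Tr(p_h)$, so the correct denominator in the projection formula is the norm $\Tr(\alpha_{i,j}\alpha_{i,j}^*)$ rather than $\Tr(\alpha_{i,i})$.
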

\begin{proof}
Using (1) from Facts \ref{facts:S3Basis}, we see that the projection of $\beta_{h,k,\ell}$ onto $\cI_{3,+}$ is given by 
\begin{align*}
P_{I_{3,+}}(\beta_{h,k,\ell})
&=
\sum_{i,j}
\frac{\langle \beta_{h,k,\ell}, \alpha_{i,j}\rangle}{\|\alpha_{i,j}\|_2^2}\alpha_{i,j}
=
\sum_{i,j}
\frac{\Tr(\beta_{h,k,\ell}\alpha_{i,j}^*)}{\Tr(\alpha_{i,j}\alpha_{i,j}^*)}\alpha_{i,j}
=
\frac{\Tr(\beta_{h,k,\ell}\alpha_{h,\ell}^*)}{\Tr(\alpha_{h,\ell}\alpha_{h,\ell}^*)}\alpha_{h,\ell}
=
c_{h,k,\ell}\alpha_{h,\ell}.
\qedhere
\end{align*} 
\end{proof}

\begin{lem}\label{lem:Nonzero}
If $k$ has a square root in $G$, then 
$$
c_{h,k,\ell}
=
\frac{1}{[3]}\,
\begin{tikzpicture}[baseline=-.1cm]
	\pgfmathsetmacro{\width}{.8};
	\pgfmathsetmacro{\height}{.8};
	\pgfmathsetmacro{\boxSize}{.3};
	\coordinate (a) at (0,-\height);
	\coordinate (b) at (0,\height);
	\coordinate (c) at ($ (\width,0) + 3/4*(\boxSize,0)$);

	\draw[thick, rho] (b) -- (c) -- (a) -- (b);
	\draw[thick, rho] (b) arc (0:180:{1/2*\width}) -- ($(a)-(\width,0)$) arc (-180:0:{1/2*\width});
	\draw[thick, rho] ($ (b) + (\boxSize,0) $) arc (90:-90:{\height});
	\ncircle{unshaded}{(b)}{\boxSize}{180}{$p_\ell$}
	\ncircle{unshaded}{(c)}{\boxSize}{115}{$p_k$}
	\ncircle{unshaded}{(a)}{\boxSize}{180}{$p_h$}
\end{tikzpicture}
=
\frac{1}{[3]}\,
\begin{tikzpicture}[baseline=-.1cm]
	\pgfmathsetmacro{\width}{.8};
	\pgfmathsetmacro{\height}{.8};
	\pgfmathsetmacro{\boxSize}{.5};
	\coordinate (a) at (0,-\height);
	\coordinate (b) at (0,\height);
	\coordinate (c) at ($ (\width,0) + 3/4*(\boxSize,0)$);

	\draw[thick, rho] (b) -- (c) -- (a) -- (b);
	\draw[thick, rho] (b) arc (0:180:{1/2*\width}) -- ($(a)-(\width,0)$) arc (-180:0:{1/2*\width});
	\draw[thick, rho] ($ (b) + (\boxSize,0) $) arc (90:-90:{\height});
	\ncircle{unshaded}{(b)}{\boxSize}{180}{$p_{k^{-1}\ell}$}
	\ncircle{unshaded}{(c)}{.4}{115}{$p_1$}
	\ncircle{unshaded}{(a)}{\boxSize}{180}{$p_{k^{-1}h}$}
	
\end{tikzpicture}
=
\begin{cases}
\displaystyle \frac{[3]}{([3]-1)^2}& \text{if $h,k,\ell$ are distinct}
\\
\displaystyle \frac{1}{([3]-1)^2} & \text{if any two are equal}
\\
\displaystyle \left( \frac{[3]-2}{[2]} \right)^2 &\text{if $h=k=\ell$.}
\end{cases}
$$
By symmetry, a similar statement holds if $h$ or $\ell$ has a square root in $G$.

Moreover, if any of $h,k,\ell$ have a square root in $G$, then $\gamma_{h,k,\ell}:=\beta_{h,k,\ell}-c_{h,k,\ell}\alpha_{h,\ell}\neq 0$.
\end{lem}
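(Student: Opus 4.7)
The first equality is the definition of $c_{h,k,\ell}$ from the preceding lemma, so the substantive claims are the middle equality, the case-by-case values, and the non-vanishing of $\gamma_{h,k,\ell}$. The plan for the middle equality is to write $k = g^2$ and apply the shaded planar algebra automorphism $\Psi_{g^{-1}}$ to the closed diagram defining $c_{h,k,\ell}$. Since $\Psi_{g^{-1}}$ preserves the values of all closed diagrams, and by Facts~\ref{facts:S3Basis}(3)--(4) sends $p_x \mapsto p_{k^{-1}x}$ for every $x \in G$, the diagram is carried exactly to the one with projections $p_{k^{-1}h}$, $p_1$, $p_{k^{-1}\ell}$. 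The ``by symmetry'' clause follows from the same argument with $g^2 = h$ or $g^2 = \ell$, producing an analogous diagram with $p_1$ at the corresponding corner of the triangle.

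For the case values, I would set $a = k^{-1}h$ and $b = k^{-1}\ell$ and expand $p_1$ using its definition as the H-shape of trivalent vertices. The closed diagram then reduces to a network of $\rho$-strands and trivalent vertices attached to $p_a$ and $p_b$, which can be evaluated via the skein relations of Proposition~\ref{prop:RhoSkeinRelations}, principally \eqref{rel:IH}. The crucial simplification is that $p_x$ for $x \neq 1$ is orthogonal to $p_1$ and to the Temperley--Lieb subalgebra, so many potential cross-terms vanish on contact with $p_a$ or $p_b$, and direct expansion in the three subcases ($\{a, b, 1\}$ pairwise distinct; exactly two coincide; all three coincide) should yield the three claimed scalars.

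For the non-vanishing, the preceding lemma gives $c_{h,k,\ell} \alpha_{h,\ell} = P_{\cI_{3,+}}(\beta_{h,k,\ell})$, so $\gamma_{h,k,\ell}$ is the orthogonal-complement piece of $\beta_{h,k,\ell}$, and the Pythagorean theorem yields $\|\beta_{h,k,\ell}\|_2^2 = |c_{h,k,\ell}|^2 \|\alpha_{h,\ell}\|_2^2 + \|\gamma_{h,k,\ell}\|_2^2$. I would compute $\|\beta_{h,k,\ell}\|_2^2 = \Tr(\beta_{h,k,\ell} \beta_{h,k,\ell}^*)$ as a closed diagram of two triangles sharing their three projections, reducing via \eqref{rel:IH} and the trivalent vertex relations to an explicit scalar; $\|\alpha_{h,\ell}\|_2^2$ is immediate from the $\alpha$-normalization. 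Whenever one of $h, k, \ell$ has a square root, apply $\Psi_{g^{-1}}$ again to reduce to the case where the corresponding projection is $p_1$, and then a direct comparison in each case should show the strict inequality $\|\beta_{h,k,\ell}\|_2^2 > |c_{h,k,\ell}|^2 \|\alpha_{h,\ell}\|_2^2$, hence $\|\gamma_{h,k,\ell}\|_2^2 > 0$.

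The hardest case is expected to be $h = k = \ell$, in which after $\Psi$-reduction the closed triangle carries three copies of $p_1$ and contains six trivalent vertices chained in a highly symmetric network; extracting the precise value $([3]-2)^2/[2]^2$ and the corresponding non-vanishing inequality will require several applications of \eqref{rel:IH} with careful tracking of coefficient cancellations, particularly since the defining normalization \eqref{eq:V} of the trivalent vertex enters quadratically.
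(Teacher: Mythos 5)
Your proposal follows essentially the same route as the paper: reduce to the middle projection being $p_1$ via the $\Psi$-action with $g^2=k$, expand $p_1$ through the trivalent-vertex definition \eqref{eq:V} and the skein relations to get the case values, and deduce $\gamma_{h,k,\ell}\neq 0$ from the norm identity $\|\beta_{h,k,\ell}\|_2^2=\|c_{h,k,\ell}\alpha_{h,\ell}\|_2^2+\|\gamma_{h,k,\ell}\|_2^2$ by checking the two norms differ. The only cosmetic difference is that the paper routes the reduction through the identity $\langle\beta_{h,k,\ell},\alpha_{h,\ell}\rangle=\|\beta_{h,k,\ell}\|_2^2$ and applies $\Psi_g$ to $\beta_{k^{-1}h,1,k^{-1}\ell}$ rather than applying $\Psi_{g^{-1}}$ to the closed diagram directly, which is equivalent.
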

\begin{proof}
Using (4) of Facts \ref{facts:S3Basis} and sphericality, we have
$$
\langle \beta_{h,k,\ell}, \alpha_{h,\ell}\rangle 
= 
\|\beta_{h,k,\ell}\|^2_2
=
\|\beta_{g^2(g^{-2}h),g^2,g^2(g^{-2}\ell)l}\|^2_2
=
\|\Psi_g(\beta_{k^{-1}h,1,k^{-1}\ell})\|^2_2
=
\begin{tikzpicture}[baseline=-.1cm]
	\pgfmathsetmacro{\width}{.8};
	\pgfmathsetmacro{\height}{.8};
	\pgfmathsetmacro{\boxSize}{.5};
	\coordinate (a) at (0,-\height);
	\coordinate (b) at (0,\height);
	\coordinate (c) at ($ (\width,0) + 3/4*(\boxSize,0)$);

	\draw[thick, rho] (b) -- (c) -- (a) -- (b);
	\draw[thick, rho] (b) arc (0:180:{1/2*\width}) -- ($(a)-(\width,0)$) arc (-180:0:{1/2*\width});
	\draw[thick, rho] ($ (b) + (\boxSize,0) $) arc (90:-90:{\height});
	\ncircle{unshaded}{(b)}{\boxSize}{180}{$p_{k^{-1}\ell}$}
	\ncircle{unshaded}{(c)}{.4}{115}{$p_1$}
	\ncircle{unshaded}{(a)}{\boxSize}{180}{$p_{k^{-1}h}$}
	
\end{tikzpicture}\,.
$$
When $h\neq k\neq \ell$, expanding $p_1$ using Equation \eqref{eq:V} and simplifying, we get
$$
\|\beta_{h,k,\ell}\|^2_2 = \left(\frac{[2]}{[3]-1}\right)\left(\frac{[3]^2}{[4]} - \delta_{h,\ell}\frac{[3]}{[2]}\right)
=
\begin{cases}
\displaystyle \frac{[2][3]^2}{([3]-1)[4]}& \text{if $h\neq \ell$}
\\
\displaystyle \frac{[2]^2[3]}{([3]-1)[2][4]} & \text{if $h=\ell$}
\end{cases}
=
\begin{cases}
\displaystyle \frac{[3]^2}{([3]-1)^2}& \text{if $h\neq \ell$}
\\
\displaystyle \frac{[3]}{([3]-1)^2} & \text{if $h=\ell$.}
\end{cases}
$$
A similar calculation handles the cases $h=k\neq \ell$ and $h\neq k=\ell$.

Finally, for the case $h=k=\ell$, the following relation derived using Equation \eqref{eq:V} is helpful:
$$
\begin{tikzpicture}[baseline = -.1cm]
	\draw[thick, unshaded] (-.6,-.4)--(-.6,.4)--(.6,.4)--(.6,-.4)--(-.6,-.4);
	\coordinate (a) at (0,.1) ;
	\coordinate (b) at (-.2,-.2) ;
	\coordinate (c) at (.2,-.2) ;
	\filldraw[rho] (a) circle (.05cm);
	\filldraw[rho] (b) circle (.05cm);
	\filldraw[rho] (c) circle (.05cm);
	\draw[thick, rho] (-.4,-.4) -- (b) -- (c) -- (.4,-.4);
	\draw[thick, rho] (b) -- (a) -- (c);	
	\draw[thick, rho] (a) -- (0, .4);
\end{tikzpicture}
=
\left( [2] - \frac{3}{[2]}\right)\,
\begin{tikzpicture}[baseline = -.1cm]
	\draw[thick, unshaded] (-.4,-.4)--(-.4,.4)--(.4,.4)--(.4,-.4)--(-.4,-.4);
	\coordinate (a) at (0,0) ;
	\filldraw[rho] (a) circle (.05cm);
	\draw[thick, rho] (-.2,-.4) -- (a) --  ( .2, -.4);	
	\draw[thick, rho] (a) -- (0, .4);
\end{tikzpicture}
\,.
$$
Again using $\Psi_g$, we see $c_{h,h,h}$ is a multiple of the inner product of two triangles:
$$
c_{h,h,h}= 
\frac{1}{[3]}
 \left\langle \,
\begin{tikzpicture}[baseline = -.1cm]
	\draw[thick, unshaded] (-.6,-.4)--(-.6,.4)--(.6,.4)--(.6,-.4)--(-.6,-.4);
	\coordinate (a) at (0,.1) ;
	\coordinate (b) at (-.2,-.2) ;
	\coordinate (c) at (.2,-.2) ;
	\filldraw[rho] (a) circle (.05cm);
	\filldraw[rho] (b) circle (.05cm);
	\filldraw[rho] (c) circle (.05cm);
	\draw[thick, rho] (-.4,-.4) -- (b) -- (c) -- (.4,-.4);
	\draw[thick, rho] (b) -- (a) -- (c);	
	\draw[thick, rho] (a) -- (0, .4);
\end{tikzpicture}
\,, 
\begin{tikzpicture}[baseline = -.1cm]
	\draw[thick, unshaded] (-.6,-.4)--(-.6,.4)--(.6,.4)--(.6,-.4)--(-.6,-.4);
	\coordinate (a) at (0,.1) ;
	\coordinate (b) at (-.2,-.2) ;
	\coordinate (c) at (.2,-.2) ;
	\filldraw[rho] (a) circle (.05cm);
	\filldraw[rho] (b) circle (.05cm);
	\filldraw[rho] (c) circle (.05cm);
	\draw[thick, rho] (-.4,-.4) -- (b) -- (c) -- (.4,-.4);
	\draw[thick, rho] (b) -- (a) -- (c);	
	\draw[thick, rho] (a) -- (0, .4);
\end{tikzpicture}
\,\right\rangle
=
\left( [2] - \frac{3}{[2]}\right)^2
=
\left( \frac{[3]-2}{[2]} \right)^2.
$$

To prove $\gamma_{h,k,\ell}\neq 0$, a straightforward calculation shows $\|\alpha_{h,\ell}\|_2^2 = [3]^{-1}$, which implies
\begin{align*}
\|c_{h,k,\ell}\alpha_{h,\ell}\|_2^2 
&= 
\frac{|c_{h,k,\ell}|^2}{[3]}
\neq
\|\beta_{h,k,\ell}\|_2^2.
\qedhere
\end{align*}
\end{proof}

\begin{prop}
When $|G|$ is odd, $\{\gamma_{h,k,\ell}=\beta_{h,k,\ell} - c_{h,\ell}\alpha_{h,\ell}\}$ is a basis of $\cS_{3,+}\ominus \cI_{3,+}$.
\end{prop}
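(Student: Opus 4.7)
The plan is to first verify that each $\gamma_{h,k,\ell}$ lies in $\cS_{3,+}\ominus\cI_{3,+}$ (immediate from construction, since $\gamma_{h,k,\ell}$ is the orthogonal projection of $\beta_{h,k,\ell}$ onto $\cI_{3,+}^\perp$), and then establish non-vanishing: since $|G|$ is odd, the map $g\mapsto g^2$ is a bijection on $G$, so every element has a square root, and the preceding Lemma gives $\gamma_{h,k,\ell}\neq 0$ for every $(h,k,\ell)\in G^3$.

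To prove that $\{\gamma_{h,k,\ell}\}$ spans $\cS_{3,+}\ominus\cI_{3,+}$, I would equivalently show $\spann(\{\alpha_{i,j}\}\cup\{\beta_{h,k,\ell}\})=\cS_{3,+}$, since $\{\alpha_{i,j}\}$ is a basis of $\cI_{3,+}$ and projection to $\cI_{3,+}^\perp$ sends $\beta_{h,k,\ell}$ to $\gamma_{h,k,\ell}$. The strategy is to compute the Gram matrix and verify its rank equals $\dim(\cS_{3,+}\ominus\cI_{3,+})$, which is obtained from the fusion decomposition of $\rho^{\otimes 3}$ and the identification $\cI_{3,+}\cong M_{|G|+1}(\bbC)$.

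Using Fact 5.10(2) ($\beta$-orthogonality) together with $P_{\cI_{3,+}}(\beta_{h,k,\ell})=c_{h,k,\ell}\alpha_{h,\ell}$, the Gram matrix $\bigl(\langle\gamma_{h,k,\ell},\gamma_{h',k',\ell'}\rangle\bigr)$ is block-diagonal in the pairs $(h,\ell)\in G^2$, and each $(h,\ell)$-block is a $|G|\times|G|$ matrix of the form
\[
\mathrm{diag}(\|\beta_{h,k,\ell}\|^2)_k - \tfrac{1}{[3]}\bigl(c_{h,k,\ell}\,c_{h,k',\ell}\bigr)_{k,k'},
\]
a diagonal-minus-rank-one matrix. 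By the matrix determinant lemma, each such block has rank either $|G|$ or $|G|-1$, determined by whether $\sum_k c_{h,k,\ell}^2/\|\beta_{h,k,\ell}\|^2$ equals $[3]$ or not. The explicit values of $c_{h,k,\ell}$ and $\|\beta_{h,k,\ell}\|^2$ from Lemma 5.13, combined with the dimension identity $|G|[3]=[3]^2-1$ from Remark 3.2, allow the rank of each block to be determined. The $G$-equivariance from Facts 5.10(3)-(4), together with the bijectivity of squaring on $G$ for $|G|$ odd, reduces the analysis to two essentially distinct cases ($h=\ell$ versus $h\neq\ell$), and summing block ranks yields the required total.

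The main obstacle is the case analysis stemming from Lemma 5.13, where the values of $c_{h,k,\ell}$ and $\|\beta_{h,k,\ell}\|^2$ split according to whether $h,k,\ell$ are pairwise distinct, exactly two are equal, or all three coincide. Handling these sub-cases carefully to sum the block ranks to $\dim(\cS_{3,+}\ominus\cI_{3,+})$ is the core computation, and the identity $|G|[3]=[3]^2-1$ is the essential input that makes the rank sum work out correctly.
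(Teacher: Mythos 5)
You should know that the statement you are trying to prove is false as written, so the gap here is not really in your argument or in the paper's --- it is in the proposition itself. Reading the triangle diagram as a composite of $2$-boxes gives $\beta_{h,k,\ell}=(p_\ell\otimes 1)(1\otimes p_k)(p_h\otimes 1)$, and the diagram for $\alpha_{h,\ell}$ is (up to its normalization) $(p_\ell\otimes 1)(1\otimes p_\emptyset)(p_h\otimes 1)$. Since $p_\emptyset+\sum_{k\in G}p_k=\id_{\rho\otimes\rho}$, summing over $k$ yields $\alpha_{h,\ell}+\sum_{k\in G}\beta_{h,k,\ell}=(p_\ell p_h)\otimes 1=\delta_{h,\ell}\,p_h\otimes 1$. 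For $h\neq\ell$ this says $\sum_k\beta_{h,k,\ell}=-\alpha_{h,\ell}\in\cI_{3,+}$, so applying $1-P_{\cI_{3,+}}$ gives the nontrivial linear relation $\sum_{k\in G}\gamma_{h,k,\ell}=0$. The dimension count confirms this: the corner $(p_\ell\otimes 1)\cS_{3,+}(p_h\otimes 1)\cong\Hom(h\rho\otimes\rho,\ell\rho\otimes\rho)$ has dimension $\delta_{h,\ell}+|G|$, so for $h\neq\ell$ it cannot contain the $|G|+1$ elements $\alpha_{h,\ell},\beta_{h,k,\ell}$ independently; globally there are $|G|^3$ vectors $\gamma_{h,k,\ell}$ while $\dim(\cS_{3,+}\ominus\cI_{3,+})=|G|^3-|G|^2+|G|$. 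What is true, and what Theorem \ref{thm:GeneratedByTwoBoxes} and Theorem \ref{thm:YB} actually use, is that $\{\gamma_{h,k,\ell}\}$ \emph{spans} $\cS_{3,+}\ominus\cI_{3,+}$.

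Your Gram-matrix strategy is actually well suited to proving that corrected statement, and if carried out it would detect the failure of the original one: the singularity criterion for the $(h,\ell)$-block, $\tfrac{1}{[3]}\sum_k c_{h,k,\ell}^2/\|\beta_{h,k,\ell}\|^2=1$, holds precisely when $\alpha_{h,\ell}$ lies in the span of the orthogonal set $\{\beta_{h,k,\ell}\}_k$, which by the relation above happens exactly when $h\neq\ell$; those blocks then have the all-ones vector in their kernel, and the block ranks sum to $|G|^3-|G|^2+|G|$ --- spanning, not a basis. Your write-up leaves exactly this rank determination open and tacitly assumes it comes out to full rank. For comparison, the paper's proof reduces to the corner in the same way you do but then glues $p_g$ onto the third strand and asserts that $\beta_{h,k,\ell}$ becomes $\delta_{k,g}\beta_{h,g,\ell}$; that gluing is the coproduct $p_k*p_g$ on the third strand, which is supported on every $p_m$ (each $m\rho$ occurs in $k\rho\otimes g\rho$), so the paper's key identity fails at the same underlying point, as it must. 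If you want a clean proof of the spanning statement, the quickest route is corner-by-corner: for $h\neq\ell$ the $|G|$ orthogonal nonzero $\beta_{h,k,\ell}$ already span the whole corner, and for $h=\ell$ one checks that $\alpha_{h,h}$ is not in their span.
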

\begin{proof}
By counting dimensions, it suffices to show linear independence.
Suppose we have a linear combination 
$$
0=\sum_{h,k,\ell \in G} \lambda_{h,k,\ell} \gamma_{h,k,\ell}.
$$
Compress by a particular $p_h$ and $p_\ell$ on the bottom and top respectively to get
$$
0=\sum_{k \in G} \lambda_{h,k,\ell} \gamma_{h,k,\ell}.
$$
Attaching by $p_g$ on the right hand side, we see for every $g\in G$,
$$
0=
\sum_{k\in G} \lambda_{h,k,\ell} 
\begin{tikzpicture}[baseline=-.1cm]
	\draw[thick, red] (-.2,-.7) -- (-.2,.7);
	\draw[thick, red] (0,-.7) -- (0,.7);
	\draw[thick, red] (.2,.4) arc (180:0:.3cm);
	\draw[thick, red] (.2,-.4) arc (-180:0:.3cm);
	\draw[thick, red] (1.2,-.7) -- (1.2,.7);
	\nbox{unshaded}{(0,0)}{.4}{.1}{.1}{$\gamma_{h,k,\ell}$}
	\nbox{unshaded}{(1,0)}{.4}{0}{0}{\rotatebox{-90}{$p_{g}$}}
	\node at (1,.55) {$\star$};
\end{tikzpicture}
=
\lambda_{h,g,\ell} \beta_{h,g,\ell} - 
\sum_{k\in G} \lambda_{h,k,\ell}c_{h,\ell} \beta_{h,g,\ell} 
=
\left(\lambda_{h,g,\ell}-\sum_{k\in G} \lambda_{h,k,\ell}c_{h,\ell}\right)\beta_{h,g,\ell}.
$$
Hence $\lambda_{h,g,\ell} = \sum_{k\in G} \lambda_{h,k,\ell} c_{h,\ell}$ is independent of $g\in G$.
Denote this common value by $\lambda_{h,\ell}$.
We now see that $(1-|G| c_{h,\ell})\lambda_{h,\ell}=0$, which implies $\lambda_{h,\ell}=0$.
(A straightforward calculation using Remark \ref{rem:GSize} and Lemma \ref{lem:Nonzero} shows $c_{h,\ell}\neq |G|^{-1}$.)
\end{proof}

\begin{cor}\label{cor:BasisOfS3}
When $|G|$ is odd, $\cB_{3,+}=\{\alpha_{i,j}\}_{i,j\in G\cup\{\emptyset\}}\cup\{\gamma_{h,k,\ell}\}_{h,k,\ell\in G}$ is a basis of $\cS_{3,+}$.
\end{cor}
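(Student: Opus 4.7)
The corollary is an immediate consequence of the preceding proposition together with part (1) of Facts \ref{facts:S3Basis}. My plan is to simply combine these two structural results, so most of the work has already been done.

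First I would recall from part (1) of Facts \ref{facts:S3Basis} that the family $\{\alpha_{i,j}\}_{i,j \in G \cup \{\emptyset\}}$ is a system of matrix units for $\cI_{3,+} = \cS_{2,+} e_2 \cS_{2,+} \cong M_{|G|+1}(\bbC)$, and in particular is a basis for $\cI_{3,+}$. Next I would recall that the previous proposition shows that, when $|G|$ is odd, $\{\gamma_{h,k,\ell}\}_{h,k,\ell \in G}$ is a basis of the orthogonal complement $\cS_{3,+} \ominus \cI_{3,+}$. Finally, from the definition $\gamma_{h,k,\ell} = \beta_{h,k,\ell} - c_{h,k,\ell}\alpha_{h,\ell}$, we have $\gamma_{h,k,\ell} = (\id - P_{\cI_{3,+}})(\beta_{h,k,\ell})$, so each $\gamma_{h,k,\ell}$ genuinely lies in $\cS_{3,+} \ominus \cI_{3,+}$.

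Since $\cS_{3,+} = \cI_{3,+} \oplus (\cS_{3,+} \ominus \cI_{3,+})$ as vector spaces, concatenating the two bases gives a basis of $\cS_{3,+}$, which is precisely $\cB_{3,+}$. There is no real obstacle here; the only thing to check is that $|G|$ being odd is used at exactly the point where the previous proposition needed it (namely in invoking Lemma \ref{lem:Nonzero}, which required square roots in $G$). Since we assume $|G|$ is odd throughout, this is automatic.
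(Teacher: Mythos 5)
Your proposal is correct and matches the paper's intent exactly: the corollary is stated without proof precisely because it follows immediately from combining Facts \ref{facts:S3Basis}(1) (the $\alpha_{i,j}$ are matrix units, hence a basis, for $\cI_{3,+}$) with the preceding proposition (the $\gamma_{h,k,\ell}$ form a basis of $\cS_{3,+}\ominus\cI_{3,+}$ when $|G|$ is odd). Your additional observation that $\gamma_{h,k,\ell}=(\id-P_{\cI_{3,+}})(\beta_{h,k,\ell})$ lies in the orthogonal complement, and that the oddness hypothesis enters only through the earlier proposition, is exactly the right bookkeeping.
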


Recall that $\cQ_\bullet$ is the planar subalgebra of $\cR_\bullet$ generated by $\cR_{2}$. 
We now prove Conjecture \ref{conj:GeneratedByTwoBoxes} for the case when $|G|$ is odd.

\begin{thm}\label{thm:GeneratedByTwoBoxes}
When $|G|$ is odd, $\cQ_\bullet=\cR_\bullet$.
\end{thm}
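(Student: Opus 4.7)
The plan is to first establish $\cQ_3 = \cR_3$ using the explicit basis from Corollary \ref{cor:BasisOfS3}, and then propagate this equality to all depths via a categorical generation argument.

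For the first step, Corollary \ref{cor:BasisOfS3} gives, when $|G|$ is odd, the basis $\cB_{3,+} = \{\alpha_{i,j}\}_{i,j\in G\cup\{\emptyset\}} \cup \{\gamma_{h,k,\ell}\}_{h,k,\ell\in G}$ of $\cS_{3,+} \cong \cR_3$. Inspecting the definitions, each $\alpha_{i,j}$ is built from two 2-box projections $p_i,p_j$ connected by $\rho$-strands and a cap, while each $\beta_{h,k,\ell}$ consists of three 2-box projections $p_h,p_k,p_\ell$ at the corners of a $\rho$-strand triangle. Neither diagram uses trivalent vertices, so both lie in $\cQ_3$ by the definition of $\cQ_\bullet$. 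Since $\gamma_{h,k,\ell} = \beta_{h,k,\ell} - c_{h,k,\ell}\alpha_{h,\ell}$ is a linear combination of such elements, $\gamma_{h,k,\ell} \in \cQ_3$ as well. Therefore $\cB_{3,+} \subseteq \cQ_3$, and as this set spans $\cR_3$, we obtain $\cQ_3 = \cR_3$.

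For the second step, we extend to $\cQ_n = \cR_n$ for $n \geq 4$. The key observation is that every simple of the fusion category underlying $\cR_\bullet$ (i.e., every vertex of $\Lambda$) appears as a subobject of $\rho^{\otimes k}$ for some $k \leq 3$: each $g\rho$ arises in $\rho$ or $\rho^{\otimes 2}$, and each $g \in G\setminus\{1\}$ arises in $\rho^{\otimes 3}$ via $\rho \otimes g\rho \supseteq g^{-1}$. Hence $\cR_3 \subseteq \cQ$ contains minimal projections onto every simple together with all intertwiners between them. We then induct on $n$: assuming $\cQ_n = \cR_n$, every matrix unit in $\cR_{n+1}$ decomposes as a planar composition of matrix units in $\cR_n$ (in $\cQ$ by the inductive hypothesis) with fusion intertwiners from $\cR_3 \subseteq \cQ$ implementing the decomposition of $\rho^{\otimes n} \otimes \rho$.

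The main obstacle is the inductive step in the second part. Formalizing it requires an explicit path-basis argument showing that the partial isometries bridging depths $n$ and $n+1$ can all be expressed as planar compositions of depth-3 intertwiners. The essential categorical input is that beyond depth 3, the Bratteli diagram of $\cR_\bullet$ is entirely determined by the fusion rules already encoded in $\cR_3$; equivalently, that the fusion category is generated as a tensor category by the data in $\cR_{\leq 3}$. Since $|G|$ odd ensures that the $G$-action $\Phi_g$ on $\cR_\bullet$ restricts to $\cQ_\bullet$ transitively on each orbit of simples, this symmetry can also be leveraged to reduce the verification to a single representative intertwiner at each step.
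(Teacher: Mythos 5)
Your first step is correct and is exactly the paper's first step: every element of $\cB_{3,+}$ from Corollary \ref{cor:BasisOfS3} is a $\rho$-strand diagram labelled only by the projections $p_i$ (no trivalent vertices), hence lies in $\cQ_3$, and since $\cB_{3,+}$ spans $\cR_3$ when $|G|$ is odd, $\cQ_3=\cR_3$.

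The second step, however, contains a genuine gap, and you have essentially flagged it yourself. Your induction rests on the claim that every simple of $\Pro(\cR_\bullet)$ appears in $\rho^{\otimes k}$ for some $k\le 3$, so that $\cR_{n+1}$ for $n\ge 3$ is exhausted by compositions of things already in $\cQ$. That claim is false as stated for $\cR_\bullet$: by Definition \ref{defn:ReducedPlanarAlgebra} the category of projections is $\frac{1}{2}\cP_+\boxtimes\Vec(\bbZ/2\bbZ)$, and the principal graph $\Lambda$ has depth $4$ --- the invertible objects $g\ne 1$ (the leftmost column of $\Lambda$) first occur in $\rho^{\otimes 4}$, not $\rho^{\otimes 3}$. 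Consequently $\cR_4$ is strictly larger than the basic construction ideal $\cI_4=\cR_3 e_3\cR_3$, and the inclusion $\cI_4\subseteq\cQ_4$ (which does follow from $\cQ_3=\cR_3$) does not by itself give $\cQ_4=\cR_4$. Showing that the new minimal projections at depth $4$ lie in $\cQ_4$ is precisely the content you defer to ``an explicit path-basis argument,'' and it is the one point where a real input is needed. The paper supplies that input in two ways: either via Wenzl's generalized relation \cite[Section 2.3]{1308.5656}, which expresses a basis of $\cR_4$ (including the part outside $\cI_4$) in terms of elements of $\cQ_4$, after which the depth-$4$ bound makes all higher box spaces equal to basic construction ideals; or by observing that $\cQ_\bullet$ and $\cR_\bullet$ have the same principal graph to depth $3$ and that a dimension count forces the graphs to terminate identically. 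To complete your argument you would need to import one of these (or an equivalent) to handle the depth-$4$ simples; without it the inductive step does not close.
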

\begin{proof}
First, identifying $\cS_{3,+}$ with $\cR_3$, we note that every element in $\cB_{3,+}$ is in $\cQ_{3}$.
Using Wenzl's generalized relation \cite[Section 2.3]{1308.5656}, we see that there is a basis of $\cR_{4}$ of elements in $\cQ_{4}$, since every element in the basic construction ideal $\cI_{4}=\cR_{3}e_3\cR_{3}$ is in $\cQ_{4}$.
Since $\cR_\bullet$ has depth 4, all further box spaces are equal to the corresponding basic construction ideals, and thus $\cQ_\bullet=\cR_\bullet$.

We give an alternate argument.
Since $\cB_{3,+}\subset \cQ_{3}$, $\cQ_{3}=\cR_{3}$, so the principal graphs agree to depth 3.
By counting dimensions, there is only one way for this graph to terminate, so the principal graphs of $\cQ_\bullet$ and $\cR_\bullet$ are equal, and $\cQ_\bullet=\cR_\bullet$.
\end{proof}

\begin{thm*}[Theorem \ref{thm:YB}]
For $|G|$ odd, $\cS_\bullet$ is a Yang-Baxter planar algebra with $|G|-1$ generators.
\end{thm*}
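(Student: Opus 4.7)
The plan is to verify the two requirements for a Yang-Baxter planar algebra: (i) generation of $\cS_\bullet$ by its 2-box space, and (ii) a relation rewriting one orientation of triangle in $\cS_{3,+}$ in terms of the opposite orientation plus ``lower order'' (ideal) terms, with the correct count of $|G|-1$ non-Temperley-Lieb generators.

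For (i), Theorem \ref{thm:GeneratedByTwoBoxes} already establishes $\cR_\bullet = \cQ_\bullet$ for $|G|$ odd; reinstating the shading via the identification $\cS_{n,\pm} \cong \cR_n$ of Definition \ref{defn:ReducedPlanarAlgebra} (using the symmetric self-duality $\Delta$) shows that $\cS_\bullet$ is generated as a shaded planar algebra by $\cS_{2,+}$. The 2-box space $\cS_{2,+}$ has minimal projections $\rhoE$ together with $\{p_g\}_{g \in G}$; since $\TL_{2,+}$ is 2-dimensional spanned by $\rhoE$ and $\rhoJW = \sum_{g \in G} p_g$, precisely $|G|-1$ of the $p_g$'s are linearly independent modulo $\TL_{2,+}$, giving the claimed generator count.

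For (ii), write $\beta_{h,k,\ell}$ for the right-handed triangle filled with $p_h, p_k, p_\ell$ (as in the paper) and $\tilde\beta_{h,k,\ell}$ for the left-handed triangle obtained by horizontal reflection. Corollary \ref{cor:BasisOfS3} states that $\cB_{3,+} = \{\alpha_{i,j}\} \cup \{\gamma_{h,k,\ell}\}$ is a basis for $\cS_{3,+}$, with $\gamma_{h,k,\ell} = \beta_{h,k,\ell} - c_{h,k,\ell}\alpha_{h,\ell}$ and the $\alpha_{i,j}$'s spanning the basic construction ideal $\cI_{3,+}$. Consequently the classes $\{[\beta_{h,k,\ell}]\}_{h,k,\ell \in G}$ form a basis of $\cS_{3,+}/\cI_{3,+}$ of dimension $|G|^3$. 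Horizontal reflection is a planar algebra anti-involution that preserves $\cI_{3,+}$ (since $e_2$ is reflection-invariant), and it carries each $p_g$ to itself modulo the symmetric self-duality $\Delta$ (using $\overline{g\rho} = g\rho$ from Corollary \ref{cor:SelfDual}). Hence the reflected classes $\{[\tilde\beta_{h,k,\ell}]\}$ also form a basis of $\cS_{3,+}/\cI_{3,+}$, and each $\beta_{h,k,\ell}$ can be written as a linear combination of $\tilde\beta_{h',k',\ell'}$'s plus an element of $\cI_{3,+}$. Since $\cI_{3,+}$ is spanned by the $\alpha_{i,j}$'s, each of which contains at most two of the 2-box generators, these are the ``lower order'' terms in the Yang-Baxter relation.

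The main obstacle will be verifying rigorously that horizontal reflection descends to a bijection on $\cS_{3,+}/\cI_{3,+}$ taking right-triangle classes to left-triangle classes: one must show that the reflection of $\beta_{h,k,\ell}$ really is $\tilde\beta_{h',k',\ell'}$ (possibly reindexed via self-duality of the $p_g$), rather than acquiring additional corrections from the interaction of shading and orientation. A subsidiary step is to establish the second Yang-Baxter relation (needed to evaluate every closed diagram) by inverting the linear system in the first relation, or equivalently by running the same argument with right and left swapped. Once these are in hand, combining (i) and (ii) gives the Yang-Baxter planar algebra structure on $\cS_\bullet$ with $|G|-1$ generators.
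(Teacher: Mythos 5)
Your proposal is correct and follows essentially the same route as the paper: generation by $2$-boxes comes from Theorem \ref{thm:GeneratedByTwoBoxes}, and the Yang--Baxter relations come from the fact that both orientations of triangle (together with the ideal elements) yield bases of $\cS_{3,+}$, so each triangle can be expanded in terms of the other plus lower-order terms, in both directions. The only divergence is the mechanism for the second basis: where you propose a reflection anti-automorphism argument and rightly flag the shading obstacle (horizontal reflection sends $\cS_{3,+}$ to $\cS_{3,-}$, so it must be corrected by the symmetric self-duality $\Delta$), the paper simply writes down the reflected elements $\eta_{i,j}$ and $\xi_{h,k,\ell}$ directly in $\cS_{3,+}$ and observes that the inner-product and linear-independence calculations of the preceding lemmas go through verbatim.
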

\begin{proof}
By Theorem \ref{thm:GeneratedByTwoBoxes}, $\cS_\bullet$ is generated by 2-boxes. 
For $i,j\in G\cup\{\emptyset\}$ and $h,k,\ell\in G$, we may similarly define elements by considering
$$
\eta_{i,j}=
\frac{1}{\Tr(p_i)^{1/2}\Tr(p_j)^{1/2}}
\begin{tikzpicture}[baseline=-.1cm, xscale=-1]
	\pgfmathsetmacro{\height}{.8};
	\coordinate (a) at ($ 3/4*(0,-\height) $);
	\coordinate (b) at ($ 3/4*(0,\height) $);

	\fill[shaded] ($(a)+3/4*(0,-\height)+(.45,0)$) -- ($(a)+(.45,.3)$) arc (0:180:.15cm) -- ($(a)+3/4*(0,-\height)+(.15,0)$);
	\fill[shaded] ($(b)+3/4*(0,\height)+(.45,0)$) -- ($(b)+(.45,-.3)$) arc (0:-180:.15cm) -- ($(b)+3/4*(0,\height)+(.15,0)$);
	\fill[shaded] ($(a)+3/4*(0,-\height)+(-.15,0)$) -- ($(a)+3/4*(0,-\height)+(-.45,0)$) -- ($(b)+3/4*(0,\height)+(-.45,0)$) -- ($(b)+3/4*(0,\height)+(-.15,0)$) ;
	
	\draw[thick, rho] ($(a)+(-.15,0)+3/4*(0,-\height)$) -- ($(b)+(-.15,0)+3/4*(0,\height)$);
	\draw[thick, rho] ($(a)+(.15,0)+3/4*(0,-\height)$)--($(a)+(.15,.3)$) arc (180:0:.15cm) -- ($(a)+(.45,0)+3/4*(0,-\height)$);
	\draw[thick, rho] ($(b)+(.15,0)+3/4*(0,\height)$)--($(b)+(.15,-.3)$) arc (-180:0:.15cm) -- ($(b)+(.45,0)+3/4*(0,\height)$);
	\nbox{unshaded}{(b)}{.3}{0}{0}{$p_j$}
	\nbox{unshaded}{(a)}{.3}{0}{0}{$p_i$}
\end{tikzpicture}
\text{ and }
\xi_{h,k,\ell} = 
\begin{tikzpicture}[xscale=-1, baseline=-.1cm]
	\pgfmathsetmacro{\width}{.8};
	\pgfmathsetmacro{\height}{.8};
	\coordinate (a) at ($ 3/4*(0,-\height) $);
	\coordinate (b) at ($ 3/4*(0,\height) $);
	\coordinate (c) at (\width,0);

	\fill[shaded] ($(a)+3/4*(0,-\height)+ (\width,0)$) -- ($(b)+3/4*(0,\height)+ (\width,0)$) -- ($(b)+3/4*(0,\height) + 1/2*(\width,0)$) -- (b) -- (c) -- (a) -- ($(a)+3/4*(0,-\height) + 1/2*(\width,0)$);
	\fill[shaded] ($(a)+3/4*(0,-\height)$) -- ($(b)+3/4*(0,\height)$) -- ($(b)+3/4*(0,\height)+1/2*(-\width,0)$) -- ($(a)+3/4*(0,-\height)+1/2*(-\width,0)$);

	\draw [thick, rho](b) -- (c) -- (a);
	\draw[thick, rho] ($(a)+3/4*(0,-\height)$) -- ($(b)+3/4*(0,\height)$);
	\draw[thick, rho] ($(a)+3/4*(0,-\height)+(\width,0)$) -- ($(b)+3/4*(0,\height)+(\width,0)$);
	\draw[thick, rho] (a) -- ($(a)+3/4*(0,-\height) + 1/2*(\width,0)$);
	\draw[thick, rho] (b) -- ($(b)+3/4*(0,\height) + 1/2*(\width,0)$);
	\ncircle{unshaded}{(b)}{.3}{73}{$p_\ell$}
	\ncircle{unshaded}{(c)}{.3}{0}{$p_k$}
	\ncircle{unshaded}{(a)}{.3}{65}{$p_h$}
\end{tikzpicture}\,.
$$
Similar calculations show $\cB_{3,+}'=\{\eta_{i,j}\}_{i,j\in G\cup\{\emptyset\}}\cup\{\zeta_{h,k,\ell}=\xi_{h,k,\ell} - c_{h,\ell}\eta_{h,\ell}\}_{h,k,\ell\in G}$ is a basis for $\cS_{3,+}\ominus \cI_{3,+}$.

Now for all $h,k,\ell\in G$, $\xi_{h,k,\ell}\in\spann(\cB_{3,+})$ and $\beta_{h,k,\ell}\in \spann(\cB_{3,+}')$, which gives the necessary relations to show $\cS_\bullet$ is a Yang-Baxter planar algebra.
\end{proof}

\begin{rem}
At this point, it seems highly non-trivial to compute the structure constants $\langle \beta_{h,k,\ell}, \xi_{h',k',\ell'}\rangle$.
\end{rem}

%%%%%%%%%%%%%%%%%%%%%%%%%%%%%%%%%%%%%%%%%%%%%%%%%
\bibliographystyle{amsalpha}
{\footnotesize{
\bibliography{../../bibliography}
}}
\end{document}